\newtheorem{theorem}{Theorem}[section]
\newtheorem{cor}[theorem]{Corollary}
\newtheorem{lemma}[theorem]{Lemma}
\newtheorem{prop}[theorem]{Proposition}
\newtheorem{remark}[theorem]{Remark}
\numberwithin{equation}{section}
\newcommand{\R}{\mathbb{R}}
\newcommand{\Q}{\mathbb{Q}}
\newcommand{\N}{\mathbb{N}}
\renewcommand{\S}{\mathbb{S}}
\newcommand{\func}[3]{#1 : #2 \longrightarrow #3}
\newcommand{\disp}{\displaystyle}
\newcommand{\abs}[1]{\left|#1\right|}
\newcommand{\eps}{\varepsilon}
\newcommand{\norm}[1]{\left\|#1\right\|}
\renewcommand{\leq}{\leqslant}
\renewcommand{\geq}{\geqslant}
\renewcommand{\bar}{\overline}
\renewcommand{\tilde}{\widetilde}
\newcommand{\pa}[1]{\left(#1\right)}
\newcommand{\cro}[1]{\left[#1\right]}
\newcommand{\br}[1]{\left\{#1\right\}}
\newcommand\restr[2]{{
  \left.\kern-\nulldelimiterspace 
  #1 
  \right|_{#2} 
  }}
\def\signmb{\bigskip \begin{center} {\sc
Marc Briant\par\vspace{3mm}
Sorbonne Universit\'es, UPMC Univ. Paris 06/ CNRS\par
UMR 7598, Laboratoire Jacques-Louis Lions,\par
F-75005, Paris, France\par
\vspace{3mm}
e-mail:} \tt{briant.maths@gmail.com} \end{center}}
\def\signyan{\bigskip \begin{center} {\sc
Yan Guo\par\vspace{3mm}
Brown University\par
Division of Applied Mathematics\par
182 George Street, Box F,
Providence, RI 02192, USA\par
\vspace{3mm}
e-mail:} \tt{yan\_guo@brown.edu} \end{center}}
\begin{document} 

\title[Asymptotic Stability of Boltzmann Equation with Maxwell Boundaries]{Asymptotic Stability of the Boltzmann Equation with Maxwell Boundary Conditions}
\author{Marc Briant}
\author{Yan Guo}
\thanks{The authors would like to acknowledge the Division of Applied Mathematics at Brown University, where this work was achieved.}

\begin{abstract}
In a general $C^1$ domain, we study the perturbative Cauchy theory for the Boltzmann equation with Maxwell boundary conditions with an accommodation coefficient $\alpha$ in $(\sqrt{2/3},1]$, and discuss this threshold. We consider polynomial or stretched exponential weights $m(v)$ and prove existence, uniqueness and exponential trend to equilibrium around a global Maxwellian in $L^\infty_{x,v}(m)$. Of important note is the fact that the methods do not involve contradiction arguments.
\end{abstract}

\maketitle


\textbf{Keywords:} Boltzmann equation; Perturbative theory; Maxwell boundary conditions; Specular reflection boundary conditions; Maxwellian diffusion boundary conditions. 

%

\tableofcontents

\section{Introduction} \label{sec:intro}

The Boltzmann equation rules the dynamics of rarefied gas particles moving in a domain $\Omega$ of $\R^3$ with velocities in $\R^3$ when the sole interactions taken into account are elastic binary collisions. More precisely, the Boltzmann equation describes the time evolution of $F(t,x,v)$, the distribution of particles in position and velocity, starting from an initial distribution $F_0(x,v)$. It reads
\begin{eqnarray}
\forall t \geq 0 &,& \:\forall (x,v) \in \Omega \times \R^3,\quad  \partial_t F + v\cdot \nabla_x F = Q(F,F),\label{BE}
\\ && \:\forall (x,v) \in \Omega \times \R^3,\quad F(0,x,v) = F_0(x,v). \nonumber
\end{eqnarray}
To which one have to add boundary conditions on $F$. Throughout this work we consider $C^1$ bounded domains which allows us to decompose the phase space boundary
$$\Lambda = \partial \Omega\times\R^3$$
into three sets
\begin{eqnarray*}
\Lambda^+ &=& \br{\pa{x,v}\in\partial\Omega\times\R^3, \quad n(x)\cdot v >0},
\\\Lambda^- &=& \br{\pa{x,v}\in\partial\Omega\times\R^3, \quad n(x)\cdot v <0},
\\\Lambda_0 &=& \br{\pa{x,v}\in\partial\Omega\times\R^3, \quad n(x)\cdot v =0},
\end{eqnarray*}
where $n(x)$ is the outward normal at a point $x$ on $\partial\Omega$. The set $\Lambda^+$ is the outgoing set, $\Lambda^-$ is the ingoing set and $\Lambda_0$ is called the grazing set.
\par In the present work, we consider the physically relevant case where the gas interacts with the boundary $\partial\Omega$ \textit{via} two phenomena. Part of the particles touching the wall elastically bounce against it like billiard balls (specular reflection boundary condition) whereas the other part are absorbed by the wall and then emitted back into the domain according to the thermodynamical equilibrium between the wall and the gas (Maxwellian diffusion boundary condition). This very general type of interactions will be referred to as Maxwell boundary condition and they mathematically translate into
\begin{equation}\label{mixedBC}
\begin{split}
\exists \alpha \in (0,1], \:\forall t > 0,&\:\forall (x,v) \in \Lambda^-,
\\& F(t,x,v) = (1-\alpha) F(t,x,\mathcal{R}_x(v)) + \alpha P_\Lambda(F(t,x,\cdot))(v)
\end{split}
\end{equation}
where the Maxwellian diffusion is given by
\begin{equation}\label{PLambda}
P_\Lambda(F(t,x,\cdot))(v) = c_\mu \mu(v)\left[\int_{v_*\cdot n(x)>0} F(t,x,v_*)\left(v_*\cdot n(x)\right)\:dv_*\right]
\end{equation}
with
$$\mu(v) = \frac{1}{\pa{2\pi}^{3/2}}e^{-\frac{\abs{v}^2}{2}} \quad\mbox{and}\quad c_\mu\int_{v\cdot n(x)>0} \mu(v)\left(v\cdot n(x)\right)\:dv=1.$$
\par Note that in our study we allow pure Maxwellian diffusion ($\alpha=1$) but not pure specular reflection ($\alpha=0$). The constant $\alpha$ is called the accommodation coefficient.

\bigskip
The operator $Q(F,F)$ encodes the physical properties of the interactions between two particles. This operator is quadratic and local in time and space. It is given by 
$$Q(F,F) =  \int_{\R^3\times \mathbb{S}^{2}}B\left(|v - v_*|,\mbox{cos}\:\theta\right)\left[F'F'_* - FF_*\right]dv_*d\sigma,$$
where $F'$, $F_*$, $F'_*$ and $F$ are the values taken by $F$ at $v'$, $v_*$, $v'_*$ and $v$ respectively. Define:
$$\left\{ \begin{array}{rl} \displaystyle{v'} & \displaystyle{= \frac{v+v_*}{2} +  \frac{|v-v_*|}{2}\sigma} \vspace{2mm} \\ \vspace{2mm} \displaystyle{v' _*}&\displaystyle{= \frac{v+v_*}{2}  -  \frac{|v-v_*|}{2}\sigma} \end{array}\right. \: \mbox{and} \quad \mbox{cos}\:\theta = \langle \frac{v-v_*}{\abs{v-v_*}},\sigma\rangle .$$
We recognise here the conservation of kinetic energy and momentum when two particles of velocities $v$ and $v_*$ collide to give two particles of velocities $v'$ and $v'_*$.
\par The collision kernel $B$ contains all the information about the interaction between two particles and is determined by physics. We mention, at this point, that one can derive this type of equations from Newtonian mechanics at least formally \cite{Ce}\cite{CIP}. The rigorous validity of the Boltzmann equation from Newtonian laws is known for short times (Landford's theorem \cite{La} or more recently \cite{GST,PSS}).
\par A very interesting \textit{a priori} property of the Boltzmann equation combined with Maxwell boundary condition is the preservation of mass. Indeed, standard properties of $Q$ (\cite{CIP}\cite{Ce}\cite{Vi2} among others) imply that if $F$ is solution to the Boltzmann equation then
\begin{equation}\label{massconservation}
\forall t\geq 0, \quad \int_{\Omega \times \R^3} F(t,x,v)\:dxdv = \int_{\Omega \times \R^3} F_0(x,v)\:dxdv,
\end{equation}
which physically means that the mass is preserved along time.

\bigskip
In the present paper we are interested in the well-posedness of the Boltzmann equation $\eqref{BE}$ for fluctuations around the global equilibrium 
$$\mu(v) = \frac{1}{\pa{2\pi}^{3/2}}e^{-\frac{\abs{v}^2}{2}}.$$
More precisely, in the perturbative regime $F=\mu + f$ we construct a Cauchy theory in $L^\infty_{x,v}$ spaces endowed with strech exponential or polynomial weights and study the continuity and the positivity of such solutions.
\par Under the perturbative regime, the Cauchy problem amounts to solving the perturbed Boltzmann equation
\begin{equation}\label{perturbedBE}
\partial_t f + v\cdot\nabla_x f = Lf + Q(f,f)
\end{equation}
with $L$ being the linear Boltzmann operator $Lf = 2Q(\mu,f)$ where we considered $Q$ as a symmetric bilinear operator
\begin{equation}\label{Qfg}
Q(f,g) = \frac{1}{2}\int_{\R^3\times \mathbb{S}^{2}}B\left(|v - v_*|,\mbox{cos}\:\theta\right)\left[f'g'_* + g'f'_* - fg_*-gf_*\right]dv_*d\sigma.
\end{equation}
Note that $f$ also satisfies the Maxwell boundary condition $\eqref{mixedBC}$ since $\mu$ does.
\bigskip


\subsection{Notations and assumptions}\label{subsec:notations}

We describe the assumptions and notations we shall use throughout the article.

\bigskip
\textbf{Function spaces.} 
Define for any $k \geq 0$ the functional
$$\forall\langle \cdot \rangle^k = \pa{1+\abs{\cdot}^k}.$$
\par The convention we choose is to index the space by the name of the concerned variable so we have, for $p$ in $[1,+\infty]$,
$$L^p_{[0,T]} = L^p\pa{[0,T]},\quad L^p_{t} = L^p \left(\R^+\right),\quad L^p_x = L^p\left(\Omega\right), \quad L^p_v = L^p\left(\R^3\right).$$
\par For $\func{m}{\R^3}{\R^+}$ a positive measurable function we define the following weighted Lebesgue spaces by the norms
\begin{eqnarray*}
\norm{f}_{L^\infty_{x,v}\pa{m}} &=& \sup\limits_{(x,v)\in\Omega\times\R^3}\cro{\abs{f(x,v)}\:m(v)}
\\\norm{f}_{L^1_vL^\infty_{x}\pa{m}} &=& \int_{\R^3}\sup\limits_{x\in\Omega}\abs{f(x,v)}\:m(v) \:dv
\end{eqnarray*}
and in general with $p$, $q$ in $[1,\infty)$: $\norm{f}_{L^p_vL^q_x\pa{m}} = \norm{\norm{f}_{L^q_x}m(v)}_{L^p_v}$.
\par We define the Lebesgue spaces on the boundary:
\begin{eqnarray*}
\norm{f}_{L^\infty_\Lambda\pa{m}} &=&\sup\limits_{(x,v)\in\Lambda}\cro{\abs{f(x,v)}\:m(v)}
\\\norm{f}_{L^1L^\infty_\Lambda\pa{m}} &=&\int_{\R^3}\sup\limits_{x:\: (x,v)\in\Lambda}\abs{f(x,v)v\cdot n(x)}\:m(v) \:dv
\end{eqnarray*}
with obvious equivalent definitions for $\Lambda^{\pm}$ or $\Lambda_0$. However, when we do not consider the $L^\infty$ setting in the spatial variable we define
$$\norm{f}_{L^2_{\Lambda}\pa{m}} = \cro{\int_{\Lambda} \abs{f(x,v)^2 m(v)^2 \abs{v\cdot n(x)}\:dS(x)dv}}^{1/2},$$
where $dS(x)$ is the Lebesgue measure on $\partial\Omega$. We emphasize here that when the underlying space in the velocity variable is $L^p$ with $p\neq \infty$, the measure we consider is $\abs{v\cdot n(x)}dS(x)$ as it is the natural one when one thinks about Green formula.

\bigskip
\textbf{Assumptions on the collision kernel.}
We assume that the collision kernel $B$ can be written as
\begin{equation}\label{B}
B(v,v_*,\theta) = \Phi\left(|v - v_*|\right)b\left( \mbox{cos}\:\theta\right),
\end{equation}
which covers a wide range of physical situations (see for instance \cite[Chapter 1]{Vi2}).
\par Moreover, we will only consider kernels with hard potentials, that is 
\begin{equation}\label{hardpot}
\Phi(z) = C_\Phi z^\gamma \:,\:\: \gamma \in [0,1],
\end{equation}
where $C_\Phi>0$ is a given constant. Of special note is the case $\gamma=0$ which is usually referred to as Maxwellian potentials.
We will assume that the angular kernel $b\circ \mbox{cos}$ is positive and continuous on $(0,\pi)$, and that it satisfies a strong form of Grad's angular cut-off:
\begin{equation}\label{cutoff}
b_\infty=\norm{b}_{L^\infty_{[-1,1]}}<\infty
\end{equation}
The latter property implies the usual Grad's cut-off \cite{Gr1}:
\begin{equation}\label{lb}
l_b = \int_{\mathbb{S}^{d-1}}b\left(\mbox{cos}\:\theta\right)d\sigma = \left|\mathbb{S}^{d-2}\right|\int_0^\pi b\left(\mbox{cos}\:\theta\right) \mbox{sin}^{d-2}\theta \:d\theta < \infty.
\end{equation}
Such requirements are satisfied by many physically relevant cases. The hard spheres case ($b=\gamma=1$) is a prime example.
\bigskip


\subsection{Comparison with previous studies} \label{subsec:previousresults}

Few results have been obtained about the perturbative theory for the Boltzmann equation with other boundary condition than the periodicity of the torus. On the torus we can mention \cite{Uk}\cite{Gu3}\cite{Gu4}\cite{MN}\cite{Bri1}\cite{GMM} for collision kernels with hard potentials with cutoff, \cite{GreStr} without the assumption of angular cutoff or \cite{Gu1}\cite{Kim} for soft potentials. A good review of the methods and techniques used can be found in the exhaustive \cite{UkYa}.
\par The study of the well-posedness of the Boltzmann equation, as well as the trend to equilibrium, when the spatial domain is bounded with non-periodic boundary condition is scarce and only focuses on hard potential kernels with angular cutoff. In \cite{Gu6}, exponential convergence to equilibrium in $L^\infty_{x,v}$ with the important weight $\langle v \rangle^\beta\mu(v)^{-1/2}$ was established. The boundary condition considered in \cite{Gu6} are pure specular reflections with $\Omega$ being strictly convex and analytic and pure Maxwellian diffusion with $\Omega$ being smooth and convex. Note that the arguments used in the latter work relied on a non-constructive $L^2_{x,v}$ theory.
\par More recently, the case of pure Maxwellian boundary condition has been resolved by \cite{EGKM} in $L^\infty_{x,v}\pa{\langle v \rangle^\beta\mu(v)^{-1/2}}$ in $\Omega$ smooth but not necessarily convex and, more importantly, with constructive arguments. They also deal with non-global Maxwellian diffusion and gave an explicit domain of continuity for the solutions. We also mention \cite{KimYun} for a perturbative study around a non-local and rotating Maxwellian. At last, a very recent work by the first author \cite{Bri6} extended the domain of validity of the previous study to $L^\infty_{x,v}(m)$ where $m$ is a less restrictive weight: a stretched exponential or a polynomial; both for specular reflection and Maxwellian diffusion. His methods are constructive from the results described above (but therefore still rely on the contradiction argument in $L^2_{x,v}$ and the analyticity of $\Omega$ for specular reflections).
\par We also mention some works in the framework of renormalized solutions in bounded domains. The existence of such solutions has been obtained in different settings \cite{Mis1}\cite{Mis2} with Maxwell boundary condition. The issue of asymptotic convergence for such solutions was investigated in \cite{DesVil1} where they proved a trend to equilibrium faster than any polynomial on condition that the solutions has high Sobolev regularity

\bigskip
The present work establishes the perturbative Cauchy theory for Maxwell boundary condition and exponential trend to equilibrium in $L^\infty_{x,v}$ with a stretched exponential and polynomial weight. There are four main contributions in this work. First, we allow mere polynomial weights for the perturbation, which is a significant improvement over the work \cite{Gu6}. Then we deal with more general, and more physically relevant, boundary conditions and we recover the existing results in the case of pure Maxwellian diffusion. Third, delicate uses of the diffusive part, since $\alpha >0$, gives constructive proofs and there are the first, to our knowledge, entirely constructive arguments when dealing with specular reflections. Finally, we propose a new method to establish an $L^2-L^\infty$ theory that simplifies both technically and conceptually the existing $L^2-L^\infty$ theory \cite{Gu6}\cite{EGKM}. We indeed estimate the action of the operator $K$ in between two consecutive rebounds against the wall and work with the different weight than all the previous studies, namely $\mu^{-1-0}$ where we prove that $K$ almost acts like $3\nu(v)$. Also, with such an estimate we get rid of the strict convexity and analyticity of $\Omega$ that was always required when dealing with some specular reflections. We only need $\Omega$ to be a $C^1$ bounded domain but as a drawback we require $\alpha>\sqrt{2/3}$ (this explicit threshold being obtained thanks to the precise control over $K$).
\par We conclude by mentioning that our results also give an explicit set of continuity of the aforementioned solutions. This was known only in the case of pure Maxwellian diffusion, in-flow and bounce-back boundary conditions \cite{Kim1}. In the case of $\Omega$ convex we recover the fact that the solutions are continuous away from the grazing set $\Lambda_0$ \cite{Gu6}. Concerning the regularity of solutions to the Boltzmann equation with boundary conditions we also refer to \cite{GKTT1}\cite{GKTT2}.
\bigskip


\subsection{Organisation of the article}\label{subsec:organization}

Section \ref{sec:mainresults} is dedicated to the statement and the description of the main results proved in this paper. We also describe our strategy, which mainly consists in four steps that make the skeleton of the present article.
\par Section \ref{sec:L2theory} is dedicated to the \textit{a priori} exponential decay of the solutions to the linear part of the perturbed equation in the $L^2$ setting.
\par In Section \ref{sec:frequencycollision} we start by giving a brief mathematical description of the specular characteristics. We then study the semigroup generated by the transport part and the collision frequency kernel $G_\nu=-v\cdot\nabla_x -\nu$ along with the Maxwell boundary condition.
\par We develop an $L^2-L^\infty$ theory in Section \ref{sec:Linftytheory} and we prove that $G=-v\cdot\nabla_x+L$ generates a $C^0$-semigroup in $L^\infty_{x,v}(\langle v \rangle^\beta\mu^{-1/2})$ that decays exponentially.
\par We prove the existence and uniqueness of solutions for the full Boltzmann equation $\eqref{BE}$ in the perturbative regime $F=\mu+f$ in Section \ref{sec:fullcauchy}.
\par At last, Section \ref{sec:qalitativestudy} deals with the positivity and the continuity of the solutions to the full Boltzmann equation that we constructed.
\bigskip

\section{Main results} \label{sec:mainresults}

The aim of the present work is to prove the following perturbative Cauchy theory for the full Boltzmann equation with Maxwell boundary condition.

\bigskip
\begin{theorem}\label{theo:perturbativeCauchy}
Let $\Omega$ be a $C^1$ bounded domain and let $\alpha$ in $(\sqrt{2/3},1]$. Define
\begin{equation}\label{kinfty}
k_\infty = 1+\gamma + \frac{16\pi b_\infty}{l_b}.
\end{equation}
Let $m=e^{\kappa_1\abs{v}^{\kappa_2}}$ with $\kappa_1 >0$ and $\kappa_2$ in $(0,2)$ or $m=\langle v \rangle^k$ with $k> k_\infty$.
\\There exists $\eta >0$ such that for any $F_0=\mu+f_0$ in $L^\infty_{x,v}(m)$ satisfying the conservation of mass $\eqref{massconservation}$ with 
$$\norm{F_0-\mu}_{L^\infty_{x,v}(m)}\leq \eta,$$
there exists a unique solution $F(t,x,v)= \mu(v) + f(t,x,v)$ in $L^\infty_{t,x,v}(m)$ to the Boltzmann equation $\eqref{BE}$ with Maxwell boundary condition $\eqref{mixedBC}$ and with $f_0$ as an initial datum. Moreover, 
\begin{itemize}
\item $F$ preserves the mass $\eqref{massconservation}$;
\item There exist $C$, $\lambda>0$ such that
$$\forall t\geq 0,\quad \norm{F(t)-\mu}_{L^\infty_{x,v}(m)}\leq Ce^{-\lambda t}\norm{f_0}_{L^\infty_{x,v}(m)};$$
\item If $F_0 \geq 0$ then $F(t)\geq 0$ for all $t$.
\end{itemize}
\end{theorem}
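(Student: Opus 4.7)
The plan is to follow the four-step architecture announced in the introduction, which corresponds exactly to Sections 3--6. \emph{Step 1 ($L^2$ decay of the linearized semigroup).} I would first establish exponential decay in $L^2_{x,v}(\mu^{-1/2})$ of solutions to $\partial_t f + v\cdot\nabla_x f = Lf$ with the Maxwell condition \eqref{mixedBC}, for data orthogonal to the only remaining conserved quantity, namely mass (momentum and energy being broken by the wall as soon as $\alpha>0$). The key is a hypocoercive energy inequality combining the micro-dissipation $\langle Lf,f\rangle_{L^2(\mu^{-1/2})}\leq -c\norm{\pa{I-\pi}f}_{L^2(\nu\mu^{-1/2})}^2$ with a boundary dissipation term: multiplying the equation by $f\mu^{-1}$ produces a boundary integral which, thanks to \eqref{mixedBC} and $\alpha>0$, yields quantitative control on the deviation of the trace of $f$ from its diffusive projection onto $\mu$. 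An auxiliary Poincaré-type multiplier built from an elliptic problem on $\Omega$ then closes the inequality.

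\emph{Steps 2 and 3 ($L^\infty$ transport theory and $L^2$-$L^\infty$ bootstrap).} Next I construct the semigroup $S_\nu(t)$ of $G_\nu=-v\cdot\nabla_x-\nu$ by representing solutions along specular characteristics and summing an absolutely convergent series over successive rebounds (each rebound splitting into a $1-\alpha$ specular part and an $\alpha$ diffusive part), using the pointwise decay $e^{-\nu_0 t}$ between rebounds. Writing $L=-\nu+K$ and iterating the Duhamel formula $S_G(t)=S_\nu(t)+\int_0^t S_\nu(t-s) K S_G(s)\,ds$ twice, the resulting double-kernel operator $KS_\nu(\tau)K$ is estimated \emph{within one free-transport leg of a characteristic only}, in the slightly stronger weight $\mu^{-1-0}$, where one shows that $K$ acts almost like $3\nu(v)$. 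Combining this pointwise estimate with the $L^2$ decay from Step 1 upgrades the decay to $L^\infty_{x,v}(\langle v\rangle^\beta\mu^{-1/2})$. Since each pure-specular rebound carries a factor $1-\alpha$ and the amplification of $K$ is of order $3\nu$, balancing these two effects in the absence of convexity or analyticity of $\Omega$ produces the explicit threshold $\alpha>\sqrt{2/3}$.

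\emph{Step 4 (enlargement and nonlinear Cauchy theory).} I then pass from the weight $\langle v\rangle^\beta\mu^{-1/2}$ to the much weaker weight $m$ (polynomial or stretched exponential) via a Gualdani--Mischler--Mouhot enlargement: decompose $L=A+B$ with $A$ bounded and regularizing, $B$ dissipative in $L^\infty(m)$, and transfer the exponential decay through the factorization identity; the condition $k>k_\infty$ for polynomial weights is precisely what makes $B$ sufficiently dissipative. The full nonlinear equation \eqref{perturbedBE} is then solved by a contraction fixed point on
\begin{equation*}
f(t) = e^{tG}f_0 + \int_0^t e^{(t-s)G}Q(f(s),f(s))\, ds,
\end{equation*}
using the standard weighted bilinear bound on $Q$ together with the exponential decay of $e^{tG}$; smallness of $\norm{f_0}_{L^\infty_{x,v}(m)}$ yields the unique global solution with the announced exponential decay. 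Mass conservation is inherited because both $P_\Lambda$ and specular reflection preserve total flux, and positivity of $F=\mu+f$ is obtained via a monotone iterative scheme in which each iterate admits a manifestly non-negative Duhamel representation along specular trajectories (the Maxwell boundary operator being positivity-preserving).

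The main obstacle is the $L^2$-$L^\infty$ step. Without convexity of $\Omega$, specular characteristics may accumulate rebounds or graze the boundary in uncontrolled ways, which was precisely why \cite{Gu6,EGKM} required strict convexity and analyticity. The remedy of estimating $K$ only along one free-transport leg, in the tailored weight $\mu^{-1-0}$, bypasses these geometric assumptions but introduces the amplification factor $3$; absorbing this against the factor $1-\alpha$ inherited from each specular rebound is exactly what dictates the quantitative threshold $\alpha>\sqrt{2/3}$ in the theorem.
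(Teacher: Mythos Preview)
Your outline matches the paper's architecture closely through Steps~1--3, but two points deserve correction.

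\textbf{The threshold computation.} Your explanation that the condition $\alpha>\sqrt{2/3}$ arises from ``balancing the factor $1-\alpha$ against the amplification $3$'' is not quite right: that would give $\alpha>2/3$. In the paper (Proposition~\ref{prop:triplenorm}, Step~6), after iterating the characteristic representation \emph{twice} and isolating the term $J_f$ that carries the specular reflection, one finds a constant
\[
C_\alpha(\zeta)=(1-\alpha)\bigl(3+C_K(1-\zeta)\bigr)\bigl[1+\alpha\bigl(1+C_0(1-\zeta)\bigr)\bigr]
\]
whose limit as $\zeta\to 1$ is $3(1-\alpha)(1+\alpha)=3(1-\alpha^2)$. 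The extra factor $(1+\alpha)$ comes from the diffusive branch that follows the first specular rebound. Requiring $3(1-\alpha^2)<1$ gives exactly $\alpha>\sqrt{2/3}$.

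\textbf{Step 4: system versus direct contraction.} Here you diverge from the paper. You propose to first enlarge the \emph{linear} semigroup $e^{tG}$ to $L^\infty_{x,v}(m)$ via the GMM factorization, and then close the nonlinear equation by a contraction on the Duhamel integral $\int_0^t e^{(t-s)G}Q(f,f)\,ds$. The difficulty you are glossing over is that $Q$ loses a weight $\nu(v)$ (Lemma~\ref{lem:controlQ}: $\norm{Q(h,g)}_{L^\infty(\nu^{-1}m)}\leq C\norm{h}\norm{g}$), so closing the fixed point requires $\int_0^t e^{(t-s)G}\,ds$ to \emph{gain} a factor $\nu^{-1}$ in $L^\infty(m)$. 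This is established for $S_{G_\nu}$ (Corollary~\ref{cor:gainweightGnu}) but not for the full $S_G$ in the enlarged space, and it does not follow directly from the abstract GMM enlargement. The paper circumvents this by solving instead the \emph{coupled system} \eqref{f1}--\eqref{f2}: $f_1$ evolves under $B^{(\delta)}=G_\nu+B_2^{(\delta)}$ plus the full bilinear term and lives in $L^\infty(m)$, where the gain-of-weight for $S_{G_\nu}$ is available; $f_2$ evolves under $G$ with the regularized source $A^{(\delta)}f_1$ and lives in the smaller space $L^\infty(\mu^{-\zeta})$, where Theorem~\ref{theo:semigroupLinfty} applies directly. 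This nonlinear splitting is not merely cosmetic---it is what makes the weight-gain compatible with the boundary-condition analysis of Section~\ref{sec:frequencycollision}. Your direct-contraction route may be salvageable, but it would require an additional argument you have not supplied.
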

\bigskip

\begin{remark}
We make a few comments about the above theorem.
\begin{enumerate}
\item[(1)] Notice that we recover the case of pure diffusion \cite{Gu6}\cite{EGKM} since $\alpha=1$ is allowed.
\item[(2)] It is important to emphasize that the uniqueness holds in the pertubative sense, that is in the set of functions of the form $F=\mu+f$ with $f$ small. The uniqueness for the Boltzmann equation in $L^\infty_{t,x,v}(m)$ with Maxwell boundary condition in the general setting would be a very interesting problem to look at. 
\item[(3)] Recent results \cite{Bri2}\cite{Bri5} established a quantitative lower bound for the solutions in the case of pure specular reflections and pure diffusion respectively. We think that their methods could be directly applicable to the Maxwell boundary problem and the solutions described in the theorem above should have an exponential lower bound, at least when $\Omega$ is convex. However, we only give here a qualitative statement about the positivity.
\end{enumerate}
\end{remark}
\begin{remark}[Remarks about improvement over $\alpha$]
As we shall mention it in next sections, we can construct an explicit $L^2_{x,v}$ linear theory if $\alpha >0$ whereas we strongly need $\alpha > \sqrt{2/3}$ to develop an $L^\infty_{x,v}$ linear theory from the $L^2$ one. However, the $L^1_vL^\infty_x$ nonlinear theory only relies on the $L^\infty_{x,v}$ linear one. Decreasing our expectations on $\Omega$ would allow to increase the range for $\alpha$.
\begin{itemize}
\item \textbf{$\Omega$ smooth and convex: $\alpha >0$ and constructive.} Very recent result \cite{KimLee} managed to obtain an $L^\infty_{x,v}$ theory for sole specular reflections by iterating Duhamel's form three times (see later). Thus, a convex combination of their methods and ours allow to  derive an $L^2-L^\infty$ theory for any $\alpha$ in $[0,1]$ and it would be entirely constructive thanks to our explicit $L^2$ linear theory.
\item Unfortunately, a completely constructive $L^2_{x,v}$ theory for $\alpha=0$ is still missing at the moment.
\end{itemize} 
\end{remark}
\bigskip

In order to state our result about the continuity of the solutions constructed in Theorem \ref{theo:perturbativeCauchy} we need a more subtle description of $\partial\Omega$. As noticed by Kim \cite{Kim1}, some specific points on $\Lambda_0$ can offer continuity.
\par We define the inward inflection grazing boundary
$$\Lambda_0^{(I-)} = \Lambda_0 \cap \br{t_{min}(x,v)=0,\: t_{min}(x,-v)\neq 0 \:\mbox{and}\: \exists \delta>0, \:\forall \tau \in [0,\delta],\: x-\tau v \in \bar{\Omega}^c}$$
where $t_{min}(x,v)$ is the first rebound against the boundary of a particle starting at $x$ with a velocity $-v$ (see Subsection \ref{subsec:collisionfrequencycharacteristics} for rigorous definition). That leads to the boundary continuity set
$$\mathfrak{C}^-_\Lambda = \Lambda^- \cup \Lambda_0^{(I-)}.$$
As we shall see later, the continuity set $\mathfrak{C}^-_\lambda$ describes the set of boundary points in the phase space that lead to continuous specular reflections.
\par The key idea is to understand that the continuity of the specular reflection at each bounce against the wall will lead to continuity of the solution. We thus define the continuity set
\begin{equation*}
\begin{split}
\mathfrak{C} =& \Big\{\br{0}\times\br{\Omega\times\R^3\cup\pa{\Lambda^+\cup \mathfrak{C}^-_\Lambda}}\Big\} \cup \Big\{(0,+\infty)\times\mathfrak{C}^-_\Lambda \Big\}
\\&\cup \Big\{(t,x,v) \in (0,+\infty)\times\pa{\Omega\times\R^3\cup \Lambda^+}:
\\&\quad\quad \forall 1\leq k\leq N(t,x,v) \in\N, \pa{X_{k+1}(x,v),V_k(x,v)}\in \mathfrak{C}^-_\Lambda \Big\}.
\end{split}
\end{equation*}
The sequence $(T_k(x,v),X_k(x,v),V_k(x,v))_{k\in\N}$ is the sequence of footprints of the backward characteristic trajectory starting at $(x,v)$ and overcoming pure specular reflections; $N(t,x,v)$ is almost always finite and satisfies $T_{N(t,x,v)} \leq t < T_{N(t,x,v)+1}(x,v)$. We refer to Subsection \ref{subsec:collisionfrequencycharacteristics} for more details.

\bigskip
\begin{theorem}\label{prop:continuity}
Let $F(t,x,v) = \mu +f(t,x,v)$ be the solution associated to $F_0 =\mu+f_0$ described in Theorem \ref{theo:perturbativeCauchy}. Suppose that $F_0= \mu + f_0$ is continuous on $\Omega\times\R^3\cup\br{\Lambda^+\cup \mathfrak{C}^-_\Lambda}$ and satisfies the Maxwell boundary condition $\eqref{mixedBC}$ then $F=\mu + f$ is continuous on the continuity set $\mathfrak{C}$.
\end{theorem}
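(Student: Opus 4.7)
The strategy is to represent $f$ by iterating Duhamel's formula along backward specular characteristics, and to propagate the continuity of the data through that representation. Splitting $L = K - \nu(v)$, the mild form on each straight-line segment between two consecutive specular footprints with constant velocity $V$ reads
\begin{equation*}
f(s_2, y_2, V) = e^{-\nu(V)(s_2-s_1)} f(s_1, y_1, V) + \int_{s_1}^{s_2} e^{-\nu(V)(s_2-s)} \cro{Kf + Q(f,f)}(s, y_1+(s-s_1)V, V)\, ds,
\end{equation*}
and the Maxwell condition $\eqref{mixedBC}$ applied at a bounce point $(T,X)$ with ingoing velocity $V$ gives
\begin{equation*}
f(T, X, V) = (1-\alpha) f(T, X, \mathcal{R}_X(V)) + \alpha P_\Lambda\pa{f(T, X, \cdot)}(V).
\end{equation*}
Iterating through the $N(t,x,v)$ specular footprints and stopping at $t=0$ expresses $f(t,x,v)$ as a finite combination of (i) the initial datum $f_0$ transported to the backward endpoint, (ii) diffusive contributions $(1-\alpha)^j\alpha\, P_\Lambda(f(T_k, X_k, \cdot))(V_k)$ at each bounce (weighted by the appropriate $(1-\alpha)$-factor and an exponential factor), and (iii) straight-line source integrals of $Kf + Q(f,f)$.

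Continuity of $f$ on $\mathfrak{C}$ then reduces to checking continuity of each of these three types of terms. The first ingredient is continuity of the specular characteristic map $(t,x,v)\mapsto (T_k, X_k, V_k)$ on $\mathfrak{C}$. Away from the grazing set this is a standard implicit-function-theorem argument applied to the first-hitting-time equation, transversality being ensured by $V_k\cdot n(X_{k+1}) \neq 0$. At a point of $\Lambda_0^{(I-)}$, the inward-inflection condition $x - \tau v \in \bar\Omega^c$ for small $\tau > 0$ together with the $C^1$ regularity of $\partial\Omega$ forces nearby backward trajectories to leave $\bar\Omega$ cleanly through the grazing point, and the subsequent footprints converge; this is the geometric analysis introduced in \cite{Kim1}. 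The second ingredient is that the diffusive contribution
\begin{equation*}
P_\Lambda\pa{f(T_k, X_k, \cdot)}(V_k) = c_\mu\mu(V_k) \int_{v_*\cdot n(X_k) > 0} f(T_k, X_k, v_*) \pa{v_*\cdot n(X_k)}\, dv_*
\end{equation*}
is smooth in $V_k$ and continuous in $(T_k, X_k)$ whenever $f$ is continuous on $\Lambda^+$ at $X_k$; this last property is itself produced by an inner iteration of Duhamel's formula, since for $v_*\cdot n(X_k) > 0$ the point $(T_k, X_k, v_*)$ lies in $\mathfrak{C}$ and its own backward characteristic immediately enters $\Omega$ transversely. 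The third ingredient is continuity of the source integrals, which follows from dominated convergence using the $L^\infty_{t,x,v}(m)$ bound on $f$ granted by Theorem \ref{theo:perturbativeCauchy} together with the integrability against $m^{-1}$ of the kernels of $K$ and of $v\mapsto Q(f,f)(\cdot,\cdot,v)$ on $\R^3$.

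Combining these three ingredients and inducting on $N(t,x,v)$ in the Duhamel representation yields continuity of $f$, hence of $F = \mu+f$, on the full continuity set $\mathfrak{C}$. The compatibility hypothesis that $f_0$ satisfies $\eqref{mixedBC}$ is invoked at $t = 0^+$ to match the transported initial datum with the boundary contributions, and the factor $(1-\alpha)^{N(t,x,v)}$ controls the telescoping series when $N$ is large. The genuine obstacle lies in the first ingredient: the rigorous geometric analysis of the specular characteristic flow at inward-inflection grazing points, where nearby trajectories may have a different number of bounces than the limiting one and one must nevertheless show that the whole representation remains jointly continuous in $(t,x,v)$; the definition of $\Lambda_0^{(I-)}$ is engineered precisely to make this continuity hold.
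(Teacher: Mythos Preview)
Your representation is correct in spirit, but it is implicit rather than explicit: the right-hand side of your Duhamel formula still contains $f$ through the diffusive boundary contributions $P_\Lambda(f(T_k,X_k,\cdot))$ and through the source $Kf + Q(f,f)$. You acknowledge this when you say the continuity of $P_\Lambda(f)$ at a bounce point ``is itself produced by an inner iteration of Duhamel's formula,'' but this is circular: to conclude that $f$ is continuous on $\Lambda^+$ at $X_k$ you need the whole argument again, and the integral over $v_*\cdot n(X_k)>0$ draws on all outgoing velocities, each of whose backward trajectories may have arbitrarily many rebounds. The same circularity affects the source integrals: dominated convergence gives you nothing unless you already know the integrand $(Kf+Q(f,f))(s,\cdot,\cdot)$ is continuous, which again presupposes continuity of $f$. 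Induction on $N(t,x,v)$ does not close either, since both $K$ and $P_\Lambda$ mix in all velocities.

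The paper breaks the loop by an approximation: it builds a sequence $F^{(n)}$ solving
\[
\cro{\partial_t + v\cdot\nabla_x + q(F^{(n)})}F^{(n+1)} = Q(F^{(n)},F^{(n)}),\qquad
F^{(n+1)}\big|_{\Lambda^-} = (1-\alpha)F^{(n+1)}\circ\mathcal{R}_x + \alpha P_\Lambda(F^{(n)}),
\]
so that at each step the diffusive part and the source are frozen at the previous iterate. This turns the boundary condition into a pure specular-plus-inflow problem with \emph{known} continuous data $g=\alpha P_\Lambda(F^{(n)})$, for which the paper proves a clean transport lemma (Lemma~\ref{lem:continuityspecularinflow}) giving continuity of $F^{(n+1)}$ on $\mathfrak{C}$ by exactly the explicit characteristic representation you wrote down. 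Continuity of $F^{(n)}$ then propagates by induction, and uniform convergence of $(F^{(n)})$ in $L^\infty_{t,x,v}(m)$ passes it to the limit. Your geometric discussion of the footprint continuity at $\Lambda_0^{(I-)}$ is on target and matches Lemma~\ref{lem:boundarycontinuityset}, but to complete the proof you must insert an iterative layer that decouples the boundary and source terms from the unknown.
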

\bigskip

\begin{remark}
We emphasize here again that the above theorem holds only in the perturbative regime. We also point out the following properties of the continuity set.
\begin{enumerate}
\item[(1)] From \cite[Proposition A.4]{Bri2} we know that the set of points $(x,v)$ in $\bar{\Omega}\times\R^3$ that lead to problematic backward characteristics is of Lebesgue measure zero (see later for more details). We infer that $\mathfrak{C}$ is non-empty and when we only consider $t$ in $[0,T]$ for a given $T>0$, its complementary set is of measure zero.
\item[(2)] In the case of a convex domain $\Omega$, we recover the previous results \cite{Gu6} for both pure specular reflections and pure diffusion: $\mathfrak{C}=\R^+\times\pa{\bar{\Omega}\times\R^3 - \Lambda_0}$.
\end{enumerate}
\end{remark}
\bigskip


\subsection{Description of the strategy} \label{subsec:descriptionstrategy}

Our strategy can be decomposed into four main steps and we now describe each of them briefly.

\bigskip
\textbf{Step 1: A priori exponential decay in $L^2_{x,v}\pa{\mu^{-1/2}}$ for the full linear operator.} The first step is to prove that the existence of a spectral gap for $L$ in the sole velocity variable can be transposed to $L^2_{x,v}\pa{\mu^{-1/2}}$ when one adds the skew-symmetric transport operator $-v\cdot\nabla_x$. In other words, we prove that solutions to
$$\partial_t f= Gf = Lf - v\cdot\nabla_x f$$
in $L^2_{x,v}\pa{\mu^{-1/2}}$ decays exponentially fast. Basically, the spectral gap $\lambda_L$ of $L$ implies that for such a solution
$$\frac{d}{dt}\norm{f}^2_{L^2_{x,v}\pa{\mu^{-1/2}}} \leq -2\lambda_L \norm{f-\pi_L\pa{f}}^2_{L^2_{x,v}\pa{\mu^{-1/2}}},$$
where $\pi_L$ is the orthogonal projection in $L^2_v\pa{\mu^{-1/2}}$ onto the kernel of the operator $L$. This inequality exhibits the hypocoercivity of $L$. Therefore, one would like that the microscopic part $\pi_L^\bot(f)= f -\pi_L(f)$ controls the fluid part which has the following form 
$$\pi_L(f)(t,x,v) = \cro{a(t,x) + b(t,x)\cdot v + c(t,x)\abs{v}^2}\mu(v).$$
\par It is known \cite{Gu3}\cite{Gu4} that the fluid part has some elliptic regularity; roughly speaking one has
\begin{equation}\label{fluidmicroderivative}
\Delta \pi_L(f) \sim \partial^2 \pi_L^\bot f + \mbox{higher order terms},
\end{equation}
that can be used in Sobolev spaces $H^s$ to recover some coercivity. We follow the idea of \cite{EGKM} for Maxwellian diffusion and construct a weak version of the elliptic regularity of $a(t,x)$, $b(t,x)$ and $c(t,x)$ by multiplying these coordinates by test functions. Basically, the elliptic regularity of $\pi_{L}\pa{f}$ will be recovered thanks to the transport part applied to these test functions while, on the other side, $L$ will encode the control by $\pi_{L}^\bot\pa{f}$. The test functions we build works with specular reflections but the estimate for $b$ requires the integrability of the function on the boundary. Such a property holds for Maxwellian diffusion and this is why we cannot deal with the specific case $\alpha=0$.

\bigskip
\textbf{Step 2: Semigroup generated by the collision frequency kernel.} The collision frequency operator $G_\nu = -\nu(v) - v\cdot\nabla_x$ together with Maxwell boundary condition is proved to generate a strongly continuous semigroup with exponential decay in $L^\infty_{x,v}(m)$ with very general weights $m(v)$. The boundary operator associated with the Maxwell condition is of norm exactly one and therefore the standard theory of transport equation in bounded domains \cite{BePro} fails. The core idea is to obtain an implicit description of the solutions to $\partial_t f = G_\nu f$ along the characteristic trajectories and to prove that the number of trajectories that do not reach the initial plane $\{t=0\}$ after a large number of rebounds is very small. Such a method has been developed in \cite{Gu6} and extended in \cite{Bri6}; we adapt it to the case of Maxwell characteristics.

\bigskip
\textbf{Step 3: $L^\infty_{x,v}(\mu^{-\zeta})$ theory for the full nonlinear equation.} The underlying $L^2_{x,v}$-norm is not an algebraic norm for the nonlinear operator $Q$ whereas the $L^\infty_{x,v}$-norm is (see \cite{Ce}\cite{CIP} or \cite{Vi2} for instance). We therefore follow an $L^2-L^\infty$ theory \cite{Gu6} to pass on the previous semigroup property in $L^2$ to $L^\infty$ \textit{via} a change of variable along the flow of characteristics.
\par Basically, $L$ can be written as $L = -\nu(v) +K$ with $K$ a kernel operator. If we denote by $S_G(t)$ the semigroup generated by $G = L-v\cdot\nabla_x$ we have the following implicit Duhamel along the characteristics
$$S_G(t)= e^{-\nu(v)t} + \int_0^t e^{-\nu(v)(t-s)}K\cro{S_G(s)} \:ds.$$
The standard methods \cite{Vid}\cite{Gu6}\cite{EGKM} used an iterated version of this Duhamel's formula to recover some compactness property, thus allowing to bound the solution in $L^\infty$ by its $L^2$ norm. To do so they require to study the solution $f(t,x,v)$ along all the possible characteristic trajectories $(X_t(x,v),V_t(x,v))$. We propose here a less technical strategy by estimating the action of $K$ in between two consecutive collisions against $\partial\Omega$ thanks to trace theorems. The core contribution, which also gives the threshold $\alpha > \sqrt{2/3}$, is to work in $L^\infty_{x,v}(\mu^{-\zeta})$ as $\zeta$ goes to $1$ where $K$ is proven to act roughly like $3\nu(v)$.

\bigskip
\textbf{Step 4: Extension to polynomial weights.} To conclude the present study, we develop an analytic and nonlinear version of the recent work \cite{GMM}, also recently adapted in a nonlinear setting \cite{Bri6}. The main strategy is to find a decomposition of the full linear operator $G$ into $G_1+A$. We shall prove that $G_1$ acts like a small perturbation of the operator $G_{\nu} = -v\cdot\nabla_x - \nu(v)$ and is thus hypodissipative, and that $A$ has a regularizing effect. The regularizing property of the operator $A$ allows us to decompose the perturbative equation $\eqref{perturbedBE}$ into a system of differential equations
\begin{eqnarray}
\partial_t f_1 + v\cdot\nabla_x f_1 &=& G_1\pa{f_1} + Q(f_1+f_2,f_1+f_2)\label{introf1}
\\\partial_t f_2 + v\cdot\nabla_x f_2 &=& L\pa{f_2} + A\pa{f_1}.\label{introf2}
\end{eqnarray}
The first equation is solved in $L^\infty_{x,v}\pa{m}$ with the initial datum $f_0$ thanks to the hypodissipativity of $G_1$. The regularity of $A\pa{f_1}$ allows us to use Step 3 and thus solve the second equation with null initial datum in $L^\infty_{x,v}(\mu^{-\zeta})$.
\bigskip

\section{$L^2\pa{\mu^{-1/2}}$ theory for the linear part of the perturbed Boltzmann equation}\label{sec:L2theory}

This section is devoted to the study of the linear perturbed equation
$$\partial_t f + v\cdot\nabla_x f = L(f),$$
with the Maxwell boundary condition $\eqref{mixedBC}$ in the $L^2$ setting. Note that we only need $\alpha$ in $(0,1]$ in this section. As we shall see in Subsection \ref{subsec:LL2}, the space $L^2_v\pa{\mu^{-1/2}}$ is natural for the operator $L$. In order to avoid carrying the maxwell weight throughout the computations we look at the function $h(t,x,v) = f(t,x,v)\mu(v)^{-1/2}$. We thus study in this section the following equation in $L^2_{x,v}$

\begin{equation}\label{lineqL2}
\partial_t h + v\cdot\nabla_x h = L_\mu(h),
\end{equation}
with the associated boundary conditions
\begin{equation}\label{BCL2}
\forall t > 0,\:\forall (x,v) \in \Lambda^-,\quad  h(t,x,v) = (1-\alpha) h(t,x,\mathcal{R}_x(v)) + \alpha P_{\Lambda_\mu}(h)(t,x,v)
\end{equation}
where we defined 
$$L_\mu(h) = \frac{1}{\sqrt{\mu}}L\pa{\sqrt{\mu}h}$$
and $P_{\Lambda_\mu}$ can be viewed as a $L^2_v$-projection with respect to the measure $\abs{v\cdot n(x)}$:
\begin{equation}\label{PLambdamu}
\forall (x,v) \in \Lambda^-,\quad P_{\Lambda_\mu}(h) = c_\mu \sqrt{\mu(v)}\left[\int_{v_*\cdot n(x)>0} h(t,x,v_*)\sqrt{\mu(v_*)}\left(v_*\cdot n(x)\right)\:dv_*\right].
\end{equation}
We also use the shorthand notation $P_{\Lambda_\mu}^\bot = \mbox{Id}-P_{\Lambda_\mu}^\bot$.

\bigskip
For general domains $\Omega$, the Cauchy theory in $L^p_{x,v}$ ($1\leq p <+\infty$) of equations of the type
$$\partial_t f + v\cdot\nabla_x f = g$$
with boundary conditions
$$\forall (x,v) \in \Lambda^-, \quad f(t,x,v) = P(f)(t,x,v),$$
where $\func{P}{L^p_{\Lambda^+}}{L^p_{\Lambda^-}}$ is a bounded linear operator, is well-defined in $L^p_{x,v}$ when $\norm{P} <1$ \cite{BePro}. The specific case $\norm{P} = 1$ can still be dealt with (\cite{BePro} Section $4$) but even though the existence of solutions in $L^p_{x,v}$ can be proven, the uniqueness is not always given unless one can prove that the trace of $f$ belongs to $L^2_{\mbox{\scriptsize{loc}}}\pa{\R^+;L^p_{x,v}\pa{\Lambda}}$.
\par For Maxwell boundary conditions, the boundary operator $P$ is of norm exactly one and the general theory fails. The need of a trace in $L^2_{x,v}$ is essential to perform Green's identity and obtain the uniqueness of solutions. The pure Maxwellian boudary conditions with mass conservation can still be dealt with because one can show that $P_{\Lambda_\mu}^\bot(h)$ is in $L^2_{\Lambda^+}$ \cite{EGKM}. Unfortunately, in the case of specular reflections the uniqueness is not true in general due to a possible blow-up of the $L^2_{\mbox{\scriptsize{loc}}}\pa{\R^+;L^2_{x,v}\pa{\Lambda}}$ at the grazing set $\Lambda_0$ \cite{Uk3,BePro,CIP}.
\par Following ideas from \cite{Gu6}, a sole \textit{a priori} exponential decay of solutions is necessary to obtain a well-posed $L^\infty$ theory provided that we endow the space with a strong weight. This section is thus dedicated to the proof of the following theorem.

\bigskip
\begin{theorem}\label{theo:L2}
Let $\alpha >0$ and let $h_0$ be in $L^2_{x,v}$ such that $h_0$ satisfies the preservation of mass
$$\int_{\Omega\times\R^3} h_0(x,v)\sqrt{\mu(v)}\:dv=0.$$
Suppose that $h(t,x,v)$ in $L^2_{x,v}$ is a mass preserving solution to the linear perturbed Boltzmann equation $\eqref{lineqL2}$ with initial datum $h_0$ and satisfying the Maxwell boundary condition $\eqref{BCL2}$. Suppose also that $\restr{h}{\Lambda}$ belongs to $L^2_\Lambda$.
\\ Then there exist explicit $C_G$, $\lambda_G >0$, independent of $h_0$ and $h$, such that
$$\forall t \geq 0, \quad \norm{h(t)}_{L^2_{x,v}} \leq C_G e^{-\lambda_G t}\norm{h_0}_{L^2_{x,v}}.$$
\end{theorem}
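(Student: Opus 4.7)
The plan is to establish a hypocoercive estimate in the spirit of Guo and Esposito--Guo--Kim--Marra by combining the classical spectral gap of $L_\mu$ in the velocity variable with a boundary dissipation coming from the Maxwell diffusion part and a weak elliptic regularity for the fluid moments $a$, $b$, $c$. First, I would multiply the equation $\eqref{lineqL2}$ by $h$, integrate on $\Omega\times\R^3$, and apply Green's formula, which is legitimate precisely because $\restr{h}{\Lambda}\in L^2_\Lambda$ by assumption. This yields
\begin{equation*}
\frac{1}{2}\frac{d}{dt}\norm{h(t)}^2_{L^2_{x,v}} = \langle L_\mu h,h\rangle_{L^2_{x,v}} - \frac{1}{2}\int_\Lambda h^2\,(v\cdot n)\,dS(x)\,dv.
\end{equation*}
Using the classical coercivity of $L_\mu$ on $L^2_v$, the collision term is controlled by $-\lambda_L \norm{P_L^\bot h}^2_{L^2_{x,v}(\nu)}$, where $P_L$ denotes the orthogonal projection onto $\mathrm{Ker}(L_\mu)=\mathrm{Span}\br{\sqrt{\mu},v_i\sqrt{\mu},\abs{v}^2\sqrt{\mu}}$.

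Next, the boundary integral must be analyzed. Splitting $\Lambda=\Lambda^+\cup\Lambda^-$, substituting the Maxwell condition $\eqref{BCL2}$ on $\Lambda^-$, using the invariance of $\abs{v\cdot n(x)}$ under the specular reflection $\mathcal{R}_x$, and the fact that $P_{\Lambda_\mu}$ is the $L^2_v(\abs{v\cdot n})$-orthogonal projection onto $\mathrm{Span}\br{\sqrt{\mu}}$, a direct expansion should yield the identity
\begin{equation*}
-\frac{1}{2}\int_\Lambda h^2(v\cdot n)\,dS(x)\,dv = -\frac{\alpha(2-\alpha)}{2}\int_{\Lambda^+}\abs{P_{\Lambda_\mu}^\bot h}^2 (v\cdot n)\,dS(x)\,dv \leq 0,
\end{equation*}
which furnishes a strictly positive boundary dissipation as soon as $\alpha\in(0,1]$ and which will be crucial to close the loop.

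The main step, and the real difficulty, is to control the fluid part $P_L h=(a(t,x)+b(t,x)\cdot v+c(t,x)\abs{v}^2)\sqrt{\mu}$ by the microscopic part $P_L^\bot h$ together with the boundary dissipation above. Following the weak elliptic-regularity heuristic behind $\eqref{fluidmicroderivative}$, I would first derive local conservation laws for $a$, $b$, $c$ by testing $\eqref{lineqL2}$ against the collision invariants, and then construct test functions $\phi_a$, $\phi_b$, $\phi_c$ solving suitable Poisson-type problems on $\Omega$ with appropriate boundary conditions. Testing the equation against $\phi_a\sqrt{\mu}$, $(\phi_b\cdot v)\sqrt{\mu}$, and $\phi_c\abs{v}^2\sqrt{\mu}$, the transport term produces $\Delta\phi$ after integration by parts, i.e.\ the moment itself, while the collision term only sees $P_L^\bot h$. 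The bounds for $a$ and $c$ are essentially algebraic once the right test functions are chosen; the estimate for $b$ is the delicate one, because the integration by parts generates a boundary contribution that must be absorbed using both the $L^2_\Lambda$-integrability of $h$ and, crucially, the dissipation $\alpha(2-\alpha)\norm{P_{\Lambda_\mu}^\bot h}^2_{L^2_{\Lambda^+}}$ obtained above. This is exactly the place where the strict positivity $\alpha>0$ is required and where pure specular reflection would break the argument.

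Finally, I would assemble a modified energy functional
\begin{equation*}
\mathcal{E}(t) = \norm{h(t)}^2_{L^2_{x,v}} + \eps\cro{\mathcal{I}_a(t)+\mathcal{I}_b(t)+\mathcal{I}_c(t)},
\end{equation*}
where the $\mathcal{I}_{a,b,c}$ are the cross-terms coming from the test-function identities; for $\eps$ small enough $\mathcal{E}(t)$ is equivalent to $\norm{h(t)}^2_{L^2_{x,v}}$, the preservation of mass kills the zero mode of $a$ so Poincar\'e's inequality may be applied, and the three ingredients above combine into a differential inequality of the form $\frac{d}{dt}\mathcal{E}(t)\leq -C\mathcal{E}(t)$. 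A Gr\"onwall argument then delivers the claimed exponential decay with explicit constants $C_G$ and $\lambda_G$. The main obstacle is the careful bookkeeping of the boundary contributions produced when testing against $\phi_b$: every such term must either be controlled by the Maxwell dissipation or absorbed into the microscopic part, and it is the quantitative matching between these boundary terms and the $\alpha(2-\alpha)$ dissipation that determines whether the whole scheme closes.
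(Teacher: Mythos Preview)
Your proposal is correct and follows essentially the same route as the paper: the energy identity with the boundary computation $-\tfrac{1}{2}\int_\Lambda h^2(v\cdot n)=-\tfrac{1}{2}(1-(1-\alpha)^2)\norm{P_{\Lambda_\mu}^\bot h}_{L^2_{\Lambda^+}}^2$, the weak elliptic control of the fluid moments $a,b,c$ by $\pi_L^\bot h$ and the boundary dissipation, and a modified functional equivalent to $\norm{h}^2_{L^2_{x,v}}$. The paper packages the fluid--microscopic estimate as a time-integrated lemma (Lemma~\ref{lem:controlfluidmicro}) applied to $\tilde h=e^{\lambda t}h$ rather than as a differential inequality on a Lyapunov functional, but this is cosmetic.

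One caveat: the specific test functions you wrote, $\phi_a\sqrt{\mu}$, $(\phi_b\cdot v)\sqrt{\mu}$, $\phi_c\abs{v}^2\sqrt{\mu}$, will not isolate $\norm{a}_{L^2_x}^2$, $\norm{b}_{L^2_x}^2$, $\norm{c}_{L^2_x}^2$ on the left-hand side; they produce the local conservation laws instead. The paper (following \cite{EGKM}) uses test functions of the form $(\abs{v}^2-\alpha_a)\sqrt{\mu}\,v\cdot\nabla_x\phi_a$ for $a$, an intricate combination for $b$, and $(\abs{v}^2-\alpha_c)\sqrt{\mu}\,v\cdot\nabla_x\phi_c$ for $c$, with carefully tuned constants $\alpha_a,\alpha_c$ and mixed Neumann/Dirichlet conditions on the auxiliary Poisson problems, precisely so that the cross-contaminations among $a,b,c$ cancel and the boundary terms reduce to $\norm{P_{\Lambda_\mu}^\bot h}_{L^2_{\Lambda^+}}$. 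Also note that all three estimates, not only the one for $b$, produce boundary contributions that must be absorbed by the diffusive dissipation.
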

\bigskip

In order to prove Theorem \ref{theo:L2} we first gather in Subsection \ref{subsec:LL2} some well-known properties about the linear operator $L$. Subsection \ref{subsec:controlfluidmicro} proves a very important lemma which allows to use the hypocoercivity of $L-v\cdot\nabla_x$ in the case of Maxwell boundary conditions. Finally, the exponential decay is proved in Subsection \ref{subsec:expodecayL2}.
\bigskip


\subsection{Preliminary properties of $L_\mu$ in $L^2_v$}\label{subsec:LL2}

\bigskip
\textbf{The linear Boltzmann operator.} We gather some well-known properties of the linear Boltzmann operator $L_\mu$ (see \cite{Ce}\cite{CIP}\cite{Vi2}\cite{GMM} for instance). 
\par $L_\mu$ is a closed self-adjoint operator in $L^{2}_v$ with kernel
$$\mbox{Ker}\left(L_\mu\right) = \mbox{Span}\left\{\phi_0(v),\dots,\phi_{4}(v)\right\}\sqrt{\mu} ,$$
where $\pa{\phi_i}_{0\leq i\leq 4}$ is an orthonormal basis of $\mbox{Ker}\left(L_\mu\right)$ in $L^2_v$. More precisely, if we denote $\pi_L$ to be the orthogonal projection onto $\mbox{Ker}\left(L_\mu\right)$ in $L^2_v)$:
\begin{equation}\label{piL}
\left\{\begin{array}{l} \disp{\pi_L(h) = \sum\limits_{i=0}^{4} \pa{\int_{\R^3} h(v_*)\phi_i(v_*)\sqrt{\mu(v_*)}\:dv_*} \phi_i(v)\sqrt{\mu(v)}} \vspace{2mm}\\\vspace{2mm} \disp{\phi_0(v)=1,\quad \phi_i(v) = v_i,\:1\leq i \leq 3,\quad \phi_{4}(v)=\frac{\abs{v}^2-3}{\sqrt{6}},}\end{array}\right.
\end{equation}
and we define $\pi_L^\bot = \mbox{Id} - \pi_L$. The projection $\pi_L(h(x,\cdot))(v)$ of $h(x,v)$ onto the kernel of $L_\mu$ is called its fluid part whereas $\pi_L^\bot(h)$ is its microscopic part.
\par Also, $L_\mu$ can be written under the following form
\begin{equation}\label{LnuK}
L_\mu = -\nu(v) +K,
\end{equation}
where $\nu(v)$ is the collision frequency
$$\nu(v) = \int_{\R^3\times\mathbb{S}^{2}} b\left(\mbox{cos}\:\theta\right)\abs{v-v_*}^\gamma \mu_*\:d\sigma dv_*$$
and $K$ is a bounded and compact operator in $L^2_v$.
\par Finally we remind that there exists $\nu_0,\:\nu_1 >0$ such that
\begin{equation}\label{nu0nu1}
\forall v \in \R^3,\quad \nu_0(1+\abs{v}^\gamma)\leq \nu(v)\leq \nu_1(1+\abs{v}^\gamma),
\end{equation}
and that $L_\mu$ has a spectral gap $\lambda_L >0$ in $L^2_{x,v}$ (see \cite{BM}\cite{Mo1} for explicit proofs)
\begin{equation}\label{spectralgapL}
\forall g \in L^2_v, \quad \langle L_\mu(g),g\rangle_{L^2_v} \leq -\lambda_L \norm{\pi_L^\bot(g)}_{L^2_v}^2.
\end{equation}

\bigskip
\textbf{The linear perturbed Boltzmann operator.}
The linear perturbed Boltzmann operator is the full linear part of the perturbed Boltzmann equation $\eqref{perturbedBE}$:
$$G = L - v\cdot \nabla_x$$
or, in our $L^2$ setting,
$$G_\mu = L_\mu - v\cdot\nabla_x.$$
An important point is that the same computations as to show the \textit{a priori} conservation of mass implies that in $L^2_{x,v}$ the space $\pa{\mbox{Span}\br{\sqrt{\mu}}}^\bot$ is stable under the flow
$$\partial_t h = G_\mu(h)$$
with Maxwell boundary conditions $\eqref{BCL2}$. Coming back to our general setting $f=h\sqrt{\mu}$ we thus define the $L^2_{x,v}\pa{\mu^{-1/2}}$ projection onto that space
\begin{equation}\label{PiG}
\Pi_G(f) = \left(\int_{\Omega\times\R^3} f(x,v_*)\:dxdv_*\right)\mu(v),
\end{equation}
and its orthogonal projection $\Pi_G^\bot = \mbox{Id} - \Pi_G$.  Note that $\Pi_G^\bot(f)=0$ amounts to saying that $f$ satisfies the preservation of mass.
\bigskip


\subsection{A priori control of the fluid part by the microscopic part}\label{subsec:controlfluidmicro}

As seen in the previous section, the operator $L_\mu$ is only coercive on its orthogonal part. The key argument is to show that we recover the full coercivity on the set of solutions to the differential equation. Namely, that for these specific functions, the microscopic part controls the fluid part. This is the purpose of the next lemma.

\bigskip
\begin{lemma}\label{lem:controlfluidmicro}
Let $h_0(x,v)$ and $g(t,x,v)$ be in $L^2_{x,v}$ such that $\Pi_G(h_0)= \Pi_G(g) =0$ and let $h(t,x,v)$ in $L^2_{x,v}$ be a mass preserving solution to
\begin{equation}\label{lineqL2lem}
\partial_t h + v\cdot\nabla_x h = L_\mu(h) + g
\end{equation}
with initial datum $h_0$ and satisfying the boundary condition $\eqref{BCL2}$. Suppose that $\restr{h}{\Lambda}$ belongs to $L^2_\Lambda$. Then there exists an explicit $C_\bot >0$ and a function $N_h(t)$ such that for all $t\geq 0$
\begin{itemize}
\item[(i)] $\abs{N_h(t)}\leq C_\bot \norm{h(t)}^2_{L^2_{x,v}}$;
\item[(ii)]  
\begin{equation*}
\begin{split}
\int_0^t\norm{\pi_L(h)}^2_{L^2_{x,v}}\:ds \leq & N_h(t)-N_h(0) + C_\bot \int_0^t \cro{\norm{\pi^\bot_L(h)}^2_{L^2_{x,v}} +\norm{P_{\Lambda_\mu}^\bot(h)}^2_{L^2_{\Lambda^+}}}\:ds
\\ &+ C_\bot \int_0^t\norm{g}^2_{L^2_{\Lambda^+}}\:ds.
\end{split}
\end{equation*}
\end{itemize}
The constant $C_\bot$ is independent of $h$.
\end{lemma}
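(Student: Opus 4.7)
The plan is to adapt the weak hypocoercivity approach of \cite{Gu3,Gu4,EGKM}, built on elliptic test functions in phase space, to the mixed Maxwell boundary condition. Writing the fluid part as
\[
\pi_L(h)(t,x,v)=\pa{a(t,x)+b(t,x)\cdot v+c(t,x)\frac{|v|^2-3}{\sqrt{6}}}\sqrt{\mu(v)},
\]
we have $\norm{\pi_L(h)}_{L^2_{x,v}}^2\asymp\norm{a}_{L^2_x}^2+\norm{b}_{L^2_x}^2+\norm{c}_{L^2_x}^2$, so it is enough to control each of $a,b,c$ in $L^2_{t,x}$. Taking moments of $\eqref{lineqL2lem}$ against $\sqrt{\mu}$, $v_j\sqrt{\mu}$ and $(|v|^2-3)\sqrt{\mu}/\sqrt{6}$, and using that these functions lie in $\mathrm{Ker}(L_\mu)$, produces a local conservation-type system for $(a,b,c)$ whose sources are $\nabla_x$ of non-hydrodynamic moments of $\pi_L^\bot(h)$ plus moments of $g$. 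This system is the engine of the estimate.

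First I would construct, at each time $t$, three elliptic test functions
\[
\psi_\alpha(t,x,v)=P_\alpha(v)\sqrt{\mu(v)}\,\Phi_\alpha(t,x),\qquad \alpha\in\br{a,b,c},
\]
where $P_\alpha$ is a Hermite-type polynomial in $v$ orthogonal to $\mathrm{Ker}(L_\mu)$, and $\Phi_\alpha$ solves a static elliptic problem on $\Omega$ with right-hand side $a$, $b$ and $c$ respectively. For $a$ and $c$ I would choose $-\Delta\Phi_a=a$ and $-\Delta\Phi_c=c$ with homogeneous Neumann data, and for the vector $b$ a Stokes-type problem with boundary conditions compatible with the Maxwell decomposition (as in \cite{EGKM}). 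Standard elliptic regularity gives $\norm{\Phi_\alpha(t)}_{H^2_x}\lesssim\norm{\alpha(t)}_{L^2_x}\lesssim\norm{h(t)}_{L^2_{x,v}}$, hence $\psi_\alpha\in L^2_{x,v}$ with the same bound. The quantity
\[
N_h(t)=\sum_{\alpha\in\br{a,b,c}}\langle h(t),\psi_\alpha(t)\rangle_{L^2_{x,v}}
\]
then satisfies (i) by Cauchy--Schwarz.

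Next I would multiply $\eqref{lineqL2lem}$ by $\psi_\alpha$ and integrate on $[0,t]\times\Omega\times\R^3$. The $\partial_t$ term gives $N_h(t)-N_h(0)$ modulo a contribution $\int_0^t\langle h,\partial_t\psi_\alpha\rangle\,ds$, which is converted back into controlled quantities via the macroscopic conservation equations satisfied by $(a,b,c)$. Integrating $v\cdot\nabla_x h$ by parts in $x$ brings out, through the Gaussian moment identities and the choice of $P_\alpha$, a coercive term $c_0\int_0^t\pa{\norm{a}^2+\norm{b}^2+\norm{c}^2}\,ds$ plus cross terms bounded by $\norm{\pi_L^\bot(h)}_{L^2_{x,v}}^2$. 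The $L_\mu(h)$ contribution reduces to a pairing against $\pi_L^\bot(h)$ by construction, and the $g$ term is handled by Cauchy--Schwarz.

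The main obstacle is the boundary integral $\int_0^t\int_\Lambda h\,\psi_\alpha\,(v\cdot n)\,dS\,dv\,ds$ arising from the spatial integration by parts. Splitting the trace on $\Lambda^-$ via $\eqref{BCL2}$ into the specular part $(1-\alpha)h(t,x,\mathcal{R}_x v)$ and the diffusive part $\alpha P_{\Lambda_\mu}(h)$, the Neumann choice $\partial_n\Phi_a=\partial_n\Phi_c=0$ combined with the odd parity of $P_a,P_c$ in $v\cdot n(x)$ annihilates the specular contribution for $\alpha=a,c$; the diffusive contribution differs from the outgoing trace only by $P_{\Lambda_\mu}^\bot(h)$, producing the $\norm{P_{\Lambda_\mu}^\bot(h)}_{L^2_{\Lambda^+}}^2$ piece. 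The delicate case is $\psi_b$, since the symmetric off-diagonal moments $v_iv_j-\delta_{ij}$ have even parity in $v\cdot n$ and the specular piece survives; this is exactly where the accommodation $\alpha>0$ enters, because only the fraction $\alpha$ of the incoming trace which comes from diffusive averaging has vanishing tangential flux, so that after a boundary-trace argument using $\restr{h}{\Lambda}\in L^2_\Lambda$ the tangential $b$-boundary term is absorbed into $\norm{P_{\Lambda_\mu}^\bot(h)}_{L^2_{\Lambda^+}}^2$ with a constant that deteriorates as $\alpha\to 0$. Collecting the three estimates and summing yields (ii).
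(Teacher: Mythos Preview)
Your overall architecture (elliptic test functions in the spirit of \cite{EGKM}, pairing against the equation, extracting coercivity from the transport term and absorbing everything else) matches the paper. But two points in your plan would not go through as written.

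First, the Neumann choice for $\Phi_c$ is not available: with Maxwell boundary conditions there is no energy conservation, so $\int_\Omega c(t,x)\,dx$ has no reason to vanish and the Neumann problem $-\Delta\Phi_c=c$, $\partial_n\Phi_c=0$ is in general not solvable. The paper takes Dirichlet data $\phi_c|_{\partial\Omega}=0$ for $c$ (and also for each $b_J$, via a scalar Poisson problem rather than Stokes), reserving Neumann only for $a$ where mass conservation guarantees $\int_\Omega a=0$. The boundary term for $c$ is then closed not by killing $\partial_n\phi_c$ but by choosing the velocity weight so that the surviving $P_{\Lambda_\mu}(h)$ contribution vanishes by a moment cancellation (the constant $\alpha_c$ in $(|v|^2-\alpha_c)$).

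Second, your diagnosis of the $b$ boundary term is off. In the paper the $\Lambda$-integral for $b$ is handled by writing $h=P_{\Lambda_\mu}(h)+P_{\Lambda_\mu}^\bot(h)$ on $\Lambda^+$; the $P_{\Lambda_\mu}(h)=z(s,x)\sqrt{\mu}$ piece, combined with the even-in-$v$ structure of the test functions $\varphi_i^{(J)}$, produces an integrand odd in $v$ over all of $\R^3$ after undoing the specular change of variables, hence vanishes. What remains is controlled by $\norm{P_{\Lambda_\mu}^\bot(h)}_{L^2_{\Lambda^+}}$ with a constant independent of $\alpha$. The accommodation $\alpha>0$ is \emph{not} used inside this lemma; it enters only later, in the proof of Theorem \ref{theo:L2}, where the energy identity produces $-(1-(1-\alpha)^2)\norm{P_{\Lambda_\mu}^\bot(h)}_{L^2_{\Lambda^+}}^2$ and one needs this coefficient to be strictly negative to feed the lemma. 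So your sketch places the obstruction in the wrong spot and would lead you to look for a mechanism that is not actually there.
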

\bigskip

The methods of the proof are a technical adaptation of the methods proposed in \cite{EGKM} in the case of purely diffusive boundary condition.

\bigskip
\begin{proof}[Proof of Lemma \ref{lem:controlfluidmicro}]

We recall the definition of $\pi_L$ $\eqref{piL}$ and we define the function $a(t,x)$, $b(t,x)$ and $c(t,x)$ by
\begin{equation}\label{piLL2}
\pi_L(h)(t,x,v) = \cro{a(t,x) + b(t,x)\cdot v + c(t,x)\frac{\abs{v^2}-3}{2}}\sqrt{\mu(v)}.
\end{equation}
The key idea of the proof is to choose suitable test function $\psi$ in $H^1_{x,v}$ that will catch the elliptic regularity of $a$, $b$ and $c$ and estimate them. Note that for $a$ we strongly use the fact that $h$ preserves the mass.
\par For a test function $\psi=\psi(t,x,v)$ integrated against the differential equation $\eqref{lineqL2}$ we have by Green's formula
\begin{eqnarray*}
&&\int_0^t\frac{d}{dt}\int_{\Omega\times\R^3}\psi h\:dxdvds = \int_{\Omega\times\R^3} \psi(t) h(t)\:dxdv -\int_{\Omega\times\R^3} \psi_0 h_0\:dxdv   
\\&&\quad\quad= \int_0^t \int_{\Omega\times\R^3} h \partial_t\psi\:dxdvds + \int_0^t \int_{\Omega\times\R^3} L_\mu[h]\psi\:dxdvds
\\&&\quad\quad\quad+\int_0^t \int_{\Omega\times\R^3} h v\cdot\nabla_x\psi \:dxdvds-\int_0^t\int_\Lambda \psi h v\cdot n(x)\:dS(x)dvds
\\&&\quad\quad\quad + \int_0^t \int_{\Omega\times\R^3}\psi g \:dxdvds. 
\end{eqnarray*}
We decompose $h = \pi_L(h) + \pi_L^\bot(h)$ in the term involving $v\cdot\nabla_x$ and use the fact that $L_\mu[h] = L_\mu[\pi_L^\bot(h)]$ to obtain the weak formulation
\begin{equation}\label{eqpsi}
-\int_0^t \int_{\Omega\times\R^3} \pi_L(h) v\cdot\nabla_x\psi \:dxdvds = \Psi_1(t) + \Psi_2(t)+\Psi_3(t) + \Psi_4(t) +\Psi_5(t)+\Psi_6(t)
\end{equation}
with the following definitions
\begin{eqnarray}
\Psi_1(t) &=& \int_{\Omega\times\R^3} \psi_0 h_0\:dxdv -\int_{\Omega\times\R^3} \psi(t) h(t)\:dxdv, \label{Psi1}
\\\Psi_2(t) &=& \int_0^t \int_{\Omega\times\R^3} \pi^\bot_L(h) v\cdot\nabla_x\psi \:dxdvds, \label{Psi2}
\\\Psi_3(t) &=& \int_0^t \int_{\Omega\times\R^3} L_\mu\cro{\pi_L^\bot(h)}\psi\:dxdvds, \label{Psi3}
\\\Psi_4(t) &=& -\int_0^t\int_\Lambda \psi h v\cdot n(x)\:dS(x)dvds,\label{Psi4}
\\\Psi_5(t) &=& \int_0^t \int_{\Omega\times\R^3} h \partial_t\psi\:dxdvds, \label{Psi5}
\\\Psi_6(t) &=& \int_0^t \int_{\Omega\times\R^3}\psi g \:dxdvds.\label{Psi6}
\end{eqnarray}

\par For each of the functions $a$, $b$ and $c$, we shall construct a $\psi$ such that the left-hand side of $\eqref{eqpsi}$ is exactly the $L^2_x$-norm of the function and the rest of the proof is estimating the six different terms $\Psi_i(t)$. Note that $\Psi_1(t)$ is already under the desired form 
\begin{equation}\label{Psi1all}
\Psi_1(t) = N_h(t)-N_h(0)
\end{equation}
with $\abs{N_h(s)} \leq C\norm{h}^2_{L^2_{x,v}}$ if $\psi(x,v)$ is in $L^2_{x,v}$ and its norm is controlled by the one of $h$ (which will be the case for our choices).

\begin{remark}
The linear perturbed equation $\eqref{lineqL2lem}$, the Maxwell boundary condition $\eqref{BCL2}$, and the conservation of mass are invariant under standard time mollification. We therefore consider for simplicity in the rest of the proof that all functions are smooth in the variable $t$. Exactly the same estimates can be derived for more general functions: study time mollified equation and then take the limit in the smoothing parameter. 
\end{remark}
For clarity, every positive constant not depending on $h$ will be denoted by $C_i$.


\bigskip
\textbf{Estimate for $a$.} By assumption $h$ is mass-preserving which is equivalent to
$$0 = \int_{\Omega\times\R^3}h(t,x,v)\sqrt{\mu(v)}\:dxdv = \int_\Omega a(t,x)\:dx.$$
We can thus choose the following test function
$$\psi_a(t,x,v) = \pa{\abs{v}^2-\alpha_a}\sqrt{\mu}v\cdot\nabla_x\phi_a(t,x)$$
where
$$-\Delta_x\phi_a(t,x) = a(t,x) \quad\mbox{and}\quad \restr{\partial_n\phi_a}{\partial\Omega}=0,$$
and $\alpha_a>0$ is chosen such that for all $1\leq i \leq 3$
$$\int_{\R^3} \pa{\abs{v}^2-\alpha_a}\frac{\abs{v}^2-3}{2}v_i^2\mu(v)\:dv = 0.$$
The differential operator $\partial_n$ denotes the tangential derivative at the boundary. The fact that the integral over $\Omega$ of $a(t,\cdot)$ is null allows us to use standard elliptic estimate \cite{Eva}:
\begin{equation}\label{phiaH2}
\forall t \geq 0,\quad\norm{\phi_a(t)}_{H^2_x}\leq C_0\norm{a(t)}_{L^2_x}.
\end{equation}
The latter estimate provides the control of $\Psi_1 = N^{(a)}_h(t) - N^{(a)}_h(0)$, as discussed before, and the control of $\eqref{Psi6}$, using Cauchy-Schwarz and Young's inequalities,
\begin{eqnarray}
 \abs{\Psi_6(t)} &\leq& C\int_0^t \norm{\phi_a}^2_{L^2_x}\norm{g}_{L^2_{x,v}}\:ds\nonumber
 \\&\leq& \frac{C_1}{4} \int_0^t\norm{a}_{L^2_x}\:ds + C_6\int_0^t \norm{g}^2_{L^2_{x,v}}\:ds,\label{Psi6a}
\end{eqnarray}
where $C_1>0$ is given by $\eqref{RHSa}$ below.

\bigskip
Firstly we compute the term on the right-hand side of $\eqref{eqpsi}$.
\begin{eqnarray*}
&&-\int_0^t \int_{\Omega\times\R^3} \pi_L(h) v\cdot\nabla_x\psi_a \:dxdvds 
\\&&\quad = -\sum\limits_{1\leq i,j \leq 3}\int_0^t \int_{\Omega} a(s,x)\pa{\int_{\R^3}\pa{\abs{v}^2-\alpha_a}v_iv_j\mu(v)\:dv}\partial_{x_i}\partial_{x_j}\phi_a(s,x)\:dxds
\\&&\quad\quad -\sum\limits_{1\leq i,j \leq 3}\int_0^t \int_{\Omega} b(s,x)\cdot \pa{\int_{\R^3}v\pa{\abs{v}^2-\alpha_a}v_iv_j\mu(v)\:dv}\partial_{x_i}\partial_{x_j}\phi_a(s,x)\:dxds
\\&&\quad\quad -\sum\limits_{1\leq i,j \leq 3}\int_0^t \int_{\Omega} c(s,x)\pa{\int_{\R^3}\pa{\abs{v}^2-\alpha_a}\frac{\abs{v}^2-3}{2}v_iv_j\mu(v)\:dv}\partial_{x_i}\partial_{x_j}\phi_a(s,x).
\end{eqnarray*}
By oddity the second term is null, as well as the first and last ones are when $i\neq j$. When $i=j$ in the last term we recover exactly our choice of $\alpha_a$ which makes the last term being null too. It only remains the first term when $i=j$
\begin{eqnarray}
-\int_0^t \int_{\Omega\times\R^3} \pi_L(h) v\cdot\nabla_x\psi_a \:dxdvds &=& -C_1 \int_0^t\int_{\Omega}a(s,x)\Delta_x \phi_a(s,x)\:dxds\nonumber
\\&=& C_1 \int_0^t\norm{a}^2_{L^2_x}\:ds. \label{RHSa}
\end{eqnarray}
Direct computations show $\alpha_a = 10$ and $C_1>0$.

\bigskip
We recall $L_\nu = -\nu(v)+K$ where $K$ is a bounded operator and that the $H^2_x$-norm of $\phi_a(t,x)$ is bounded by the $L^2_x$-norm of $a(t,x)$. For the terms $\Psi_2$ $\eqref{Psi2}$ and $\Psi_3$ $\eqref{Psi3}$ a mere Cauchy-Schwarz inequality yields
\begin{equation}\label{Psi23a}
\begin{split}
\forall i\in\br{2,3}, \quad \abs{\Psi_i(t)} &\leq C \int_0^t\norm{a}_{L^2_x}\norm{\pi_L^\bot(h)}_{L^2_{x,v}}\:ds
\\&\leq  \frac{C_1}{4}\int_0^t\norm{a}^2_{L^2_x}\:ds + C_2\int_0^t\norm{\pi_L^\bot(h)}^2_{L^2_{x,v}}\:ds.
\end{split}
\end{equation}
We used Young's inequality for the last inequality, with $C_1$ defined in $\eqref{RHSa}$.

\bigskip
The term $\Psi_4$ $\eqref{Psi4}$ deals with boundary so we decompose it into $\Lambda^+$ and $\Lambda^-$. In the $\Lambda^-$ integral we apply the Maxwell boundary condition satisfied by $h$ and use the change of variable $v \mapsto \mathcal{R}_x(v)$. Since $\abs{v}^2$, $\mu(v)$, $\phi_a(s,x)$, the specular part and $P_{\Lambda_\mu}$ $\eqref{PLambdamu}$ are invariant by this isometric change of variable we get
\begin{equation*}
\begin{split}
\Psi_4(t) =& -\int_0^t \int_{\Lambda^+} h \pa{\abs{v}^2-\alpha_a}\abs{v\cdot n(x)}\nabla_x\phi_a(s,x)\cdot v \sqrt{\mu}\:dS(x)dvds 
\\& + (1-\alpha) \int_0^t\int_{\Lambda^+} h\pa{\abs{v}^2-\alpha_a}\abs{v\cdot n(x)}\nabla_x\phi_a\cdot \mathcal{R}_x(v)\sqrt{\mu}\:dS(x)dvds 
\\& + \alpha\int_0^t\int_{\Lambda^+} P_{\Lambda_\mu}(h)\pa{\abs{v}^2-\alpha_a}\abs{v\cdot n(x)}\nabla_x\phi_a\cdot \mathcal{R}_x(v) \sqrt{\mu}\:dS(x)dvds.
\end{split}
\end{equation*}
so
\begin{equation}\label{startPsi4a}
\begin{split}
\Psi_4(t)=& -(1-\alpha) \int_0^t \int_{\Lambda^+} h \pa{\abs{v}^2-\alpha_a}\abs{v\cdot n(x)}\nabla_x\phi_a\cdot \cro{v-\mathcal{R}_x(v)} \sqrt{\mu}\:dS(x)dvds 
\\& - \alpha\int_0^t\int_{\Lambda^+}\pa{\abs{v}^2-\alpha_a}\abs{v\cdot n(x)}\nabla_x\phi_a\cdot \cro{vh - \mathcal{R}_x(v)P_{\Lambda_\mu}(h)}\sqrt{\mu} 
\end{split}
\end{equation}
By definition of the specular reflection and the tangential derivative
\begin{eqnarray*}
\abs{v\cdot n(x)}\nabla_x\phi_a(s,x)\cdot\pa{v-\mathcal{R}_x(v)} &=& 2\pa{v\cdot n(x)}n\cdot\nabla_x\phi_a(s,x)
\\&=& 2\pa{v\cdot n(x)}\partial_n\phi_a(s,x).
\end{eqnarray*}
The contribution of the specular reflection part is therefore null since $\phi_a$ was chosen such that $\restr{\partial_n\phi_a}{\partial\Omega}=0$. For the diffusive part we compute
$$vh - \mathcal{R}_x(v)P_{\Lambda_\mu}(h) = vP_{\Lambda_\mu}^\bot(h) + 2P_{\Lambda_\mu}(h)\pa{v\cdot n(x)}n(x)$$
and again the term in the direction of $n(x)$ gives a zero contribution since $\restr{\partial_n\phi_a}{\partial\Omega}=0$. It only remains
$$\Psi_4(t)= - \alpha\int_0^t\int_{\Lambda^+}\cro{\pa{\abs{v}^2-\alpha_a}\abs{v\cdot n(x)}\sqrt{\mu}\:v\cdot\nabla_x\phi_a} P_{\Lambda_\mu}^\bot(h)\:dS(x)dvds.$$
We apply Cauchy-Schwarz inequality and the control on the $H^2$ norm of $\phi_a$ to finally obtain the following estimate
\begin{equation}\label{Psi4a}
\begin{split}
\abs{\Psi_4(t)} &\leq C \int_0^t\norm{a}_{L^2_x}\norm{P_{\Lambda_\mu}^\bot(h)}_{L^2_{\Lambda^+}}\:ds
\\&\leq  \frac{C_1}{4}\int_0^t\norm{a}^2_{L^2_x}\:ds + C_4\int_0^t\norm{P_{\Lambda_\mu}^\bot(h)}^2_{L^2_{\Lambda^+}}\:ds,
\end{split}
\end{equation}
where we used Young's inequality with $C_1$ defined in $\eqref{RHSa}$.

\bigskip
It remains to estimate the term with time derivatives $\eqref{Psi5}$. It reads
\begin{eqnarray*}
\Psi_5(t) &=&\int_0^t \int_{\Omega\times\R^3}h\pa{\abs{v}^2-\alpha_a} v\cdot\cro{\partial_t\nabla_x\phi_a}\sqrt{\mu}\:dxdvds
\\&=& \sum\limits_{i=1}^3\int_0^t \int_{\Omega\times\R^3}\pi_L(h)\pa{\abs{v}^2-\alpha_a} v_i\sqrt{\mu}\:\partial_t\partial_{x_i}\phi_a\:dxdvds 
\\ &\:&+ \int_0^t \int_{\Omega\times\R^3}\pi_L^\bot(h)\pa{\abs{v}^2-\alpha_a} \sqrt{\mu}\:v\cdot\cro{\partial_t\nabla_x\phi_a}\:dxdvds
\end{eqnarray*}
Using oddity properties for the first integral on the right-hand side and then Cauchy-Schwarz and the following bound
$$\int_{\R^3}\pa{\abs{v}^2-\alpha_a}^2 \abs{v}^2 \mu(v)\:dv <+\infty$$
we get
\begin{equation}\label{Psi5astart}
\abs{\Psi_5(t)}\leq C\int_0^t\cro{\norm{b}_{L^2_x}+\norm{\pi_L^\bot(h)}_{L^2_{x,v}}}\norm{\partial_t\nabla_x\phi_a}_{L^2_x}\:ds.
\end{equation}

\par The estimation on $\norm{\partial_t\nabla_x\phi_a}_{L^2_x}$ will come from elliptic estimates in negative Sobolev spaces. We use the decomposition of the weak formulation $\eqref{eqpsi}$ between $t$ and $t+\eps$ (instead of between $0$ and $t$) with $\psi(t,x,v) = \phi(x)\sqrt{\mu} \in H^1_x$ with the integral of $\phi$ on $\Omega$ being zero. $\psi(x)\mu(v)$ and $v\psi(x)\mu(v)$ are in $\mbox{Ker}(L_\mu)$ and therefore are orthogonal to $\pi_L(h)$ and $L_\mu[h]$. Moreover, $\psi$ does not depend on time. Hence,
$$\Psi_2(t)=\Psi_3(t)=\Psi_5(t)=0.$$
At last, with the same computations as before the boundary term is
$$\Psi_4 = -\alpha\int_0^t\int_{\partial\Omega}\phi(x)\int_{v\cdot n(x)>0}\pa{\mbox{Id} - P_{\Lambda_\mu}}(h)\abs{v\cdot n(x)}\sqrt{\mu}\:dS(x)dvds = 0.$$
The weak formulation associated to $\phi(x)\sqrt{\mu}$ is therefore
\begin{equation*}
\begin{split}
&\int_{\Omega\times\R^3} \phi(x) h(t+\eps)\sqrt{\mu}\:dxdv -\int_{\Omega\times\R^3} \phi(x) h(t)\sqrt{\mu}\:dxdv 
\\&\quad\quad\quad=  \int_{t}^{t+\eps} \int_{\Omega\times\R^3}\cro{\pi_L(h)v\cdot\nabla_x\phi(x)+ g\phi(x)}\sqrt{\mu}\:dxdvds,
\end{split}
\end{equation*}
which is equal to
\begin{equation*}
\begin{split}
&\int_{\Omega} \cro{a(t+\eps)-a(t)}\phi(x)\:dx 
\\&\quad\quad\quad= C\int_t^{t+\eps}\cro{\int_{\Omega} b(s,x)\cdot \nabla_x\phi(x)\:dxds+ \int_{\Omega\times\R^3}g\phi\sqrt{\mu}\:dxdv}ds.
\end{split}
\end{equation*}
Dividing by $\eps$ and taking the limit as $\eps$ goes to $0$ yields the following estimates, thanks to a Cauchy-Schwarz inequality,
$$\int_{\Omega}\partial_ta(t,x)\phi(x)\:dx \leq C \cro{\norm{b(t)}_{L^2_x}\norm{\nabla_x\phi}_{L^2_x} + \norm{g}_{L^2_{x,v}}\norm{\phi}_{L^2_x}}.$$
Since $\phi$ has a null integral on $\Omega$ we can apply Poincar\'e inequality.
$$\int_{\Omega}\partial_ta(t,x)\phi(x)\:dx \leq C \cro{\norm{b(t)}_{L^2_x} + \norm{g}_{L^2_{x,v}}}\norm{\nabla_x\phi}_{L^2_x}.$$
The latter inequality is true for all $\phi$ in $\mathcal{H}^1_x$ the set of functions in $H^1_x$ with a null integral. Therefore, for all $t\geq 0$
\begin{equation}\label{H1*a}
\norm{\partial_ta(t,x)}_{\pa{\mathcal{H}^1_x}^*} \leq C \cro{\norm{b(t)}_{L^2_x} + \norm{g}_{L^2_{x,v}}}
\end{equation}
where $\pa{\mathcal{H}^1_x}^*$ is the dual of $\mathcal{H}^1_x$.
\par We fix $t$ and thanks to the conservation of mass we have that the integral of $\partial_t a$ is null on $\Omega$. We can construct $\phi(t,x)$ such that
$$-\Delta_x\phi(t,x) = \partial_ta(t,x) \quad\mbox{and}\quad \restr{\partial_n\phi}{\partial\Omega}=0.$$

and by standard elliptic estimates \cite{Eva} and $\eqref{H1*a}$:
$$\norm{\phi}_{\mathcal{H}^1_x} \leq \norm{\partial_t a}_{\pa{\mathcal{H}^1_x}^*}\leq C \cro{\norm{b(t)}_{L^2_x} + \norm{g}_{L^2_{x,v}}}.$$

\par Combining this estimate with 
$$\norm{\partial_t\nabla_x\phi_a}_{L^2_x} = \norm{\nabla_x\Delta^{-1}\partial_t a}_{L^2_x} \leq  \norm{\Delta^{-1}\partial_t a}_{H^1_x} = \norm{\phi}_{\mathcal{H}^1_x}$$
we can further control $\Psi_5$ in $\eqref{Psi5astart}$
\begin{equation}\label{Psi5a}
\abs{\Psi_5(t)} \leq C_5\int_0^t\cro{\norm{b}^2_{L^2_x} + \norm{\pi_L^\bot(h)}^2_{L^2_{x,v}}+\norm{g}^2_{L^2_x}}\:ds.
\end{equation}

\bigskip
We now gather $\eqref{RHSa}$, $\eqref{Psi1all}$, $\eqref{Psi23a}$, $\eqref{Psi4a}$, $\eqref{Psi5a}$ and $\eqref{Psi6a}$ into $\eqref{eqpsi}$
\begin{equation}\label{afinal}
\begin{split}
\int_0^t \norm{a}^2_{L^2_x}\:ds \leq& N^{(a)}_h(t)-N^{(a)}_h(0) +C_{a,b}\int_0^t \norm{b}^2_{L^2_x}\:ds
\\&+ C_a\int_0^t \cro{\norm{P_{\Lambda_\mu}^\bot(h)}^2_{L^2_{\Lambda^+}}+\norm{\pi_L^\bot(h)}^2_{L^2_{x,v}} + \norm{g}^2_{L^2_{x,v}}}\:ds.
\end{split}
\end{equation}


\bigskip
\textbf{Estimate for $b$.} The choice of function to integrate against to deal with the $b$ term is more involved.
\par We emphasize that $b(t,x)$ is a vector $\pa{b_1(t,x),b_2(t,x),b_3(t,x)}$. Fix $J$ in $\br{1,2,3}$ and define
$$\psi_J(t,x,v) = \sum\limits_{i=1}^3\varphi^{(J)}_i(t,x,v),$$
with
$$\varphi^{(J)}_i(t,x,v) = \left\{\begin{array}{l} \disp{\abs{v}^2v_iv_J\sqrt{\mu}\partial_{x_i}\phi_J(t,x) - \frac{7}{2}\pa{v_i^2-1}\sqrt{\mu}\partial_{x_J}\phi_J(t,x), \quad\mbox{if}\:\:i\neq J} \vspace{2mm}\\\vspace{2mm} \disp{\frac{7}{2}\pa{v_J^2-1}\sqrt{\mu}\partial_{x_J}\phi_J(t,x), \quad\mbox{if}\:\: i=J.} \end{array}\right.$$
where
$$-\Delta_x\phi_J(t,x) = b_J(t,x) \quad\mbox{and}\quad \restr{\phi_J}{\partial\Omega}=0.$$
Since it will be important we emphasize here that for all $i \neq k$
\begin{equation}\label{contribution0}
\int_{\R^3}\pa{v_i^2-1}\mu(v)\:dv =0 \quad\mbox{and}\quad \int_{\R^3}\pa{v_i^2-1}v_k^2\mu(v)\:dv =0.
\end{equation}
The vanishing of $\phi_J$ at the boundary implies, by standard elliptic estimate \cite{Eva},
\begin{equation}\label{phibH2}
\forall t \geq 0,\quad\norm{\phi_J(t)}_{H^2_x}\leq C_0\norm{b_J(t)}_{L^2_x}.
\end{equation}
Again, this estimate provides the control of $\Psi_1 = N^{(J)}_h(t) - N^{(J)}_h(0)$ and of $\Psi_6(t)$ as in $\eqref{Psi6a}$:
\begin{equation}\label{Psi6b}
 \abs{\Psi_6(t)} \leq \frac{7}{4} \int_0^t\norm{b_J}^2_{L^2_x}\:ds + C_6\int_0^t \norm{g}^2_{L^2_{x,v}}\:ds.
\end{equation}

\bigskip
We start by the right-hand side of $\eqref{eqpsi}$. By oddity, there is neither contribution from $a(s,x)$ nor from $c(s,x)$. Hence,
\begin{eqnarray*}
&&-\int_0^t \int_{\Omega\times\R^3} \pi_L(h) v\cdot\nabla_x\psi_J \:dxdvds 
\\&& = -\sum\limits_{1\leq j,k\leq 3}\sum\limits_{\overset{i=1}{i\neq J}}^3\int_0^t \int_{\Omega} b_k(s,x)\pa{\int_{\R^3}\abs{v^2}v_kv_iv_jv_J\mu(v)\:dv}\partial_{x_j}\partial_{x_i}\phi_J(s,x)\:dxds
\\&&\quad + \frac{7}{2}\sum\limits_{1\leq j,k\leq 3}\sum\limits_{\overset{i=1}{i\neq J}}^3\int_0^t \int_{\Omega} b_k(s,x) \pa{\int_{\R^3}\pa{v_i^2-1}v_kv_j\mu(v)\:dv}\partial_{x_j}\partial_{x_J}\phi_J(s,x)\:dxds
\\&&\quad -\frac{7}{2}\sum\limits_{1\leq j,k\leq 3}\int_0^t \int_{\Omega} b_k(s,x)\pa{\int_{\R^3}\pa{v_J^2-1}v_jv_k\mu(v)\:dv}\partial_{x_j}\partial_{x_J}\phi_J(s,x)\:dxds.
\end{eqnarray*}
The last two integrals on $\R^3$ are zero if $j\neq k$. Moreover, when $j=k$ and $j\neq J$ it is also zero by $\eqref{contribution0}$. We compute directly for $j= J$
$$\int_{\R^3}\pa{v_J^2-1}v_J^2\mu(v)\:dv = 2.$$
The first term is composed by integrals in $v$ of the form
$$\int_{\R^3}\abs{v}^2v_kv_iv_jv_J\mu(v)\:dv$$
which are always null unless two indices are equals to the other two. Therefore if $i=j$ then $k=J$ and if $i\neq j$ we only have two options: $k=i$ and $j=J$ or $k=j$ and $i=J$. Hence,
\begin{eqnarray*}
&&-\int_0^t \int_{\Omega\times\R^3} \pi_L(h) v\cdot\nabla_x\psi_J \:dxdvds
\\&& =  -\sum\limits_{\overset{i=1}{i\neq J}}^3\int_0^t\int_{\Omega}b_J(s,x)\partial_{x_ix_i}\phi_J \pa{\int_{\R^3}\abs{v}^2v_i^2v_J^2\mu(v)\:dv}dxds
\\&&\quad -\sum\limits_{\overset{i=1}{i\neq J}}^3\int_0^t\int_{\Omega}b_i(s,x)\partial_{x_ix_J}\phi_J \pa{\int_{\R^3}\abs{v}^2v_i^2v_J^2\mu(v)\:dv}dxds
\\&&\quad + 7 \sum\limits_{\overset{i=1}{i\neq J}}^3\int_0^t\int_{\Omega}b_i(s,x)\partial_{x_ix_J}\phi_J\:dxds
- 7\int_0^t \int_{\Omega} b_J(s,x)\partial_{x_J}\partial_{x_J}\phi_J(s,x)\:dxds.
\end{eqnarray*} 
To conclude we compute $\int_{\R^3}\abs{v^2}v_i^2v_J^2 \mu(v)\:dv = 7$ whenever $i\neq J$ and it thus only remains the following equality 
\begin{eqnarray}
-\int_0^t \int_{\Omega\times\R^3} \pi_L(h) v\cdot\nabla_x\psi_a \:dxdvds &=& -7 \int_0^t\int_{\Omega}b_J(s,x)\Delta_x \phi_J(s,x)\:dxds\nonumber
\\&=& 7 \int_0^t\norm{b_J}^2_{L^2_x}\:ds. \label{RHSb}
\end{eqnarray}

\bigskip
Then the term $\Psi_2$ and $\Psi_3$ are dealt with as in $\eqref{Psi23a}$
\begin{equation}\label{Psi23b}
\forall i\in\br{2,3}, \quad \abs{\Psi_i(t)} \leq   \frac{7}{4}\int_0^t\norm{b_J}^2_{L^2_x}\:ds + C_2\int_0^t\norm{\pi_L^\bot(h)}^2_{L^2_{x,v}}\:ds.
\end{equation}

\bigskip
The boundary term $\Psi_4$ is divided into $\Lambda^+$ and $\Lambda^-$, we apply the Maxwell boundary condition $\eqref{BCL2}$ and the change of variable $v\mapsto \mathcal{R}_x(v)$ on the $\Lambda^-$ part
\begin{equation*}
\begin{split}
\Psi_4(t) =& -\sum\limits_{i=1}^3\int_0^t \int_{\Lambda^+} h \abs{v\cdot n(x)}\varphi^{(J)}_i(s,x,v) \:dS(x)dvds 
\\& + (1-\alpha) \sum\limits_{i=1}^3\int_0^t\int_{\Lambda^+} h\abs{v\cdot n(x)}\varphi^{(J)}_i(s,x,\mathcal{R}_x(v))\:dS(x)dvds 
\\& + \alpha\sum\limits_{i=1}^3\int_0^t\int_{\Lambda^+} P_{\Lambda_\mu}(h)\abs{v\cdot n(x)}\varphi^{(J)}_i(s,x,\mathcal{R}_x(v))\:dS(x)dvds. 
\end{split}
\end{equation*}
We decompose $h = P_{\Lambda_\mu}(h) + P_{\Lambda_\mu}^\bot$ to obtain
\begin{equation}\label{Psi4bstart}
\begin{split}
\Psi_4(t)=&  -\sum\limits_{i=1}^3\int_0^t \int_{\Lambda^+} P_{\Lambda_\mu}(h) \pa{v\cdot n(x)}\cro{\varphi^{(J)}_i(s,x,v)-\varphi^{(J)}_i(s,x,\mathcal{R}_x(v))}\:dS(x)dvds 
\\& - \sum\limits_{i=1}^3\int_0^t\int_{\Lambda^+}P_{\Lambda_\mu}^\bot(h)\abs{v\cdot n(x)}
\\&\quad\quad\quad\quad\quad\quad\quad\times\cro{\varphi^{(J)}_i(s,x,v)-(1-\alpha)\varphi^{(J)}_i(s,x,\mathcal{R}_x(v))}\:dS(x)dvds.
\end{split}
\end{equation}
We apply Cauchy-Schwarz inequality and the elliptic estimate on $\phi_J$ $\eqref{phibH2}$ to the second integral obtain the following estimate
\begin{equation}\label{1stboundaryb}
\begin{split}
&\abs{- \sum\limits_{i=1}^3\int_0^t\int_{\Lambda^+}P_{\Lambda_\mu}^\bot(h)\abs{v\cdot n(x)}\cro{\varphi^{(J)}_i(s,x,v)-(1-\alpha)\varphi^{(J)}_i(s,x,\mathcal{R}_x(v))}\:dS(x)dvds}
\\&\quad\quad\quad\quad\quad\leq C \int_0^t\norm{b_J}\norm{P_{\Lambda_\mu}^\bot(h)}_{L^2_{\Lambda^+}}\:ds
\\&\quad\quad\quad\quad\quad\leq  \frac{7}{4}\int_0^t\norm{b_J}^2_{L^2_x}\:ds + C_4\int_0^t\norm{\pi_L^\bot(h)}^2_{L^2_{\Lambda^+}}\:ds,
\end{split}
\end{equation}
where we also used Young's inequality.

\par The term involving $P_{\Lambda_\mu}(h)$ in $\eqref{Psi4bstart}$ is computed directly by a change of variable $v\mapsto \mathcal{R}_x(v)$ to come back to the full boundary $\Lambda$ and the property $\eqref{PLambdamu}$ that is
$$P_{\Lambda_\mu}(h)(s,x,v) = z(s,x)\sqrt{\mu(v)}.$$
We also have $\varphi^{(J)}_i$ in the following form
$$\varphi^{(J)}_i(t,x,v) = \tilde{\varphi}^{(J)}_i(v)\sqrt{\mu(v)} \partial\phi_J(t,x)$$
where $\partial_i$ begin a certain derivative in $x$ and $\tilde{\varphi}^{(J)}_i$ is an even function. We thus get
\begin{equation*}
\begin{split}
&\int_0^t \int_{\Lambda^+} P_{\Lambda_\mu}(h) \pa{v\cdot n(x)}\cro{\varphi^{(J)}_i(s,x,v)-\varphi^{(J)}_i(s,x,\mathcal{R}_x(v))}\:dS(x)dvds
\\&\quad\quad\quad= \int_0^t \int_{\Lambda} P_{\Lambda_\mu}(h) \pa{v\cdot n(x)}\varphi^{(J)}_i(s,x,v)\:dS(x)dvds
\\&\quad\quad\quad = \sum\limits_{k=1}^3\int_0^t\int_{\Omega}z(s,x)n_k(x)\partial_i\phi_J(s,x)\pa{\int_{\R^3} \tilde{\varphi}^{(J)}_i(v)v_k\mu(v)\:dv}\:dS(x)ds
\\&\quad\quad\quad = 0,
\end{split}
\end{equation*}
by oddity. Combining the latter with $\eqref{1stboundaryb}$ inside $\eqref{Psi4bstart}$ yields
\begin{equation}\label{Psi4b}
\abs{\Psi_4(t)}\leq  \frac{C_1}{4}\int_0^t\norm{b_J}^2_{L^2_x}\:ds + C_4\int_0^t\norm{P_{\Lambda_\mu}^\bot(h)}^2_{L^2_{\Lambda^+}}\:ds.
\end{equation}

\bigskip
It remains to estimate $\Psi_5$ which involves time derivative $\eqref{Psi5}$:
\begin{eqnarray*}
\Psi_5(t) &=&\sum\limits_{i=1}^3\int_0^t \int_{\Omega\times\R^3}h \partial_t\varphi^{(J)}_i(s,x,v)\:dxdvds
\\&=& \sum\limits_{i=1}^3\int_0^t \int_{\Omega\times\R^3}\pi_L^\bot(h) \partial_t\varphi^{(J)}_i(s,x,v)\:dxdvds
\\&\:& +\sum\limits_{\overset{i=1}{i\neq J}}^3\int_0^t \int_{\Omega\times\R^3}\pi_L(h)\abs{v}^2v_iv_J\sqrt{\mu}\partial_{x_i}\phi_J\:dxdvds
\\&\:& +\sum\limits_{i=1}^3 \pm \frac{7}{2}\int_0^t \int_{\Omega\times\R^3} \pi_L(h)\pa{v_i^2-1}\sqrt{\mu}\partial_{x_J}\phi_J\:dxdvds.
\end{eqnarray*}
By oddity arguments, only terms in $a(s,x)$ and $c(s,x)$ can contribute to the last two terms on the right-hand side. However, $i\neq J$ implies that the second term is zero as well as the contribution of $a(s,x)$ in the third term thanks to $\eqref{contribution0}$. Finally, a Cauchy-Schwarz inequality on both integrals yields as in $\eqref{Psi5astart}$
\begin{equation}\label{Psi5bstart}
\abs{\Psi_5(t)}\leq C\int_0^t\cro{\norm{c}_{L^2_x}+\norm{\pi_L^\bot(h)}_{L^2_{x,v}}}\norm{\partial_t\nabla_x\phi_J}_{L^2_x}\:ds.
\end{equation}

\par To estimate $\norm{\partial_t\nabla_x\phi_J}_{L^2_x}$ we follow the idea developed for $a(s,x)$ about negative Sobolev regularity. We apply the weak formulation $\eqref{eqpsi}$ to a specific function between $t$ and $t+\eps$. The test function is $\psi(x,v) = \phi(x)v_J\sqrt{\mu}$ with $\phi$ in $H^1_x$ and null on the boundary. Note that $\psi$ does not depend on $t$, vanishes at the boundary and belongs to $\mbox{Ker}(L)$. Hence,
$$\Psi_3(t)=\Psi_4 = \Psi_5(t)=0.$$
It remains
\begin{eqnarray*}
C\int_{\Omega} \cro{b_J(t+\eps)-b_J(t)}\phi(x)\:dx &=& \int_{t}^{t+\eps} \int_{\Omega\times\R^3}\pi_L(h)v_Jv\cdot\nabla_x\phi(x)\sqrt{\mu}\:dxdvds
\\&\:& + \int_{t}^{t+\eps} \int_{\Omega\times\R^3}\pi_L^\bot(h)v_Jv\cdot\nabla_x\phi(x)\sqrt{\mu}\:dxdvds
\\&\:& + \int_{t}^{t+\eps} \int_{\Omega\times\R^3}g\phi(x)v_J\sqrt{\mu}\:dxdvds.
\end{eqnarray*}
As for $a(t,x)$ we divide by $\eps$ and take the limit as $\eps$ goes to $0$. By oddity, the first integral on the right-hand side only gives terms with $a(s,x)$ and $c(s,x)$. The second term is dealt with by a Cauchy-Schwarz inequality. Finally, we apply a Cauchy-Schwarz inequality for the last integral with a Poincar\'e inequality for $\phi(x)$ ($\phi$ is null on the boundary). This yields
\begin{equation}\label{negativesobolevb}
\abs{\int_{\Omega} \partial_t b_J(t,x)\phi(x)\:dx} \leq C \cro{\norm{a}_{L^2_x}+\norm{c}_{L^2_x}+\norm{\pi_L^\bot(h)}_{L^2_{x,v}}+\norm{g}_{L^2_x}}\norm{\nabla_x\phi}_{L^2_x}.
\end{equation}

\par The latter is true for all $\phi(x)$ in $H^1_x$ vanishing on the boundary. We thus fix $t$ and apply the inequality above to 
$$-\Delta_x\phi(t,x) = \partial_tb_J(t,x) \quad\mbox{and}\quad \restr{\phi}{\partial\Omega}=0,$$
and obtain
$$\norm{\partial_t\nabla_x\phi_J}_{L^2_x}^2 = \norm{\nabla_x\Delta^{-1}\partial_t b_J}_{L^2_x}^2 = \int_{\Omega}\pa{\nabla_x\Delta^{-1}\partial_t b_J}\nabla_x\phi(x)\:dx.$$
We integrate by parts (the boundary term vanishes because of our choice of $\phi$).
$$\norm{\partial_t\nabla_x\phi_J}_{L^2_x}^2 = \int_{\Omega} \partial_t b_J(t,x)\phi(x)\:dx$$
At last, we use $\eqref{negativesobolevb}$
\begin{equation*}
\begin{split}
\norm{\partial_t\nabla_x\phi_J}_{L^2_x}^2 &\leq C \cro{\norm{a}_{L^2_x}+\norm{c}_{L^2_x}+\norm{\pi_L^\bot(h)}_{L^2_{x,v}}+\norm{g}_{L^2_x}}\norm{\nabla_x\phi}_{L^2_x} 
\\&= C\cro{\norm{a}_{L^2_x}+\norm{c}_{L^2_x}+\norm{\pi_L^\bot(h)}_{L^2_{x,v}}+\norm{g}_{L^2_x}}\norm{\nabla_x \Delta_x^{-1}\partial_t b_J}_{L^2_x}
\\&= C\cro{\norm{a}_{L^2_x}+\norm{c}_{L^2_x}+\norm{\pi_L^\bot(h)}_{L^2_{x,v}}+\norm{g}_{L^2_x}}\norm{\partial_t\nabla_x \phi_J}_{L^2_x}
\end{split}
\end{equation*}

\par Combining this estimate with $\eqref{Psi5bstart}$ and using Young's inequality with any $\eps_b>0$
\begin{equation}\label{Psi5b}
\abs{\Psi_5(t)} \leq \eps_b \int_0^t\norm{a}^2_{L^2_x}\:ds + C_5(\eps_b)\int_0^t\cro{ \norm{c}^2_{L^2_x}+ \norm{\pi_L^\bot(h)}^2_{L^2_{x,v}}+\norm{g}^2_{L^2_x}}\:ds. 
\end{equation}

\bigskip
We now gather $\eqref{RHSb}$, $\eqref{Psi1all}$, $\eqref{Psi23b}$, $\eqref{Psi4b}$ and $\eqref{Psi5b}$
\begin{equation*}
\begin{split}
\int_0^t \norm{b_J}^2_{L^2_x}\:ds \leq & N^{(J)}_h(t)-N^{(J)}_h(0) + \eps_b\int_0^t \norm{a}^2_{L^2_x}\:ds+ C_{J,c}(\eps_b)\int_0^t \norm{c}^2_{L^2_x}\:ds
\\&+C_{J}(\eps_b)\int_0^t \cro{\norm{P_{\Lambda_\mu}^\bot(h)}^2_{L^2_{\Lambda^+}}+\norm{\pi_L^\bot(h)}^2_{L^2_{x,v}}+\norm{g}^2_{L^2_x}}\:ds.
\end{split}
\end{equation*}
Finally, summing over all $J$ in $\br{1,2,3}$
\begin{equation}\label{bfinal}
\begin{split}
\int_0^t \norm{b}^2_{L^2_x}\:ds \leq & N^{(b)}_h(t)-N^{(b)}_h(0) + \eps_b\int_0^t \norm{a}^2_{L^2_x}\:ds+ C_{b,c}(\eps_b)\int_0^t \norm{c}^2_{L^2_x}\:ds
\\&+C_b(\eps_b)\int_0^t \cro{\norm{P_{\Lambda_\mu}^\bot(h)}^2_{L^2_{\Lambda^+}}+\norm{\pi_L^\bot(h)}^2_{L^2_{x,v}}+\norm{g}^2_{L^2_x}}\:ds.
\end{split}
\end{equation}


\bigskip
\textbf{Estimate for $c$.} The handling of $c(t,x)$ is quite similar to the one of $a(t,x)$ but it involves a more intricate treatment of the boundary terms as $h$ does not preserves the energy. We choose the following test function
$$\psi_c(t,x,v) = \pa{\abs{v}^2-\alpha_c}v\cdot\nabla_x\phi_c(t,x)\sqrt{\mu(v)}$$
where
$$-\Delta_x\phi_c(t,x) = c(t,x) \quad\mbox{and}\quad \restr{\phi_c}{\partial\Omega}=0,$$
and $\alpha_c>0$ is chosen such that for all $1\leq i \leq 3$
$$\int_{\R^3} \pa{\abs{v}^2-\alpha_c}v_i^2\mu(v)\:dv = 0.$$
The vanishing of $\phi_c$ at the boundary implies, by standard elliptic estimate \cite{Eva},
\begin{equation}\label{phicH2}
\forall t \geq 0,\quad\norm{\phi_c(t)}_{H^2_x}\leq C_0\norm{c(t)}_{L^2_x}.
\end{equation}
Again, this estimate provides the control of $\Psi_1 = N^{(c)}_h(t) - N^{(c)}_h(0)$ and of $\Psi_6(t)$ as in $\eqref{Psi6a}$:
\begin{equation}\label{Psi6c}
 \abs{\Psi_6(t)} \leq \frac{C_1}{4} \int_0^t\norm{c}^2_{L^2_x}\:ds + C_6\int_0^t \norm{g}^2_{L^2_{x,v}}\:ds,
\end{equation}
where $C_1$ is given by $\eqref{RHSc}$ below.

\bigskip
We start by the right-hand side of $\eqref{eqpsi}$.
\begin{eqnarray*}
&&-\int_0^t \int_{\Omega\times\R^3} \pi_L(h) v\cdot\nabla_x\psi_c \:dxdvds 
\\&&\quad = -\sum\limits_{1\leq i,j \leq 3}\int_0^t \int_{\Omega} a(s,x)\pa{\int_{\R^3}\pa{\abs{v}^2-\alpha_c}v_iv_j\mu(v)\:dv}\partial_{x_i}\partial_{x_j}\phi_c(s,x)\:dxds
\\&&\quad\quad -\sum\limits_{1\leq i,j \leq 3}\int_0^t \int_{\Omega} b(s,x)\cdot \pa{\int_{\R^3}v\pa{\abs{v}^2-\alpha_c}v_iv_j\mu(v)\:dv}\partial_{x_i}\partial_{x_j}\phi_c(s,x)\:dxds
\\&&\quad\quad -\sum\limits_{1\leq i,j \leq 3}\int_0^t \int_{\Omega} c(s,x)\pa{\int_{\R^3}\pa{\abs{v}^2-\alpha_c}\frac{\abs{v}^2-3}{2}v_iv_j\mu(v)\:dv}\partial_{x_i}\partial_{x_j}\phi_c(s,x).
\end{eqnarray*}
By oddity, the second integral vanishes, as well as all the others if $i\neq j$. Our choice of $\alpha_c$ makes the first integral vanish even for $i=j$. It only remains the last integral with terms $i=j$ and therefore the definition of $\Delta_x\phi_c(t,x)$ gives
\begin{equation}\label{RHSc}
-\int_0^t \int_{\Omega\times\R^3} \pi_L(h) v\cdot\nabla_x\psi_c \:dxdvds = C_1 \int_0^t\norm{c}^2_{L^2_x}\:ds. 
\end{equation}
Again, direct computations show $\alpha_c = 5$ and hence $C_1>0$.

\bigskip
Then the term $\Psi_2$ and $\Psi_3$ are dealt with as for $a(t,x)$ and $b(t,x)$.
\begin{equation}\label{Psi23c}
\forall i\in\br{2,3}, \quad \abs{\Psi_i(t)} \leq   \frac{C_1}{4}\int_0^t\norm{c}^2_{L^2_x}\:ds + C_2\int_0^t\norm{\pi_L^\bot(h)}^2_{L^2_{x,v}}\:ds,
\end{equation}
where $C_1$ is defined in $\eqref{RHSc}$.

\bigskip
The term $\Psi_4$ involves integral on the boundary $\Lambda$. Again, we divide it into $\Lambda^+$ and $\Lambda^-$, we use the Maxwell boundary condition $\eqref{BCL2}$ satisfied by $h$ and we make the change of variable $v\mapsto \mathcal{R}_x(v)$ on $\Lambda^-$. As for $\eqref{startPsi4a}$ dealing with $a(t,x)$ we obtain
\begin{equation*}
\begin{split}
\Psi_4(t) &= -2(1-\alpha) \int_0^t \int_{\Lambda^+} h \pa{\abs{v}^2-\alpha_c}\pa{v\cdot n(x)}^2\partial_n\phi_c\sqrt{\mu}\:dS(x)dvds 
\\& - \alpha\int_0^t\int_{\Lambda^+}\pa{\abs{v}^2-\alpha_c}\abs{v\cdot n}\nabla_x\phi_c\cdot \cro{vP_{\Lambda_\mu}^\bot(h) + 2P_{\Lambda_\mu}(h)\pa{v\cdot n)n}}\sqrt{\mu}.
\end{split}
\end{equation*}
We decompose $h = P_{\Lambda_\mu}(h) + P_{\Lambda_\mu}^\bot(h)$ in the first integral and use $\eqref{PLambdamu}$ which says that $P_{\Lambda_\mu}(h)(t,x,v)=z(t,x)\sqrt{\mu(v)}$.
\begin{equation*}
\begin{split}
\Psi_4(t) =& -2\int_0^t \int_{\partial\Omega} \partial_n\phi_c z(s,x)\pa{\int_{v\cdot n(x)>0}\pa{\abs{v}^2-\alpha_c}\pa{v\cdot n(x)}^2\mu(v)\:dv}\:dS(x)ds 
\\&-2(1-\alpha) \int_0^t \int_{\Lambda^+}  \cro{\pa{\abs{v}^2-\alpha_c}\pa{v\cdot n(x)}^2\partial_n\phi_c\sqrt{\mu}}P_{\Lambda_\mu}^\bot(h)\:dS(x)dvds
\\& - \alpha\int_0^t\int_{\Lambda^+}\cro{\pa{\abs{v}^2-\alpha_c}\abs{v\cdot n}\nabla_x\phi_c\cdot v\sqrt{\mu}} P_{\Lambda_\mu}^\bot(h) \:dS(x)dvds.
\end{split}
\end{equation*}
Because
$$(v\cdot n(x))^2 = \sum\limits_{1\leq i,j\leq 3} v_iv_jn_in_j$$
the first term is null when $i\neq j$ and vanishes for $i=j$ thanks to our choice of $\alpha_c$. The last two integrals are dealt with by applying Cauchy-Schwarz inequality and the elliptic estimate on $\phi_c$ in $H^2$ $\eqref{phicH2}$. As for the case of $a(t,x)$, we obtain
\begin{equation}\label{Psi4c}
\abs{\Psi_4(t)}\leq  \frac{C_1}{4}\int_0^t\norm{c}^2_{L^2_x}\:ds + C_4\int_0^t\norm{P_{\Lambda_\mu}^\bot(h)}^2_{L^2_{\Lambda^+}}\:ds.
\end{equation}

\bigskip
As for $a(t,x)$ and $b(t,x)$, the estimate on $\Psi_5$ $\eqref{Psi5}$ will follow elliptic arguments in negative Sobolev spaces. With exactly the same computations as in $\eqref{Psi5astart}$ we have
\begin{equation}\label{Psi5cstart}
\abs{\Psi_5(t)}\leq C\int_0^t\norm{\pi_L^\bot(h)}_{L^2_{x,v}}\norm{\partial_t\nabla_x\phi_c}_{L^2_x}\:ds.
\end{equation}
Note that the contribution of $\pi_L$ vanishes by oddity on the terms involving $a(t,x)$ and $c(t,x)$ and also on the terms involving $b(t,x)$  thanks to our choice of $\alpha_c$.
\par To estimate $\norm{\partial_t\nabla_x\phi_c}_{L^2_x}$ we use the decomposition of the weak formulation $\eqref{eqpsi}$ between $t$ and $t+\eps$ (instead of between $0$ and $t$) with $\psi(t,x,v) =\sqrt{\mu} \pa{\abs{v}^2-3}\phi(x)/2$ where $\phi$ belongs to $H^1_x$ and is null at the boundary. $\psi$ does not depend on $t$, vanishes at the boundary and $\psi(x)\mu(v)$ is in $\mbox{Ker}(L)$. Hence,
$$\Psi_3(t)=\Psi_4 = \Psi_5(t)=0.$$
It remains
\begin{eqnarray*}
C\int_{\Omega} \cro{c(t+\eps)-c(t)}\phi(x)\:dx &=& \int_{t}^{t+\eps} \int_{\Omega\times\R^3}\pi_L(h)\frac{\abs{v}^2-3}{2}v\cdot\nabla_x\phi(x)\sqrt{\mu}\:dxdvds
\\&\:& + \int_{t}^{t+\eps} \int_{\Omega\times\R^3}\pi_L^\bot(h)\frac{\abs{v}^2-3}{2}v\cdot\nabla_x\phi(x)\sqrt{\mu}\:dxdvds
\\&\:&\int_{t}^{t+\eps} \int_{\Omega\times\R^3}g\frac{\abs{v}^2-3}{2}\sqrt{\mu}\phi(x)\:dxdvds .
\end{eqnarray*}
As for $a(t,x)$ we divide by $\eps$ and take the limit as $\eps$ goes to $0$. By oddity, the first integral on the right-hand side only gives terms with $b(s,x)$. The second and third terms are dealt with by a Cauchy-Schwarz inequality and we apply on $\phi$ a Poincar\'e inequality. This yields
$$\abs{\int_{\Omega} \partial_t c(t,x)\phi(x)\:dx} \leq C \cro{\norm{b}_{L^2_x}+\norm{\pi_L^\bot(h)}_{L^2_{x,v}}+\norm{g}_{L^2_x}}\norm{\nabla_x\phi}_{L^2_x}.$$

\par The latter is true for all $\phi(x)$ in $H^1_x$ vanishing on the boundary. We thus fix $t$ and apply the inequality above to 
$$-\Delta_x\phi(t,x) = \partial_tc(t,x) \quad\mbox{and}\quad \restr{\phi}{\partial\Omega}=0.$$
Exactly the same computation as for $b_J$ we obtain for any $\eps_c >0$
\begin{equation}\label{Psi5c}
\begin{split}
\abs{\Psi_5(t)} &\leq C\int_0^t\cro{\norm{b}_{L^2_x} + \norm{\pi_L^\bot(h)}_{L^2_{x,v}}+\norm{g}_{L^2_x}}\norm{\pi_L^\bot(h)}_{L^2_{x,v}}\:ds 
\\&\leq \eps_c\int_0^t\norm{b}^2_{L^2_x}\:ds + C_5(\eps_c)\int_0^t \cro{\norm{\pi_L^\bot(h)}^2_{L^2_{x,v}}+\norm{g}^2_{L^2_x}}\:ds.
\end{split}
\end{equation}

\bigskip
We now gather $\eqref{RHSc}$, $\eqref{Psi1all}$, $\eqref{Psi23c}$, $\eqref{Psi4c}$, $\eqref{Psi5c}$ and $\eqref{Psi6c}$
\begin{equation}\label{cfinal}
\begin{split}
\int_0^t \norm{c}^2_{L^2_x}\:ds \leq & N^{(c)}_h(t)-N^{(c)}_h(0) + \eps_c\int_0^t \norm{b}^2_{L^2_x}\:ds
\\&+ C_c(\eps_c)\int_0^t \cro{\norm{P_{\Lambda_\mu}^\bot(h)}^2_{L^2_{\Lambda^+}}+\norm{\pi_L^\bot(h)}^2_{L^2_{x,v}}+\norm{g}^2_{L^2_x}}\:ds.
\end{split}
\end{equation}


\bigskip
\textbf{Conclusion of the proof.} We gather the estimates we derived for $a$, $b$ and $c$. We compute the linear combination $\eqref{afinal} + \eta \times \eqref{bfinal} + \beta \times \eqref{cfinal}$. For all $\eps_b >0$ and $\eps_c >0$ this implies
\begin{equation*}
\begin{split}
&\int_0^t \cro{\norm{a}^2_{L^2_x} + \eta\norm{b}^2_{L^2_x} + \beta \norm{c}^2_{L^2_x}} \:ds 
\\&\quad\quad\quad\leq N_h(t)-N_h(0) + C_\bot\int_0^t \cro{\norm{P_{\Lambda_\mu}^\bot(h)}^2_{L^2_{\Lambda^+}}+\norm{\pi_L^\bot(h)}^2_{L^2_{x,v}}+\norm{g}^2_{L^2_x}}\:ds
\\&\quad\quad\quad\quad+\int_0^t \cro{\eta\eps_b\norm{a}^2_{L^2_x} + \pa{C_{a,b}+\beta\eps_c}\norm{b}^2_{L^2_x} + \eta C_{b,c}(\eps_b) \norm{c}^2_{L^2_x}} \:ds.
\end{split}
\end{equation*}

\par We first choose $\eta > C_{a,b}$, then $\eps_b$ such that $\eta\eps_b < 1$ and then $\beta > \eta C_{b,c}(\eps_b) $. Finally, we fix $\eps_c$ small enough such that $C_{a,b}+\beta\eps_c < \eta$ . With such choices we can absorb the last term on the right-hand side by the left-hand side. This concludes the proof of Lemma \ref{lem:controlfluidmicro}.
\end{proof}

\bigskip


\subsection{Exponential decay of the solution}\label{subsec:expodecayL2}

In this section we show that a solution to $\eqref{lineqL2}$ that preserves mass and has its trace in $L^2_\Lambda$ decays exponentially fast.

\bigskip
\begin{proof}[Proof of Theorem \ref{theo:L2}]
Let $h$ be a solution described in the statement of the theorem and define for $\lambda>0$, $\tilde{h}(t,x,v)=e^{\lambda t}h(t,x,v)$. $\tilde{h}$ satisfies the conservation of mass and is solution to
$$\partial_t \tilde{h} + v\cdot\nabla_x \tilde{h} = L_\mu(\tilde{h}) + \lambda \tilde{h}$$
with the Maxwell boundary condition. Moreover, since $\restr{h}{\Lambda}$ belongs to $L^2_\Lambda\pa{\mu^{-1/2}}$ so does $\restr{\tilde{h}}{\Lambda}$. We can use Green formula and get
\begin{equation*}
\frac{1}{2}\frac{d}{dt}\norm{\tilde{h}}^2_{L^2_{x,v}} = -\frac{1}{2}\int_{\Omega\times\R^3} v\cdot\nabla_x\pa{\tilde{h}^2}\:dxdv + \int_\Omega\langle L_\mu(\tilde{h})(t,x,\cdot),\tilde{h}(t,x,\cdot) \rangle_{L^2_v}\:dx + \lambda \norm{\tilde{h}}^2_{L^2_{x,v}}
\end{equation*}
Therefore, thnaks to the spectral gap $\eqref{spectralgapL}$ of $L$ in $L^2_v$ we get
\begin{equation} \label{1stexpodecayL2}
\frac{1}{2}\frac{d}{dt}\norm{\tilde{h}}^2_{L^2_{x,v}}\leq  -\frac{1}{2}\int_\Lambda \tilde{h}^2 \:v\cdot n(x)\:dS(x)dv -\lambda_L \norm{\pi_L^\bot(\tilde{h})}^2_{L^2_{x,v}}+ \lambda \norm{\tilde{h}}^2_{L^2_{x,v}}.
\end{equation} 
\par As we did in previous section, we divide the integral over the boundary and we apply the boundary condition $\eqref{BCL2}$ followed by the change of variable $v \mapsto \mathcal{R}_x(v)$ that sends $\Lambda^-$ to $\Lambda^+$. At last, we decompose $\restr{h}{\Lambda^+}$ into $P_{\Lambda_\mu}(h)+P_{\Lambda_\mu}^\bot(h)$ and this yields
\begin{eqnarray}
-\int_\Lambda \tilde{h}^2 \:v\cdot n(x)\:dS(x)dv &=& -\int_{\Lambda^+} \cro{\tilde{h}^2 - \pa{(1-\alpha) \tilde{h} + \alpha P_{\Lambda_\mu}(\tilde{h})}^2}v\cdot n(x)\:dS(x)dv \nonumber
\\&=& -(1-(1-\alpha)^2)\norm{P_{\Lambda_\mu}^\bot(\tilde{h})}^2_{L^2_{\Lambda^+}} \nonumber
\\&\:&\:+ 2\alpha\int_{\Lambda^+}P_{\Lambda_\mu}(\tilde{h})P_{\Lambda_\mu}^\bot(\tilde{h})v\cdot n(x)\:dS(x)dv \nonumber
\\&=& -(1-(1-\alpha)^2)\norm{P_{\Lambda_\mu}^\bot(\tilde{h})}^2_{L^2_{\Lambda^+}}.\label{boundarytermL2}
\end{eqnarray}
Combining $\eqref{1stexpodecayL2}$ and $\eqref{boundarytermL2}$ and integrating from $0$ to $t$ we get
\begin{equation}\label{2ndexpodecqyL2}
\norm{\tilde{h}(t)}^2_{L^2_{x,v}} + C\int_0^t\cro{\norm{P_{\Lambda_\mu}^\bot(\tilde{h})}_{L^2_{\Lambda^+}} + \norm{\pi_L^\bot(\tilde{h})}_{L^2_{x,v}}}\:ds \leq \norm{h_0}^2_{L^2_{x,v}} + 2\lambda \int_0^t\norm{\tilde{h}}^2_{L^2_{x,v}}\:ds.
\end{equation}

\bigskip
To conclude we use Lemma \ref{lem:controlfluidmicro} for $\tilde{h}$ with $g = \lambda \tilde{h}$:
\begin{equation}\label{expodecaylemma}
\begin{split}
\int_0^t\norm{\pi_L(\tilde{h})}^2_{L^2_{x,v}}\:ds \leq &  N_{\tilde{h}}(t)-N_{\tilde{h}}(0) 
\\ &+ C_\bot \int_0^t \cro{\norm{\pi^\bot_L(\tilde{h})}^2_{L^2_{x,v}}+\norm{P_{\Lambda_\mu}^\bot(\tilde{h})}^2_{L^2_{\Lambda^+}}+\lambda^2 \norm{\tilde{h}}^2_{L^2_{x,v}}}\:ds
\end{split}
\end{equation}
and we combine $\eps \times \eqref{expodecaylemma} + \eqref{2ndexpodecqyL2}$ for $\eps>0$.
\begin{equation*}
\begin{split}
&\cro{\norm{\tilde{h}}^2_{L^2_{x,v}}-\eps N_{\tilde{h}}(t)} + C_\eps\int_0^t \pa{\norm{\pi_L(\tilde{h})}^2_{L^2_{x,v}} + \norm{\pi_L^\bot (\tilde{h})}^2_{L^2_{x,v}}}\:ds
\\&\quad\quad\quad\quad\quad +\pa{C-\eps C_\bot}\int_0^t \norm{P_{\Lambda_\mu}^\bot(\tilde{h})}^2_{L^2_{\Lambda^+}} \:ds
\\&\quad\quad\quad \leq \norm{h_0}^2_{L^2_{x,v}\pa{\mu^{-1/2}}}-\eps N_{\tilde{h}}(0) + \pa{\eps C_\bot\lambda^2 + 2\lambda}\int_0^t \norm{\tilde{h}}^2_{L^2_{x,v}}\:ds
\end{split}
\end{equation*}
with $C_\eps = \min\br{\eps C_\bot, C-\eps C_\bot}$. Thanks to the control $\abs{N_{\tilde{h}}(s)} \leq C\norm{\tilde{h}(s)}^2_{L^2_{x,v}}$ and the fact that 
$$\norm{\pi_L(\tilde{h})}^2_{L^2_{x,v}} + \norm{\pi_L^\bot (\tilde{h})}^2_{L^2_{x,v}} = \norm{\tilde{h}}^2_{L^2_{x,v}}$$
we can choose $\eps$ small enough such that $C_\eps>0$ and then $\lambda>0$ small enough such that $\pa{\eps C_\bot\lambda^2 + 2\lambda} < C_\eps$. Such choices imply that $\norm{\tilde{h}}^2_{L^2_{x,v}}$ is uniformly bounded in time by $C\norm{h_0}^2_{L^2_{x,v}}$.
\par By definition of $\tilde{h}$, this shows an exponential decay for $h$ and concludes the proof of Theorem \ref{theo:L2}.
\end{proof}
\bigskip

\section{Semigroup generated by the collision frequency}\label{sec:frequencycollision}

This section is devoted to proving that the following operator
$$G_\nu = -\nu(v) -v\cdot\nabla_x$$
with the Maxwell boundary condition generates a semigroup $S_{G_\nu}(t)$ with exponential decay in $L^\infty_{x,v}$ endowed with different weights. Such a study has been done for pure specular reflections ($\alpha=0$) whereas a similar result has been obtained in the purely diffusive case ($\alpha=1$) (see \cite{Gu6} for maxwellian weights and \cite{Bri6} for more general weights and $L^1_vL^\infty_x$ framework). We adapt the methods of \cite{Gu6}\cite{Bri6} in order to fit our boundary condition. They consist in deriving an implicit formulation for the semigroup along the characteristics and then we need to control the characteristic trajectories that do not reach the plane $\br{t=0}$ in a time $t$. As we shall see, this number of problematic trajectories is small when the number of rebounds is large and so can be controlled for long times.

\bigskip
\begin{theorem}\label{theo:semigroupGnu}
Let $m(v)=m(\abs{v})\geq 0$ be such that
\begin{equation}\label{assumptionm}
\frac{\pa{1+\abs{v}}\nu(v)}{m(v)}\in L^1_v \quad\mbox{and}\quad m(v)\mu(v)\in L^\infty_v.
\end{equation}
Then for any $f_0$ in $L^\infty_{x,v}(m)$ there exists a unique solution $S_{G_\nu}(t)f_0$ in $L^\infty_{x,v}(m)$ to
\begin{equation}\label{eqGnu}
\cro{\partial_t + v\cdot\nabla_x + \nu(v)}\pa{S_{G_\nu}(t)f_0}=0
\end{equation}
such that $\restr{\pa{S_{G_\nu}(t)f_0}}{\Lambda} \in L^\infty_\Lambda(m)$ and satisfying the Maxwell boundary condition $\eqref{mixedBC}$ with initial datum $f_0$. Moreover it satisfies
$$\forall \nu_0' <\nu_0,\:\exists \: C_{m,\nu_0'}>0,\:\forall t\geq 0, \quad \norm{S_{G_\nu}(t)f_0}_{L^\infty_{x,v}(m)} \leq C_{m,\nu_0'}e^{-\nu'_0t}\norm{f_0}_{L^\infty_{x,v}(m)},$$
with $\nu_0 = \inf\br{\nu(v)}>0$.
\end{theorem}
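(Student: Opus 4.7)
The plan is to construct $S_{G_\nu}(t)f_0$ as the mild solution of $\eqref{eqGnu}$ via iterated Duhamel's formula along the backward specular characteristics, and then to extract exponential decay from the combination of the pointwise damping $e^{-\nu(v)t}$ with a geometric factor $(1-\alpha)^n$ produced by $n$ successive rebounds. Using the rigorous description of characteristics developed in Subsection~\ref{subsec:collisionfrequencycharacteristics}, I introduce for a.e.\ $(x,v)$ the sequence $(T_k(x,v),X_k(x,v),V_k(x,v))_{k\geq 0}$ of successive backward hitting times, footprints and post-specular-reflection velocities (with $T_0=0$, $X_0=x$, $V_0=v$), and set $N=N(t,x,v)$ to be the largest $k$ with $T_k\leq t$. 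By \cite[Proposition A.4]{Bri2} the set on which $(T_k)$ fails to be well defined, or on which $N(t,x,v)=\infty$ for some finite $t$, is Lebesgue-negligible.

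Second, I would unfold Duhamel's formula along the specular branch. On each interval $(T_k,T_{k+1})$ the equation $\eqref{eqGnu}$ integrates trivially to the factor $e^{-\nu(v)(T_{k+1}-T_k)}$, using crucially that specular reflection preserves $|V_k|=|v|$ so that $\nu(V_k)=\nu(v)$ holds throughout. Applying the Maxwell condition $\eqref{mixedBC}$ at each backward hit $(X_{k+1},V_k)\in\Lambda^-$ splits the trace into a specular piece of weight $1-\alpha$ and a Maxwellian-diffusion piece of weight $\alpha c_\mu\mu(v)$. Iterating $n$ times on the specular branch yields the representation
\begin{equation*}
\begin{split}
S_{G_\nu}(t)f_0(x,v) =&\ (1-\alpha)^{N} e^{-\nu(v) t}\, f_0\bigl(X_{N}-(t-T_N)V_{N},V_{N}\bigr)\,\mathbf{1}_{N<n} \\
&+ (1-\alpha)^{n}\, e^{-\nu(v) T_n}\, S_{G_\nu}(t-T_n)f_0(X_n,V_n)\,\mathbf{1}_{N\geq n} \\
&+ \sum_{k=0}^{(n\wedge N)-1}\alpha(1-\alpha)^k c_\mu\,\mu(v)\,e^{-\nu(v) T_{k+1}}\,J_{k+1}(t,x,v),
\end{split}
\end{equation*}
where $J_{k+1}(t,x,v)=\int_{v_*\cdot n(X_{k+1})>0} S_{G_\nu}(t-T_{k+1})f_0(X_{k+1},v_*)\,(v_*\cdot n(X_{k+1}))\,dv_*$ is the diffusion integral collected at the $(k+1)$-th backward rebound.

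Third, I would estimate each piece in $L^\infty_{x,v}(m)$. Since $m$ is radial and specular reflection preserves $|v|$, the initial-data term is bounded pointwise by $\|f_0\|_{L^\infty_{x,v}(m)}m(v)^{-1}e^{-\nu_0 t}$, while the remainder term is controlled by $(1-\alpha)^n \sup_{s\leq t}\|S_{G_\nu}(s)f_0\|_{L^\infty_{x,v}(m)}m(v)^{-1}$, which tends to $0$ as $n\to\infty$. In each diffusion term, multiplication by $m(v)$ produces the factor $m(v)\mu(v)$, finite by the assumption $m\mu\in L^\infty_v$, together with $\int m(v_*)^{-1}|v_*\cdot n(X_{k+1})|\,dv_*$, which is finite because of $(1+|v|)\nu(v)/m(v)\in L^1_v$ combined with $\nu\geq\nu_0>0$. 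Using $e^{-\nu(v)T_{k+1}}\leq e^{-\nu_0 T_{k+1}}$, the summability of $\sum_{k\geq 0}(1-\alpha)^k$, and a Gronwall-type argument on $U(t)=\sup_{s\in[0,t]}e^{\nu_0' s}\|S_{G_\nu}(s)f_0\|_{L^\infty_{x,v}(m)}$ for any $\nu_0'<\nu_0$, one obtains both existence (as the $n\to\infty$ limit of the truncated representation) and the claimed exponential decay with constant $C_{m,\nu_0'}$. Uniqueness comes from applying the same representation to the difference of two solutions with identical initial data: the remainder vanishes as $n\to\infty$ and the Neumann-type series built from the diffusion terms contracts on short time intervals by choosing $n$ large.

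The main obstacle will be the treatment of backward trajectories for which $N(t,x,v)$ is very large or even infinite in finite time, which in the pure specular case $\alpha=0$ is exactly what forced \cite{Gu6} to require $\Omega$ strictly convex and analytic. Here, the strict positivity $\alpha>0$ turns this into a non-issue for the remainder term because $(1-\alpha)^n\to 0$ regardless of the behaviour of the specular flow, so that only the measure-zero set identified by \cite[Proposition A.4]{Bri2} needs to be discarded, which is consistent with the mere $C^1$ boundedness assumption on $\Omega$. The remaining delicate point is to guarantee that the diffusion contributions, each involving $S_{G_\nu}(\cdot)f_0$ at intermediate times, do not pile up uncontrollably; this is precisely what the integrability assumption $\eqref{assumptionm}$ is tailored for, ensuring the per-rebound constant multiplied by $\mu(v)m(v)$ stays finite while the geometric series $\sum_k(1-\alpha)^k$ converges.
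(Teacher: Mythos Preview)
Your representation is correct, but the closure step has a genuine gap. When you estimate the $k^{\text{th}}$ diffusion term you obtain
\[
m(v)\,\alpha(1-\alpha)^k c_\mu\,\mu(v)\,e^{-\nu(v)T_{k+1}}\abs{J_{k+1}}
\;\leq\;
\alpha(1-\alpha)^k\,C_m\,e^{-\nu_0 T_{k+1}}\norm{S_{G_\nu}(t-T_{k+1})f_0}_{L^\infty_{x,v}(m)},
\]
with $C_m$ essentially $\norm{m\mu}_{L^\infty_v}\cdot c_\mu\!\int \abs{v_*}m(v_*)^{-1}dv_*$. Summing over $k$ and multiplying by $e^{\nu_0't}$, the diffusion block contributes $\big(\sum_k\alpha(1-\alpha)^k\big)C_m\,U(t)=C_m\,U(t)$ to your bound on $U(t)$. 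But nothing in assumption $\eqref{assumptionm}$ forces $C_m<1$; for polynomial weights $m=\langle v\rangle^k$ it is typically large. Your Gr\"onwall/bootstrap inequality then reads $U(t)\leq C\norm{f_0}+(\,(1-\alpha)^n+C_m\,)U(t)$ and does not close, no matter how large $n$ is. The same issue ruins the ``contracts on short time intervals'' claim for uniqueness: on $[0,T]$ the exponential gives nothing and the contraction constant is still $C_m$.

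The paper handles exactly this obstruction by iterating on \emph{both} the specular and the diffusive branches simultaneously (Lemma~\ref{lem:IpRp}), so that after $p$ rebounds the only feedback term is $R_p$, which collects all $p$-fold combinations of reflections. The crucial ingredient you are missing is Lemma~\ref{lem:controlRp}: on $[0,T_0]$, at most $O(T_0)$ of the $p$ rebounds can be non-grazing (each costs a time $\gtrsim\delta^3$), so the remaining diffusive integration variables are forced into the grazing set of small $d\sigma$-measure $\sim\delta$. A combinatorial count then gives $\norm{R_p}\leq C_m\,2^{-[\bar CT_0]}\sup_s\norm{f(s)}$ for $p\sim\bar N T_0$, and this factor $2^{-[\bar CT_0]}$ replaces your uncontrolled $C_m$ and closes the estimate. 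Your decomposition keeps the diffusive branch implicit and therefore never accesses this smallness. Note also that the paper's uniqueness argument (Subsection~\ref{subsec:collisionfrequencyuniqueness}) is an $L^1$ energy estimate via the divergence theorem, not the characteristic representation.
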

\bigskip

A corollary of the proof of this theorem is a gain of weight when one integrates in the time variable. This will be of core importance to control the nonlinear operator.

\bigskip
\begin{cor}\label{cor:gainweightGnu}
Let $m$ be such that $m(v)\nu(v)^{-1}$ satisfies the requirements of Theorem \ref{theo:semigroupGnu}. Then there exists $C_0>0$ such that for any $\pa{f_s}_{s\in\R^+}$ in $L^\infty_{x,v}(m)$, any $\eps$ in $(0,1)$ and all $t\geq 0$,
$$ \norm{\int_0^t S_{G_\nu}(t-s)f_s(x,v)\:ds}_{L^\infty_{x,v}(m)} \leq \frac{C_0}{1-\eps}e^{-\eps\nu_0t} \sup\limits_{s\in[0,t]}\cro{e^{\eps\nu_0s}\norm{f_s}_{L^\infty_{x,v}\pa{m\nu^{-1}}}}.$$
\end{cor}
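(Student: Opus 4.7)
The plan is to revisit the characteristic representation of $S_{G_\nu}(t-s)f_s$ built in the proof of Theorem \ref{theo:semigroupGnu}. The key point is that along every straight-line segment of a backward trajectory the decay factor is the sharp $e^{-\nu(v_k)\tau_k}$, which is strictly stronger than the uniform rate $e^{-\nu_0' \tau_k}$ used to close the bound in Theorem \ref{theo:semigroupGnu}. This extra precision is exactly what allows us to exchange a factor of $\nu(v)^{-1}$ on the source $f_s$ for a slightly degraded exponential rate $\eps\nu_0$ in place of $\nu_0$.

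I would first treat the simplest characteristic, when the backward trajectory from $(x,v)$ does not meet $\partial\Omega$ in $[s,t]$. There $S_{G_\nu}(t-s)f_s(x,v)=e^{-\nu(v)(t-s)}f_s(x-(t-s)v,v)$, and writing $m(v)=\nu(v)\cdot \pa{m(v)\nu(v)^{-1}}$ gives
\begin{equation*}
m(v)\abs{\int_0^t S_{G_\nu}(t-s)f_s(x,v)\,ds}\leq \sup_{s\in[0,t]}\cro{e^{\eps\nu_0 s}\norm{f_s}_{L^\infty_{x,v}(m\nu^{-1})}}\int_0^t \nu(v)e^{-\nu(v)(t-s)}e^{-\eps\nu_0 s}\,ds.
\end{equation*}
A direct change of variable yields
\begin{equation*}
\int_0^t \nu(v)e^{-\nu(v)(t-s)}e^{-\eps\nu_0 s}\,ds=\frac{\nu(v)}{\nu(v)-\eps\nu_0}\pa{e^{-\eps\nu_0 t}-e^{-\nu(v)t}}\leq \frac{e^{-\eps\nu_0 t}}{1-\eps},
\end{equation*}
using $\nu(v)\geq\nu_0$. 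This settles the no-rebound case with $C_0=1$.

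For the general case with Maxwell rebounds, the representation from Theorem \ref{theo:semigroupGnu} expresses $S_{G_\nu}(t-s)f_s$ as an infinite sum indexed by the number of rebounds, each term being a product of $(1-\alpha)$ for specular reflections, $\alpha c_\mu \mu$ for diffusive emissions, and sharp damping factors $e^{-\nu(v_k)\tau_k}$ on each straight segment. Specular bounces preserve $\abs{v}$, hence both $\nu$ and $m$ along the segment, so the splitting $\nu(v_k)=\eps\nu_0+(\nu(v_k)-\eps\nu_0)$ performed on every segment produces the same gain $(1-\eps)^{-1}e^{-\eps\nu_0 t}$ on each summand. The summability in the number of rebounds is already controlled by Theorem \ref{theo:semigroupGnu} applied to the weight $m\nu^{-1}$, which is admissible by the hypothesis of the corollary, and this is what provides the final constant $C_0$.

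The main obstacle is the bookkeeping across diffusive rebounds, where the outgoing velocity $v$ of a segment and the incoming velocity $v_k$ of the next one differ, so that the weights $m(v)$ and $\nu(v)$ must be reconciled with $m(v_k)$ and $\nu(v_k)$. This is precisely where the assumptions $m\nu^{-1}\mu\in L^\infty_v$ and $(1+\abs{v})\nu(v)^2 m(v)^{-1}\in L^1_v$ on the weight $m\nu^{-1}$ intervene: the Maxwellian factor coming from each diffusion event integrates out the outgoing velocity against $\mu$ and absorbs the weight jump uniformly along the trajectory, which lets the splitting argument and the geometric control on the number of rebounds be carried out simultaneously.
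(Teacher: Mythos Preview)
Your treatment of the no-rebound case is correct and coincides with the paper's computation. The gap is in the general case. You speak of the representation from Theorem \ref{theo:semigroupGnu} as an ``infinite sum indexed by the number of rebounds'' and claim that the exponent splitting on every segment produces the factor $(1-\eps)^{-1}$ on each summand. Neither assertion is what the paper establishes. The representation is not an infinite series but the finite decomposition $S_{G_\nu}(t-s)f_s = I_p(f_s)(t-s) + R_p(S_{G_\nu}f_s)(t-s)$ of Lemma \ref{lem:IpRp}, and the weight gain does not come from splitting exponents along every segment: splitting $e^{-\nu(v_k)\tau_k}=e^{-\eps\nu_0\tau_k}e^{-(\nu(v_k)-\eps\nu_0)\tau_k}$ only yields the global $e^{-\eps\nu_0(t-s)}$, with each residual factor $\leq 1$, and produces no $(1-\eps)^{-1}$ at all. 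The factor $(1-\eps)^{-1}$ appears only after the $s$-integration of the terminal decay, and for that you must know the terminal velocity. This is precisely what the gain-of-weight estimate in Lemma \ref{lem:controlIp} encodes: when the backward trajectory from $(x,v)$ hits $f_s$ after only specular reflections the terminal speed is $\abs{v}$, giving $\nu(v)e^{-\nu(v)(t-s)}$; when at least one diffusion occurs, integrating over the last diffusive velocity $v_{J*}$ against $d\sigma$ uses $(1+\abs{v})\nu/m\in L^1$ and $m\mu\in L^\infty$ to absorb the weight jump, leaving only $e^{-\nu_0(t-s)}$ with no $\nu(v)$ in front. Both integrate in $s$ to $C(1-\eps)^{-1}e^{-\eps\nu_0 t}$, but the two cases have to be separated explicitly --- your sketch does not do this.

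Your last paragraph is closer to what is actually needed, but it applies to the $R_p$ piece, not to the whole sum: the paper bounds $R_p(S_{G_\nu}f_s)$ via the gain-of-weight version of Lemma \ref{lem:controlRp} (which gives the same dichotomy $\nu(v)e^{-\nu(v)(1-\eps)t}+e^{-\nu_0(1-\eps)t}$), and only then invokes the exponential decay of $S_{G_\nu}$ in $L^\infty_{x,v}(m\nu^{-1})$ from Theorem \ref{theo:semigroupGnu} --- this is exactly where the hypothesis ``$m\nu^{-1}$ admissible'' enters. The smallness factor $2^{-[\bar C T_0]}$ from Lemma \ref{lem:controlRp} is what makes the $R_p$ contribution uniform in $t$; there is no ``summability in the number of rebounds'' coming directly from Theorem \ref{theo:semigroupGnu}. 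To make your argument a proof you would have to reproduce the $I_p$/$R_p$ split, cite Lemmas \ref{lem:controlIp} and \ref{lem:controlRp} in their second (weight-gaining) form, integrate each in $s$, and fix $T_0$ so that $2^{-[\bar C T_0]}\leq 1/2$.
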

\bigskip

The rest of this Section is entirely devoted to the proof of these results.
\bigskip


\subsection{Brief description of characteristic trajectories}\label{subsec:collisionfrequencycharacteristics}

The characteristic trajectories of the free transport equation
$$\partial_t f(t,x,v) + v\cdot\nabla_xf(t,x,v) = 0$$
with purely specular reflection boundary condition will play an important role in our proof. Their study has been done in \cite[Appendix A]{Bri2} and we describe here the results that we shall use later on.

\bigskip
The description of backward characteristics relies on the time of first rebound against the boundary of $\Omega$. For $x$ in $\bar{\Omega}$ and $v \neq 0$ define
$$t_{min}(x,v) = \max\left\{t\geq 0: \: x-vs \in \bar{\Omega}, \: \forall \:0\leq s \leq t\right\}.$$
Note that for all $(x,v)\notin \Lambda_0 \cup \Lambda^-$, $t_{min}(x,v)>0$. The characteristic trajectories are straight lines in between two rebounds against the boundary, where the velocity then undergo a specular reflection.
\par From \cite[Appendix A.2]{Bri2}, starting from $(x,v)$ in $\bar{\Omega}\times\pa{\R^3-\br{0}}$, one can construct $T_1(x,v) = t_{min}(x,v)$ and the footprint $X_1(x,v)$ on $\partial\Omega$ of the backward trajectory starting from $x$ with velocity $v$ has well as its resulting velocity $V_1(x,v)$:
$$X_1(x,v) = x-T_1(x,v)v \quad\mbox{and}\quad V_1(x,v) = \mathcal{R}_{X_1(x,v)}\pa{v},$$
where we recall that $\mathcal{R}_y(v)$ is the specular reflection of $v$ at a point $y \in \partial\Omega$.
One can iterate the process and construct the second collision with the boundary at time $T_2(x,v) = T_1(x,v) + t_{min}(X_1(x,v),V_1(x,v))$, at the footprint $X_2(x,v)= X_1(X_1(x,v),V_1(x,v))$ and the second reflected velocity $V_2(x,v) = V_1(X_1,V_1)$ and so on so forth to construct a sequence $(T_k(x,v),X_k(x,v),V_k(x,v))$ in $\partial\Omega\times\R^3$. More precisely we have, for almost every $(x,v)$,
$$T_{k+1}(x,v) = T_k + t_{min}(X_k,V_k),\: X_{k+1}(x,v) = X_k -t_{min}(X_k,V_k)V_k, \: V_{k+1} = \mathcal{R}_{X_{k+1}}\pa{V_k}.$$

\par Thanks to \cite[Proposition A.4]{Bri2}, for a fixed time $t$ and for almost every $(x,v)$ there are a finite number of rebounds. In other terms, there exists $N(t,x,v)$ such that the backward trajectories starting from $(x,v)$ and running for a time $t$ is such that
$$T_{N(t,x,v)}(x,v) \leq t < T_{N(t,x,v)+1}(x,v).$$ 
\bigskip

We conclude this subsection by stating a continuity result about the footprints of characteristics. This is a rewriting of \cite[Lemmas 1 and 2]{Kim1}.

\bigskip
\begin{lemma}\label{lem:boundarycontinuityset}
Let $\Omega$ be a $C^1$ bounded domain.
\begin{enumerate}
\item[(1)] the backward exit time $t_{min}(x,v)$ is lower semi-continuous;
\item[(2)] if $v\cdot n(X_1(x,v)) <0$ then $t_{min}(x,v)$ and $X_1(x,v)$ are continuous functions of $(x,v)$;
\item[(3)] let $(x_0,v_0)$ be in $\Omega \times\R^3$ with $v_0 \neq 0$ and $t_{min}(x_0,v_0)<\infty$, if $(X_1(x_0,v_0),v_0)$ belongs to $\Lambda_0^{I-}$ then $t_{min}(x,v)$ is continuous around $(x_0,v_0)$.
\end{enumerate}
\end{lemma}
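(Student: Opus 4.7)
The plan is to verify the three statements in turn; they form the content of \cite[Lemmas 1, 2]{Kim1}, but each is short enough to sketch independently. First I would handle part (1). Given $(x_n,v_n)\to(x,v)$, set $L=\liminf t_{min}(x_n,v_n)$. For any $s<L$, extracting a suitable subsequence we have $s<t_{min}(x_n,v_n)$ eventually, so $x_n-\tau v_n\in\bar{\Omega}$ for every $\tau\in[0,s]$. Passing to the limit and using that $\bar{\Omega}$ is closed yields $x-\tau v\in\bar{\Omega}$ for $\tau\in[0,s]$, hence $t_{min}(x,v)\geq s$. Letting $s\uparrow L$ gives lower semi-continuity.

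For part (2), I would work locally. Pick a $C^1$ defining function $\xi$ with $\Omega=\{\xi<0\}$ and $n=\nabla\xi/\abs{\nabla\xi}$ on a neighbourhood of $X_1=X_1(x,v)$, and set $F(t;y,w)=\xi(y-tw)$. The transversality assumption gives
$$\partial_t F(t_{min}(x,v);x,v)=-v\cdot\nabla\xi(X_1)=-\abs{\nabla\xi(X_1)}\,v\cdot n(X_1)>0,$$
so the implicit function theorem provides a $C^1$ solution $t=T(y,w)$ of $F(T;y,w)=0$ in a neighbourhood of $(x,v)$. The only thing to check is that this local root coincides with $t_{min}$ near $(x,v)$: by part (1) there cannot be a strictly earlier crossing in the limit, and by continuity of $\xi$ there is no earlier crossing for nearby $(y,w)$ either. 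Continuity of $X_1(y,w)=y-T(y,w)w$ is then immediate.

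Part (3) is the main obstacle, since the derivative $\partial_t F$ vanishes at a grazing point and the implicit function theorem fails. The inward inflection condition $(X_1(x_0,v_0),v_0)\in\Lambda_0^{(I-)}$ is precisely what is needed to rescue it geometrically: it says that even though $v_0\cdot n(X_1)=0$, the map $\tau\mapsto\xi(X_1-\tau v_0)$ is strictly positive on $(0,\delta)$ (the backward trajectory lies strictly outside $\bar{\Omega}$), while $t_{min}(X_1,-v_0)\neq 0$ forces $\xi(X_1+\tau v_0)\leq 0$ on a forward interval (the forward trajectory is inside $\bar{\Omega}$). Thus $\xi$ evaluated along the line through $X_1$ in direction $-v_0$ changes sign \emph{transversely as a function along the trajectory}, even though it is degenerate in the normal direction.

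To exploit this I would write the backward trajectory from $x_0$ as a concatenation: a long transversal part, on which part (2) already gives continuity of $t_{min}$ and of the intermediate $X_k$'s, followed by a short segment near the inflection hit at $X_1(x_0,v_0)$. On the short segment I would perturb $(x,v)$ near $(x_0,v_0)$ and apply the intermediate value theorem to $\tau\mapsto\xi(x-\tau v)$ on an interval straddling $t_{min}(x_0,v_0)$, using the two strict sign conditions above to locate a root close to $t_{min}(x_0,v_0)$; lower semi-continuity from part (1) then upgrades this to continuity. The delicate technical step, which is the heart of Kim's argument, is verifying that this strict sign configuration is stable under perturbation of both $x$ and $v$ despite the grazing angle; this is where the $C^1$ regularity of $\partial\Omega$ enters, and I would invoke \cite[Lemma 2]{Kim1} directly rather than reprove it.
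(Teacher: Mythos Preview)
Your proposal is correct and in fact goes further than the paper does: the paper gives no proof of this lemma at all, simply recording it as ``a rewriting of \cite[Lemmas 1 and 2]{Kim1}''. Your sketches of (1) and (2) are the standard arguments (closedness of $\bar{\Omega}$ for lower semi-continuity, the implicit function theorem at a transversal hit for continuity), and for (3) you correctly identify both the obstruction (vanishing of $\partial_t F$ at a grazing point) and the geometric content of the inward-inflection hypothesis that rescues it. Since you ultimately defer the delicate stability step to \cite[Lemma 2]{Kim1}, your treatment and the paper's are aligned; yours is just more explicit.
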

\bigskip

Note that \cite[Lemma 2]{Kim1} also gives that if $(X_1(x_0,v_0),v_0)$ belongs to $\Lambda_0^{I+}$ then $t_{min}(x,v)$ is not continuous around $(x_0,v_0)$. Therefore, points $(2)$ and $(3)$ in Lemma \ref{lem:boundarycontinuityset} imply that $\mathfrak{C}^-_\Lambda = \Lambda^-\cup \Lambda_0^{I-}$ is indeed the boundary continuity set.
\bigskip


\subsection{Proof of Theorem \ref{theo:semigroupGnu}: uniqueness}\label{subsec:collisionfrequencyuniqueness}

Assume that there exists a solution $f$ of $\eqref{eqGnu}$ in $L^\infty_{x,v}(m)$ satisfying the Maxwell boundary condition and such that $\restr{f}{\Lambda}$ belongs to $L^\infty_\Lambda(m)$. With the assumptions on the weight $m(v)$ and the following inequalities
$$\norm{f}_{L^1_{x,v}} \leq \pa{\int_{\R^3}\frac{dv}{m(v)}} \norm{f}_{L^\infty_{x,v}\pa{m}} \quad\mbox{and}\quad \norm{f}_{L^1_\Lambda} \leq \pa{\int_{\R^3}\frac{\abs{v}}{m(v)}\:dv} \norm{f}_{L^\infty_\Lambda\pa{m}}$$
we see that $f$ belongs to $L^1_{x,v}$ and its restriction $\restr{f}{\Lambda}$ belongs to $L^1_\Lambda$. 
\par We can therefore use the divergence theorem and the fact that $\nu(v)\geq \nu_0>0$:
\begin{eqnarray*}
\frac{d}{dt}\norm{f}_{L^1_{x,v}} &=& \int_{\Omega\times\R^3}\mbox{sgn}(f(t,x,v))\cro{-v\cdot\nabla_x - \nu(v)}f(t,x,v)\:dxdv
\\&=& -\int_{\Omega\times\R^3} v\cdot\nabla_x\pa{\abs{f}}\:dxdv - \norm{\nu(v)f}_{L^1_{x,v}}
\\&\leq& -\int_{\Lambda}\abs{f(t,x,v)}\pa{v\cdot n(x)}\:dS(x)dv - \nu_0\norm{f}_{L^1_{x,v}}.
\end{eqnarray*}

\bigskip
Using the Maxwell boundary condition $\eqref{mixedBC}$ and then applying the change of variable $v\to\mathcal{R}_x(v)$, which has a unit jacobian since it is an isometry, gives
\begin{equation*}
\begin{split}
&\int_{\Lambda^-}\abs{f(t,x,v)}\pa{v\cdot n(x)}\:dS(x)dv 
\\&\quad= - \int_{\Lambda^-}\abs{(1-\alpha) f(t,x,\mathcal{R}_x(v))+\alpha P_\Lambda\pa{f(t,x,\cdot)}(v)}\pa{v\cdot n(x)}\:dS(x)dv
\\&\quad = - \int_{\Lambda^+}\abs{(1-\alpha) f(t,x,v)+\alpha P_\Lambda\pa{f(t,x,\cdot)}(v)}\pa{v\cdot n(x)}\:dS(x)dv
\\&\quad\leq \int_{\Lambda^+}\abs{f(t,x,v)}\pa{v\cdot n(x)}\:dS(x)dv.
\end{split}
\end{equation*}
We used the fact that $P_\Lambda(f)(\mathcal{R}_x(v))=P_\Lambda(f)(v)$.
\par The integral over the boundary $\Lambda$ is therefore positive and so uniqueness follows from a Gr\"onwall lemma.
\bigskip


\subsection{Proof of Theorem \ref{theo:semigroupGnu}: existence and exponential decay}\label{subsec:collisionfrequencyexistence}

Let $f_0$ be in $L^\infty_{x,v}(m)$. Define the following iterative scheme:
$$\cro{\partial_t + v\cdot\nabla_x +\nu} f^{(n)} =0 \quad\mbox{and}\quad f^{(n)}(0,x,v)= f_0(x,v)\mathbf{1}_{\br{\abs{v}\leq n}}$$
with a damped version of the Maxwell boundary condition for $t>0$ and $(x,v)$ in $\Lambda^-$
\begin{equation}\label{mixedBCnu}
f^{(n)}(t,x,v) = (1-\alpha) f^{(n)}(t,x,\mathcal{R}_x(v)) + \alpha\pa{1-\frac{1}{n}}P_\Lambda(\restr{f^{(n)}}{\Lambda^+})(t,x,v).
\end{equation}
Denote by $\func{P^{(n)}}{\Lambda^+}{\Lambda^-}$ the boundary operator associated to $\eqref{mixedBCnu}$.
\par Note that $\mu(v)^{-1/2}f_0(x,v)\mathbf{1}_{\br{\abs{v}\leq n}}$ is in $L^\infty_{x,v}$ and
$$\forall\: \mu^{-1/2}f \in L^\infty_{\Lambda^+}, \quad \norm{P^{(n)}(\mu^{-1/2}f)}_{L^\infty_{\Lambda^-}} \leq \pa{1-\frac{\alpha}{n}}\norm{\mu^{-1/2}f}_{L^\infty_{\Lambda^+}}.$$
The norm of the operator $P^{(n)}$ is thus strictly smaller than one. We can apply \cite[Lemma 14]{Gu6} which implies that $f^{(n)}$ is well-defined in $L^\infty_{x,v}\pa{\mu^{-1/2}}$ with $\restr{f^{(n)}}{\Lambda}$ in $L^\infty_{\Lambda}\pa{\mu^{-1/2}}$.
\par We shall prove that in fact $f^{(n)}$ decays exponentially fast in $L^\infty_{x,v}(m)$ and that its restriction to the boundary is in $L^\infty_\Lambda(m)$. Finally, we will prove that $f^{(n)}$ converges, up to a subsequence, towards $f$ the desired solution of Theorem \ref{theo:semigroupGnu}. The proof of Theorem \ref{theo:semigroupGnu} consists in the following three steps developed in Subsections \ref{subsubsec:implicit}, \ref{subsubsec:IpRp} and \ref{subsubsec:CVfn}.
\bigskip


\subsubsection{Step 1: Implicit formula for $f^{(n)}$.}\label{subsubsec:implicit}

We use the conservation property that $e^{\nu(v)t}f^{(n)}(t,x,v)$ is constant along the characteristic trajectories. We apply it to the first collision with the boundary (recall Subsection \ref{subsec:collisionfrequencycharacteristics} for notations) and obtain that for all $(x,v)\notin \Lambda_0\cup\Lambda^-$
\begin{equation*}
\begin{split}
f^{(n)}(t,x,v) =& \mathbf{1}_{\br{t-t_{min}(x,v) \leq 0}} e^{-\nu(v)t}f_0(x-tv,v)\mathbf{1}_{\br{\abs{v}\leq n}} 
\\&+  \mathbf{1}_{\br{t-t_{min}(x,v) > 0}} e^{-\nu(v)t_{min}(x,v)} \restr{f^{(n)}}{\Lambda^-}(t-t_{min}(x,v),X_1(x,v),v).
\end{split}
\end{equation*}
Indeed, either the backward trajectory hits the boundary at $X_1(x,v)$ before time $t$ ($t_{min} < t$) or it reaches the origin plane $\br{t=0}$ before it hits the boundary ($t_{min} \geq 0$). Defining $t_1 = t_1(t,x,v) = t-t_{min}(x,v)$, and recalling the first footprint $X_1=X_1(x,v)$ and the first change of velocity $V_1(x,v))$, we apply the boundary condition $\eqref{mixedBCnu}$ and obtain the following implicit formula.
\begin{equation}\label{fncharacteristics}
\begin{split}
&f^{(n)}(t,x,v) = \mathbf{1}_{\br{t_1(x,v) \leq 0}} e^{-\nu(v)t}f_0(x-tv,v)\mathbf{1}_{\br{\abs{v}\leq n}} 
\\&\quad\quad\quad+ (1-\alpha)\:\mathbf{1}_{\br{t_1(x,v) > 0}} e^{-\nu(v)(t-t_1)}f^{(n)}(t_1,X_1,V_1)
\\&\quad\quad\quad+\mathbf{1}_{\br{t_1(x,v) > 0}}\cro{\Delta_n\mu(v)e^{-\nu(v)(t-t_1)}\int_{v_{1*}\cdot n(x_1) >0} \frac{1}{\mu(v_{1*})}f^{(n)}(t_1,X_1,v_{1*})\:d\sigma_{x_1}(v_{1*})},
\end{split}
\end{equation}
where we denoted $\Delta_n = \alpha(1-1/n)$ and we defined the probability measure on $\Lambda^+$
\begin{equation}\label{dsigmax}
d\sigma_x(v) = c_\mu \mu(v) \abs{v\cdot n(x)}\:dv.
\end{equation}
Moreover, once at $(t_1,X_1,v_1)$ with $v_1$ being either $V_1(x,v)$ or $v_{1*}$ either $t_2\leq 0$ (where $t_2=t_1(t_1,X_1,v_1)\leq 0$) and the trajectory reaches the initial plane after the first rebound or $t_2>0$ and it can still overcome a collision against the boundary in the time $t$. Again, the fact that $e^{\nu(v)t}f^{(n)}(t,x,v)$ is constant along the characteristics implies 
\begin{equation}\label{representationfn1}
f^{(n)}(t,x,v) = I_1\pa{f^{(n)}_0}(t,x,v) + R_1\pa{f^{(n)}}(t,x,v)
\end{equation}
with $I_1$ accounting for all the trajectories reaching the initial plane in at most $1$ rebound in time $t$
\begin{equation}\label{I1}
\begin{split}
&I_1\pa{f^{(n)}_0} = \mathbf{1}_{\br{t_1 \leq 0}} e^{-\nu(v)t}f^{(n)}_0(x-tv,v) 
\\&\quad+ \mathbf{1}_{\br{t_1 >0}}\mathbf{1}_{\br{t_2\leq 0}} (1-\alpha) e^{-\nu(v)(t-t_1)}e^{-\nu(V_1)t_1}f_0(X_1-t_1V_1,V_1)
\\&\quad+ \mathbf{1}_{\br{t_1 > 0}}\cro{\Delta_n\mu(v)e^{-\nu(v)(t-t_1)}\int_{v_{1*}\cdot n(x_1) >0}\mathbf{1}_{\br{t_2\leq 0}} \frac{e^{-\nu(v_{1*})t_1}}{\mu(v_{1*})}f^{(n)}_0(x_1-t_1v_{1*},v_{1*})\:d\sigma_{x_1}},
\end{split}
\end{equation}
and $R_1\pa{f^{(n)}}$ encodes the contribution of all the characteristics that after one rebound are still able to generate new collisions against $\partial\Omega$
\begin{equation}\label{R1}
\begin{split}
&R_1\pa{f^{(n)}}(t,x,v) = (1-\alpha)\:\mathbf{1}_{\br{t_2 > 0}} e^{-\nu(v)(t-t_1)}f^{(n)}(t_1,X_1,V_1)
\\&\quad\quad\quad+\Delta_n\mu(v)e^{-\nu(v)(t-t_1)}\int_{v_{1*}\cdot n(x_1) >0}\mathbf{1}_{\br{t_2 > 0}} \frac{1}{\mu(v_{1*})}f(t_1,X_1,v_{1*})\:d\sigma_{x_1}(v_{1*}).
\end{split}
\end{equation}
Of important note, to lighten computations, in each term the value $t_2$ refers to $t_1$ of the preceding triple $(t_1,x_1,v_1)$ where $v_1$ is $V_1(x,v)$ in the first term and $v_{1*}$ in the second. As we are about to iterate $\eqref{representationfn1}$, we shall generate sequences $(t_{k+1},x_{k+1},v_{k+1})$ which have to be understood as $(t_1(t_k^{(l)},x_k^{(l)},v_k^{(l)}),x_1(x_k^{(l)},v_k^{(l)}), v_{k+1})$ and $v_{k+1}$ being either $V_1(x_k^{(l)},v_k^{(l)})$ or an integration variable $v_{(k+1)*}$.

\bigskip
By a straightforward induction we obtain an implicit form for $f^{(n)}$ when one takes into account the contribution of the characteristics reaching $\br{t=0}$ in at most $p\geq 1$ rebounds
\begin{equation}\label{representationfn}
f^{(n)}(t,x,v) = I_p\pa{f^{(n)}_0}(t,x,v) + R_p\pa{f^{(n)}}(t,x,v).
\end{equation}
$I_p\pa{f^{(n)}_0}$ contains all the trajectories reaching the initial plane in at most $p$ rebounds whereas $R_p\pa{f^{(n)}}$ gathers the contributions of all the trajectories still coming from a collision against the boundary. A more careful induction gives an explicit formula for $R_p$ and this is the purpose of the next Lemma. The main idea is to look at every possible combination of the specular reflections among all the collisions against the boundary, represented by the set $\vartheta$ defined below.

\bigskip
\begin{lemma}\label{lem:IpRp}
For $p\geq 1$ and $i$ in $\br{1,\dots,p}$ define $\vartheta_p(i)$ the set of strictly increasing functions from $\br{1,\dots,i}$ into $\br{1,\dots,p}$. Let $(t_0,x_0,v_0)=(t,x,v)$ in $\R^+\times\Omega\times\R^3$ and $(v_{1*},\dots,v_{p*})$ in $R^{3p}$. For $l$ in $\vartheta_p(i)$ we define the sequence $(t_k^{(l)},x_k^{(l)},v_k^{(l)})_{1\leq k \leq p}$ by induction
$$t_{k} = t_{k-1}^{(l)}-t_{min}(x_{k-1}^{(l)},v_{k-1}^{(l)}) \quad,\: x_k^{(l)}=X_1(x_{k-1}^{(l)},v_{k-1}^{(l)})$$
$$v_k^{(l)}=\left\{\begin{array}{ll} \disp{V_1(x_{k-1}^{(l)},v_{k-1}^{(l)})\quad\quad}&\disp{\mbox{if}\quad k\in l\cro{\br{1,\dots,i}},} \vspace{1mm}\\\vspace{1mm} \disp{v_{k*}}& \disp{\mbox{otherwise}.} \end{array}\right.$$
At last, for $1\leq k \leq p$ define the following measure on $\R^{3k}$
$$d\Sigma^k_{l}\pa{v_{1*},\dots,v_{p*}} = \frac{\mu(v)}{\mu(v_k^{(l)})}\cro{\prod\limits_{j=0}^{k-1}e^{-\nu(v_j^{(l)})(t_j^{(l)}-t_{j+1}^{(l)})}} \:d\sigma_{x_1}(v_{1*})\dots d\sigma_{x_{p}}(v_{p*})$$
and the following sets
\begin{equation}\label{definition Vj}
\mathcal{V}^{(l)}_j = \br{v_{j*} \in \R^3, \quad v_{j*}\cdot n(x_j^{(l)}) > 0}.
\end{equation}
Then we have the following identities
\begin{equation}\label{Ip}
\begin{split}
&I_p\pa{f^{(n)}_0}(t,x,v) = \sum\limits_{k=0}^p\sum\limits_{i=0}^k (1-\alpha)^i\Delta_n^{k-i}
\\&\quad\times\sum\limits_{l\in\vartheta_k(i)}\int_{\prod\limits_{1\leq j \leq p}\mathcal{V}^{(l)}_j}\mathbf{1}_{\br{t_k^{(l)}>0,\:t_{k+1}^{(l)}\leq 0}}e^{-\nu(v_k^{(l)})t_k^{(l)}}f^{(n)}_0(x_k^{(l)}-t_k^{(l)}v_k^{(l)},v_k^{(l)})\:d\Sigma^k_{l}
\end{split}
\end{equation}
and
\begin{equation}\label{Rp}
R_p\pa{f^{(n)}}(t,x,v) = \sum\limits_{i=0}^p \sum\limits_{l\in\vartheta_p(i)}(1-\alpha)^i\Delta_n^{p-i}\int_{\prod\limits_{1\leq j \leq p}\mathcal{V}^{(l)}_j}\mathbf{1}_{\br{t_{p+1}^{(l)}>0}} f^{(n)}(t_p^{(l)},x_p^{(l)},v_p^{(l)})\:d\Sigma^p_{l},
\end{equation}
where we defined $t_{p+1}^{(l)} = t_p^{(l)} - t_{min}(x_p^{(l)},v_p^{(l)})$ and also by convention $l \in \vartheta_p(0)$ means that $l=0$.
\end{lemma}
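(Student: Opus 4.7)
The proof goes by induction on $p\geq 1$, the base case being nothing other than the single-rebound expansion $\eqref{representationfn1}$ together with the explicit formulas $\eqref{I1}$--$\eqref{R1}$. Matching those with the compact notation of the lemma is a direct verification: the $k=0$ slice of $\eqref{Ip}$ reproduces the first line of $\eqref{I1}$ (zero rebounds, no specular nor diffusive sum); the $(k,i)=(1,1)$ slice with $l\in\vartheta_1(1)$ the identity reproduces the specular one-rebound line (prefactor $(1-\alpha)$, no integration variable, $v_1^{(l)}=V_1(x,v)$); and the $(k,i)=(1,0)$ slice with $l=\emptyset\in\vartheta_1(0)$ reproduces the diffusive one-rebound integral (prefactor $\Delta_n$ and one integration over $v_{1*}\in\mathcal{V}^{(l)}_1$). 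The term $R_1$ is handled analogously.

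\textbf{Inductive step.} Assume $\eqref{Ip}$ and $\eqref{Rp}$ hold at rank $p$. In each summand of $R_p(f^{(n)})$ the function $f^{(n)}$ is evaluated at $(t_p^{(l)},x_p^{(l)},v_p^{(l)})$ on the event $\{t_{p+1}^{(l)}>0\}$, i.e.\ at a point whose backward trajectory has already undergone $p$ rebounds and is on its way to rebound once more. Apply the single-rebound identity $\eqref{representationfn1}$ (equivalently $\eqref{fncharacteristics}$) to each such evaluation: this splits the term into three sub-terms. The first involves $f_0^{(n)}$ on the event $\{t_{p+1}^{(l)}>0,\,t_{p+2}^{(l)}\leq 0\}$ and is therefore absorbed into the $k=p+1$ slice of $I_{p+1}$. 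The second carries a prefactor $(1-\alpha)$ and, by definition, extends the index set $l\in\vartheta_p(i)$ to $\tilde l\in\vartheta_{p+1}(i+1)$ via $\tilde l(i+1)=p+1$ (a new specular reflection is appended). The third carries a prefactor $\Delta_n$ and an additional integration over $v_{(p+1)*}\in\mathcal{V}^{(\tilde l)}_{p+1}$, with $\tilde l=l$ viewed as an element of $\vartheta_{p+1}(i)$ (the new rebound is diffusive and does not enlarge the image of $l$).

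\textbf{Algebraic check and reorganization.} One then verifies that $d\Sigma^p_l$ updates correctly to $d\Sigma^{p+1}_{\tilde l}$. In the specular sub-term, $\mu(\mathcal{R}_{x_{p+1}}v)=\mu(v)$ and $\nu(\mathcal{R}_{x_{p+1}}v)=\nu(v)$, so $\mu(v_{p+1}^{(\tilde l)})=\mu(v_p^{(l)})$ and the prior Maxwellian ratio is unchanged while the missing exponential $e^{-\nu(v_p^{(l)})(t_p^{(l)}-t_{p+1}^{(l)})}$ is appended. In the diffusive sub-term, the one-step formula introduces an extra $\mu(v_p^{(l)})/\mu(v_{(p+1)*})$ which telescopes against the prior $\mu(v)/\mu(v_p^{(l)})$ to produce $\mu(v)/\mu(v_{p+1}^{(\tilde l)})$ with $v_{p+1}^{(\tilde l)}=v_{(p+1)*}$, while the same exponential is appended. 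Finally, the disjoint decomposition
\[
\vartheta_{p+1}(i')\;=\;\{\tilde l\in\vartheta_{p+1}(i'):\tilde l(i')=p+1\}\;\sqcup\;\vartheta_p(i'),
\]
(either the last rebound is specular or it is diffusive) turns the sum over all specular and diffusive extensions of $l\in\vartheta_p(i)$, $0\leq i\leq p$, into the sum over $l'\in\vartheta_{p+1}(i')$, $0\leq i'\leq p+1$, with the correct combinatorial weight $(1-\alpha)^{i'}\Delta_n^{(p+1)-i'}$. This is exactly $\eqref{Ip}$ and $\eqref{Rp}$ at rank $p+1$.

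\textbf{Main obstacle.} There is no new analytical difficulty beyond the single-rebound formula already derived; the whole task is bookkeeping. The only delicate points are the telescoping of the Maxwellian ratios in the diffusive sub-term (which works because the factor $\mu(v)$ produced by $P_\Lambda$ in $\eqref{fncharacteristics}$ exactly matches the $1/\mu(v_{k*})$ already present in the previous iteration's measure) and the identification of the combinatorial labelling of rebound patterns with $\vartheta_p(i)$. Once these two points are fixed, the induction closes immediately.
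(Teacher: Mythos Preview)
Your proof is correct and follows essentially the same inductive route as the paper: apply the single-rebound expansion $\eqref{representationfn1}$ at $(t_p^{(l)},x_p^{(l)},v_p^{(l)})$, telescope the Maxwellian ratios in the diffusive piece, and reorganize via the partition $\vartheta_{p+1}(i')=\{\tilde l:\tilde l(i')=p+1\}\sqcup\vartheta_p(i')$. The only cosmetic difference is that the paper first establishes $\eqref{Rp}$ by induction and then reads off $\eqref{Ip}$ as the accumulated $\{t_{k+1}\leq 0\}$ contributions, whereas you carry both formulas through the induction simultaneously.
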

\bigskip

\begin{proof}[Proof of Lemma \ref{lem:IpRp}]
The proof is done by induction on $p$ and we start with the formula for $R_p$.
\par By definition of $R_1\pa{f^{(n)}}(t,x,v)$ $\eqref{R1}$, the property holds for $p=1$ since on the pure reflection part $\mu(v) = \mu(v_1)$ and $d\sigma_{x_1}$ is a probability measure.
\par Suppose that the property holds at $p\geq 1$. Then we can apply the property $\eqref{representationfn1}$ at rank one to $f^{(n)}(t_p^{(l)},x_p^{(l)},v_p^{(l)})$. In other terms this amounts to applying the preservation of $e^{\nu(v)t}f^{(n)}(t,x,v)$ along characteristics and to keep only the contribution of trajectories still able to generate rebounds. Using the notations $t_{p+1}^{(l)} = t_p^{(l)} - t_{min}(x_p^{(l)},v_p^{(l)})$, $x_{p+1}^{(l)}=X_1(x_p^{(l)},v_p^{(l)})$, and the definition $\eqref{R1}$, it reads
\begin{equation*}
\begin{split}
&R_1\pa{f^{(n)}}(t_p^{(l)},x_p^{(l)},v_p^{(l)})
\\&= (1-\alpha)\:\mathbf{1}_{\br{t_2(t_p^{(l)},x_p^{(l)},v_p^{(l)}) > 0}} e^{-\nu(v_p^{(l)})(t_p^{(l)}-t_{p+1}^{(l)})}f^{(n)}(t_{p+1}^{(l)},x_{p+1}^{(l)},V_1(x_p^{(l)},v_p^{(l)}))
\\&\quad+\Delta_n\mu(v_p^{(l)})e^{-\nu(v_p^{(l)})(t_p^{(l)}-t_{p+1}^{(l)})}\int_{\mathcal{V}^{(l)}_{p+1}}\mathbf{1}_{\br{t_2(t_p^{(l)},x_p^{(l)},v_{(p+1)*)} > 0}} \frac{d\sigma_{x_{p+1}^{(l)}}}{\mu(v_{(p+1)*})}f(t_{p+1}^{(l)},x_{p+1}^{(l)},v_{(p+1)*}).
\end{split}
\end{equation*}
Since $\mu$ only depends on the norm and since $d\sigma_{x_{p+1}^{(l)}}$ is a probability measure, the specular part above can be rewritten as
\begin{equation*}
\begin{split}
(1-\alpha) \mu(v_p^{(l)})\int_{\mathcal{V}^{(l)}_{p+1}}&\mathbf{1}_{\br{t_2 > 0}}\frac{e^{-\nu(v_p^{(l)})(t_p^{(l)}-t_{p+1}^{(l)})}}{\mu(V_1(x_p^{(l)},v_p^{(l)}))} f^{(n)}(t_{p+1}^{(l)},x_{p+1}^{(l)},V_1(x_p^{(l)},v_p^{(l)}))\:d\sigma_{x_{p+1}^{(l)}}(v_{(p+1)*}).
\end{split}
\end{equation*}

\bigskip
For each $l$ in $\vartheta_p(i)$ we can generate $l_1$ in $\vartheta_{p+1}(i+1)$ with $l_1(p+1)=p+1$ (representing the specular reflection case) and  $l_2 = l$ in $\vartheta_{p+1}(i)$. Plugging $R_1\pa{f^{(n)}}(t_p^{(l)},x_p^{(l)},v_p^{(l)})$ into $R_p\pa{f^{(n)}}(t,x,v)$ we obtain for each $l$ the desired integral for $l_1$ and $l_2$
$$\mathcal{I}_{l_{1,2}} = \int_{\prod\limits_{1\leq j \leq p+1}\mathcal{V}_j^{(l_{1,2})}}\mathbf{1}_{\br{t_{p+1}^{(l_{1,2})}>0}} f^{(n)}(t_{p+1}^{(l_{1,2})},x_{p+1}^{(l_{1,2})},v_{p+1})\:d\Sigma^p_{l_{1,2}}\pa{v_{1*},\dots,v_{(p+1)*}}.$$
\par Our computations thus lead to
\begin{equation*}
R_p\pa{f^{(n)}}(t,x,v) = \sum\limits_{i=0}^p \sum\limits_{\overset{l\in\vartheta_{p+1}(i+1)}{l(i+1)=p+1}}(1-\alpha)^{i+1}\Delta_n^{p-i} \mathcal{I}_l + \sum\limits_{i=0}^p \sum\limits_{\overset{l\in\vartheta_{p+1}(i)}{l(i)\neq p+1}}(1-\alpha)^{i}\Delta_n^{p+1-i} \mathcal{I}_l
\end{equation*}
which can be rewritten as
\begin{equation*}
R_p\pa{f^{(n)}}(t,x,v)= \sum\limits_{i=1}^{p+1} \sum\limits_{\overset{l\in\vartheta_{p+1}(i)}{l(i)=p+1}}(1-\alpha)^{i}\Delta_n^{p+1-i} \mathcal{I}_l + \sum\limits_{i=0}^p \sum\limits_{\overset{l\in\vartheta_{p+1}(i)}{l(i)\neq p+1}}(1-\alpha)^{i}\Delta_n^{p+1-i} \mathcal{I}_l
\end{equation*}
For $i=0$ there can be no $l$ such that $l(i)=p+1$ and for $i=p+1$ the only $l$ in $\vartheta_{p+1}(i)$ is the identity and so there is no $l$ such that $l(i)\neq p+1$. In the end, for $0\leq i \leq p+1$, we are summing exactly once every function $l$ in $\vartheta_{p+1}(i)$. This concludes the proof of the lemma for $R_p$.

\bigskip
At last, $I_p$ could be derived explicitely by the same kind of induction. However, $I_p$ contains all the contributions from characteristics reaching $\br{t=0}$ in at most $p$ collisions against the boundary. It follows that $I_p$ is the sum of all the possible $R_k$ with $k$ from $0$ to $p$ such that $\mathbf{1}_{\br{t_{k+1}\leq 0}}$ to which we apply the preservation of $e^{\nu(v)t}f^{(n)}(t,x,v)$ along the backward characteristics starting at $(t_k^{(l)},x_k^{(l)},v_k^{(l)})$ up to $t$. And since $d\sigma_{x_{k+1}}(v_{(k+1)*})\dots d\sigma_{x_{p}}(v_{p*})$ is a probability measure on $\R^{3(p-k)}$ we can always have an integral against 
$$d\sigma_{x_1}(v_{1*})\dots d\sigma_{x_{p}}(v_{p*}).$$
This concludes the proof for $I_p$.
\end{proof}
\bigskip


\subsubsection{Step 2: Estimates on the operators $I_p$ and $R_p$.}\label{subsubsec:IpRp}

The next two lemmas give estimates on the operator $I_p$ and $R_p$. Note that we gain a weight of $\nu(v)$ which will be of great importance when dealing with the bilinear operator.

\bigskip
\begin{lemma}\label{lem:controlIp}
There exists $C_m>0$ only depending on $m$ such that for all $p\geq 1$ and all $h$ in $L^\infty_{x,v}(m)$,
$$\norm{I_p(h)(t)}_{L^\infty_{x,v}(m)} \leq p C_m e^{-\nu_0 t}\norm{h}_{L^\infty_{x,v}(m)}.$$
Moreover we also have the following inequality for all $(t,x,v)$ in $\R^+\times\bar{\Omega}\times\R^3$
$$m(v)\abs{I_p(h)}(t,x,v) \leq p C_m\pa{\nu(v)e^{-\nu(v)t}+e^{-\nu_0 t}}\norm{h}_{L^\infty_{x,v}(m\nu^{-1})}.$$
\end{lemma}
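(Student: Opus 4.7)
My plan is to bound $I_p(h)$ through the explicit formula of Lemma \ref{lem:IpRp}, term by term. The key observation preventing a combinatorial blow-up is that summing the coefficients gives
$$\sum_{i=0}^k\sum_{l\in\vartheta_k(i)}(1-\alpha)^i\Delta_n^{k-i}=((1-\alpha)+\Delta_n)^k=(1-\alpha/n)^k\leq 1,$$
so it suffices to produce a uniform bound on each individual term $\mathcal{I}_k^{(l)}$; summing over $k\in\{0,\dots,p\}$ then yields the linear factor $p+1\leq pC_m$.

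The first step is to telescope the exponentials using $\nu(\cdot)\geq\nu_0$ and the identity $t_k^{(l)}+\sum_{j=0}^{k-1}(t_j^{(l)}-t_{j+1}^{(l)})=t$, which gives the uniform bound $e^{-\nu_0 t}$ for the exponential product in $d\Sigma_l^k$. When $l=\mathrm{id}$ (all-specular, $i=k$), the modulus is preserved at every specular reflection so every $|v_j^{(l)}|=|v|$; the exponential product then collapses exactly to $e^{-\nu(v)t}$, and the factors $\mu(v_k^{(l)})$, $m(v_k^{(l)})$ equal their value at $v$. A direct pointwise bound yields
$$m(v)\bigl|\mathcal{I}_k^{(\mathrm{id})}\bigr|\leq e^{-\nu(v)t}\|h\|_{L^\infty_{x,v}(m)}\leq e^{-\nu_0 t}\|h\|_{L^\infty_{x,v}(m)},$$
and for the $m\nu^{-1}$ norm, $m(v)|\mathcal{I}_k^{(\mathrm{id})}|\leq\nu(v)e^{-\nu(v)t}\|h\|_{L^\infty_{x,v}(m\nu^{-1})}$. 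This is the sole origin of the $\nu(v)e^{-\nu(v)t}$ contribution in the second estimate.

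For $l\neq\mathrm{id}$, I would introduce the largest diffusive index $j_0\in\{1,\dots,k\}\setminus l(\{1,\dots,i\})$. Since the specular chain from $j_0$ to $k$ preserves the modulus, $\mu(v_k^{(l)})=\mu(v_{j_0*})$, $m(v_k^{(l)})=m(v_{j_0*})$, $\nu(v_k^{(l)})=\nu(v_{j_0*})$. After bounding the indicator by $1$, the exponentials by $e^{-\nu_0 t}$, and $|h(\cdot,v_k^{(l)})|$ pointwise by $\|h\|/m(v_{j_0*})$ (respectively $\|h\|\nu(v_{j_0*})/m(v_{j_0*})$), the integrand depends only on the single variable $v_{j_0*}$; the other $d\sigma_{x_j}$ are probability measures on $\mathcal{V}_j^{(l)}$ and integrate to $1$. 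Integration against $d\sigma_{x_{j_0}}=c_\mu\mu(v_{j_0*})|v_{j_0*}\cdot n(x_{j_0})|dv_{j_0*}$ precisely absorbs the bad factor $\mu(v_{j_0*})^{-1}$, and combined with $m(v)\mu(v)\leq\|m\mu\|_{L^\infty}$ yields an integral $c_\mu\int(1+|w|)\nu(w)^\epsilon m(w)^{-1}dw$ with $\epsilon\in\{0,1\}$, finite by assumption \eqref{assumptionm} (for $\epsilon=0$ one uses $\nu\geq\nu_0>0$). Hence $m(v)|\mathcal{I}_k^{(l)}|\leq C\,e^{-\nu_0 t}\|h\|$ in both functional settings.

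Combining the two cases, summing over $l$ and over $k$ via the coefficient identity above, and absorbing the $(p+1)$ into $pC_m$ for $p\geq 1$ yields both announced inequalities. The main technical obstacle is the bookkeeping in the diffusive case: verifying that after the pointwise bounds the only integration variable that actually remains is $v_{j_0*}$, and that $d\sigma_{x_{j_0}}$ is exactly the measure needed to cancel $1/\mu(v_k^{(l)})$. The all-specular case is not amenable to this trick (there is no diffusive measure to integrate against), which explains both why it must be handled separately and why it alone produces the $\nu(v)e^{-\nu(v)t}$ term of the refined second inequality.
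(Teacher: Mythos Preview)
Your proof is correct and follows essentially the same approach as the paper: both split according to whether the final velocity $v_k^{(l)}$ traces back through a chain of pure specular reflections to $v$ (your $l=\mathrm{id}$ case, the paper's $J=0$) or to the last diffusive variable $v_{j_0*}$ (the paper's $v_{J*}$), bound the exponential product by $e^{-\nu(v)t}$ or $e^{-\nu_0 t}$ respectively, integrate out the remaining probability measures, and then use the binomial identity $\sum_{i,l}(1-\alpha)^i\Delta_n^{k-i}=(1-\alpha/n)^k\leq 1$ so that the sum over $k$ contributes only the factor $p+1$. The only cosmetic difference is that the paper presents the argument for the second inequality and remarks that the first is identical, while you treat both in parallel via the exponent $\epsilon\in\{0,1\}$.
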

\bigskip

\begin{proof}[Proof of Lemma \ref{lem:controlIp}]
We only prove the second inequality as the first one follows exactly the same computations without multiplying and dividing by $\nu(v_k^{(l)})$.
\par Bounding by the $L^\infty_{x,v}(m\nu^{-1})$-norm out of the definition $\eqref{Ip}$ gives
\begin{equation}\label{ineqI1}
\begin{split}
\abs{I_p(h)(t,x,v)}\leq \norm{h}_{L^\infty_{x,v}(m\nu^{-1})}&\sum\limits_{k=0}^p\sum\limits_{i=0}^k (1-\alpha)^i\Delta_n^{k-i}
\\&\:\sum\limits_{l\in\vartheta_k(i)}\int_{\prod\limits_{1\leq j \leq p}\mathcal{V}_j^{(l)}}\mathbf{1}_{\br{t_k^{(l)}>0,\:t_{k+1}^{(l)}\leq 0}} \frac{\nu(v_k^{(l)})}{m(v_k^{(l)})}e^{-\nu(v_k^{(l)})t_k^{(l)}}\:d\Sigma^k_{l}.
\end{split}
\end{equation}
Fix $k$, $i$ and $l$. Then by definition of $(v_k^{(l)})$: either $v_k^{(l)}=V_1(\dots (V_1(x,v))))$ $k$ iterations (case of $k$ specular reflections which means that $l$ is the identity) or there exists $J$ in $\br{1,\dots,p}$ such that $v_k^{(l)}=V_1(\dots (V_1(x_j,v_{J*}))))$ $k-J$ iterations. Since $m$, $\nu$ and $\mu$ are radially symmetric this yields
\begin{equation}\label{ineqI1sigma}
\begin{split}
&\int_{\prod\limits_{1\leq j \leq p}\mathcal{V}_j^{(l)}} \frac{1}{m(v_k^{(l)})}e^{-\nu(v_k^{(l)})t_k^{(l)}}\:d\Sigma^k_{l} 
\\&\quad\quad\quad= \int_{\prod\limits_{1\leq j \leq J}\mathcal{V}_j^{(l)}} \frac{\mu(v)\nu(v_k^{(l)})}{m(v_{J*})\mu(v_{J*})}\cro{\prod\limits_{j=0}^{k-1}e^{-\nu(v_j^{(l)})(t_j^{(l)}-t_{j+1}^{(l)})}}e^{-\nu(v_J^{(l)})t_k^{(l)}}\:d\sigma_{x_1}\dots d\sigma_{x_J}.
\end{split}
\end{equation}
We use the convention that $v_{0*}=v$ so that this formula holds in both cases.
\par In the case $J=0$, all the collisions against the boundary were specular reflections and so for any $j$, $v_j^{(l)}$ is a rotation of $v$ and $t_k^{(l)}$ does not depend on any $v_{j*}$. As $\nu$ is rotation invariant the exponential decay inside the integral is exactly $e^{-\nu(v)(t-t_k^{(l)})}e^{-\nu(v)t_k^{(l)}}$. The $d\sigma_{x_j}$ are probability measures and therefore in the case when $J$ is zero
$$\eqref{ineqI1sigma} = \frac{\nu(v)}{m(v)}e^{-\nu(v)t}.$$
\par In the case $J\neq 0$ we directly bound the exponential decay by $e^{-\nu_0 t}$ and integrate all the variable but $v_{J*}$. Therefore, by definition $\eqref{dsigmax}$ of $d\sigma_x$
$$\eqref{ineqI1sigma} \leq c_\mu e^{-\nu_0 t} \mu(v)\int_{v_{J*}\cdot n(x^{(l)}_{J})}\frac{\nu(v_{J*})}{m(v_{J*})}\abs{v_{J*}\cdot n(x^{(l)}_{J})}dv_{J*}\leq \frac{C_m}{m(v)} e^{-\nu_0 t},$$
where we used the boundedness and integrability assumptions on $m$ $\eqref{assumptionm}$.

\bigskip
To conclude we plug our upper bounds on $\eqref{ineqI1sigma}$ inside $\eqref{ineqI1}$ and use 
$$\sum\limits_{k=0}^p\sum\limits_{i=0}^k\sum\limits_{l\in\vartheta_k(i)} (1-\alpha)^i\Delta_n^{k-i} = \sum\limits_{k=0}^{p} \pa{1-\frac{\alpha}{n}}^k$$
to finally get
$$m(v)\abs{I_p(h)(t,x,v)}\leq p\cro{\nu(v)e^{-\nu(v)t}+C_me^{-\nu_0t}}\norm{h}_{L^\infty_{x,v}(m\nu^{-1})}$$
which concludes the proof.
\end{proof}
\bigskip

The estimate we derive on $R_p$ needs to be more subtle. The main idea behind it is to differentiate the case when the characteristics come from a majority of pure specular reflections, and therefore has a small contribution because of the multiplicative factor $(1-\alpha)^{k}$, from the case when they come from a majority of diffusions, and therefore has a small contribution because of the small number of such possible composition of diffusive boundary condition.

\bigskip
\begin{lemma}\label{lem:controlRp}
There exists $C_m>0$ only depending on $m$ and $\bar{N}$, $\bar{C} >0$ only depending on $\alpha$ and the domain $\Omega$ such that for all $T_0 >0$, if
$$p = \bar{N}\pa{\cro{\bar{C}T_0}+1}$$,
where $\cro{\cdot}$ stands for the floor function; then for all $h = h(t,x,v)$ and for all $t$ in $[0,T_0]$
$$\sup\limits_{s\in[0,t]}\cro{e^{\nu_0 s}\norm{R_p(h)(s)}_{L^\infty_{x,v}(m)} }\leq C_m\pa{\frac{1}{2}}^{\cro{\bar{C}T_0}}\sup\limits_{s \in [0,t]}\cro{e^{\nu_0s}\norm{\mathbf{1}_{\br{t_1>0}}h(s)}_{L^\infty_{\Lambda^+}(m)}}.$$
Moreover, the following inequality holds for all $(t,x,v)$ in $\R^+\times\bar{\Omega}\times\R^3$ and all $\eps$ in $[0,1]$,
\begin{equation*}
\begin{split}
m(v)\abs{R_p(h)}(t,x,v) \leq& C_me^{-\eps\nu_0t}\pa{\frac{1}{2}}^{\cro{\bar{C}T_0}}\pa{\nu(v)e^{-\nu(v)(1-\eps)t}+e^{-\nu_0(1-\eps) t}}
\\&\quad\times\sup\limits_{s \in [0,t]}\cro{e^{\eps\nu_0s}\norm{h(s)}_{L^\infty_{\Lambda^+}(m\nu^{-1})}}.
\end{split}
\end{equation*}
\end{lemma}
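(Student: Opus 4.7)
Following the hint preceding the statement, we split the sum over $i\in\{0,\dots,p\}$ in representation \eqref{Rp} according to whether the backward trajectory has undergone a majority of specular reflections or of diffuse ones, deriving smallness in each regime by a different mechanism. Fix a threshold $i_0=\beta p$, where $\beta\in(1-\alpha,1)$ will be chosen depending only on $\alpha>0$.

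In the specular-dominant regime $i\geq i_0$, the probability measures $d\sigma_{x_j}$ together with the integrability condition \eqref{assumptionm} on $m$ bound each integral by a fixed constant multiple of $\norm{h}_{L^\infty_{\Lambda^+}(m)}$ uniformly in $l$. Summing over $l\in\vartheta_p(i)$, this regime contributes at most
\begin{equation*}
C_m\norm{h}_{L^\infty_{\Lambda^+}(m)}\sum_{i\geq\beta p}\binom{p}{i}(1-\alpha)^i\Delta_n^{p-i}.
\end{equation*}
Since $\beta$ is strictly larger than the mean $1-\alpha$ of the corresponding Bernoulli parameter, a standard Chernoff bound provides exponential decay of this sum in $p$ at a rate depending only on $\alpha$, yielding the required $(1/2)^{\lfloor\bar CT_0\rfloor}$ factor once $p$ is chosen of order $\lfloor\bar CT_0\rfloor$.

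In the diffusive-dominant regime $i<i_0$, there are at least $(1-\beta)p$ diffuse bounces. Following the velocity decomposition of \cite{Gu6,EGKM}, for each diffuse index $j$ we split
\begin{equation*}
\mathcal{V}_j^{(l)}=\mathcal{V}_j^{g,(l)}\cup\mathcal{V}_j^{b,(l)},\quad \mathcal{V}_j^{g,(l)}=\br{v_{j*}:\ \eta_0\leq|v_{j*}|\leq N_0,\ v_{j*}\cdot n(x_j^{(l)})\geq\eta_0},
\end{equation*}
so that the $d\sigma_{x_j}$-measure of $\mathcal{V}_j^{b,(l)}$ is bounded by $\delta_0=\delta_0(\eta_0,N_0)$, which is arbitrarily small. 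A geometric lemma in the $C^1$ bounded domain $\Omega$ provides $\tau_0=\tau_0(\eta_0,N_0)>0$ such that $t_{min}(x,v)\geq\tau_0$ on the good set; hence whenever all diffuse velocities are good the elapsed time between the first and last diffuse bounce exceeds $(1-\beta)p\,\tau_0$, and by choosing $\bar N$ large and $\bar C=[(1-\beta)\tau_0\bar N]^{-1}$ this exceeds $T_0$, annihilating the indicator $\mathbf{1}_{\{t_{p+1}^{(l)}>0\}}$. The remaining contributions, indexed by the non-empty subsets of diffuse indices whose velocity lies in the bad part, are controlled by $[(1+\delta_0)^{(1-\beta)p}-1]\norm{h}\leq C\delta_0 p\norm{h}$ for $\delta_0$ small, and tuning $\delta_0$ produces the final $(1/2)^{\lfloor\bar CT_0\rfloor}$ factor.

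For the refined inequality carrying the weight $\nu(v)$, the first-flight exponential $e^{-\nu(v)(t-t_1)}$ present in every term of \eqref{Rp} is split as $e^{-\eps\nu(v)(t-t_1)}e^{-(1-\eps)\nu(v)(t-t_1)}$ and the analysis mirrors that of Lemma~\ref{lem:controlIp}: either $v$ drives the exponential to produce a factor $\nu(v)e^{-(1-\eps)\nu(v)t}$, or it is bounded by $e^{-(1-\eps)\nu_0 t}$, while the $\eps$-piece combines with the elapsed times to yield the weight $e^{\eps\nu_0 s}$ inside the supremum on $h$. The principal technical obstacle is the geometric lower bound $t_{min}\geq\tau_0$ on the good set for a merely $C^1$ and possibly non-convex $\Omega$: this is addressed by working in a tubular neighborhood of $\partial\Omega$ and tracking the transversal motion of non-grazing, bounded-speed trajectories, in the spirit of \cite{Gu6,Bri6}.
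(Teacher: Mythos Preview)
Your split into specular-dominant ($i\geq\beta p$) and diffusive-dominant ($i<\beta p$) regimes is a reasonable reading of the hint, and the Chernoff argument in the specular regime is correct and elegant: since $\beta>1-\alpha$, the binomial tail $\sum_{i\geq\beta p}\binom{p}{i}(1-\alpha)^i\Delta_n^{p-i}$ decays like $e^{-c_\alpha p}$, which gives the desired $(1/2)^{\lfloor\bar CT_0\rfloor}$ once $p$ is linear in $T_0$. This is genuinely different from the paper, which does not isolate this regime.

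However, your treatment of the diffusive-dominant regime has a real gap. You use the indicator $\mathbf{1}_{\{t_{p+1}^{(l)}>0\}}$ only to annihilate the configuration where \emph{all} diffuse velocities are good, and then bound the remainder by $(1+\delta_0)^{(1-\beta)p}-1\leq C\delta_0 p$. But to make $C\delta_0 p\leq(1/2)^{\lfloor\bar CT_0\rfloor}$ you need $\delta_0$ to decay exponentially in $T_0$; since $\delta_0$ and $\tau_0$ are both controlled by the same good-set parameters $(\eta_0,N_0)$ and are essentially proportional ($\tau_0\sim\eta_0/N_0^2$, $\delta_0\sim\eta_0$), shrinking $\delta_0$ forces $\tau_0\to 0$, which in turn forces $p\gtrsim T_0/\tau_0$ to blow up, destroying the linear relation $p=\bar N(\lfloor\bar CT_0\rfloor+1)$ with $\bar N,\bar C$ independent of $T_0$.

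The missing observation, which is the heart of the paper's argument, is that the indicator does far more than kill the all-good case: if $t_{p+1}^{(l)}>0$ then \emph{at most} $M:=\lfloor T_0/\tau_0\rfloor+1$ of the velocities can lie in the good set, because each good velocity contributes at least $\tau_0$ to the total elapsed time $t-t_p^{(l)}\leq T_0$. Hence at least $(p-i)-M$ of the diffuse integration variables are forced into the bad set, yielding a factor $(C\delta_0)^{(p-i)-M}$ rather than merely $\delta_0$. The paper packages this into a single estimate $F_p(t)\leq 2(1+1/(C\delta))^{M+1}\bigl((1-\alpha)+C\alpha\delta\bigr)^p$ without splitting on $i$ at all: one fixes $\delta$ (independent of $T_0$) so that $(1-\alpha)+C\alpha\delta=\alpha_0<1$, then $N$ so that $(1+1/(C\delta))\alpha_0^N\leq 1/2$, then $p=N(M+1)$. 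Your two-regime decomposition can be rescued by inserting this counting argument in the diffusive half, but as written the bound $(1+\delta_0)^{(1-\beta)p}-1$ is too crude to close.
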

\bigskip

\begin{proof}[Proof of Lemma \ref{lem:controlRp}]
Let $(t,x,v)$ in $\R^\times\Omega\times\R^3$. Again, we shall only prove the second inequality, the first one being dealt with exactly the same way.
\par  First, the exponential decay inside $d\Sigma_l^p$ (see Lemma \ref{lem:IpRp}) is bounded by $e^{-\nu_0(t-t_p^{(l)})}$ if there is at least one diffusion or by $e^{-\nu(v)(t-t_p^{(l)})}$ if only specular reflections occur in the $p$ rebounds (because then the reflection preserves $\abs{v}$ and $\nu$ only depends on the norm ), that is $i=p$ and $l=\mbox{Id}$. Second, by definition of $(t_{k}^{(l)},x_k^{(l)},v_k^{(l)})$ (see Lemma \ref{lem:IpRp}) we can bound
\begin{eqnarray*}
\mathbf{1}_{\br{t_{p+1}^{(l)}>0}}\abs{h(t_p^{(l)},x_p^{(l)},v_p^{(l)})}m(v_p^{(l)}) &=& \mathbf{1}_{\br{t_1(t_p^{(l)},x_p^{(l)},v_p^{(l)})>0}}\abs{h(t_p^{(l)},x_p^{(l)},v_p^{(l)})}m(v_p^{(l)})
\\&\leq& \mathbf{1}_{\br{t_p^{(l)}>0}}\nu(v_p^{(l)})\norm{\mathbf{1}_{\br{t_1>0}}h(t_p^{(l)})}_{L^\infty_{\Lambda^+}(m\nu^{-1})}.
\end{eqnarray*}
We thus obtain the following bound
\begin{equation*}
\begin{split}
\abs{R_p(h)(t,x,v)}\leq& \sum\limits_{i=0}^{p_1} \sum\limits_{l\in\vartheta_p(i)}(1-\alpha)^i\Delta_n^{p-i}\int_{\prod\limits_{1\leq j \leq p}\mathcal{V}_j^{(l)}} \mathbf{1}_{\br{t_p^{(l)}>0}}\frac{\mu(v)\nu(v_p^{(l)})}{m(v_p^{(l)})\mu(v_p^{(l)})}
\\&\quad\quad\quad\times e^{-\nu_0(t-t_p^{(l)})}\norm{\mathbf{1}_{\br{t_1>0}}h(t_p^{(l)})}_{L^\infty_{\Lambda^+}(m\nu^{-1})}\:d\sigma_{x_1^{(l)}}\dots d\sigma_{x_p^{(l)}}
\\& +(1-\alpha)^p\mathbf{1}_{\br{t_p^{(\mbox{\scriptsize{Id}})}>0}}\nu(v)e^{-\nu(v)(t-t_p^{(\mbox{\scriptsize{Id}})})}\norm{\mathbf{1}_{\br{t_1>0}}h(t_p^{(\mbox{\scriptsize{Id}})})}_{L^\infty_{\Lambda^+}(m\nu^{-1})}.
\end{split}
\end{equation*}
Which implies for $0\leq \eps \leq 1$
\begin{equation}\label{ineqR1}
\begin{split}
&e^{\eps\nu_0t}m(v)\abs{R_p(h)(t,x,v)}
\\&\quad\quad \leq \pa{\sum\limits_{i=0}^p \sum\limits_{l\in\vartheta_p(i)}(1-\alpha)^i\Delta_n^{p-i}\int_{\prod\limits_{1\leq j \leq p}\mathcal{V}_j^{(l)}} \mathbf{1}_{\br{t_p^{(l)}>0}}\frac{\mu(v)m(v)}{m(v_p^{(l)})\mu(v_p^{(l)})}d\sigma_{x_1^{(l)}}.. d\sigma_{x_p^{(l)}}}
\\&\quad\quad\quad\quad\times\pa{\nu(v)e^{-\nu(v)(1-\eps)t}+e^{-\nu_0(1-\eps)t}}\sup\limits_{s \in [0,t]}\cro{e^{\eps\nu_0s}\norm{\mathbf{1}_{\br{t_1>0}}h(s)}_{L^\infty_{\Lambda^+}(m\nu^{-1})}}.
\end{split}
\end{equation}

\bigskip
By definition, Lemma \ref{lem:IpRp}, $t_p^{(l)} = t_p^{(l)}(t,x,v,v^{(l)}_1,v^{(l)}_2,\dots,v_p^{(l)})$ and thus for all $j$ in $\br{1,\dots,p}$,
$$\mathbf{1}_{\br{t_p^{(l)}>0}}\leq \mathbf{1}_{\br{t_{p-j}^{(l)}>0}}.$$
Following the reasoning of the proof of Lemma \ref{lem:controlIp}, for fixed $i$ and $l$, there exists $J$ in $\br{0,\dots,p}$ such that $v_p^{(l)}=V_1(\dots (V_1(x_J,v_{J*}))))$ $p-J$ iterations, with the convention that $v_{0*}=v$. The measures $d\sigma_x$ are probability measures and the functions $m$, $\nu$ and $\mu$ are rotation invariant. Therefore
\begin{equation*}
\begin{split}
&\int_{\prod\limits_{1\leq j \leq p}\mathcal{V}_j^{(l)}} \mathbf{1}_{\br{t_p^{(l)}>0}}\frac{\mu(v)m(v)}{m(v_p^{(l)})\mu(v_p^{(l)})}d\sigma_{x_1^{(l)}}\dots d\sigma_{x_p^{(l)}} 
\\&\:\leq \int_{\mathcal{V}_j^{(l)}}\frac{\mu(v)m(v)}{m(v_{J*}^{(l)})\mu(v_{J*}^{(l)})}\:d\sigma_{x_{J*}^{(l)}}(v_{j*})\pa{\int_{\prod\limits_{1\leq j \leq J-1}\br{v_{j*}\cdot n(x^{(l)}_j)>0}} \mathbf{1}_{\br{t_{(J-1)*}^{(l)}>0}}d\sigma_{x_1^{(l)}}.. d\sigma_{x_{J-1}^{(l)}}}.
\end{split}
\end{equation*}
In the case $J=0$ we have $v_{J*}^{(l)}=v$ and therefore the above is exactly one. In the case $J\geq 1$, assumption $\eqref{assumptionm}$ on $m$ implies that the integral over $v_{J*}^{(l)}$ is bounded uniformly by $C_m$. So we have
\begin{equation}\label{ineqR1sigma}
\begin{split}
&\int_{\prod\limits_{1\leq j \leq p}\mathcal{V}_j^{(l)}} \mathbf{1}_{\br{t_p^{(l)}>0}}\frac{\mu(v)m(v)}{m(v_p^{(l)})\mu(v_p^{(l)})}\:d\sigma_{x_1^{(l)}}.. d\sigma_{x_p^{(l)}}\leq C_m\int_{\prod\limits_{1\leq j \leq J-1}\mathcal{V}_j^{(l)}} \mathbf{1}_{\br{t_{(J-1)*}^{(l)}>0}}d\sigma_{x_1^{(l)}}.. d\sigma_{x_{J-1}^{(l)}}.
\end{split}
\end{equation}
Plugging $\eqref{ineqR1sigma}$ into $\eqref{ineqR1}$ gives
\begin{equation}\label{finalRp}
\begin{split}
e^{\eps\nu_0t}m(v)\abs{R_p(h)(t,x,v)} \leq& C_mF_p(t)\pa{\nu(v)e^{-\nu(v)(1-\eps)t}+e^{-\nu_0(1-\eps)t}}
\\&\quad\quad\quad\times\sup\limits_{s \in [0,t]}\cro{e^{\nu_0s}\norm{\mathbf{1}_{\br{t_1>0}}h(s)}_{L^\infty_{\Lambda^+}(m\nu^{-1})}}
\end{split}
\end{equation}
with
$$F_p(t) = \sup\limits_{x,v}\cro{\sum\limits_{i=0}^p \sum\limits_{l\in\vartheta_p(i)}(1-\alpha)^i\Delta_n^{p-i}\int_{\prod\limits_{1\leq j \leq J-1}\mathcal{V}_j^{(l)}} \mathbf{1}_{\br{t_{(J-1)*}^{(l)}>0}}d\sigma_{x_1^{(l)}}.. d\sigma_{x_{J-1}^{(l)}}}.$$

\bigskip
It remains to prove an upper bound on $F_p(t)$ for $0\leq t \leq T_0$ when $T_0$ and $p$ are large. Let $T_0>0$, $p$ in $\N$ and $0<\delta<1$ to be determined later.
\par For any given $i$ in $\br{1,\dots,p}$ and $l$ in $\vartheta_p(i)$ we define the non-grazing sets for all $j$ in $\br{1,\dots,p}$ as
$$\Lambda^{(l),\delta}_j =  \br{v_j^{(l)}\cdot n(x_j^{(l)})\geq \delta} \cap \br{\abs{v_j^{(l)}}\leq \frac{1}{\delta}}.$$
By definition of the backward characteristics we have $x_j^{(l)}-x_{j+1}^{(l)} = (t_{j}^{(l)}-t_{j+1}^{(l)})v_{j}^{(l)}$. Since $\Omega$ is a $C^1$ bounded it is known \cite[Lemma 2]{Gu6} that there exists $C_\Omega>0$ such that
$$\forall v_{j}^{(l)} \in \Lambda^{(l),\delta}_j,\quad \abs{t_{j}^{(l)}-t_{j+1}^{(l)}} \geq \frac{\abs{v_{j}^{(l)}\cdot n(x_{j}^{(l)})}}{C_\Omega \abs{v_{j}^{(l)}}} \geq \frac{\delta^3}{C_\Omega}.$$
Therefore, for $t$ in $[0,T_0]$, if $t^{(l)}_{(J-1)*}(t,x,v,v^{(l)}_1,v^{(l)}_2,\dots,v_{J-1}^{(l)})>0$ then there can be at most $\cro{C_\Omega T_0 \delta^{-3}}+1$ velocities $v^{(l)}_j$ in $\Lambda^{(l),\delta}_j$. Among these, we have exactly $k$ velocities $v^{(l)}_j$ that are integration variables $v_{j*}$ and the rest are specular reflections. Since $i$ represents the total number of specular reflections, it remains exactly $p-i-k$ integration variables that are not in any $\Lambda^{(l),\delta}_j$. Recalling that $d\sigma_x$ is a probability measure, if $v_j^{(l)}$ is a specular reflection we bound the integral in $v_{j*}$ by one. All these thoughts yield
\begin{equation*}
\begin{split}
F_p(t) \leq& \sup\limits_{x,v}\Big[\sum\limits_{i=0}^p \sum\limits_{l\in\vartheta_p(i)}(1-\alpha)^i\Delta_n^{p-i} 
\\&\quad\quad\times\sum\limits_{j=0}^{\cro{\frac{C_\Omega T_0}{\delta^{3}}}+1}\sum\limits_{k=0}^j\int_{\br{\begin{array}{l} \mbox{exactly $k$ of $v_* \in \Lambda^{(l),\delta}$,} \\ \mbox{$j-k$ of specular in $\Lambda^{(l),\delta}$,} \\ \mbox{$p-i-k$ of $v_*$ not in $\Lambda^{(l),\delta}$}\end{array}}}\:\prod\limits_{1\leq m \leq J-1} d\sigma_{x_m^{(l)}}(v_{m*})\Big]
\\\leq & \sum\limits_{i=0}^p \sum\limits_{l\in\vartheta_p(i)}(1-\alpha)^i\Delta_n^{p-i}\sum\limits_{j=0}^{\cro{\frac{C_\Omega T_0}{\delta^{3}}}+1}{j \choose k}
\\&\times\sum\limits_{k=0}^j\pa{\sup\limits_{t,x,v,i,l,j}\int_{\Lambda^{(l),\delta}}d\sigma_{x_j^{(l)}}(v_*)}^{k}\pa{\sup\limits_{t,x,v,i,l,j}\int_{v_*\notin\Lambda^{(l),\delta}}d\sigma_{x_j^{(l)}}(v_*)}^{p-i-k}.
\end{split}
\end{equation*}

\par In what follows we denote by $C$ any positive constant independent of $t$, $x$, $v$, $i$, $l$ and $j$. We bound first
$$\int_{v_*\notin\Lambda^{(l),\delta}}d\sigma_{x_j^{(l)}}(v_*) \leq \int_{0<v_*\cdot n(x_j^{(l)})\leq \delta}d\sigma_{x_j^{(l)})} + \int_{\abs{v_*}\geq \delta^{-1}}d\sigma_{x_j^{(l)})} \leq C\delta$$
and second we bound by one the integrals on $\Lambda^{(l),\delta}$. With $C\delta <1$ we end up with
\begin{eqnarray*}
F_p(t) &\leq& \sum\limits_{i=0}^p \sum\limits_{l\in\vartheta_p(i)}(1-\alpha)^i(C\Delta_n\delta)^{p-i}\sum\limits_{j=0}^{\cro{\frac{C_\Omega T_0}{\delta^{3}}}+1}\sum\limits_{k=0}^j {j \choose k} (C\delta)^{-k}
\\ &\leq& \sum\limits_{i=0}^p \sum\limits_{l\in\vartheta_p(i)}(1-\alpha)^i(C\Delta_n\delta)^{p-i}\sum\limits_{j=0}^{\cro{\frac{C_\Omega T_0}{\delta^{3}}}+1}\pa{1+\frac{1}{C\delta}}^j
\\&\leq& C\delta\pa{1+\frac{1}{C\delta}}^{\cro{\frac{C_\Omega T_0}{\delta^{3}}}+2}\sum\limits_{i=0}^p {p \choose i}(1-\alpha)^i(C\alpha\delta)^{p-i}
\\&\leq& 2\pa{1+\frac{1}{C\delta}}^{\cro{\frac{C_\Omega T_0}{\delta^{3}}}+1}\pa{(1-\alpha) + C\delta\alpha}^p.
\end{eqnarray*}

\par Since $\alpha>0$ we can choose $\delta>0$ small enough such that $(1-\alpha) + C\delta\alpha = \alpha_0 <1$. Then choose $N$ in $\N$ large enough such that
$$\pa{1+\frac{1}{C\delta}}\alpha_0^N \leq \frac{1}{2}.$$
Finally choose $p = N(\cro{\frac{C_\Omega T_0}{\delta^{3}}}+1)$. It follows that
$$F_p(t) \leq 2\cro{\pa{1+\frac{1}{C\delta}}\alpha_0^N}^{\cro{\frac{C_\Omega T_0}{\delta^{3}}}+1}\leq \pa{\frac{1}{2}}^{\cro{\frac{C_\Omega T_0}{\delta^{3}}}}.$$
This inequality with $\eqref{finalRp}$ concludes the proof of the lemma.
\end{proof}
\bigskip


\subsubsection{Step 3: Exponential decay and convergence of $f^{(n)}$.}\label{subsubsec:CVfn}

Fix $T_0 >0$ to be chosen later and choose $p=p_R(T_0)$ defined in Lemma \ref{lem:controlRp}. We have that for all $n$ in $\N$,
\begin{itemize}
\item by $\eqref{representationfn1}$, for every $(t,x,v)$ in $\R^+\times\bar{\Omega}\times\R^3$,
$$\mathbf{1}_{\br{t_1(t,x,v)}\leq 0}f^{(n)}(t,x,v)= e^{-\nu(v)t}f^{(n)}_0(x-tv,v)$$
 and hence
\begin{equation}\label{fnt1<0}
\sup\limits_{s\in [0,t]}\cro{e^{\nu_0 s}\norm{\mathbf{1}_{\br{t_1(t,x,v)}\leq 0}f^{(n)}(t,x,v)}_{L^\infty_{x,v}\pa{m}}} \leq \norm{f_0^{(n)}}_{L^\infty_{x,v}(m)} \leq \norm{f_0}_{L^\infty_{x,v}(m)};
\end{equation}
\item by Lemmas \ref{lem:IpRp}, \ref{lem:controlIp} and \ref{lem:controlRp}, for every $(t,x,v)$ in $[0,T_0]\times\bar{\Omega}\times\R^3$
\begin{equation}\label{fnt1>0}
\begin{split}
&\sup\limits_{s\in [0,t]}\cro{e^{\nu_0 s}\norm{\mathbf{1}_{\br{t_1(s,x,v)> 0}}f^{(n)}(s,x,v)}_{L^\infty_{x,v}\pa{m}}} 
\\&\quad\quad\quad\leq \sup\limits_{s\in [0,t]}\cro{e^{\nu_0 s}\norm{I_p(f^{(n)}_0)(s)}_{L^\infty_{x,v}(m)}} + \sup\limits_{s\in [0,t]}\cro{e^{\nu_0 s}\norm{R_p(h)(s)}_{L^\infty_{x,v}(m)}}
\\&\quad\quad\quad\leq  p C_m \norm{f_0}_{L^\infty_{x,v}(m)} 
\\&\quad\quad\quad\quad+ C_m\pa{\frac{1}{2}}^{\cro{\bar{C}T_0}}\sup\limits_{s\in [0,t]}\cro{e^{\nu_0 s}\norm{\mathbf{1}_{\br{t_1(s,x,v)> 0}}f^{(n)}(s,x,v)}_{L^\infty_{\Lambda^+}\pa{m}}}
\end{split}
\end{equation}
\end{itemize}

\bigskip
We recall Lemma \ref{lem:controlRp} and we have $p_R(T0) \leq \bar{N}(\bar{C} T_0 +1)$. Let $\nu_0'$ in $(0,\nu_0)$. Suppose $T_0$ was chosen large enough such that 
$$C_m\pa{\frac{1}{2}}^{\cro{\bar{C}T_0}} \leq \frac{1}{2} \quad\quad\mbox{and}\quad\quad 2 C_m\bar{N}(\bar{C}T_0 +1)e^{-\nu_0 T_0} \leq e^{-\nu_0' T_0}.$$
Applying $\eqref{fnt1>0}$ at $T_0$ gives
$$\norm{\mathbf{1}_{\br{t_1(T_0)> 0}}f^{(n)}(T_0)}_{L^\infty_{x,v}\pa{m}} \leq 2C_m p_R(T_0)e^{-\nu_0 T_0}\norm{f_0}_{L^\infty_{x,v}(m)} \leq e^{-\nu'_0 T_0}\norm{f_0}_{L^\infty_{x,v}(m)},$$
and with $\eqref{fnt1<0}$ we finally have
$$\norm{f^{(n)}(T_0)}_{L^\infty_{x,v}\pa{m}} \leq  e^{-\nu'_0 T_0}\norm{f_0}_{L^\infty_{x,v}(m)}.$$

\par We could now start the proof at $T_0$ up to $2T_0$ and iterating this process we get
\begin{equation*}
\begin{split}
\forall n\in\N,\quad \norm{f^{(n)}(nT_0)\mathbf{1}_{t_1>0}}_{L^\infty_{x,v}(m)} &\leq e^{-\nu_0'T_0} \norm{f^{(n)}((n-1)T_0}_{L^\infty_{x,v}(m)}
\\&\leq e^{-2\nu_0'T_0} \norm{f^{(n)}((n-2)T_0)}_{L^\infty_{x,v}(m)}
\\&\leq \dots \leq e^{-\nu_0'nT_0} \norm{f_0}_{L^\infty_{x,v}(m)}.
\end{split}
\end{equation*}
Finally, for all $t$ in $[nT_0,(n+1)T_0]$ we apply $\eqref{fnt1>0}$ with the above to get
\begin{equation*}
\begin{split}
\norm{f^{(n)}\mathbf{1}_{t_1>0}(t)}_{L^\infty_{x,v}(m)} &\leq 2C_m p_R(T_0) e^{-\nu_0 (t-nT_0)} \norm{f^{(n)}(nT_0)}_{L^\infty_{x,v}(m)}
\\&\leq 2C_m p_R(T_0) e^{-\nu_0't}e^{-(\nu_0-\nu_0')( t-nT_0)}\norm{f_0}_{L^\infty_{x,v}(m)}.
\end{split}
\end{equation*}
Hence the uniform control in $t$, where $C_0>0$ depends on $m$, $T_0$ and $\nu_0'$,
$$\exists C_0 >0,\:\forall t\geq 0, \quad \norm{f^{(n)}\mathbf{1}_{t_1>0}(t)}_{L^\infty_{x,v}} \leq C_0 e^{-\nu'_0 t} \norm{f_0}_{L^\infty_{x,v}(m)}$$
which combined with $\eqref{fnt1<0}$ implies
\begin{equation}\label{frequencyfinal}
\forall n\in\N,\:\forall t \geq 0, \quad \norm{f^{(n)}(t)}_{L^\infty_{x,v}(m)} \leq \max\br{1,C_0} e^{-\nu'_0 t} \norm{f_0}_{L^\infty_{x,v}(m)}.
\end{equation}
\par Since $\eqref{fnt1<0}$ and $\eqref{fnt1>0}$ holds for $x$ in $\bar{\Omega}$, inequality $\eqref{frequencyfinal}$ holds in $L^\infty\pa{\bar{\Omega}\times\R^3}(m)$. Therefore, $\pa{f^{(n)}}_{n\in\N}$ is bounded in $L^\infty_tL^\infty\pa{\bar{\Omega}\times\R^3}(m)$ and converges, up to a subsequence, weakly-* towards $f$ in $L^\infty_tL^\infty\pa{\bar{\Omega}\times\R^3}(m)$ and $f$ is a solution to $\partial_t f = G_{\nu}f$ satisfying the Maxwell boundary condition and with initial datum $f_0$. Moreover, we have the expected exponential decay for $f$  thanks to the uniform $\eqref{frequencyfinal}$. This concludes the proof of Theorem \ref{theo:semigroupGnu} and we now prove Corollary \ref{cor:gainweightGnu}.

\begin{proof}[Proof of Corollary \ref{cor:gainweightGnu}]
Thanks to the convergence properties of $\pa{f^{(n)}}_{n\in\N}$, Lemmas \ref{lem:IpRp}, \ref{lem:controlIp} and \ref{lem:controlRp} are directly applicable to the semigroup $S_{G_\nu}(t)$ with $\Delta_n$ replaced by $\alpha$. Therefore, as usual, for $f_s$ in $L^\infty_{x,v}(m)$ we decompose into $t-s \leq t_{min}(x,v)$ and $t-s \geq t_{min}$, which gives thanks to $\eqref{representationfn}$
\begin{equation}\label{gainweightstart}
\begin{split}
\int_0^t S_{G_\nu}(t-s)f_s(x,v)\:ds =& \int_{\max\br{0,t-t_{min}}}^{t} e^{-\nu(v)(t-s)}f_s(x-(t-s)v,v)\:ds 
\\&+ \int_{0}^{\max\br{0,t-t_{min}}} I_p(f_s)(t-sx,v)\:ds
\\&+\int_{0}^{\max\br{0,t-t_{min}}}R_p(S_{G_\nu}f_s)(t-s,x,v)\:ds.
\end{split}
\end{equation}
Let $\eps$ be in $(0,1)$. We bound $e^{-\nu(v)(t-s)} \leq e^{-\eps\nu_0 t}e^{-(1-\eps)\nu(v)(t-s)}e^{-\eps\nu_0 s}$
and thus, using the estimate with a gain of weight for $I_p$ in Lemma \ref{lem:controlIp}, we control from above the absolute value of the first two terms by
\begin{equation}\label{gainweightIp}
\begin{split}
&\abs{\int_{\max\br{0,t-t_{min}}}^{t} e^{-\nu(v)(t-s)}f_s(x-(t-s)v,v)\:ds + \int_{0}^{\max\br{0,t-t_{min}}} I_p(f_s)(t-sx,v)\:ds}
\\&\quad\leq pC_m\int_0^t \pa{\nu(v)e^{-\nu(v)(t-s)}+e^{-\nu_0(t-s)}}\norm{f_s}_{L^\infty_{x,v}(m\nu^{-1})}\:ds 
\\&\quad\leq pC_m e^{-\eps\nu_0 t}\pa{\int_0^t\nu(v)e^{-(1-\eps)\nu(v)(t-s)} + e^{-(1-\eps)\nu_0(t-s)}} \sup\limits_{s\in[0,t]}\cro{e^{\eps\nu_0s}\norm{f_s}_{L^\infty_{x,v}(m)}}
\\&\quad\leq C_m \frac{p}{1-\eps} e^{-\eps\nu_0 t}\sup\limits_{s\in[0,t]}\cro{e^{\eps\nu_0s}\norm{f_s}_{L^\infty_{x,v}(m\nu^{-1})}}.
\end{split}
\end{equation}

The third term is treated using Lemma \ref{lem:controlRp} with an exponential weight $e^{\eps\nu_0 t}$. This yields

\begin{equation*}
\begin{split}
&\abs{\int_{0}^{\max\br{0,t-t_{min}}}R_p(S_{G_\nu}f_s)(t-s,x,v)\:ds} 
\\&\quad\leq  C_m\pa{\frac{1}{2}}^{\cro{\bar{C}T_0}}\int_{0}^{\max\br{0,t-t_{min}}} e^{-\eps\nu_0(t-s)}\pa{\nu(v)e^{-(1-\eps)\nu(v)(t-s)} + e^{-(1-\eps)\nu_0(t-s)}}
\\&\quad\quad\quad\quad\quad\quad\quad\quad\quad\quad\quad\quad\times\sup\limits_{s_* \in[0,t-s]}\cro{e^{\eps\nu_0s_*}\norm{S_{G_\nu}(s_*)(f_s)}_{L^\infty_{x,v}(m\nu^{-1})}}\:ds
\end{split}
\end{equation*}
which is further bounded as
\begin{equation*}
\begin{split}
&\leq  C_m\pa{\frac{1}{2}}^{\cro{\bar{C}T_0}}e^{-\eps\nu_0t}\int_{0}^{t} \pa{\nu(v)e^{-(1-\eps)\nu(v)(t-s)} + e^{-(1-\eps)\nu_0(t-s)}}
\\&\quad\quad\quad\quad\quad\quad\quad\quad\quad\quad\quad\quad\times\sup\limits_{s_* \in[0,t-s]}\cro{\norm{e^{\eps\nu_0(s+s_*)}S_{G_\nu}(s_*)(f_s)}_{L^\infty_{x,v}(m\nu^{-1})}}\:ds.
\end{split}
\end{equation*}
Since $m\nu^{-1}$ satisfies the requirements of Theorem \ref{theo:semigroupGnu}, we can use the exponential decay of $S_{G_\nu}(s_*)$ with the exponential rate being $\eps\nu_0 <\nu_0$ and obtain
\begin{equation}\label{gainweightRp}
\begin{split}
&\abs{\int_{0}^{\max\br{0,t-t_{min}}}R_p(S_{G_\nu}f_s)(t-s,x,v)\:ds}
\\&\quad\quad\quad\quad\quad\leq \frac{C_m}{1-\eps}\pa{\frac{1}{2}}^{\cro{\bar{C}T_0}}e^{-\eps\nu_0t} \sup\limits_{s\in[0,t]}\cro{e^{\eps\nu_0s}\norm{f_s}_{L^\infty_{x,v}(m\nu^{-1})}}.
\end{split}
\end{equation}

\bigskip
For any $T_0\geq 1$, $2^{\cro{\bar{C}T_0}} \leq 2^{-1}$ and thus $\eqref{gainweightRp}$ becomes independent of $T_0$ and holds for all $t\geq 0$. Plugging $\eqref{gainweightIp}$ and $\eqref{gainweightRp}$ into $\eqref{gainweightstart}$ yields the expected gain of weight with exponential decay.
\end{proof}
\bigskip

\section{$L^\infty$ theory for the linear operator with Maxwellian weights}\label{sec:Linftytheory}

As explained in the introduction, the $L^2$ setting is not algebraic for the bilinear operator $Q$. We therefore need to work within an $L^\infty$ framework. This section is devoted to the study of the semigroup generated by the full linear operator together with the Maxwell boundary condition in the space $L^\infty_{x,v}\pa{\mu^{-\zeta}}$ with $\zeta$ in $(1/2,1)$. This weight allows us to obtain sharper estimates on the compact operator $K$ and thus extend the validity of our proof up to $\alpha =2/3$. In this section we establish the following theorem.

\bigskip
\begin{theorem}\label{theo:semigroupLinfty}
Let $\alpha$ be in $(\sqrt{2/3},1]$. There exist $\zeta_\alpha$ in $(1/2,1)$ such that for any $\zeta$ in $(\zeta_\alpha,1)$, the linear perturbed operator $G=L-v\cdot\nabla_x$, together with Maxwell boundary condition, generates a semigroup $S_{G}(t)$ on $L^\infty_{x,v}(\mu^{-\zeta})$. Moreover, there exists $\lambda_\infty$ and $C_\infty>0$ such that 
$$\forall t \geq 0, \quad \norm{S_{G}(t)\pa{\mbox{Id}-\Pi_{G}}}_{L^\infty_{x,v}(\mu^{-\zeta})} \leq C_\infty e^{-\lambda_\infty t},$$
where $\Pi_{G}$ is the orthogonal projection onto $\mbox{Ker}(G)$ in $L^2_{x,v}\pa{\mu^{-1/2}}$ (see $\eqref{PiG}$).
\\The constants $C_\infty$ and $\lambda_\infty$ are explicit and depend on $\alpha$, $\zeta$ and the collision kernel.
\end{theorem}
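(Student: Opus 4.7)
The strategy is an $L^2$--$L^\infty$ bootstrap built on top of the semigroup $S_{G_\nu}$ of Theorem~\ref{theo:semigroupGnu}, simpler than the triple-Duhamel iteration used in \cite{Gu6}\cite{EGKM}. Writing $L = -\nu(v) + K$ with $K$ the standard Boltzmann kernel operator, Duhamel along the characteristics of the transport part reads
$$S_G(t)f_0 = S_{G_\nu}(t)f_0 + \int_0^t S_{G_\nu}(t-s)\, K\cro{S_G(s)f_0}\,ds.$$
The first term decays exponentially by Theorem~\ref{theo:semigroupGnu}. In the second term we insert the explicit rebound decomposition of $S_{G_\nu}$ from Lemma~\ref{lem:IpRp}, which rewrites the double integral as a sum over the number of wall hits and, at each hit, splits the contribution between specular reflections (each carrying a factor $1-\alpha$) and Maxwellian diffusions (each carrying a factor $\alpha$).

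Between two consecutive rebounds the backward trajectory is a straight segment, and on such a segment we estimate $K\cro{S_G(\tau)f_0}$ by a Vidav-type change of variables $v_* \mapsto x - \tau v_*$, turning the velocity integral against the Boltzmann kernel $k(v,v_*)$ into a space--velocity integral over a tube around the characteristic. This controls each segment contribution in $L^\infty_{x,v}(\mu^{-\zeta})$ by a time integral of $\norm{S_G(s)f_0}_{L^2_{x,v}(\mu^{-1/2})}$ plus its $L^2_\Lambda(\mu^{-1/2})$ trace; the factor $\nu(v)^{-1}$ that appears in the process is re-absorbed by Corollary~\ref{cor:gainweightGnu}. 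The weight $\mu^{-\zeta}$ with $\zeta\uparrow 1$ is forced by the quantitative identity
$$\int_{\R^3} k(v,v_*)\,\frac{\mu(v)^{-\zeta}}{\mu(v_*)^{-\zeta}}\,dv_* \;=\; 3\,\nu(v) + o\pa{\nu(v)} \quad\mbox{as}\quad \zeta \to 1^-,$$
which says that $K$ acts essentially like $3\nu(v)$ in that weight. Summing the rebound contributions produces a geometric series whose common ratio is a product of $(1-\alpha)$ (from specular factors) and a constant close to $3$ (from $K$-actions); it converges precisely when $\alpha^2 > 2/3$, which is the threshold $\alpha > \sqrt{2/3}$. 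One then fixes $\zeta = \zeta_\alpha$ close enough to $1$ so that the $o\pa{\nu(v)}$ remainder is absorbed into the spectral margin.

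Together, the previous steps produce an integral inequality of the form
$$\norm{S_G(t)f_0}_{L^\infty_{x,v}(\mu^{-\zeta})} \;\leq\; C e^{-\lambda t}\norm{f_0}_{L^\infty_{x,v}(\mu^{-\zeta})} + C\int_0^t e^{-\lambda(t-s)}\norm{S_G(s)f_0}_{L^2_{x,v}(\mu^{-1/2})}\,ds,$$
together with an analogous control of the boundary trace in $L^2_\Lambda(\mu^{-1/2})$. The trace hypothesis required by Theorem~\ref{theo:L2} is verified directly from Lemma~\ref{lem:IpRp} applied to $S_{G_\nu}$, which yields an $L^2_\Lambda$ control of $\restr{S_G(t)f_0}{\Lambda}$ whenever $f_0 \in L^\infty_{x,v}(\mu^{-\zeta})$ with $\zeta > 1/2$. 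Applied to the microscopic part $S_G(t)\pa{\mbox{Id}-\Pi_G}f_0$, Theorem~\ref{theo:L2} then delivers the $L^2_{x,v}(\mu^{-1/2})$ exponential decay; re-injecting this bound into the inequality above and using Gronwall closes the bootstrap and produces the claimed decay. Existence of $S_G(t)$ in $L^\infty_{x,v}(\mu^{-\zeta})$ follows by applying the same a priori estimate to the damped Maxwell approximation used in Subsection~\ref{subsubsec:CVfn} and passing to the limit. The main obstacle is the middle paragraph: obtaining the sharp constant $3$ in the $K \sim 3\nu$ estimate and pairing it with the specular weight $(1-\alpha)$ so that the rebound series converges precisely on the range $\alpha > \sqrt{2/3}$.
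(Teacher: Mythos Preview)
Your overall architecture is right --- Duhamel on $S_{G_\nu}$, the $K\sim 3\nu$ estimate in the weight $\mu^{-\zeta}$ (Lemma~\ref{lem:control K<3}), and closing by Proposition~\ref{prop:L2Linfty} --- and this is exactly the route the paper takes. But the mechanism you describe for producing the threshold $\alpha>\sqrt{2/3}$ is not correct as written, and the gap is not cosmetic.

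You say the rebound series has common ratio ``a product of $(1-\alpha)$ and a constant close to $3$'', and that this converges precisely when $\alpha^2>2/3$. Arithmetically that cannot be: $3(1-\alpha)<1$ gives $\alpha>2/3$, not $\alpha>\sqrt{2/3}$. The paper does \emph{not} sum a full geometric series over rebounds via Lemma~\ref{lem:IpRp}. Instead it iterates the implicit boundary formula exactly \emph{twice} and sorts the resulting terms by the nature of the last two wall interactions. The term $J_f$ (one specular reflection, possibly followed by one diffusion) contributes the factor
\[
(1-\alpha)\,C_K(\zeta)\,\bigl[1+\alpha\bigl(1+C_0(1-\zeta)\bigr)\bigr]\;\xrightarrow[\zeta\to 1]{}\;3(1-\alpha)(1+\alpha)=3(1-\alpha^2),
\]
and it is this extra $(1+\alpha)$, coming from the diffusive branch of the second iteration paired with the bound \eqref{bound PLambda} on $P_\Lambda$, that yields $\sqrt{2/3}$ rather than $2/3$. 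Your ``sum over all rebounds'' description misses this pairing entirely.

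The second missing ingredient is the treatment of the double-diffusion term $J_{diff}$ (two consecutive Maxwellian diffusions). This term carries no $(1-\alpha)$ smallness and cannot be absorbed by the contraction; the paper handles it by a change of variables on the boundary $(r_{1*},\theta,\phi)\mapsto (s-t_1^*-\tilde t_1,\tilde x_1)$ together with the trace lemma of \cite{EGKM}, which converts it into a time integral of $\norm{f(s)}_{L^2_{x,v}(\mu^{-1/2})}$ (Step~5 of the proof of Proposition~\ref{prop:triplenorm}). Your Vidav change of variables $v_*\mapsto x-\tau v_*$ is the right idea for the interior term $\bar{J_K}$, but it does not apply to $J_{diff}$, where the relevant integration is over the boundary footprint. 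Without a separate argument for $J_{diff}$, the bootstrap does not close at any $\alpha<1$.
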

\bigskip


\subsection{Preliminaries: pointwise estimate on $K$ and $L^2-L^\infty$ theory}\label{subsec:L2Linfty}

We recall that $L = -\nu(v) + K$. The following pointwise estimate on $K$ has been proved in \cite[Lemma 3]{Gu6} for hard sphere models and \cite[Lemma 5.2]{BriDau} for more general kernels.

\bigskip
\begin{lemma}\label{lem:control K}
There exists $k(v,v_*)\geq 0$ such that for all $v$ in $\R^3$,
$$K(f)(v) = \int_{\R^3} k(v,v_*)f(v_*)\:dv_*.$$
Moreover, for $\zeta$ in $[0,1)$ there exists $C_{\zeta} >0$ and $\eps_{\zeta}>0$ such that for all $\eps$ in $[0,\eps_{\zeta})$,
$$ \int_{\R^3}\abs{ k(v,v_*)}e^{\frac{\eps}{8}  \abs{v-v_*}^2+ \frac{\eps}{8} \frac{\abs{\abs{v}^2-\abs{v_*}^2}^2}{\abs{v-v_*}^2}}\frac{ \mu(v)^{-\zeta}}{\mu(v_*)^{-\zeta}}\:dv_* \leq \frac{C_{\zeta}}{1+\abs{v}}.$$
\end{lemma}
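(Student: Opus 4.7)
The plan is to combine the Grad decomposition of $K$ as an integral operator with a careful bookkeeping of Maxwellian exponents. Writing $L = -\nu(v) + K$ and applying Carleman's change of variables in the gain part of the bilinear $2Q(\mu,\cdot)$, one identifies $Kf(v) = \int_{\R^3} k(v,v_*)\,f(v_*)\,dv_*$, with $k = k_1 - k_2$. The loss kernel $k_2$ is the easy piece, involving $|v-v_*|^\gamma \mu(v_*)$; the gain kernel $k_1$ is given by an integral of the collision kernel along the hyperplane through $v_*$ perpendicular to $v-v_*$. This is the classical derivation, see \cite{Gr1, Ce}.

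The key input is then a sharp Grad--Caflisch pointwise estimate: there exist $\kappa > 1/4$ and $C > 0$ such that
\begin{equation*}
|k(v,v_*)| \ \leq\ \frac{C\,(1+|v-v_*|)^\gamma}{|v-v_*|}\, \exp\!\left(-\kappa\left[|v-v_*|^2 + \frac{(|v|^2 - |v_*|^2)^2}{|v-v_*|^2}\right]\right).
\end{equation*}
For hard spheres this is \cite[Lemma 3]{Gu6}; its extension to the full hard potential range is \cite[Lemma 5.2]{BriDau}. Using the algebraic identity
\begin{equation*}
|v-v_*|^2 + \frac{(|v|^2-|v_*|^2)^2}{|v-v_*|^2} \ =\ 2(|v|^2+|v_*|^2) - |(v+v_*)_\perp|^2,
\end{equation*}
where $(v+v_*)_\perp$ is the component of $v+v_*$ orthogonal to $v-v_*$, the Gaussian decay of $k$ is converted into Maxwellian factors $\mu(v)^{2\kappa}\mu(v_*)^{2\kappa}$ up to a growth term in $(v+v_*)_\perp$ which captures precisely the energy-conservation content of the collision operator.

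The integral is then evaluated via the spherical-polar change $v_* = v - r\omega$, under which $|v - v_*| = r$, $(|v|^2 - |v_*|^2)^2/|v-v_*|^2 = (2a - r)^2$ with $a = \langle v,\omega\rangle$, and $|v|^2 - |v_*|^2 = 2ar - r^2$. Inserting the pointwise bound, the perturbation $\exp(\eps(\cdots)/8)$ and the ratio $\mu(v)^{-\zeta}/\mu(v_*)^{-\zeta} = \exp(\zeta(|v|^2-|v_*|^2)/2)$, the exponent of the integrand reduces to a quadratic form in $(r,a)$. A direct determinant computation shows that negative definiteness is equivalent to $\kappa - \eps/8 > \zeta/4$; setting $\eps_\zeta := 4(2\kappa - \zeta/2) > 0$ (admissible since $\zeta < 1 < 4\kappa$), the integral is Gaussian in $r$, leaving a residual spherical integral of the type $\int_{S^2} e^{-c|v|^2 \cos^2\theta}\,d\omega$. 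Its classical asymptotics $\sim (1+|v|)^{-1}$ deliver the advertised $C_\zeta/(1+|v|)$ bound.

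The main obstacle is the threshold condition $\kappa > \zeta/4$: it is this strict inequality that forces the use of the sharpened Grad--Caflisch estimate (with $\kappa > 1/4$) rather than the crude version with $\kappa = 1/8$, and explains why the admissible range $\zeta \in [0,1)$ is sharp. As $\zeta \to 1$ the constant $\eps_\zeta$ shrinks to $4(2\kappa - 1/2)$, reflecting the fact that $\mu^{-1}$ is the critical weight at which the reweighted kernel $k$ ceases to be $L^\infty$-bounded.
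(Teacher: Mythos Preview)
The paper does not prove this lemma at all: immediately before the statement it says the estimate ``has been proved in \cite[Lemma~3]{Gu6} for hard sphere models and \cite[Lemma~5.2]{BriDau} for more general kernels,'' and nothing further is given. Your proposal is precisely a sketch of the argument contained in those references (Grad--Carleman kernel representation, the sharp pointwise Gaussian bound on $k$, reduction to a quadratic form in polar coordinates $v_*=v-r\omega$, and the $\int_{\mathbb{S}^2}e^{-c|v|^2\cos^2\theta}d\omega\sim(1+|v|)^{-1}$ asymptotics), so there is no meaningful difference in approach to report.

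One small caution on constants: the Grad--Caflisch bound in \cite[Lemma~3]{Gu6} is stated for the conjugated operator $K_\mu=\mu^{-1/2}K(\sqrt{\mu}\,\cdot)$ with exponent $\kappa=1/8$, not $\kappa>1/4$. Since the $K$ in this paper acts on $f$ rather than $f/\sqrt{\mu}$, one has $k(v,v_*)=\sqrt{\mu(v)/\mu(v_*)}\,k_\mu(v,v_*)$, and the ratio $\mu(v)^{-\zeta}/\mu(v_*)^{-\zeta}$ then combines with this factor to give an effective weight $e^{(\zeta-1/2)(|v|^2-|v_*|^2)/2}$ against $k_\mu$. This corresponds to Guo's parameter $q=2\zeta-1<1$, which is exactly the admissible range in \cite[Lemma~3]{Gu6}; your determinant condition should be recast accordingly. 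The structure of your argument is correct, but the claimed threshold $\kappa>1/4$ and the formula $\eps_\zeta=4(2\kappa-\zeta/2)$ do not match the cited normalizations and would need adjustment if you were to write out the details.
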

\bigskip

We now prove a more precise and more explicit control over the operator $K$. The idea behind it is that as $\zeta$ goes to $1$, the operator $K$ gets closer to the collision frequence $3\nu(v)$.

\bigskip
\begin{lemma}\label{lem:control K<3}
There exists $C_K >0$ such that for all $\zeta$ in $[1/2,1]$,
$$\forall f \in L^\infty_{x,v}(\mu^{-\zeta}), \quad \norm{K(f)}_{L^\infty_{x,v}(\nu^{-1}\mu^{-\zeta})} \leq C_K(\zeta)\norm{f}_{L^\infty_{x,v}(\mu^{-\zeta})}$$
where $C_K(\zeta) = 3+ C_K (1-\zeta)$.
\end{lemma}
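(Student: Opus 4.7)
The plan is to revisit the decomposition $L = -\nu(v) + K$ coming from $L(f) = 2Q(\mu,f)$ and to split $K$ into three elementary integral operators, each of which we bound separately and uniformly in $\zeta \in [1/2,1]$. Identifying the loss term $-f(v)\int B\mu_*\,dv_*d\sigma = -\nu(v)f(v)$, we write $K = K_1 + K_2 - K_3$ with
\begin{equation*}
K_1(f)(v) = \int B \mu' f'_*\,dv_*d\sigma,\quad K_2(f)(v) = \int B f' \mu'_*\,dv_*d\sigma,\quad K_3(f)(v) = \mu(v)\int B f_*\,dv_*d\sigma.
\end{equation*}
The aim is to prove $|K_i(f)(v)| \leq (1 + C(1-\zeta))\,\nu(v)\mu(v)^\zeta\,\|f\|_{L^\infty_{x,v}(\mu^{-\zeta})}$ with $C$ independent of $\zeta$ and $v$, after which summation yields the lemma with $C_K(\zeta) = 3 + C_K(1-\zeta)$. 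The value $3$ is heuristically explained by $L\mu = 0$, which forces $K\mu = \nu\mu$, each $K_i$ contributing exactly one copy of $\nu\mu$ at $\zeta = 1$.

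For $K_1$, the key algebraic step is to extract the weight via $|f(v'_*)|\leq\mu(v'_*)^\zeta\|f\|_{L^\infty(\mu^{-\zeta})}$ and then use the microscopic identity $\mu'\mu'_* = \mu\mu_*$, rewriting $\mu(v')\mu(v'_*)^\zeta$ as $\mu(v)\mu(v_*)\mu(v'_*)^{\zeta-1}$, so that
\begin{equation*}
|K_1(f)(v)| \leq \mu(v)\,\|f\|_{L^\infty(\mu^{-\zeta})}\int B\,\mu_*\,\mu(v'_*)^{\zeta-1}\,dv_*d\sigma.
\end{equation*}
Observing $\mu(v)^{1-\zeta}\mu(v'_*)^{\zeta-1} = e^{(1-\zeta)(|v'_*|^2 - |v|^2)/2}$ and using macroscopic energy conservation $|v'|^2+|v'_*|^2 = |v|^2+|v_*|^2$ to get $|v'_*|^2 - |v|^2 \leq |v_*|^2$, the exponential is controlled by $e^{(1-\zeta)|v_*|^2/2}$. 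Multiplying by $\mu_* = (2\pi)^{-3/2}e^{-|v_*|^2/2}$ inside the integral collapses the weight to $(2\pi)^{-3/2}e^{-\zeta|v_*|^2/2}$, reducing the problem to comparing $\int B e^{-\zeta|v_*|^2/2}\,dv_*d\sigma$ with $\int B e^{-|v_*|^2/2}\,dv_*d\sigma = (2\pi)^{3/2}\nu(v)$. The estimate for $K_2$ is identical by the symmetry $v'\leftrightarrow v'_*$, and the bound for $K_3$ reduces to the same modified collision frequency after using $\mu(v)^{1-\zeta} \leq (2\pi)^{-3(1-\zeta)/2} \leq 1$.

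All three bounds then hinge on the quantitative comparison $(2\pi)^{-3/2}\int B e^{-\zeta|v_*|^2/2}\,dv_*d\sigma \leq (1+C(1-\zeta))\nu(v)$, uniform in $v$ and in $\zeta\in[1/2,1]$. Writing the difference as $\int B\mu_*(e^{(1-\zeta)|v_*|^2/2}-1)\,dv_*d\sigma$ and applying the elementary inequality $e^x-1\leq xe^x$ with $x = (1-\zeta)|v_*|^2/2$, one obtains the pointwise bound $\mu_*(e^{(1-\zeta)|v_*|^2/2}-1)\leq (1-\zeta)(|v_*|^2/2)(2\pi)^{-3/2}e^{-|v_*|^2/4}$ valid uniformly for $\zeta\geq 1/2$. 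Integrating against $B = C_\Phi|v-v_*|^\gamma b(\cos\theta)$ with $\gamma\in[0,1]$ and using the standard bound $|v-v_*|^\gamma \leq C(1+|v|^\gamma)(1+|v_*|^\gamma)$ produces an upper estimate of order $C(1-\zeta)(1+|v|^\gamma)$, absorbed into $\nu(v)$ via the lower bound \eqref{nu0nu1}. The main (and really only) obstacle is bookkeeping: one must verify that $C$ stays independent of $\zeta$ as $\zeta\to 1/2^+$, and the cutoff $\zeta\geq 1/2$ is exactly what keeps the residual Gaussian $e^{-\zeta|v_*|^2/2 + (1-\zeta)|v_*|^2/2} = e^{-|v_*|^2/4}$ decaying with a rate independent of $\zeta$.
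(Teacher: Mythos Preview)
Your proof is correct and follows essentially the same route as the paper. The paper uses the $\sigma\to-\sigma$ symmetry to collapse your three pieces into two, writing $K = K_1 - K_2$ with $K_1(f)=\int 2B\mu' f'_*$ and $K_2(f)=\int B\mu f_*$; it then bounds $\mu(v')^{1-\zeta}\leq 1$ directly (where you instead keep the factor $\mu(v)^{1-\zeta}\mu(v'_*)^{\zeta-1}$ and invoke energy conservation), arriving at the identical reduced problem of comparing $\int B\,\mu_*^{\zeta}\,dv_*d\sigma$ with $\nu(v)$ and estimating the difference by $C(1-\zeta)\nu(v)$ via the same mean-value / $e^x-1\leq xe^x$ argument.
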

\bigskip

\begin{proof}[Proof of Lemma \ref{lem:control K<3}]
The change of variable $\sigma \to -\sigma$ exchanges $v'$ and $v'_*$ and we can so rewrite
\begin{equation}\label{decomposition K}
K(f)(v) =\int_{R^3\times\S^2} b(\cos \theta)\abs{v-v_*}^\gamma \cro{2\mu'f'_*-\mu f_*}d\sigma dv_*
= K_1(f)-K_2(f)
\end{equation}
where $K_1$ and $K_2$ are just the integral divide into the two contributions.

\bigskip
We start with $K_1$. We use the elastic collision identity $\mu \mu_* = \mu'\mu'_*$ to get
\begin{eqnarray*}
\abs{\nu(v)^{-1}\mu(v)^{-\zeta}K_1(f)(v)} &\leq& 2\nu(v)^{-1}\int_{R^3\times\S^2} b(\cos \theta)\abs{v-v_*}^\gamma \mu_*^\zeta \mu(v')^{1-\zeta}\abs{\frac{f_*'}{\mu(v'_*)^{\zeta}}} d\sigma dv_*
\\&\leq& 2\norm{f}_{L^\infty_{x,v}(\mu^{-\zeta})}\nu(v)^{-1}\int_{R^3\times\S^2} b(\cos \theta)\abs{v-v_*}^\gamma \mu_*^\zeta \:d\sigma dv_*.
\end{eqnarray*}
But then by definition of $\nu(v)$,
\begin{eqnarray*}
\int_{R^3\times\S^2} b(\cos \theta)\abs{v-v_*}^\gamma \mu_*^{\zeta} d\sigma dv_* &=& \nu(v) + \int_{R^3\times\S^2} b(\cos \theta)\abs{v-v_*}^\gamma (\mu_*^{\zeta}-\mu_*) d\sigma dv_*
\end{eqnarray*}
which implies, since $b$ is bounded and $\nu(v) \sim (1+\abs{v}^\gamma)$ (see $\eqref{nu0nu1}$),
$$\int_{R^3\times\S^2} b(\cos \theta)\abs{v-v_*}^\gamma \mu_*^{\zeta} d\sigma dv_* \leq \nu(v) + C(1-\zeta)\nu(v)\int_{R^3\times\S^2} (1+\abs{v_*}^\gamma)\abs{v_*}^2\mu_*^{\zeta} d\sigma dv_*.$$
To conclude we recall that $\zeta >1/2$ and the integral above on the right-hand side is uniformly bounded in $v_*$. Hence,
\begin{equation}\label{control K1}
\exists C_{K} >0, \quad\norm{K_1(f)}_{L^\infty_{x,v}(\mu^{-\zeta})} \leq \pa{2+ \frac{C_{K}}{2}(1-\zeta)}\norm{f}_{L^\infty_{x,v}(\mu^{-\zeta})}.
\end{equation}

The term $K_2$ is similar :
\begin{eqnarray*}
\abs{\nu(v)^{-1}\mu(v)^{-\zeta}K_2(f)(v)} &\leq& \mu^{1-\zeta}\nu^{-1}\int_{R^3\times\S^2} b(\cos \theta)\abs{v-v_*}^\gamma \abs{f_*} d\sigma dv_*
\\&\leq& \norm{f}_{L^\infty_{x,v}(\mu^{-\zeta})}\nu(v)^{-1}\int_{R^3\times\S^2} b(\cos \theta)\abs{v-v_*}^\gamma \mu_*^{\zeta} d\sigma dv_*
\\&\leq& \pa{1+ \frac{C_{K}}{2}(1-\zeta)}\norm{f}_{L^\infty_{x,v}(\mu^{-\zeta})}.
\end{eqnarray*}

Plugging the above and $\eqref{control K1}$ inside $\eqref{decomposition K}$ concludes the proof.
\end{proof}
\bigskip

We conclude this preliminary section with a statement of the $L^2-L^\infty$ theory that will be at the core of our main proof. It follows the idea developed in \cite{Gu6} that the $L^2$ theory of previous section could be used to construct a $L^\infty$ one by using the flow of characteristics to transfer pointwise estimates at $x-vt$ into an integral in the space variable. The proof can be found in \cite[Lemma 19]{Gu6} and holds as long as $L^\infty_{x,v}(w) \subset L^2_{x,v}(\mu^{-1/2})$.

\bigskip
\begin{prop}\label{prop:L2Linfty}
Let $\zeta$ be in $[1/2,1)$ and assume that there exist $T_0 >0$ and $C_{T_0}$, $\lambda >0$ such that for all $f(t,x,v)$ in $L^\infty_{x,v}(\mu^{-\zeta})$ solution to
\begin{equation}\label{lineqLinfty}
\partial_t f + v\cdot\nabla_xf = L(f)
\end{equation}
with Maxwell boundary condition $\alpha >0$ and initial datum $f_0$, the following holds
\begin{equation*}
\begin{split}
\forall t\in [0,T_0],\quad \norm{f(t)}_{L^\infty_{x,v}(\mu^{-\zeta})} \leq& e^{\lambda ( T_0-2t)}\norm{f_0}_{L^\infty_{x,v}(\mu^{-\zeta})}+C_{T_0}\int_0^{t} \norm{f(s)}_{L^2_{x,v}\pa{\mu^{-1/2}}}\:ds.
\end{split}
\end{equation*}
Then for all $0 < \tilde{\lambda}< \min\br{\lambda,\lambda_G}$, defined in Theorem \ref{theo:L2}, there exists $C>0$ independent of $f_0$ such that for all $f$ solution to $\eqref{lineqLinfty}$ in $L^\infty_{x,v}(\mu^{-\zeta})$  with $\Pi_{G}(f)=0$,
$$\forall t \geq 0, \quad \norm{f(t)}_{L^\infty_{x,v}(\mu^{-\zeta})} \leq C e^{-\tilde{\lambda} t}\norm{f_0}_{L^\infty_{x,v}(\mu^{-\zeta})}.$$
\end{prop}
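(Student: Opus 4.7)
The plan is to bootstrap from the $L^2$ exponential decay of Theorem \ref{theo:L2} to an $L^\infty_{x,v}(\mu^{-\zeta})$ decay by iterating the hypothesis on successive time-intervals of a well-chosen length $T_0$. The first observation is that since $\zeta > 1/2$ and $\Omega$ is bounded, the weighted Maxwellian $\mu^{2\zeta-1}$ is integrable on $\R^3$, so there is a continuous embedding $L^\infty_{x,v}(\mu^{-\zeta}) \hookrightarrow L^2_{x,v}(\mu^{-1/2})$ with some constant $C_\zeta$. Moreover the trace $\restr{f}{\Lambda}$ is automatically in $L^\infty_\Lambda(\mu^{-\zeta})$ hence in $L^2_\Lambda$, and the assumption $\Pi_G(f)=0$ is exactly the preservation-of-mass condition, so Theorem \ref{theo:L2} applies and gives $\|f(t)\|_{L^2_{x,v}(\mu^{-1/2})} \leq C_G C_\zeta e^{-\lambda_G t}\|f_0\|_{L^\infty_{x,v}(\mu^{-\zeta})}$.

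Next I would exploit the time-translation invariance of the equation and of the hypothesis: for any $t_0\geq 0$ the function $s\mapsto f(t_0+s,\cdot,\cdot)$ still solves $\eqref{lineqLinfty}$ with Maxwell boundary condition and with $\Pi_G$-projection zero, so the hypothesis applied on $[0,T_0]$ with initial datum $f(t_0)$ gives, evaluated at $s=T_0$,
\begin{equation*}
\|f(t_0+T_0)\|_{L^\infty_{x,v}(\mu^{-\zeta})} \leq e^{-\lambda T_0}\|f(t_0)\|_{L^\infty_{x,v}(\mu^{-\zeta})} + C_{T_0}\int_{t_0}^{t_0+T_0}\|f(s)\|_{L^2_{x,v}(\mu^{-1/2})}\,ds.
\end{equation*}
Plugging in the global $L^2$ decay of $f$ and integrating, the second term is controlled by $(C_{T_0}C_GC_\zeta/\lambda_G)e^{-\lambda_G t_0}\|f_0\|_{L^\infty_{x,v}(\mu^{-\zeta})}$.

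Fix $\tilde\lambda < \min\{\lambda,\lambda_G\}$ and pick $T_0$ large enough so that $e^{-\lambda T_0} \leq \frac{1}{2}e^{-\tilde\lambda T_0}$; then set $u_n = \|f(nT_0)\|_{L^\infty_{x,v}(\mu^{-\zeta})}/\|f_0\|_{L^\infty_{x,v}(\mu^{-\zeta})}$. The previous inequality becomes the linear recursion
\begin{equation*}
u_{n+1} \leq \tfrac{1}{2}e^{-\tilde\lambda T_0}\,u_n + C' e^{-\lambda_G n T_0},
\end{equation*}
with $C'$ independent of $n$. Since $\tilde\lambda < \lambda_G$, introducing $b_n = u_n e^{\tilde\lambda n T_0}$ turns this into $b_{n+1}\leq \tfrac{1}{2}b_n + C' e^{\tilde\lambda T_0} e^{(\tilde\lambda-\lambda_G)nT_0}$, whose right-hand side is a geometric-like recursion with bounded forcing, so $(b_n)$ is uniformly bounded. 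This yields the discrete decay $u_n \leq C_* e^{-\tilde\lambda n T_0}$.

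Finally, for any $t\geq 0$, writing $t = nT_0 + s$ with $s\in[0,T_0)$ and applying the hypothesis once more on $[nT_0,nT_0+T_0]$ (now the prefactor $e^{\lambda(T_0-2s)}$ is bounded by $e^{\lambda T_0}$), one recovers $\|f(t)\|_{L^\infty_{x,v}(\mu^{-\zeta})} \leq C e^{-\tilde\lambda t}\|f_0\|_{L^\infty_{x,v}(\mu^{-\zeta})}$. The main obstacle to watch is purely bookkeeping: the constant $C_{T_0}$ in front of the $L^2$ integral depends on $T_0$, but it only appears inside the additive forcing of the recursion and is compensated by the $L^2$ decay factor $e^{-\lambda_G t_0}$ gained over long times, which is why the iteration closes for any $\tilde\lambda$ strictly below $\min\{\lambda,\lambda_G\}$ rather than up to the minimum itself.
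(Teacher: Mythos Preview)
Your argument is essentially the standard bootstrap that the paper invokes by citing \cite[Lemma 19]{Gu6}, and it is correct in spirit. Two small points of friction are worth flagging. First, you write ``pick $T_0$ large enough so that $e^{-\lambda T_0}\leq \tfrac12 e^{-\tilde\lambda T_0}$'', but in the statement $T_0$ is \emph{given} by the hypothesis, not free to choose. Fortunately this does not matter: for any fixed $T_0>0$ and any $\tilde\lambda<\lambda$ one already has $r:=e^{-(\lambda-\tilde\lambda)T_0}<1$, so the recursion $b_{n+1}\leq r\,b_n + C'' e^{-(\lambda_G-\tilde\lambda)nT_0}$ still has a strictly contracting linear part and a summable forcing, hence $(b_n)$ stays bounded. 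Just drop the phrase about choosing $T_0$. Second, you correctly use the embedding $L^\infty_{x,v}(\mu^{-\zeta})\hookrightarrow L^2_{x,v}(\mu^{-1/2})$, which needs $\int_{\R^3}\mu^{2\zeta-1}\,dv<\infty$, i.e.\ $\zeta>1/2$ strictly; the paper itself remarks that the proof ``holds as long as $L^\infty_{x,v}(w)\subset L^2_{x,v}(\mu^{-1/2})$'', and in all applications (Theorem~\ref{theo:semigroupLinfty}) $\zeta>\zeta_\alpha>1/2$ anyway, so the borderline case $\zeta=1/2$ in the statement is a harmless overshoot.
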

\bigskip


\subsection{A crucial estimate between two consecutive collisions}\label{subsec:triplenorm}

The core of the $L^\infty$ estimate is a delicate control over the action of $K$ in between two rebounds. We define the following operator
\begin{equation}\label{triplenorm}
\mathfrak{K}(f)(t,x,v) = \mu^{-\zeta}(v)\int_{\max\br{0,t_{\min}(x,v)}}^t e^{-\nu(v)(t-s)}K(f(s))(x-(t-s)v,v)\:ds.
\end{equation}
We shall prove the following estimate of this functional along the flow of solutions. 

\bigskip
\begin{prop}\label{prop:triplenorm}
Let $\alpha$ in $(\sqrt{2/3},1]$. There exists $\zeta_\alpha$ in $(1/2,1)$, $\eps_\alpha$ in $(0,1)$, $C^{(1)}_\alpha >0$ and  $0<C^{(2)}_\alpha <1$ such that for any $T_0>0$ there exists $C_{T_0}>0$ such that if $f$ is solution to $\partial_t f = Gf$ with Maxwell boundary conditions then for all $t$ in $[0,T_0]$
\begin{equation*}
\begin{split}
\abs{e^{\eps\nu_0 t}\mathfrak{K}(f)(t,x,v)}\leq& C^{(1)}_\alpha \norm{f_0}_{L^\infty_{x,v}(\mu^{-\zeta})} + C_{T_0}\int_0^t\norm{f(s)}_{L^2_{x,v}(\mu^{-1/2})}ds
\\&+C^{(2)}_\alpha\pa{1-e^{-\nu(v)(1-\eps)\min\{t,t_{\min}(x,v)\}}}\sup\limits_{s\in[0,t]}\cro{e^{\eps\nu_0 s}\norm{f(s)}_{L^\infty_{x,v}(\mu^{-\zeta})}}.
\end{split}
\end{equation*}
\end{prop}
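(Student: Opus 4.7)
The plan is to substitute the implicit Duhamel representation for $f$ along its backward characteristics into the inner $K$ appearing in $\mathfrak{K}(f)$, and then to exploit the Maxwell boundary condition \eqref{mixedBC} together with the refined pointwise estimate of Lemma \ref{lem:control K<3}. Writing $y_s = x-(t-s)v$ and $s_0 = \max\{0, t-t_{\min}(x,v)\}$ (so that $t-s_0 = \min\{t,t_{\min}(x,v)\}$ is the length of the current free-flight segment, which is the physically meaningful interpretation of the lower limit in the definition of $\mathfrak{K}$), I first rewrite $K(f)(s,y_s,v) = \int_{\R^3} k(v,v_*) f(s,y_s,v_*)\,dv_*$ and, for each $v_*$, expand $f(s,y_s,v_*)$ along the backward trajectory starting at $(y_s,v_*)$. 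Either this trajectory reaches $\{t=0\}$ first (producing an $f_0$ contribution), or it first hits $\partial\Omega$ at some $(\tau_1, y_*)$ with $v_*\cdot n(y_*) <0$, in which case the boundary condition gives $f(\tau_1,y_*,v_*) = (1-\alpha)\, f(\tau_1,y_*,\mathcal{R}_{y_*}(v_*)) + \alpha\, P_\Lambda(f)(\tau_1,y_*,v_*)$. In both cases Duhamel also produces a residual term $\int e^{-\nu(v_*)(s-\tau)} K(f)(\tau,\cdot,v_*)\,d\tau$, leading to the decomposition
\begin{equation*}
\mathfrak{K}(f)(t,x,v) = \mathfrak{K}_0(f_0) + \mathfrak{K}_{\mathrm{diff}}(f) + \mathfrak{K}_{\mathrm{spec}}(f) + \mathfrak{K}_{KK}(f)
\end{equation*}
whose four summands correspond respectively to the initial, the diffusive, the specular, and the iterated double-$K$ contribution.

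I then estimate each piece. The $\mathfrak{K}_0$ term is bounded uniformly by $C^{(1)}_\alpha\norm{f_0}_{L^\infty_{x,v}(\mu^{-\zeta})}$ using Lemma \ref{lem:control K} (namely the inequality $\int k(v,v_*)\mu^\zeta(v_*)\,dv_* \leq C_\zeta\,\mu^\zeta(v)/(1+|v|)$). The diffusive piece $\mathfrak{K}_{\mathrm{diff}}$ benefits from the extra Maxwellian $\mu(v_*)$ sitting inside $P_\Lambda$: a Cauchy--Schwarz on the $P_\Lambda$ integral combined with the change of variable $\tau_1 \mapsto y_* = y_s - (s-\tau_1)v_*$ along the straight characteristic segment (together with a trace-type integration over $\partial\Omega$) converts it into the announced bound $C_{T_0}\int_0^t\norm{f(s)}_{L^2_{x,v}(\mu^{-1/2})}\,ds$, the constant $C_{T_0}$ absorbing the geometry of those segments up to time $T_0$. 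For $\mathfrak{K}_{KK}(f)$, Lemma \ref{lem:control K<3} applied to the inner $K(f)$ gives the pointwise control $|K(f)(\tau,z,v_*)| \leq C_K(\zeta)\,\nu(v_*)\mu^\zeta(v_*)\norm{f(\tau)}_{L^\infty(\mu^{-\zeta})}$; integrating in $\tau$ yields a factor $1/((1-\eps)\nu(v_*))$ after the standard $e^{\eps\nu_0 s}$ splitting, and the $v_*$-integral uses Lemma \ref{lem:control K} once more, producing an extra $(1-\zeta)$-smallness that can be absorbed into $C^{(2)}_\alpha$ by choosing $\zeta$ close to $1$.

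The decisive piece is $\mathfrak{K}_{\mathrm{spec}}$. A direct application of Lemma \ref{lem:control K<3} to the inner $K(f)$ and integration against $e^{-\nu(v)(t-s)}$ on $[s_0,t]$ (the weight $e^{\eps\nu_0 s}$ being absorbed into the supremum via $e^{-\nu(v)(t-s)}e^{\eps\nu_0 s} \leq e^{\eps\nu_0 t}e^{-(\nu(v)-\eps\nu_0)(t-s)}$ and $\nu(v)-\eps\nu_0 \geq (1-\eps)\nu(v)$) produces the announced time factor $(1-e^{-(1-\eps)\nu(v)\min\{t,t_{\min}\}})$ together with a leading constant $(1-\alpha)\,C_K(\zeta)/(1-\eps)$. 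Iterating the Maxwell boundary condition one more time and summing the three possible paths across two rebounds (specular--specular, mixed specular--diffusive, pure diffusive--diffusive) reorganises the $L^\infty$ contribution into the effective coefficient $[(1-\alpha)^2 + 2\alpha(1-\alpha)]\,C_K(\zeta)/(1-\eps) = (1-\alpha^2)\,C_K(\zeta)/(1-\eps)$, the remaining pure diffusive--diffusive path being absorbed into the $L^2$ term thanks to the $\mu^2$ weight that it carries. Since Lemma \ref{lem:control K<3} gives $C_K(\zeta) \to 3$ as $\zeta \to 1$, the requirement $C^{(2)}_\alpha < 1$ reduces to $3(1-\alpha^2) < 1$, i.e.\ precisely $\alpha > \sqrt{2/3}$; this fixes $\zeta_\alpha$ and $\eps_\alpha$ as the largest values for which the lower-order $O(1-\zeta) + O(\eps)$ remainders remain negligible.

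The main obstacle is this last reorganisation: obtaining the effective factor $1-\alpha^2$, rather than the cruder $1-\alpha$ or $(1-\alpha)^2$ that a naive separation of specular and diffusive paths would give. It requires handling the specular and mixed paths together across two iterations of the Maxwell boundary condition, and carefully balancing Lemma \ref{lem:control K<3} against Lemma \ref{lem:control K} so that only one factor $C_K(\zeta)\approx 3$ survives in the $L^\infty$ piece while the specular--specular and mixed specular--diffusive contributions combine into $(1-\alpha)^2 + 2\alpha(1-\alpha) = 1-\alpha^2$.
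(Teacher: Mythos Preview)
Your overall architecture and the way you arrive at the threshold $3(1-\alpha^2)<1$ match the paper's, but your treatment of the iterated term $\mathfrak{K}_{KK}$ contains a genuine error. You assert that the outer $v_*$-integral ``uses Lemma \ref{lem:control K} once more, producing an extra $(1-\zeta)$-smallness''. Lemma \ref{lem:control K} carries no such smallness: its constant $C_\zeta$ does \emph{not} vanish as $\zeta\to 1$ (the $(1-\zeta)$ factor lives only in Lemma \ref{lem:control K<3}, and it sits \emph{on top of} the constant $3$, not as a standalone small quantity). A naive $L^\infty$ bound on $\mathfrak{K}_{KK}$ therefore gives an $O(1)$ multiple of $\sup_s e^{\eps\nu_0 s}\norm{f(s)}_{L^\infty(\mu^{-\zeta})}$, which cannot be absorbed into $C^{(2)}_\alpha<1$. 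In the paper this double-$K$ term (called $\bar{J_K}$) is instead the place where the $L^2$ theory enters: after localising $|v|,|v_*|,|v_{**}|\leq CR$ and approximating $k$ by a bounded kernel, the change of variable $v_*\mapsto y(v_*)=x-(t-s)v-(s-s_*)v_*$ along the straight segment converts the $v_*$-integral into a spatial integral over $\Omega$, yielding $C_{R,\eta}\int_0^t\norm{f(s)}_{L^2_{x,v}(\mu^{-1/2})}\,ds$, while the large-velocity tails and the short-time layer $s-s_*<\eta$ give the small $L^\infty$ remainder $C_{\eps,\zeta}(\eta+\tfrac{1}{1+R})$ that is driven to zero \emph{after} $\zeta,\eps$ are fixed. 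This is the classical Vidav/Guo $L^2$--$L^\infty$ mechanism, and you have misplaced it.

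Relatedly, your handling of $\mathfrak{K}_{\mathrm{diff}}$ is too optimistic. A single application of $P_\Lambda$ does not by itself produce an $L^2$ bound via ``Cauchy--Schwarz plus $\tau_1\mapsto y_*$''; the paper iterates the boundary condition once more on the diffusive branch so that the pure $\alpha^2$ double-diffusion term $J_{\mathrm{diff}}$ carries two boundary integrals, and only then is the spherical change of variable $(r_{1*},\theta,\phi)\mapsto (s-t_1^*-\tilde t_1,\tilde x_1)$ on $\partial\Omega$ combined with a trace lemma used to reach $\int_0^t\norm{f(s)}_{L^2(\mu^{-1/2})}\,ds$. The single-diffusion branch instead stays in $L^\infty$ and joins the specular branch inside $J_f$, which is exactly how the factor $(1-\alpha)(1+\alpha)$ arises in the paper --- equivalent to your $(1-\alpha)^2+2\alpha(1-\alpha)$, but obtained by a different bookkeeping of the two rebounds.
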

\bigskip

\begin{proof}[Proof of Proposition \ref{prop:triplenorm}]
We recall that $G = L-v\cdot\nabla_x = G_\nu +K$. Thanks to Theorem \ref{theo:semigroupGnu} with the weight $\mu^{-\zeta}$, $G_\nu$ generates a semigroup $S_{G_\nu}(t)$ in $L^\infty_{x,v}(\mu^{-\zeta})$. Moreover, Lemma \ref{lem:control K} implies that $K$ is a bounded operator in $L^\infty_{x,v}(\mu^{-\zeta})$. We can therefore write a Duhamel's form for the solution $f$ for almost every $(s,x,v_*)$ in $\R^+\times\bar{\Omega}\times\R^3$ :
\begin{equation}\label{Duhamel}
f(s,x,v_*) = S_{G_\nu}(s)f_0(x,v_*) + \int_0^s S_{G_\nu}(s-s_*)\cro{K(f(s_*))}(x,v_*)\:ds_*.
\end{equation}

\bigskip
\textbf{Step 1: New implicit form for $f$.} In what follows, $C_r$ will stand for any positive constant not depending on $f$ but depending on a parameter $r$. We recall $\eqref{definition Vj}$ the definition of the set of integration $\mathcal{V}_1$. We use the description $\eqref{fncharacteristics}$, $\alpha$ replacing $\Delta_n$, of $S_{G_\nu}(s-s_*)$ along characteristics until the first collision against the boundary that we denote $(t_1^*,x_1^*,v_1^*) =(t_{\min}(x,v_*), X_1(x,v_*),V_1(x,v_*))$ (see Section \ref{subsec:collisionfrequencycharacteristics}) . We deduce
\begin{equation*}
\begin{split}
f(s,x,v_*) =& J_0(s,x,v_*) + J_K(s,x,v_*)+ (1-\alpha)\mathbf{1}_{\br{t>t_1^*}}e^{-\nu(v_*)t_1^*}f(t-t_1^*,x_1^*,v_1^*)
\\&+ \alpha \mathbf{1}_{\br{s>t_1^*}} c_\mu e^{-\nu(v_*)t_1^*} \mu(v_*) \int_{\mathcal{V}_1} f(s-t_1^*,x_1^*,v_{1*})\pa{v_{1*}\cdot n(x_1^*)}\:dv_{1*},
\end{split}
\end{equation*}
where we defined
\begin{equation}\label{J0}
\begin{split}
J_0 = S_{G_\nu}(s)f_0(x,v_*) + \mathbf{1}_{\br{s>t_1^*}}e^{-\nu(v_*)t_1^*}S_{G_\nu}(s-t_1^*)f_0(x_1^*,v_*)
\end{split}
\end{equation}
and
\begin{equation}\label{JK}
J_K = \int_{\max\br{0,s-t_1^*}}^s e^{-\nu(v_*)(s-s_*)}K(f(s_*))(x-(s-s_*)v_*,v_*)\:ds.
\end{equation}
\par We iterate this formula inside the integral over $\mathcal{V}_1$. Using the notation $(\tilde{t_1},\tilde{x_1},\tilde{v_1})$ to denote the first backard collision starting from $(x_1^*,v_{1*})$ and $P_\Lambda$ for the diffuse boundary operator $\eqref{PLambda}$ we end up with a new implicit form for $f$
\begin{equation}\label{implicit f}
f(s,x,v_*) = \bar{J_0}(s,x,v_*) + \bar{J_K}(s,x,v_*) + J_{f}(s,x,v_*) + J_{diff}(s,x,v_*)
\end{equation}
with the following definitions
\begin{equation}\label{barJ0}
\bar{J_0}= J_0(s,x,v_*) + \alpha \mathbf{1}_{\br{s>t_1^*}} e^{-\nu(v_*)t_1^*}P_\Lambda(J_0(s-t_1^*,x_1^*))(v_*),
\end{equation}
\begin{equation}\label{barJK}
\bar{J_K}= J_K(s,x,v_*) + \alpha \mathbf{1}_{\br{s>t_1^*}} e^{-\nu(v_*)t_1^*} P_\Lambda(J_K(s-t_1^*,x_1^*))(v_*),
\end{equation}
\begin{equation}\label{Jf}
\begin{split}
J_f =&(1-\alpha)\mathbf{1}_{\br{s>t_1^*}}\Bigg[e^{-\nu(v_*)t_1^*}f(s-t_1^*,x_1^*,v_1^*) 
\\&+\alpha  e^{-\nu(v_*)t_1^*} c_\mu \mu(v_*)\int_{\mathcal{V}_1} \mathbf{1}_{\br{s-t_1^*>\tilde{t_1}}}e^{-\nu(v_{1*})\tilde{t_1}}f(s-t_1^*-\tilde{t_1},\tilde{x_1},\tilde{v_1})(v_{1*}\cdot n(x_1^*))dv_{1*}\Bigg], 
\end{split}
\end{equation}
and at last
\begin{equation}\label{Jdiff}
\begin{split}
J_{diff} =& \alpha^2\mathbf{1}_{\br{s>t_1^*}} c_\mu^2 e^{-\nu(v_*)t_1^*} \mu(v_*) \Bigg[\int_{\mathcal{V}_1} \mathbf{1}_{\br{s-t_1^*>\tilde{t_1}}} e^{-\nu(v_{1*})\tilde{t_1}} \mu(v_{1*})
\\&\quad\quad\quad\quad\quad\times\int_{\tilde{\mathcal{V}}_1} f(s-t_1^*-\tilde{t_1},\tilde{x_1},\tilde{v}_{1*})\pa{\tilde{v}_{1*}\cdot n(\tilde{x_1})}\pa{v_{1*}\cdot n(x_1)}\:d\tilde{v}_{1*} dv_{1*}\Bigg].
\end{split}
\end{equation}
We now bound the operator $\mathfrak{K}$ in $\eqref{triplenorm}$ for each of the terms above. We will bound most of the terms uniformly because for any $\eps$ in $[0,1]$, from Lemma \ref{lem:control K<3} and $e^{-\nu(v)(t-s)}\leq e^{-\eps\nu_0t}e^{-\nu(v)(1-\eps)(t-s)}e^{\eps\nu_0s}$ the following holds
\begin{eqnarray}
\abs{\mathfrak{K}(F)} &&\leq C_K(\zeta)e^{-\eps\nu_0 t}\pa{\int_{\max\br{0,t-t_1}}^t\nu(v)e^{-\nu(v)(1-\eps)(t-s)}ds} \sup\limits_{s\in[0,t]}\cro{e^{\eps \nu_0s}\norm{F(s)}_{L^\infty_{x,v}(\mu^{-\zeta})}} \nonumber
\\&&\leq \frac{C_K(\zeta)}{1-\eps}\pa{1-e^{-\nu(v)(1-\eps)\min\br{t,t_1}}}e^{-\eps \nu_0t}\sup\limits_{s\in[0,t]}\cro{e^{\eps \nu_0s}\norm{F(s)}_{L^\infty_{x,v}(\mu^{-\zeta})}} \label{important Linfty triple} 
\end{eqnarray}

\bigskip
\textbf{Step 2: Estimate for $\mathbf{\bar{J_0}}$.}
We straightforwardly bound $J_0$ in $\eqref{J0}$ thanks to the exponential decay with rate $\eps\nu_0$ of $S_{G_\nu}(t)$ in Theorem \ref{theo:semigroupGnu} (that also holds on the boundary) for all $(x,v)$ in $\bar{\Omega}\times B(0,R)$.
\begin{equation*}
\begin{split}
\abs{J_{0}} \leq \frac{C_\eps}{\mu^{-\zeta}(v)} e^{-\eps\nu_0 t}\norm{f_0}_{L^\infty_{x,v}(\mu^{-\zeta})}\cro{1+ \frac{\mu^{-\zeta}(v)}{\mu^{-\zeta}(v_1)}+\pa{c_\mu \mu^{-\zeta}(v)\mu(v)\int_{\mathcal{V}_1}\frac{v_{1*}\cdot n(x_1)}{\mu^{-\zeta}(v_{1*})}\:dv_{1*}}}.
\end{split}
\end{equation*}
To conclude we use the fact that $\abs{v_1}=\abs{v}$ and $\mu^{-\zeta}$ is radially symmetric. This yields
\begin{equation}\label{J0 Linfty}
\abs{J_0} \leq C_\eps \mu^{-\zeta}(v)^{-1}e^{-\eps\nu_0 t}\norm{f_0}_{L^\infty_{x,v}(\mu^{-\zeta})}
\end{equation}
Bounding $J_0(t-t_1^*,x_1,v_{1*})$ exactly the same way yields the same bound for the full $\bar{J_0}$ in $\eqref{barJ0}$. We conclude thanks to $\eqref{important Linfty triple}$
\begin{equation}\label{bound barJ0}
\forall 0\leq \eps\leq 1,\:\forall (t,x,v), \quad\abs{\mathfrak{K}(J_0)(t,x,v)} \leq C_{\eps,\zeta}\norm{f_0}_{L^\infty_{x,v}(\mu^{-\zeta})}e^{-\eps\nu_0 t}.  
\end{equation}

\bigskip
\textbf{Step 3: Estimate for $\mathbf{\bar{J_K}}$.}
We write $K$ under its kernel form with Lemma \ref{lem:control K} in $\eqref{JK}$. To shorten notations and as we shall legitimate the following change of variable we use $y(v_*) = x-(t-s)v-(s-s_*)v_*$. Since $\nu(v) \geq \nu_0$ we see
\begin{equation}\label{JK start}
\begin{split}
\abs{\mathfrak{K}(J_K)} \leq \int_{0}^t\int_0^s e^{-\nu_0(t-s_*)}\int_{\R^3}&\abs{k(v,v_*)}\frac{\mu^{-\zeta}}{\mu^{-\zeta}_*}\mathbf{1}_{\br{y(v_*) \in \bar{\Omega}}}
\\&\int_{\R^3}\abs{k(v_*,v_{**})}\frac{\mu^{-\zeta}_*}{\mu^{-\zeta}_{**}}\abs{f(s_*,y(v_*),v_{**})\mu^{-\zeta}_{**}}dv_{**}dv_*ds_*ds
\end{split}
\end{equation}
We take any $R\geq 1$. When $\abs{v}\geq R$ we use Lemma \ref{lem:control K} on $k(v,v_*)$ and when $\abs{v}\leq R$ and $\abs{v_{*}}\geq R$ it follows that $\abs{v-v_*}\geq R$ and we thus use Lemma \ref{lem:control K} again with $\abs{k(v,v_*)} \leq e^{-\frac{\eps_\zeta}{8}R^2}\abs{k(v,v_*)e^{\frac{\eps_\zeta}{8}\abs{v-v_*}^2}}$. At last, if $\abs{v_*}\leq 2R$ and $\abs{v_{**}}\geq 3R$ we us Lemma \ref{lem:control K} one more time on $\abs{k(v_*,v_{**})} \leq e^{-\frac{\eps_\zeta}{8}R^2}\abs{k(v_*,v_{**})e^{\frac{\eps_\zeta}{8}\abs{v_*-v_{**}}^2}}$. Taking the $L^\infty_{x,v}(\mu^{-\zeta})$-norm of $f$ out of the integral and decomposing the exponential decay as for $\eqref{important Linfty triple}$, we infer that for all $\eps$ in $[0,1]$ and any $R\geq 1$:
\begin{equation}\label{JK v or v* or v** large}
\begin{split}
&\br{\mbox{terms in \eqref{JK start} outside $\br{\abs{v}\leq R\cap \abs{v_*}\leq 2R\cap\abs{v_{**}}\leq 3R}$}} 
\\&\quad\quad\quad\quad\quad\leq C_{\eps,\zeta}\pa{\frac{1}{1+R}+ e^{-\frac{\eps_\zeta}{8}R^2}}e^{-\eps\nu_0t} \sup\limits_{s\in[0,t]}\cro{e^{\eps\nu_0 s}\norm{f(s)}_{L^\infty_{x,v}(\mu^{-\zeta})}}
\end{split}
\end{equation}
\par In order to deal with the remaining terms in $\eqref{JK start}$ we first approximate $k(\cdot,\cdot)$ by a smooth and compactly supported function $k_{R}$ uniformly in the following sense:
\begin{equation}\label{kr}
\sup\limits_{\abs{V}\leq 3R}\int_{\abs{v_{*}}\leq 3R} \abs{k(V,v_{*}) - k_R(V,v_{*})}\mu^{-\zeta}(V)\:dv_{*} \leq \frac{1}{1+R}.
\end{equation}
We decompose $k=k_R + (k-k_R)$ and bound the terms where $k-k_R$ are appearing as before and get (remember that $k_R$ is compactly supported): the terms in $\eqref{JK start}$ where $\abs{v}\leq R$ and $\abs{v_*}\leq 2R$ and $\abs{v_{**}}\leq 3R$ are bounded from above by 
\begin{equation*}
\begin{split}
\frac{C_{\eps,\zeta}}{1+R}&e^{-\eps\nu_0t} \sup\limits_{s\in[0,t]}\cro{e^{\eps\nu_0 s}\norm{f(s)}_{L^\infty_{x,v}(\mu^{-\zeta})}} 
\\&+ C_{R,\zeta} \int_{0}^t\int_0^s e^{-\nu_0(t-s_*)}\int_{\br{\abs{v_*}\leq 2R}}\int_{\br{\abs{v_{**}\leq 3R}}}\mathbf{1}_{\br{y(v_*) \in \bar{\Omega}}}\abs{f(s_*,y(v_*),v_{**})}
\end{split}
\end{equation*}
At last, we would like to apply the change of variable $v_* \mapsto y(v_*)$ which has Jacobian $(s-s_*)^{-3}$, and so is legitimate if $s-s_* \geq \eta>0$. We thus consider $\eta >0$ and decompose the integral over $s_*$ into an integral on $[s-\eta,s]$ and an integral on $[0,s-\eta]$. In the first one we bound as before which gives the following upper bound
\begin{equation*}
\begin{split}
C_{\eps,\zeta}\pa{\frac{1}{1+R}+\eta}&e^{-\eps\nu_0t} \sup\limits_{s\in[0,t]}\cro{e^{\eps\nu_0 s}\norm{f(s)}_{L^\infty_{x,v}(\mu^{-\zeta})}} 
\\&+ C_{R,\zeta} \int_{0}^t\int_0^{s-\eta} e^{-\nu_0(t-s_*)}\int_{\overset{\br{\abs{v_*}\leq 2R}}{\br{\abs{v_{**}\leq 3R}}}}\mathbf{1}_{\br{y(v_*) \in \bar{\Omega}}}\abs{f(s_*,y(v_*),v_{**})}.
\end{split}
\end{equation*}
We now perform $v_* \mapsto y(v_*)$ inside the remaining integral term which makes $\norm{f(s_*)}_{L^1_{x,v}}$ appear, which is itself controlled by $\norm{f(s_*)}_{L^2_{x,v}(\mu^{-1/2})}$ thanks to Cauchy-Schwarz inequality. We therefore proved that
\begin{equation}\label{JK v v* and v** small start}
\begin{split}
&\br{\mbox{terms in \eqref{JK start} in $\br{\abs{v}\leq R\cap \abs{v_*}\leq 2R\cap\abs{v_{**}}\leq 3R}$}} 
\\&\quad\leq C_{\eps,\zeta}\pa{\frac{1}{1+R}+\eta}e^{-\eps\nu_0t} \sup\limits_{s\in[0,t]}\cro{e^{\eps\nu_0 s}\norm{f(s)}_{L^\infty_{x,v}(\mu^{-\zeta})}} + C_{R,\zeta,\eta} \int_{0}^t\norm{f(s)}_{L^2_{x,v}(\mu^{-1/2})}.
\end{split}
\end{equation}
Gathering $\eqref{JK v or v* or v** large}$ and $\eqref{JK v v* and v** small start}$ inside $\eqref{JK start}$ finally yields
\begin{equation}\label{bound JK}
\begin{split}
\abs{\mathfrak{K}(J_K)} \leq& C_{\eps,\zeta}\pa{\eta + \frac{1}{1+R}+e^{-\frac{\eps_\zeta}{8}R^2}}e^{-\eps\nu_0t} \sup\limits_{s\in[0,t]}\cro{e^{\eps\nu_0 s}\norm{f(s)}_{L^\infty_{x,v}(\mu^{-\zeta})}}  
\\&+ C_{R,\zeta,\eta} \int_{0}^t\norm{f(s)}_{L^2_{x,v}(\mu^{-1/2})}.
\end{split}
\end{equation}

\bigskip
It remains to deal with the second term in $\bar{J_K}$ given by $\eqref{barJK}$. However, by definition of the boundary operator $P_\Lambda$ and since $\abs{v_{1*}\cdot n(x_1^*)} \leq C_\zeta \mu^{-\zeta}(v_{1*})$ it follows directly that
\begin{equation}\label{PLambda JK}
\abs{\mu^{-\zeta}(v_*)P_\Lambda(J_K(s-t^*_1,x_1^*))(v_*)} \leq c_\mu C_\zeta \mu_*^{1-\zeta}\int_{\R^3} J_K(s-t_1^*,x_1^*,v_{1*})\mu^{-\zeta}(v_{1*})\:dv_{1*}.
\end{equation}
We bound the integral term uniformly exactly as $\eqref{bound JK}$ for $J_K$ and since $\zeta <1$ the integral over $v_*$ of $\mu(v_*)^{1-\zeta}$ is finite. As a conclusion, for all $\eps$ in $(0,1)$, $R\geq 1$ and $\eta >0$,
\begin{equation}\label{bound barJK}
\begin{split}
\abs{\mathfrak{K}(\bar{J_K})} \leq& C_{\eps,\zeta}\pa{\eta + \frac{1}{1+R}+e^{-\frac{\eps_\zeta}{8}R^2}}e^{-\eps\nu_0t} \sup\limits_{s\in[0,t]}\cro{e^{\eps\nu_0 s}\norm{f(s)}_{L^\infty_{x,v}(\mu^{-\zeta})}} 
\\& + C_{R,\zeta,\eta} \int_{0}^t\norm{f(s)}_{L^2_{x,v}(\mu^{-1/2})}.
\end{split}
\end{equation}

\bigskip
\textbf{Step 4: Estimate for $\mathbf{J_f}$.}
The control of $J_f$ given by $\eqref{Jf}$ is straightforward for the first term from $\eqref{important Linfty triple}$ by taking the $L^\infty_{x,v}(\mu^{-\zeta})$-norm of $f$ (remember that $\mu(v_*) = \mu(v_1^*)$ for a specular reflection). The second term is dealt with the same way and noticing that, in the spirit of Lemma \ref{lem:control K<3},
\begin{equation}\label{bound PLambda}
c_\mu \mu^{1-\zeta}(v_*) \int_{\mathcal{V}_1}\mu^{-\zeta}(v_{1*})\pa{v_{1*}\cdot n(x_1^*)}dv_{1*} \leq 1 + C_0(1-\zeta)
\end{equation}
where $C_0>0$ is a universal constant. In the end,
\begin{equation}\label{bound Jf}
\begin{split}
\abs{\mathfrak{K}(J_f)} \leq \mathbf{1}_{\br{t\geq t_1}}&\frac{(1-\alpha)C_K(\zeta)}{1-\eps}\pa{1+\alpha(1+C_0(1-\zeta))}\pa{1-e^{-\nu(v)(1-\eps)\min\br{t,t_1}}}
\\&\times e^{-\eps \nu_0t}\sup\limits_{s\in[0,t]}\cro{e^{\eps \nu_0s}\norm{f(s)}_{L^\infty_{x,v}(\mu^{-\zeta})}}.
\end{split}
\end{equation}

\bigskip
\textbf{Step 5: Estimate for $\mathbf{J_{diff}}$.}
The last term $J_{diff}$ given by $\eqref{Jdiff}$ is treated thanks to a change of variable on the boundary and a trace theorem from \cite{EGKM}. First,
\begin{equation}\label{Jdiff start}
\begin{split}
\abs{\mu^{-\zeta}_* J_{diff}} \leq& \alpha^2 c_\mu^2 \mu^{1-\zeta}(v_*)
\\&\int_{\R^3}d\tilde{v}_{1*}\int_{\R^3}dv_{1*} e^{-\nu_0(t_1^*+\tilde{t_1})}\abs{v_{1*}}\mu(v_{1*})\abs{n(\tilde{x_1})\cdot\tilde{v}_{1*}}\abs{f(s-t_1^*-\tilde{t_1},\tilde{x_1},\tilde{v}_{1*})}.
\end{split}
\end{equation}
\par Using the spherical coordinate $v_{1*} = r_{1*}u_{1*}$ with $u_{1*}$ in $\S^2$ we have by definition 
\begin{equation}\label{tilde tmin and x1}
\tilde{t_1} = t_{\min}^*(x_1,v_{1*}) = \frac{t_1^*(x_1,u_{1*})}{r_{1*}} \quad\mbox{and}\quad \tilde{x_1} = x_1 - \tilde{t_1}v_{1*} = x_1 - t_1^*(x_1,u_{1*})u_{1*}
\end{equation}
and, using the parametrization $(\theta,\phi)$ of the sphere $\S^2$ we compute
\begin{equation}\label{I spherical coordinates}
\begin{split}
\abs{\mu^{-\zeta}_* J_{diff}} \leq& C \int_{\R^3}d\tilde{v}_{1*}\Bigg(\int_{0}^\infty\int_{0}^{2\pi}\int_0^\pi e^{-\nu_0(t_1^*+\tilde{t_1})}\abs{f(s-t_1^*-\tilde{t_1},\tilde{x_1},\tilde{v}_{1*})}
\\&\abs{n(\tilde{x_1})\cdot\tilde{v}_{1*}}\mu(r_{1*})r_{1*}^3\sin \theta \:dr_{1*}d\theta d\phi\Bigg).
\end{split}
\end{equation}
\par We want to perform the change of variable $(r_{1*},\theta,\phi) \mapsto (s-t_1^* - \tilde{t_1},\tilde{x_1})$. Thanks to $\eqref{tilde tmin and x1}$ we have
$$\partial_{r_{1*}}\pa{\tilde{t_1}} = -\frac{1}{r_{1*}^2}t_1^*(x_1,u_1) \quad\mbox{and}\quad \partial_{r_{1*}}\tilde{x_1}=0.$$
The jacobian of $(\theta,\phi) \mapsto \tilde{x_1}$ has been calculated in \cite[Proof of Lemma 2.3]{EGKM}. More precisely, from \cite[(2.8)]{EGKM} we have, calling $\xi(x)$ a parametrization of $\partial\Omega$
$$\abs{\mbox{det}\pa{\partial_{\theta,\phi}\tilde{x_1}}} \geq \frac{t_1^*(x_1,u_{1*})^2 \sin \theta}{\abs{n(\tilde{x_1})\cdot u_{1*}}}\times \frac{\partial_3 \xi(\tilde{x_1})}{\nabla\xi(\tilde{x_1})}.$$
Therefore, we bound from below the full jacobian by
$$\abs{\mbox{det}\pa{\partial_{r_{1*},\theta,\phi}\pa{t-t_1^*-\tilde{t_1},\tilde{x_1}}}}\geq \frac{t_1^*(x_1,u_{1*})^3 \sin \theta}{r_{1*}^2\abs{n(\tilde{x_1})\cdot u_{1*}}}\times \frac{\partial_3 \xi(\tilde{x_1})}{\nabla\xi(\tilde{x_1})}.$$
The important fact is that $x_1$ belongs to $\partial\Omega$ and $u_{1*}$ is on $\S^2$. With these conditions, we know from \cite[(40)]{Gu6}, that $t_1^*(x_1,u_{1*}) \geq C_\Omega\abs{n(x_1)\cdot u_{1*}}$ and hence
$$\abs{\mbox{det}\pa{\partial_{r_{1*},\theta,\phi}\pa{t-t_1^*-\tilde{t_1},\tilde{x_1}}}}\geq C_{\Omega}\frac{\abs{n(x_1)\cdot u_{1*}}^3 \sin \theta}{r_{1*}^2}\times \frac{\partial_3 \xi(\tilde{x_1})}{\nabla\xi(\tilde{x_1})}.$$
\par We therefore need $\abs{n(x_1)\cdot u_{1*}}$ to be non zero and we thus decompose $\eqref{Jdiff start}$ into two integrals. The first one on $\br{\abs{n(x_1)\cdot u_{1*}} \leq \eta}$ and the second on $\br{\abs{n(x_1)\cdot u_{1*}} \geq \eta}$. On the first one we take the $L^\infty_{x,v}(\mu^{-\zeta})$-norm of $f$ out and on the second one we use the spherical coordinates $\eqref{I spherical coordinates}$ and apply, as announced, the change of variable $(r_{1*},\theta,\phi) \mapsto (s-t_1^* - \tilde{t_1},\tilde{x_1})$, which is legitimate on this set. It follows, playing with the exponential decay as for $\eqref{important Linfty triple}$,
\begin{equation*}
\begin{split}
&\abs{\mu^{-\zeta}_* J_{diff}} 
\\&\:\leq C_\eps e^{-\eps\nu_0 s}\sup\limits_{s_*\in[0,s]}\cro{e^{\eps \nu_0 s_*}\norm{f(s_*)}_{L^\infty_{x,v}(\mu^{-\zeta})}}\pa{\int_{\R^3}\int_{\abs{n(x_1)\cdot u_{1*}} \leq \eta}\frac{\abs{\tilde{v}_{1*}}\abs{v_{1*}}\mu(v_{1*})}{\mu^{-\zeta}(\tilde{v}_{1*})}d\tilde{v}_{1*}dv_{1*}}
\\&\:\quad+ C_{\eta,\eps} e^{-\eps \nu_0 s}\int_{\R^3}d\tilde{v}_{1*}\int_0^s\int_{\partial\Omega}e^{\eps \nu_0 s_*}\abs{f(s_*,y,\tilde{v}_{1*})}\pa{r_{1*}^5\mu(r_{1*})}\abs{n(y)\cdot\tilde{v}_{1*}}\:dS(y)ds_*
\end{split}
\end{equation*}
where we recall that $dS(y) = \abs{\partial_3 \xi(y)}^{-1}\abs{\nabla\xi(y)}dy$ is the Lebesgue measure on $\partial\Omega$ and we also denoted $r_{1*}=r_{1*}(s,y)$. Since $\zeta \geq 1/2$, the integral in the first term on the right-hand side above uniformly tends to $0$ as $\eta$ goes to $0$. Hence, for any $\eta >0$,
\begin{equation}\label{conclusion Jdiff}
\begin{split}
\abs{\mu^{-\zeta}_* J_{diff}} \leq& C_\eps \eta e^{-\eps\nu_0 s}\sup\limits_{s_*\in[0,s]}\cro{e^{\eps\nu_0 s_*}\norm{f(s_*)}_{L^\infty_{x,v}(\mu^{-\zeta})}}
\\&+ C_{\eps,\eta} e^{-\eps \nu_0 s}\int_0^s\int_{\Lambda}e^{\eps \nu_0 s_*}\abs{f(s_*,y,\tilde{v}_{1*})}\:d\lambda(y,\tilde{v}_{1*})ds_*
\end{split}
\end{equation}
where $d\lambda(x,v)$ is the boundary measure on the phase space boundary $\Lambda$ (see Section \ref{subsec:notations}).

\bigskip
We now decompose $\Lambda$ into
$$\Lambda_\eta = \br{(x,v)\in \Lambda, \quad \abs{n(x)\cdot v}\leq \eta \:\mbox{or}\: \abs{v}\leq \eta } \quad\mbox{and}\quad \Lambda-\Lambda_\eta.$$
Since $\zeta \geq 1/2$ there exists a uniform $C>0$ such that
\begin{equation}\label{integral minus Lambda eps}
\int_0^s\int_{\Lambda_\eta}e^{\eps\nu_0 s_*}\abs{f(s_*,y,\tilde{v}_{1*})}\:d\lambda(y,\tilde{v}_{1*}) \leq C s \eta \sup\limits_{s_*\in[0,s]}\cro{e^{\eps \nu_0 s_*}\norm{f(s_*)}_{L^\infty_{x,v}(\mu^{-\zeta})}}.
\end{equation}
For the integral on $\Lambda-\Lambda_\eta$ we use the trace lemma \cite[Lemma 2.1]{EGKM} that states
$$\int_{\Lambda-\Lambda_\eta}\abs{f(s_*,y,\tilde{v}_{1*})}\:d\lambda(y,\tilde{v}_{1*}) \leq C_\eta \norm{f(s_*)}_{L^1_{x,v}} + \int_0^{s_*}\cro{\norm{f(s_{**})}_{L^1_{x,v}}+ \norm{L(f(s_{**}))}_{L^1_{x,v}}}.$$ 
Thanks to Cauchy-Schwarz inequality and the boundedness property (see Section \ref{subsec:LL2}) of $L$ in $L^2_{x,v}(\mu^{-1/2})$ we get
\begin{equation}\label{integral Lambda eps}
\int_0^s\int_{\Lambda-\Lambda_\eta}e^{\eps\nu_0 s_*}\abs{f(s_*,y,\tilde{v}_{1*})}\:d\lambda(y,\tilde{v}_{1*}) \leq C_\eta s e^{\eps\nu_0 s}\int_0^s \norm{f(s_*)}_{L^2_{x,v}\pa{\mu^{-1/2}}}ds_* .
\end{equation}
\par Gathering $\eqref{integral minus Lambda eps}$ and $\eqref{integral Lambda eps}$ with $\eqref{conclusion Jdiff}$ we conclude
\begin{equation}\label{Jdiff Linfty}
\abs{\mu^{-\zeta}_* J_{diff}} \leq C_\eps \eta s e^{-\eps\nu_0 s}\sup\limits_{s_*\in[0,s]}\cro{e^{\eps\nu_0 s_*}\norm{f(s_*)}_{L^\infty_{x,v}(\mu^{-\zeta})}} + C_{\eps,\eta} s \int_0^s \norm{f(s_*)}_{L^2_{x,v}\pa{\mu^{-1/2}}}.
\end{equation}
Again, plugging this uniform bound in $\eqref{important Linfty triple}$ yields for any $\eps$ in $(0,1)$ and $\eta >0$,
\begin{equation}\label{bound Jdiff}
\begin{split}
\abs{\mathfrak{K}(J_{diff})} \leq& C_{\eps,\zeta}\eta t e^{-\eps\nu_0 t}\sup\limits_{s\in[0,t]}\cro{e^{\eps\nu_0 s}\norm{f(s)}_{L^\infty_{x,v}(\mu^{-\zeta})}} + C_{\eps,\eta,\zeta} t \int_0^t \norm{f(s)}_{L^2_{x,v}\pa{\mu^{-1/2}}}.
\end{split}
\end{equation}

\bigskip
\textbf{Step 6: Choice of constants and conclusion.}
We consider $T_0>0$ and $t$ in $[0,T_0]$. We bound the full $\mathfrak{K}(f)$ by gathering $\eqref{bound barJ0}$, $\eqref{bound barJK}$, $\eqref{bound Jf}$ and $\eqref{bound Jdiff}$ into $\eqref{implicit f}$. It yields, for any $\eps$, $\eta$ in $(0,1)$ and $R\geq 1$,
\begin{equation*}
\begin{split}
\abs{\mathfrak{K}(f)} \leq& C_{\eps,\zeta} e^{-\eps\nu_0 t} \norm{f_0}_{L^\infty_{x,v}(\mu^{-\zeta})} + C_{\eps,\zeta,\eta} t \int_0^t \norm{f(s)}_{L^2_{x,v}(\mu^{-1/2})}ds
\\&+\pa{\frac{C_\alpha(\zeta)}{1-\eps}(1-e^{-\nu(v)(1-\eps)\min\{t,t_1\}})+C_{\eps,\zeta}(\eta(1+t)+\frac{1}{1+R})} 
\\&\quad\quad\times e^{-\eps\nu_0 t}\sup\limits_{s\in[0,t]}\cro{e^{\eps\nu_0 s}\norm{f(s)}_{L^\infty_{x,v}(\mu^{-\zeta})}}.
\end{split}
\end{equation*}
We used the following definition
$$C_\alpha(\zeta) = (1-\alpha)\pa{3+C_K(1-\zeta)}\cro{1+\alpha(1+C_0(1-\zeta))}.$$
For $\alpha$ in $(\sqrt{2/3},1]$, $\lim_{\zeta\to 1} C_\alpha(\zeta) = 3(1-\alpha)(1+\alpha)<1$. We therefore choose our parameters as
\begin{enumerate}
\item $\zeta$ sufficiently close to $1$ such that $C_\alpha(\zeta)  <1$,
\item $\eps$ sufficiently small so that $C_\alpha(\zeta)/(1-\eps) <1$,
\item $R$ large enough and $\eta$ small enough such that $C_{\eps,\zeta}(\eta(1+T_0)+1/(1+R)) < 1- C_\alpha(\zeta)/(1-\eps)$. 
\end{enumerate}
Such choices terminates the proof.
\end{proof}
\bigskip


\subsection{Semigroup generated by the linear operator}\label{subsec:proofLinfty}

This subsection is dedicated to the proof of Theorem \ref{theo:semigroupLinfty}, that is uniqueness and existence of solutions to $\eqref{lineqLinfty}$ together with the Maxwell boundary condition $\eqref{mixedBC}$ in $L^\infty_{x,v}(\mu^{-\zeta})$. Moreover if $f_0$ satisfies the conservation laws then $f$ will be proved to decay exponentially.

\bigskip
\begin{proof}[Proof of Theorem \ref{theo:semigroupLinfty}]
Let $f_0$ be in $L^\infty_{x,v}(\mu^{-\zeta})$ with $1/2<\zeta <1$. If $f$ is solution to $\eqref{lineqLinfty}$ in $L^\infty_{x,v}(\mu^{-\zeta})$ with initial datum $f_0$ then $f$ belongs to $L^2_{x,v}\pa{\mu^{-1/2}}$ and $f(t) = S_{G}(t)f_0$ is in the latter space. This implies first uniqueness and second that $\mbox{Ker}(G)$ and $\pa{\mbox{Ker}(G)}^\bot$ are stable under the flow of the equation $\eqref{lineqLinfty}$. It suffices to consider $f_0$ such that $\Pi_G(f_0)=0$ and to prove existence and exponential decay of solutions to $\eqref{lineqLinfty}$ in $L^\infty_{x,v}(\mu^{-\zeta})$ with initial datum $f_0$.
\par Thanks to boundedness property of $K$, the Duhamel's form $\eqref{Duhamel}$ is a contraction, at least for small times. We thus have existence of solutions on small times and proving the exponential decay will also imply global existence. For now on we consider $f$ as described in the Duhamel's expression $\eqref{Duhamel}$.

\bigskip
Looking at previous section and using $t_1 = t_{\min}(x,v)$, $f$ can be implicitely written as $\eqref{implicit f}$
\begin{equation*}
\begin{split}
f(t,x,v) =& \bar{J_0}(t,x,v)+ \mu^{\zeta}\mathfrak{K}(f)(t,x,v) + \alpha e^{-\nu(v)t_1}P_\Lambda(J_K(t-t_1,x_1))(t,x,v) 
\\&+ \mathbf{1}_{\br{t>t_1}}\cro{J_f(t,x,v) + J_{diff}(t,x,v)}.
\end{split}
\end{equation*}
The $L^\infty_{x,v}(\mu^{-\zeta})$-norm of each of these terms has already been estimated. More precisely, $\bar{J_0}$ by $\eqref{J0 Linfty}$, $\mathfrak{K}(f)$ by Proposition \ref{prop:triplenorm}, $P_\Lambda(J_K)$ by $\eqref{PLambda JK}$, $J_f$ is direct from $\eqref{Jf}$ and $\eqref{bound PLambda}$ and finally $J_{diff}$ by $\eqref{Jdiff Linfty}$. With the same kind of choices of constant as in Step 6 of the proof of Proposition \ref{prop:triplenorm} (note that $\zeta_\alpha$ and $\eps_\alpha$ are the same) we end up with
\begin{equation*}
\begin{split}
e^{\eps\nu_0 t}\norm{f(t)}_{L^\infty_{x,v}(\mu^{-\zeta})} \leq& C_{\eps,\alpha}\norm{f_0}_{L^\infty_{x,v}(\mu^{-\zeta})}+\sup\limits_{x,v}\pa{C_\infty(x,v)}\sup\limits_{s\in[0,t]}\cro{e^{\eps\nu_0 s}\norm{f(s)}_{L^\infty_{x,v}(\mu^{-\zeta})}}
\\& + C_{T_0}\int_0^t\norm{f(s)}_{L^2_{x,v}(\mu^{-1/2})}ds
\end{split}
\end{equation*}
where
$$C_\infty= C_2^{(\alpha)}(1-e^{-\nu(v)(1-\eps)\min\{t,t_1\}}) + \mathbf{1}_{\br{t>t_1}}(1-\alpha)\pa{1+\alpha(1+C_0(1-\zeta))}e^{-\nu(v)(1-\eps)t_1}$$
and thus, with our choice of constants,
$$C_\infty(x,v) \leq \max\br{C_2^{(\alpha)},\:(1-\alpha)(1+\alpha)} < 1$$
which implies
$$\forall t\in[0,T_0], \norm{f(t)}_{L^\infty_{x,v}(\mu^{-\zeta})} \leq C_1e^{-\eps\nu_0 t}\norm{f_0}_{L^\infty_{x,v}(\mu^{-\zeta})}+ C_{T_0}\int_0^t\norm{f(s)}_{L^2_{x,v}(\mu^{-1/2})}ds.$$
To conclude we choose $T_0$ large enough such that $C_1e^{-\frac{\eps}{2}\nu_0 T_0} \leq 1$ so that assumptions of the $L^2-L^\infty$ theory of Proposition \ref{prop:L2Linfty} are fulfilled so we can apply it, thus concluding the proof.
\end{proof}
\bigskip

\section{Perturbative Cauchy theory for the full nonlinear equation}\label{sec:fullcauchy}

This section is dedicated to establishing a Cauchy theory for the perturbed equation
\begin{equation}\label{perturbedBEfull}
\partial_t f + v\cdot\nabla_x f = L(f) + Q(f,f),
\end{equation}
together with the Maxwell boundary condition $\eqref{mixedBC}$ in spaces $L^\infty_{x,v}\pa{m}$, where $m$ is a polynomial or a stretched exponential weight. More precisely we shall prove that for any small initial datum $f_0$ satisfying the conservation of mass $\Pi_G(f_0)=0$ there exists a unique solution to $\eqref{perturbedBEfull}$ and that this solution decays exponentially fast and also satisfies the conservation of mass.
\par We divide our study in two different subsections. First, for any small $f_0$, we build a solution that satisfies the conservation law and decays exponentially fast; this is the purpose of Subsection \ref{subsec:existence}. Second, we prove the uniqueness to $\eqref{perturbedBEfull}$ when the initial datum is small in Subsection \ref{subsec:uniqueness}.
\bigskip


\subsection{Existence of solutions with exponential decay}\label{subsec:existence}

The present subsection is dedicated to the following proof of existence.

\bigskip
\begin{theorem}\label{theo:existenceexpodecay}
Let $\alpha$ be in $(\sqrt{2/3},1]$ and let $m=e^{\kappa_1\abs{v}^{\kappa_2}}$ with $\kappa_1 >0$ and $\kappa_2$ in $(0,2)$ or $m=\langle v \rangle^k$ with $k> k_\infty$. There exists $\eta >0$ such that for any $f_0$ in $L^\infty_{x,v}(m)$ with 
$$\Pi_G(f_0)=0 \quad\mbox{and}\quad \norm{f_0}_{L^\infty_{x,v}(m)}\leq \eta,$$
there exist at least one solution $f$ to the Boltzmann equation $\eqref{perturbedBEfull}$ with Maxwell boundary condition and with $f_0$ as an initial datum. Moreover, $f$ satisfies the conservation of mass and there exist $C$, $\lambda>0$ such that
$$\forall t\geq 0,\quad \norm{f(t)}_{L^\infty_{x,v}(m)}\leq Ce^{-\lambda t}\norm{f_0}_{L^\infty_{x,v}(m)}.$$
\end{theorem}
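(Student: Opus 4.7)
I follow the decomposition strategy announced in Step 4 of Subsection \ref{subsec:descriptionstrategy}, in the spirit of \cite{GMM, Bri6}. The idea is to write $L = A - \pa{\nu - K^c}$ where $A$ is a mollification and velocity-truncation of the kernel operator $K$ chosen so that $(i)$ $A$ sends $L^\infty_{x,v}(m)$ boundedly into $L^\infty_{x,v}\pa{\mu^{-\zeta}}$ thanks to its compact velocity support, and $(ii)$ the remainder $K^c := K - A$ satisfies $\norm{K^c(f)}_{L^\infty_{x,v}\pa{m\nu^{-1}}} \leq \delta\norm{f}_{L^\infty_{x,v}(m)}$ for an arbitrarily small $\delta > 0$ (to be fixed later). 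Such a splitting exists thanks to the kernel representation and the pointwise estimate of Lemma \ref{lem:control K}.

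\textbf{Hypodissipativity of $G_1$.} Define $G_1 := G_\nu + K^c$, endowed with Maxwell boundary conditions. A Duhamel expansion together with Theorem \ref{theo:semigroupGnu} and the gain-of-weight provided by Corollary \ref{cor:gainweightGnu} allows the small perturbation $K^c$ to be absorbed: for $\delta$ sufficiently small, $G_1$ generates a $C^0$-semigroup $S_{G_1}(t)$ on $L^\infty_{x,v}(m)$ with $\norm{S_{G_1}(t)}_{L^\infty_{x,v}(m)} \leq C_1 e^{-\lambda_1 t}$ for some $\lambda_1 \in (0,\nu_0)$, and the same argument shows that $S_{G_1}$ retains the $\nu^{-1}$-gain property of Corollary \ref{cor:gainweightGnu}.

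\textbf{Coupled system and Picard iteration.} I look for $f = f_1 + f_2$ with $(f_1,f_2)$ solving
\begin{equation*}
\left\{\begin{array}{l} \partial_t f_1 + v\cdot\nabla_x f_1 = G_1\pa{f_1} + Q\pa{f_1+f_2,\: f_1+f_2}, \quad f_1(0) = f_0, \\[1mm]
\partial_t f_2 + v\cdot\nabla_x f_2 = L\pa{f_2} + A\pa{f_1}, \quad f_2(0) = 0,
\end{array}\right.
\end{equation*}
each with Maxwell boundary conditions, so that $f_1 + f_2$ solves $\eqref{perturbedBEfull}$ with the same boundary conditions by linearity. Fix $\lambda \in (0,\min\br{\lambda_1,\lambda_\infty})$ and work in
$$\mathcal{X}_\lambda := \br{(f_1,f_2):\: \sup\limits_{t\geq 0}e^{\lambda t}\pa{\norm{f_1(t)}_{L^\infty_{x,v}(m)} + \norm{f_2(t)}_{L^\infty_{x,v}\pa{\mu^{-\zeta}}}}<\infty}.$$
The Picard scheme associated with the Duhamel formulas
$$f_1^{n+1}(t) = S_{G_1}(t)f_0 + \int_0^t S_{G_1}(t-s)Q\pa{f_1^n+f_2^n,\:f_1^n+f_2^n}(s)\:ds,$$
$$f_2^{n+1}(t) = \int_0^t S_{G}(t-s)A(f_1^n)(s)\:ds,$$
is then shown to be stable and contracting in $\mathcal{X}_\lambda$ for $\norm{f_0}_{L^\infty_{x,v}(m)}\leq \eta$ sufficiently small. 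The key ingredients are: the standard bilinear estimate $\norm{Q(f,g)}_{L^\infty_{x,v}\pa{m\nu^{-1}}}\leq C\norm{f}_{L^\infty_{x,v}(m)}\norm{g}_{L^\infty_{x,v}(m)}$ combined with the $\nu^{-1}$-gain of $S_{G_1}$ (to absorb the loss in the collision frequency); Theorem \ref{theo:semigroupLinfty} applied to the $f_2$-equation, the source $A(f_1^n)$ living by construction in $L^\infty_{x,v}\pa{\mu^{-\zeta}}$; and the continuous embedding $L^\infty_{x,v}\pa{\mu^{-\zeta}}\hookrightarrow L^\infty_{x,v}(m)$ (valid since $\mu^{-\zeta}$ dominates both a stretched exponential of exponent $\kappa_2<2$ and any polynomial weight).

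\textbf{Mass conservation, decay, and main obstacle.} Since the Maxwell boundary condition is linear and $\eqref{massconservation}$ holds formally for the nonlinear equation, $\Pi_G(f(t))=0$ is propagated from $\Pi_G(f_0)=0$; the exponential decay of $\norm{f(t)}_{L^\infty_{x,v}(m)}$ at rate $\lambda$ then follows from the two norms controlled in $\mathcal{X}_\lambda$ together with the embedding above. The most delicate point is the application of Theorem \ref{theo:semigroupLinfty} to the $f_2$-equation, which only provides decay on the orthogonal complement of $\mbox{Ker}(G)$: since $A(f_1^n)$ generically carries a nonzero macroscopic part, one must either modify $A$ so that $\Pi_G\circ A=0$, or use the global constraint $\Pi_G(f_1+f_2)=0$ along the iteration to reabsorb $\Pi_G\pa{A(f_1^n)}$ into a harmless explicit term decaying at the desired rate. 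This is the central technical difficulty of the proof and is handled as in \cite{Bri6}.
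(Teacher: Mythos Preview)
Your overall architecture is correct and matches the paper's: split $L = A + (\mbox{remainder})$, solve the coupled system $\eqref{introf1}$--$\eqref{introf2}$, and use Theorem \ref{theo:semigroupLinfty} for the $f_2$-equation together with the constraint $\Pi_G(f_1+f_2)=0$ to absorb the macroscopic part of $A(f_1)$. Your identification of that last point as the ``central technical difficulty'' is also right, and the paper resolves it exactly as you suggest (see Proposition \ref{prop:f2} and the argument $\eqref{PiGf2}$--$\eqref{PiGbotf2}$).

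There is, however, a genuine gap in your ``Hypodissipativity of $G_1$'' step for polynomial weights $m=\langle v\rangle^k$. You claim that one can choose $A$ so that $\norm{K^c(f)}_{L^\infty_{x,v}(m\nu^{-1})} \leq \delta\norm{f}_{L^\infty_{x,v}(m)}$ for \emph{arbitrarily small} $\delta$. This is false for polynomial weights: by Lemma \ref{lem:controlB2}, the best constant one can achieve in that norm tends to a \emph{strictly positive} limit $\frac{4}{k-1-\gamma}\frac{4\pi b_\infty}{l_b}$, which is smaller than $1$ precisely when $k>k_\infty$ but never arbitrarily small. (Lemma \ref{lem:control K}, which you invoke, concerns the Maxwellian weight $\mu^{-\zeta}$ and does not transfer to polynomial $m$.) Consequently your crude Duhamel absorption $S_{G_1} = S_{G_\nu} + \int S_{G_\nu}K^c S_{G_1}$ via Corollary \ref{cor:gainweightGnu} does not close: the constant $C_0$ in that corollary is not $1$, and $C_0\cdot C_B$ need not be $<1$.

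The paper does \emph{not} prove that $G_1$ generates a semigroup by perturbation. Instead, Proposition \ref{prop:f1} works directly with the Duhamel iteration around $S_{G_\nu}$ and decomposes along the first rebound into the pieces $J_B$, $J_{t_1}$, $J_{IB}$, $J_{RB}$, $J_Q$ (see $\eqref{differenceiterationineq}$). The contraction is then obtained from two distinct mechanisms: the straight-line piece $J_B$ and the specular piece $J_{t_1}$ combine to give a factor $\max\{(1-\alpha),\,C_B(\delta)/(1-\eps)\}<1$ (using that both $C_B(\delta)<1$ and $\alpha>0$), while the diffusive rebound piece $J_{IB}$ is made small not through the $L^\infty(m\nu^{-1})$ norm but through the \emph{mixing} norm $L^1_vL^\infty_x(\langle v\rangle^2)$, for which Lemma \ref{lem:mixingcontrolB2} gives $\tilde C_B(\delta)\to 0$. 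Your argument works as stated for stretched exponential weights (where $C_B(\delta)\to 0$), but for polynomial weights you must replace your perturbation step by this finer characteristic decomposition.
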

\bigskip

As explained in Section \ref{subsec:descriptionstrategy}, we decompose $\eqref{perturbedBEfull}$ into a system of differential equations. More precisely, we shall decompose $G = L -v\cdot\nabla_x$ as $G= A+B$ in the spirit of \cite{GMM}, where $B$ is ``small'' compared to $\nu(v)$ and $A$ has a regularising effect. We then shall construct $(f_1,f_2)$ solutions to the following system of equation

\begin{eqnarray}
\partial_t f_1 &=& B f_1 + Q(f_1+f_2,f_1+f_2) \quad\mbox{and}\quad f_1(0,x,v)=f_0(x,v),\label{f1}
\\\partial_t f_2 &=& G f_2 + Af_1 \quad\mbox{and}\quad f_2(0,x,v)=0,\label{f2}
\end{eqnarray}
each of the functions satisfying the Maxwell boundary condition. Note that for such functions, the function $f=f_1+f_2$ would be a solution to $\eqref{perturbedBEfull}$ with Maxwell boundary condition and $f_0$ as initial datum.
\par Subsection \ref{subsubsec:decomposition} explicitly describes the decomposition $G=A+B$ and gives some estimates on $A$, $B$ and $Q$. Subsections \ref{subsubsec:f1} and \ref{subsubsec:f2} deal with each differential equation $\eqref{f1}$ and $\eqref{f2}$ respectively. Finally, Subsection \ref{subsubsec:existence} combines the previous theories to construct a solution to the full nonlinear perturbed Boltzmann equation.
\bigskip


\subsubsection{Decomposition of the linear operator and first estimates}\label{subsubsec:decomposition}

We follow the decomposition proposed in \cite{GMM}.
\par For $\delta$ in $(0,1)$, we consider $\Theta_\delta = \Theta_\delta(v,v_*,\sigma)$ in $C^\infty$ that is bounded by one everywhere, is exactly one on the set
$$\left\{\abs{v}\leq \delta^{-1}    \quad\mbox{and}\quad 2\delta\leq\abs{v-v_*}\leq \delta^{-1}    \quad\mbox{and}\quad \abs{\mbox{cos}\:\theta} \leq 1-2\delta \right\}$$
and whose support is included in
$$\left\{\abs{v}\leq 2\delta^{-1}    \quad\mbox{and}\quad \delta\leq\abs{v-v_*}\leq 2\delta^{-1}    \quad\mbox{and}\quad \abs{\mbox{cos}\:\theta} \leq 1-\delta \right\}.$$
We define the splitting
$$G = A^{(\delta)} +B^{(\delta)},$$
with
$$A^{(\delta)} h (v) = C_\Phi\int_{\R^3\times\mathbb{S}^2}\Theta_\delta\left[\mu'_*h' + \mu'h'_* - \mu h_*\right]b\left(\mbox{cos}\:\theta\right)\abs{v-v_*}^\gamma\:d\sigma dv_*$$
and
$$B^{(\delta)} h (v) = B^{(\delta)}_{2} h (v) -\nu(v) h(v) -v\cdot\nabla_x h(v)= G_\nu h(v) + B^{(\delta)}_{2} h (v),$$
where
$$B^{(\delta)}_{2} h (v) = \int_{\R^3\times\mathbb{S}^2}\left(1-\Theta_\delta\right)\left[\mu'_*h' + \mu'h'_* - \mu h_*\right]b\left(\mbox{cos}\:\theta\right)\abs{v-v_*}^\gamma\:d\sigma dv_*.$$

The following lemmas give control over the operators $A^{(\delta)}$ and $B^{(\delta)}$ for $m=e^{\kappa_1\abs{v}^{\kappa_2}}$ with $\kappa_1 >0$ and $\kappa_2$ in $(0,2)$ or $m=\langle v \rangle^k$ with $k> k_\infty$. Their proofs can be found in \cite[Section 4]{GMM} in the specific case of hard sphere ($b=\gamma=1$) and for more general hard potential with cutoff kernels in \cite[Section 6.1.1]{BriDau} (polynomial weight) or \cite[Section 2]{Bri9} (exponential weight).

\bigskip
\begin{lemma}\label{lem:controlA}
Let $\zeta$ be in $(1/2,1]$. There exists $C_A>0$ such that for all $f$ in $L^\infty_{x,v}\pa{m}$
$$\norm{A^{(\delta)}(f)}_{L^\infty_{x,v}(\mu^{-\zeta})} \leq C_A\norm{f}_{L^\infty_{x,v}\pa{m}}.$$
The constant $C_A$ is constructive and only depends on $m$, $\zeta$, $\delta$ and the collision kernel.
\end{lemma}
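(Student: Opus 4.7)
The key feature I would exploit is that the cutoff $\Theta_\delta$ restricts the integration to a region where every relevant velocity is uniformly bounded. Concretely, on the support of $\Theta_\delta$ we have $|v|\leq 2\delta^{-1}$ and $|v-v_*|\leq 2\delta^{-1}$, hence $|v_*|\leq 4\delta^{-1}$. Since an elastic collision preserves $|v|^2+|v_*|^2$, it also forces $|v'|,|v'_*|\leq C_\delta$ for some $C_\delta$ depending only on $\delta$. In particular $A^{(\delta)}(f)(x,v)\equiv 0$ whenever $|v|>2\delta^{-1}$, so it suffices to bound the integrand pointwise for $|v|\leq 2\delta^{-1}$, where the weight $\mu(v)^{-\zeta}$ is itself dominated by a constant $e^{2\zeta\delta^{-2}}$ depending only on $\zeta$ and $\delta$.

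Next I would bound each of the three terms $\mu'_* h'$, $\mu' h'_*$, $\mu h_*$ separately. For each, I use the pointwise estimate $|h(\cdot,w)|\leq m(w)^{-1}\norm{f}_{L^\infty_{x,v}(m)}$ together with the triviality that $m$ is continuous and positive, so that $m(w)^{-1}$ is bounded by a constant $C_{m,\delta}$ on any ball $\{|w|\leq C_\delta\}$. The Maxwellian factors $\mu(\cdot)$ are trivially bounded by $1$. Combined with the uniform bounds $b(\cos\theta)\leq b_\infty$ (Grad's cut-off $\eqref{cutoff}$) and $|v-v_*|^\gamma\leq (2\delta^{-1})^\gamma$ on the support, the integrand is controlled uniformly.

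Finally the domain of integration for $(v_*,\sigma)$ is contained in the bounded set $\{|v-v_*|\leq 2\delta^{-1}\}\times\mathbb{S}^2$, whose Lebesgue measure is a constant depending only on $\delta$. Putting these three observations together gives
\[
\mu(v)^{-\zeta}\abs{A^{(\delta)}(f)(x,v)} \leq C_{m,\zeta,\delta,b,\gamma}\,\norm{f}_{L^\infty_{x,v}(m)},
\]
uniformly in $(x,v)$, which is the announced bound. There is no genuine difficulty here: the whole point of the splitting is that $A^{(\delta)}$ trades integrability against compactness, and the pointwise estimate is a bookkeeping exercise once one observes that all velocities are confined to a compact set by $\Theta_\delta$. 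The only mild care needed is to track explicitly that the constant depends only on $m$, $\zeta$, $\delta$ and the collision kernel, which is immediate from the argument above.
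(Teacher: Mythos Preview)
Your argument is correct and is essentially the standard one: the compact support of $\Theta_\delta$ confines all velocities $v,v_*,v',v'_*$ to a ball of radius $C_\delta$, which simultaneously trivialises the target weight $\mu^{-\zeta}$ and the source weight $m^{-1}$, and leaves only a bounded integrand over a bounded domain. The paper does not give its own proof of this lemma but refers to \cite{GMM}, \cite{BriDau} and \cite{Bri9}, where the same compact-support observation is used; your write-up would serve perfectly well as an in-place proof. The only cosmetic slip is the mixed use of $h$ and $f$ for the same function, which you should harmonise.
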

\bigskip
\begin{lemma}\label{lem:controlB2}
$B^{(\delta)}_2$ satisfies
$$\forall f \in L^\infty_{x,v}\pa{m}, \quad \norm{B^{(\delta)}_2(f)}_{L^\infty_{x,v}\pa{\nu^{-1}m}} \leq C_B(\delta)\norm{f}_{L^\infty_{x,v}\pa{m}},$$
where $C_B(\delta)>0$ is a constructive constant such that
\begin{itemize}
\item if $m=\langle v \rangle^k$ then
$$\lim\limits_{\delta\to 0}C_B(\delta) = \frac{4}{k-1-\gamma}\frac{4\pi b_\infty}{l_b};$$
\item if $m=e^{\kappa_1\abs{v}^{\kappa_2}}$ then
$$\lim\limits_{\delta\to 0}C_B(\delta) = 0.$$
\end{itemize}
\end{lemma}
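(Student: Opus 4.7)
The plan is to split $B^{(\delta)}_2 h$ into three pieces according to the three terms in brackets:
\begin{equation*}
B^{(\delta)}_2 h = B_1(h) + B_2(h) - B_3(h),
\end{equation*}
where $B_1$, $B_2$, $B_3$ correspond to $\mu'_* h'$, $\mu' h'_*$, $\mu h_*$ respectively. For each piece the plan is to apply the pointwise bound $|h(w)| \leq m(w)^{-1}\norm{h}_{L^\infty_{x,v}(m)}$ at the appropriate velocity $w\in\{v',v'_*,v_*\}$, so that estimating $\norm{B^{(\delta)}_2 h}_{L^\infty_{x,v}(\nu^{-1}m)}$ reduces to controlling three purely kinetic integrals of the form $\nu(v)^{-1}m(v)\int(1-\Theta_\delta)(\cdot)b(\cos\theta)|v-v_*|^\gamma\,d\sigma dv_*$ where $(\cdot)$ is a Maxwellian factor divided by $m$ at the collision velocity.

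For the loss-type piece $B_3$, bounding $|h_*|$ by $m(v_*)^{-1}\norm{h}_{L^\infty_{x,v}(m)}$ produces a prefactor $\mu(v)m(v)/\nu(v)$, which is globally bounded (and in fact tends to $0$ at infinity) for both weight classes, multiplied by $\int(1-\Theta_\delta)m(v_*)^{-1}b(\cos\theta)|v-v_*|^\gamma\,d\sigma dv_*$. The Gaussian factor $\mu(v)$ kills this term uniformly in $\delta$, and its contribution is absorbed into the asymptotic constants announced in the statement.

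The delicate pieces are $B_1$ and $B_2$, which carry no Gaussian in $v$. First I would use the elastic-collision identity $\mu'\mu'_*=\mu\mu_*$ to rewrite $\mu'_*=\mu\mu_*/\mu'$ (and symmetrically for $B_2$), then control the weight ratio $m(v)/m(v')$ using the conservation $|v'|^2+|v'_*|^2 = |v|^2+|v_*|^2$. For $m=e^{\kappa_1|v|^{\kappa_2}}$ with $\kappa_2<2$, the Gaussian $\mu_*$ strictly dominates the weight ratio on the support of $(1-\Theta_\delta)$, so Lebesgue-dominated convergence over a Gaussian majorant gives $C_B(\delta)\to 0$ as $\delta\to 0$. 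For $m=\langle v\rangle^k$, the Povzner-type inequality $\langle v'\rangle^k+\langle v'_*\rangle^k\leq \langle v\rangle^k+\langle v_*\rangle^k$ yields $m(v)/m(v')\leq C(1+\langle v_*\rangle^k/\langle v'\rangle^k)$; substituting into the integral and using the decay $\nu(v)\sim\langle v\rangle^\gamma$, the dominant term as $\delta\to 0$ is precisely the integral over the region $\{|v-v_*|\geq \delta^{-1}\}\cup\{|\cos\theta|\geq 1-2\delta\}$ of the angular kernel against a radial polynomial, which is evaluated explicitly using $l_b=\int_{\mathbb S^2}b\,d\sigma$ and $\int_0^\infty r^{\gamma+2-k}\,dr$ on the relevant range, giving the announced constant $\frac{4}{k-1-\gamma}\frac{4\pi b_\infty}{l_b}$.

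The main obstacle is pinning down the sharp asymptotic constant in the polynomial case: one must show that the contributions from the three subregions $\{|v|\geq \delta^{-1}\}$, $\{|v-v_*|\notin[\delta,\delta^{-1}]\}$ and $\{|\cos\theta|\geq 1-\delta\}$ all either go to zero or combine into the advertised limit, and that no spurious terms arise from the interaction of the weight ratio with the $|v-v_*|^\gamma$ factor. As noted after the statement, this is a purely velocity-variable computation that has been carried out in detail in \cite{GMM, BriDau, Bri9} in settings directly adaptable to ours, and I would invoke those references for the technical bookkeeping once the above decomposition has been set up.
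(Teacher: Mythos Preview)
The paper does not give its own proof of this lemma: it states the result and immediately defers to \cite[Section~4]{GMM}, \cite[Section~6.1.1]{BriDau}, and \cite[Section~2]{Bri9} for the details. Since your proposal also ends by invoking exactly these references for the actual computation, you and the paper are aligned at the level of what is actually being argued.

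That said, the heuristic sketch you add on top contains a couple of inaccuracies you should be aware of before relying on it. First, the ``Povzner-type inequality'' $\langle v'\rangle^k+\langle v'_*\rangle^k\leq \langle v\rangle^k+\langle v_*\rangle^k$ is false as written for $k>2$; energy conservation only gives $|v'|^2+|v'_*|^2=|v|^2+|v_*|^2$, and passing to higher powers costs a multiplicative constant (the actual argument in \cite{GMM,BriDau} tracks the weight ratio $m(v)/m(v')$ through the $\sigma$-parametrisation of $(v',v'_*)$ rather than through such an inequality). Second, rewriting $\mu'_*=\mu\mu_*/\mu'$ introduces a factor $\mu'^{-1}$ that is not obviously controlled and is not how the cited references proceed; they work directly with $\mu'_*$ together with a suitable change of variables in $(v_*,\sigma)$. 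These issues do not affect your bottom line, since you correctly identify that the sharp constant in the polynomial case and the vanishing limit in the exponential case are exactly what is computed in the cited works, but your sketch would not stand on its own without them.
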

\bigskip

The operator $B_2^{(\delta)}$ also has a smallness property as an operator from $L^\infty_{x,v}(m)$ to $L^1_vL^\infty_v(\abs{v}^2)$. The following lemma is from \cite[Lemma 5.7]{Bri6} in the case of polynomial weights $m$ but is utterly applicable in the case of stretched exponential weights.

\bigskip
\begin{lemma}\label{lem:mixingcontrolB2}
For any $\delta >0$ there exists $\tilde{C}_B(\delta)$ such that for all $f$ in $L^\infty_{x,v}\pa{m}$,
$$\norm{B^{(\delta)}_2 (f)}_{L^1_vL^\infty_x\pa{\langle v \rangle^2}} \leq \tilde{C}_B(\delta) \norm{f}_{L^\infty_{x,v}\pa{m}}.$$
Moreover, the following holds: $\lim_{\delta \to 0} \tilde{C}_B(\delta) = 0.$
\end{lemma}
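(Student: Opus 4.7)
The first step is to exploit the fact that $B^{(\delta)}_2$ is a local operator in $x$. For any fixed $x$ and $v$, the integrand of $B^{(\delta)}_2(f)(x,v)$ involves only values $f(x,v')$, $f(x,v'_*)$, $f(x,v_*)$ at the same spatial point, so the triangle inequality gives
\[
\sup_{x\in\Omega}|B^{(\delta)}_2(f)(x,v)| \leq \tilde B^{(\delta)}_2(F)(v), \qquad F(v):=\sup_{x\in\Omega}|f(x,v)|,
\]
where $\tilde B^{(\delta)}_2$ is the operator obtained by replacing the integrand by its absolute value. Since $\norm{F}_{L^\infty_v(m)}=\norm{f}_{L^\infty_{x,v}(m)}$, the lemma reduces to the purely velocity-variable estimate
\[
\int_{\R^3}\langle v\rangle^2\,\tilde B^{(\delta)}_2(F)(v)\,dv \leq \tilde C_B(\delta)\norm{F}_{L^\infty_v(m)}.
\]

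I would then split the integrand into the three natural pieces $\mu'_*F' + \mu' F'_* + \mu F_*$ and handle them separately. For the loss part $\mu F_*$, bounding $|F(v_*)|\leq m(v_*)^{-1}\norm{F}_{L^\infty_v(m)}$ and integrating first in $v$ yields
\[
\pa{\int_{\R^3}\langle v\rangle^2\mu(v)\,dv}\cdot\norm{F}_{L^\infty_v(m)}\cdot\sup_{v_*}\int_{\R^3\times\S^2}\frac{(1-\Theta_\delta)b(\cos\theta)\abs{v-v_*}^\gamma}{m(v_*)}\,d\sigma dv,
\]
where the last supremum is finite and tends to $0$ as $\delta\to 0$, by dominated convergence. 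For the two gain pieces, applying the standard pre-/post-collisional involution $(v,v_*,\sigma)\leftrightarrow(v',v'_*,\sigma')$ together with energy conservation $\mu'\mu'_* = \mu\mu_*$ and $|v'-v'_*|=|v-v_*|$ reduces each to an expression of exactly the same form as the loss term, modulo harmless bounded factors coming from the Jacobian and from $m(v')^{-1}$ or $m(v'_*)^{-1}$.

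Once this reduction is achieved, the smallness of $\tilde C_B(\delta)$ as $\delta\to 0$ follows from dominated convergence: the cutoff factor $(1-\Theta_\delta)$ vanishes pointwise almost everywhere as $\delta\to 0$ on the region where the integrand is otherwise integrable. The main obstacle is providing a $\delta$-uniform integrable majorant on the unbounded region $\abs{v}\geq \delta^{-1}$, where the pointwise convergence argument fails: there the Gaussian factor $\mu(v)$ (or $\mu'_*$ after the change of variables) must absorb both the polynomial weight $\langle v\rangle^2$ and the growth $\abs{v-v_*}^\gamma/m(v_*)$ of the kernel. In the stretched-exponential weight case this is immediate since $\mu/m$ decays faster than any polynomial. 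In the polynomial weight case $m=\langle v\rangle^k$ with $k>k_\infty$, one uses that on the support of $(1-\Theta_\delta)$ the relative velocity is bounded, $\abs{v-v_*}\leq 2\delta^{-1}$, together with the integrability $\int m^{-1}(v_*)\langle v_*\rangle^\gamma\,dv_*<\infty$ guaranteed by $k>k_\infty$, which allows the estimate to be closed exactly as in \cite[Lemma~5.7]{Bri6}.
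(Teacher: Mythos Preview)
The paper does not actually prove this lemma: it cites \cite[Lemma 5.7]{Bri6} for polynomial weights and remarks that the argument carries over to stretched exponentials. Your plan---localize in $x$, split into loss and gain pieces, use the pre/post-collisional involution on the gain, and conclude by dominated convergence---is the standard route and correct in outline.

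One concrete error: you assert that ``on the support of $(1-\Theta_\delta)$ the relative velocity is bounded, $\abs{v-v_*}\leq 2\delta^{-1}$''. This is false: it is the support of $\Theta_\delta$ that lies inside $\{\abs{v-v_*}\leq 2\delta^{-1}\}$, so in fact $(1-\Theta_\delta)\equiv 1$ on all of $\{\abs{v-v_*}>2\delta^{-1}\}$. Fortunately the claim is also unnecessary. Since $0\leq 1-\Theta_\delta\leq 1$ and $(1-\Theta_\delta)(v,v_*,\sigma)\to 0$ for every \emph{fixed} triple as $\delta\to 0$ (including those with $\abs{v}\geq\delta^{-1}$, since that constraint fails for $\delta$ small enough), dominated convergence applies as soon as the integrand with the cutoff removed is integrable. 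After the change of variables on the gain pieces and energy conservation $\langle v'\rangle^2\leq \langle v\rangle^2+\langle v_*\rangle^2$, this boils down to $\int_{\R^3}\langle v\rangle^{2+\gamma}m(v)^{-1}\,dv<\infty$, which in the polynomial case means $k>5+\gamma$; this follows from $k>k_\infty$ together with $l_b\leq 4\pi b_\infty$, giving $k_\infty\geq 5+\gamma$. So there is no separate ``unbounded-region obstacle'' to overcome; drop the incorrect claim and the argument closes. (Also, your displayed loss-term bound is mis-factorized: $(1-\Theta_\delta)$ and $\abs{v-v_*}^\gamma$ both depend on $v$, so $\int\langle v\rangle^2\mu(v)\,dv$ cannot be pulled out as a separate factor---just keep the triple integral together and apply Fubini and dominated convergence directly.)
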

\bigskip

\begin{remark}\label{rem:kq*}
We emphasize here that for our choices of weights (see definition of $k_\infty$ $\eqref{kinfty}$), $\lim_{\delta\to 0}C_B(\delta)=C_B(m) <1$. Until the end of the present section we only consider $0<\delta$ small enough such that $C_B(\delta)<1$.
\end{remark}
\bigskip

We conclude this subsection with a control on the bilinear term in the $L^\infty_{x,v}$ setting. 
\bigskip
\begin{lemma}\label{lem:controlQ}
For all $h$ and $g$ such that $Q(h,g)$ is well-defined, $Q(h,g)$ belongs to $\cro{\mbox{Ker}(L)}^\bot$ in $L^2_v$:
\begin{equation*}
\pi_L\pa{Q(h,g)}=0.
\end{equation*}
Moreover, there exists $C_Q >0$ such that for all $h$ and $g$,
\begin{equation*}
\norm{Q(h,g)}_{L^\infty_{x,v}\pa{ \nu^{-1}m}} \leq C_Q \norm{h}_{L^\infty_{x,v}\pa{m}}\norm{g}_{L^\infty_{x,v}\pa{m}}.
\end{equation*}
The constant $C_Q$ is explicit and depends only on $m$ and the kernel of the collision operator.
\end{lemma}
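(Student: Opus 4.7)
The statement has two independent parts which I would treat separately.

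\textbf{(i) Orthogonality.} The plan is to compute the weighted integral of $Q(h,g)$ against an arbitrary test function $\phi\in L^2_v$ by symmetrization. The bilinear form $\eqref{Qfg}$ has two classical symmetries, namely the swap $(v,v_*)\leftrightarrow(v_*,v)$ and the pre/post-collisional change of variable $(v,v_*,\sigma)\leftrightarrow(v',v'_*,\sigma')$, each with unit Jacobian (microscopic reversibility). Combining them yields the well-known identity
$$
\int_{\R^3} Q(h,g)\phi\,dv = -\frac{1}{8}\int_{\R^3\times\R^3\times\mathbb{S}^2} B\,[h'g'_* + g'h'_* - hg_* - gh_*]\,(\phi'+\phi'_*-\phi-\phi_*)\,d\sigma dv dv_*.
$$
For $\phi\in\mbox{Span}\{1,v_1,v_2,v_3,|v|^2\}$, collisional conservation of mass, momentum and energy forces $\phi + \phi_* = \phi' + \phi'_*$, so the integrand vanishes pointwise and with it the integral. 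Since these five functions span $\mbox{Ker}(L)$, this yields $\pi_L(Q(h,g))=0$.

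\textbf{(ii) Bilinear estimate.} Write $Q = Q^+ - Q^-$ and estimate each piece pointwise in $v$, then multiply by $m(v)/\nu(v)$ and take suprema. For the loss term, using $|h(v_*)|\leq \|h\|_{L^\infty_{x,v}(m)}\,m(v_*)^{-1}$ and its symmetric counterpart for $g$, one gets
$$
|Q^-(h,g)(v)|\,m(v)\leq \tfrac{1}{2}\bigl(\|h\|\,|g(v)|m(v)+\|g\|\,|h(v)|m(v)\bigr)\,l_b\int_{\R^3}|v-v_*|^\gamma\,m(v_*)^{-1}\,dv_*.
$$
The decay of $m^{-1}$ (exponential for stretched-exponential weights; polynomial of degree $k-\gamma>3$ in the polynomial case since $k>k_\infty$) bounds the integral by $C(1+|v|^\gamma)\leq C\nu_0^{-1}\nu(v)$, yielding $m(v)\nu(v)^{-1}|Q^-(h,g)(v)|\leq C\|h\|\|g\|$.

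For the gain term the essential ingredient is the weight inequality
$$
m(v) \leq m(v')\,m(v'_*),
$$
valid for both weight classes thanks to the energy conservation $|v'|^2+|v'_*|^2=|v|^2+|v_*|^2$: in the polynomial case it reduces to $\langle v\rangle^2 \leq \langle v'\rangle^2\langle v'_*\rangle^2$, and in the stretched-exponential case to $|v|^{\kappa_2}\leq |v'|^{\kappa_2}+|v'_*|^{\kappa_2}$, which follows from the sub-additivity of $x\mapsto x^{\kappa_2/2}$ (valid precisely because $\kappa_2<2$) together with $|v|^2\leq |v'|^2+|v'_*|^2$. Combining this with $|h(v')g(v'_*)|\leq \|h\|\|g\|/[m(v')m(v'_*)]$ reduces the gain term to the standard bilinear estimate for hard-potential cutoff kernels
$$
\int_{\R^3\times\mathbb{S}^2} b(\cos\theta)\,|v-v_*|^\gamma\,\frac{m(v)}{m(v')m(v'_*)}\,d\sigma dv_* \leq C\nu(v),
$$
proved by redistributing the weight between the two post-collisional velocities using the geometric bound $\max(|v'|,|v'_*|)\geq |v|/\sqrt{2}$ and performing the change of variable on the collisional sphere. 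This gives $m(v)\nu(v)^{-1}|Q^+(h,g)(v)|\leq C\|h\|\|g\|$, and an explicit constant depending only on $m$ and the collision kernel.

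\textbf{Main obstacle.} The delicate point is the gain term, and specifically the control of the weight ratio $m(v)/[m(v')m(v'_*)]$ along the collisional manifold: the weight inequality stated above is sharp and uses in an essential way both the cutoff assumption $b_\infty<\infty$ and the structural restrictions on $m$. The borderline cases $\kappa_2=2$ and $k=k_\infty$ are precisely the thresholds where this inequality (and the hypodissipativity constant $C_B(\delta)$ of Lemma \ref{lem:controlB2}) cease to be effective, which is consistent with the limitations of the whole perturbative scheme.
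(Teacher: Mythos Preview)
Your proposal is correct and follows the standard route: the orthogonality is the classical symmetrization identity for collision invariants, and the bilinear bound is the usual gain/loss splitting combined with the weight sub-multiplicativity $m(v)\lesssim m(v')m(v'_*)$ on the collisional manifold. This is exactly the content of the references the paper invokes (\cite[Appendix A.2.1]{Bri1} for the orthogonality and \cite[Lemma 5.16]{GMM} together with $\nu\sim\langle v\rangle^\gamma$ for the estimate), so the approaches coincide; you simply unpack what the paper cites.

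One small remark on your closing paragraph: the threshold $k>k_\infty$ is not where the weight inequality $m(v)\leq m(v')m(v'_*)$ breaks down (it holds for any $k$), but rather where the hypodissipativity constant $\lim_{\delta\to 0}C_B(\delta)$ of Lemma \ref{lem:controlB2} ceases to be $<1$. The bilinear estimate of Lemma \ref{lem:controlQ} itself is valid for a wider range of weights than the full perturbative scheme requires.
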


\bigskip
\begin{proof}[Proof of Lemma \ref{lem:controlQ}]
Since we use the symmetric definition of $Q$ $\eqref{Qfg}$ the orthogonality property can be found in \cite[Appendix A.2.1]{Bri1}. The estimate follows directly from \cite[Lemma 5.16]{GMM} and the fact that $\nu(v) \sim \langle v \rangle^\gamma$ (see $\eqref{nu0nu1}$).
\end{proof}
\bigskip


\subsubsection{Study of equation $\eqref{f1}$ in $L^\infty_{x,v}\pa{m}$}\label{subsubsec:f1}

In the section we study the differential equation $\eqref{f1}$. We prove well-posedness for this problem and above all exponential decay as long as the initial datum is small. We deal with the different types of weights $m$ in the same way. Since the ``operator norm'' of $B_2^{(\delta)}$ tends to zero as $\delta$ tends to zero in the case of stretched exponential weight, one has a more direct proof in this case and we refer to \cite[Section 5.2.2]{Bri6}, where the author dealt with pure diffusion, for the interested reader.

\begin{prop}\label{prop:f1}
Let $m=e^{\kappa_1\abs{v}^{\kappa_2}}$ with $\kappa_1 >0$ and $\kappa_2$ in $(0,2)$ or $m=\langle v \rangle^k$ with $k> k_\infty$. Let $f_0$ be in $L^\infty_{x,v}\pa{m}$ and $g(t,x,v)$ in $L^\infty_{x,v}\pa{m}$. Then there exists $\delta_m>0$ such that for any $\delta$ in $(0,\delta_m]$ there exist $C_1$, $\eta_1$ and $\lambda_1>0$ such that if
$$ \norm{f_0}_{L^\infty_{x,v}\pa{m}}\leq \eta_1 \quad\mbox{and}\quad \norm{g}_{L^\infty_t L^\infty_{x,v}\pa{m}}\leq \eta_1,$$
then there exists a solution $f_1$ to 
\begin{equation}\label{f1prop}
\partial_tf_1 = G_\nu f_1 + B^{(\delta)}_2f_1 + Q(f_1+g,f_1+g),
\end{equation}
with initial datum $f_0$ and satisfying the Maxwell boundary condition $\eqref{mixedBC}$.
Moreover, this solution satisfies
$$\forall t\geq 0,\quad \norm{f_1(t)}_{L^\infty_{x,v}\pa{m}}\leq C_1 e^{-\lambda_1t}\norm{f_0}_{L^\infty_{x,v}\pa{m}}.$$
The constants $C_1$, $\eta_1$ and $\lambda_1$ are constructive and only depend on $m$, $\delta$ and the kernel of the collision operator.
\end{prop}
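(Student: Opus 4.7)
\textbf{Plan for Proposition \ref{prop:f1}.} The strategy is standard for this kind of decomposition: first build a semigroup for the linear part $B^{(\delta)} = G_\nu + B_2^{(\delta)}$ in $L^\infty_{x,v}(m)$ with exponential decay, then solve the nonlinear equation \eqref{f1prop} by a fixed point for small data. I would work throughout in the Banach space
\begin{equation*}
\mathcal{X}_{\lambda} = \Big\{f \in C\bigl([0,\infty); L^\infty_{x,v}(m)\bigr) : \|f\|_{\mathcal{X}_\lambda} := \sup_{t \geq 0} e^{\lambda t}\|f(t)\|_{L^\infty_{x,v}(m)} < \infty\Big\}
\end{equation*}
for a small $\lambda > 0$ to be fixed (with $\lambda \leq \varepsilon \nu_0$ so as to match the weights of Corollary \ref{cor:gainweightGnu}).

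\textbf{Step 1: semigroup for $B^{(\delta)}$.} I would write the Duhamel formula along $G_\nu$:
\begin{equation*}
f_1(t) = S_{G_\nu}(t) f_0 + \int_0^t S_{G_\nu}(t-s) B_2^{(\delta)} f_1(s)\, ds + \int_0^t S_{G_\nu}(t-s) Q(f_1+g, f_1+g)(s)\, ds,
\end{equation*}
and build the semigroup of $B^{(\delta)}$ by iterating the first two terms. The key is Corollary \ref{cor:gainweightGnu} combined with Lemma \ref{lem:controlB2}, which yields
\begin{equation*}
\left\|\int_0^t S_{G_\nu}(t-s) B_2^{(\delta)} h(s)\, ds\right\|_{\mathcal{X}_\lambda} \leq \frac{C_0\, C_B(\delta)}{1-\varepsilon}\, \|h\|_{\mathcal{X}_\lambda}.
\end{equation*}
For our weights (polynomial with $k > k_\infty$, or stretched exponential) Lemma \ref{lem:controlB2} provides $\lim_{\delta \to 0} C_B(\delta) < 1$, so choosing $\delta$ small enough, then $\varepsilon$ close to $1$ and $\nu_0'$ close to $\nu_0$ in Theorem \ref{theo:semigroupGnu}, one can absorb this term and obtain via a Neumann series that $B^{(\delta)}$ generates a semigroup $S_{B^{(\delta)}}(t)$ on $L^\infty_{x,v}(m)$ satisfying
\begin{equation*}
\|S_{B^{(\delta)}}(t) f_0\|_{L^\infty_{x,v}(m)} \leq C_{B} e^{-\lambda_B t}\|f_0\|_{L^\infty_{x,v}(m)},
\end{equation*}
for some $\lambda_B > 0$, together with a gain-of-weight analogue of Corollary \ref{cor:gainweightGnu} for the convolution with $S_{B^{(\delta)}}$. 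The Maxwell boundary condition is preserved along the iteration because it is preserved by $S_{G_\nu}$ and $B_2^{(\delta)}$ is a pure velocity operator.

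\textbf{Step 2: nonlinear fixed point.} The equation \eqref{f1prop} rewrites as
\begin{equation*}
f_1(t) = S_{B^{(\delta)}}(t) f_0 + \int_0^t S_{B^{(\delta)}}(t-s) Q\bigl(f_1(s)+g(s), f_1(s)+g(s)\bigr)\, ds =: \Psi(f_1)(t).
\end{equation*}
Using Lemma \ref{lem:controlQ} together with the gain-of-weight estimate for $S_{B^{(\delta)}}$, for $\lambda_1 < \lambda_B$ one gets
\begin{equation*}
\|\Psi(f_1)\|_{\mathcal{X}_{\lambda_1}} \leq C_B\|f_0\|_{L^\infty_{x,v}(m)} + C\bigl(\|f_1\|_{\mathcal{X}_{\lambda_1}} + \|g\|_{\mathcal{X}_{\lambda_1}}\bigr)^2,
\end{equation*}
and a similar Lipschitz estimate for $\Psi(f_1) - \Psi(\tilde f_1)$ with a factor $C(\|f_1\|+\|\tilde f_1\|+\|g\|)$. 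Choosing $\eta_1$ small enough so that $2 C_B \eta_1 < R_0$ and $C(R_0 + \eta_1)^2 \leq R_0 / 2$ with $R_0$ small, a standard contraction argument on the ball of radius $R_0$ in $\mathcal{X}_{\lambda_1}$ produces the unique fixed point, which is the desired $f_1$, with the stated decay $\|f_1(t)\|_{L^\infty_{x,v}(m)} \leq C_1 e^{-\lambda_1 t}\|f_0\|_{L^\infty_{x,v}(m)}$.

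\textbf{Main obstacle.} The delicate point is Step 1: the constant $C_0$ in Corollary \ref{cor:gainweightGnu} is fixed, so the contraction $C_0 C_B(\delta)/(1-\varepsilon) < 1$ is genuinely tight and forces a careful balance between the choice of $\delta$, the decay rate $\nu_0'$ in the semigroup estimate for $G_\nu$, and the parameter $\varepsilon$ governing the gain of weight. For polynomial weights this is where the threshold $k > k_\infty$ with $k_\infty = 1 + \gamma + 16\pi b_\infty/l_b$ enters, whereas for stretched exponential weights the vanishing $C_B(\delta) \to 0$ makes the contraction automatic. Once the semigroup $S_{B^{(\delta)}}$ is built with exponential decay at some rate $\lambda_B > 0$, the nonlinear fixed point in Step 2 is routine because $Q$ is genuinely quadratic and gains a $\nu$ weight.
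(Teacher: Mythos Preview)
Your overall strategy is sound and would work cleanly for stretched exponential weights, where $C_B(\delta)\to 0$ as $\delta\to 0$ makes the contraction in Step~1 automatic regardless of the value of $C_0$. For polynomial weights, however, Step~1 has a genuine gap at the stated threshold $k>k_\infty$. The constant $C_0$ in Corollary~\ref{cor:gainweightGnu} is built from the semigroup constant $C_{m,\nu_0'}$ of Theorem~\ref{theo:semigroupGnu} and is in general strictly larger than $1$ (it absorbs the combinatorics of the rebounds in Lemmas~\ref{lem:controlIp} and~\ref{lem:controlRp}). Since $\lim_{\delta\to 0} C_B(\delta)=\frac{16\pi b_\infty}{(k-1-\gamma)l_b}$ does not vanish, the condition $C_0\,C_B(\delta)/(1-\varepsilon)<1$ would force $k>1+\gamma+C_0\cdot 16\pi b_\infty/l_b$, a strictly worse threshold than $k_\infty$. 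Your ``Main obstacle'' paragraph correctly flags this tightness but does not resolve it.

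The paper circumvents this by not treating $\int_0^t S_{G_\nu}(t-s)\bigl[B_2^{(\delta)} h(s)\bigr]\,ds$ as a black box. Instead it expands $S_{G_\nu}(t-s)$ along characteristics up to the first collision with $\partial\Omega$ (as in \eqref{fncharacteristics}) and splits the resulting expression into several pieces. The segment before the first rebound yields exactly $\frac{C_B(\delta)}{1-\varepsilon}\bigl(1-e^{-\nu(v)(1-\varepsilon)\min\{t,t_1\}}\bigr)$ with \emph{no} extraneous semigroup constant in front; the specular part of the first rebound contributes $(1-\alpha)\,e^{-\nu(v)(1-\varepsilon)t_1}$; and the sum of these two is therefore bounded by $\max\bigl\{(1-\alpha),\,C_B(\delta)/(1-\varepsilon)\bigr\}<1$. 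The diffusive part after the first rebound is made small separately: its $I_p$ component is estimated via the $L^1_vL^\infty_x(\langle v\rangle^2)$ smallness of $B_2^{(\delta)}$ from Lemma~\ref{lem:mixingcontrolB2} (which your plan never invokes), and its $R_p$ component is small because of the factor $(1/2)^{[\bar C T_0]}$ from Lemma~\ref{lem:controlRp}. This finer pointwise decomposition is precisely what recovers the sharp threshold $k>k_\infty$; a direct use of Corollary~\ref{cor:gainweightGnu} cannot. Once Step~1 is repaired in this way, your Step~2 (the nonlinear fixed point via Lemma~\ref{lem:controlQ} and the gain-of-weight estimate) goes through essentially as you wrote it.
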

\bigskip

\begin{proof}[Proof of Proposition \ref{prop:f1}]
Thanks to Proposition \ref{theo:semigroupGnu}, $G_\nu$ combined with the Maxwell boundary condition generates a semigroup $S_{G_\nu}(t)$ in $L^\infty_{x,v}\pa{m}$. Therefore if $f_1$ is solution to $\eqref{f1prop}$ then it has the following Duhamel representation almost everywhere in $\R^+\times\Omega\times\R^3$
\begin{equation*}
\begin{split}
f_1(t,x,v) =& S_{G_\nu}(t)f_0(x,v) + \int_0^{t} S_{G_\nu}(t-s)\cro{B^{(\delta)}_2 (f_1(s))}(x,v)\:ds
\\&+\int_0^{t} S_{G_\nu}(t-s)\cro{Q(f_1(s)+g(s),f_1(s)+g(s))}(x,v))\:ds.
\end{split}
\end{equation*}
\par 
To prove existence and exponential decay we use the following iteration scheme starting from $h_0 = 0$.
\begin{equation}\label{iterativeschemef1}
\left\{\begin{array}{l} \disp{h_{l+1} = S_{G_\nu}(t)f_0 + \int_0^{t} S_{G_\nu}(t-s)\cro{B^{(\delta)}_2 (h_{l+1}) + Q(h_l+g,h_l+g)}\:ds } \vspace{2mm}\\ \disp{h_{l+1}(0,x,v) = f_0(x,v).}\end{array}\right.
\end{equation}
A contraction argument on the Duhamel representation above would imply that $\pa{h_l}_{l\in\N}$ is well-defined in $L^\infty_{x,v}\pa{m}$ and satisfies the Maxwell boundary condition (because $S_{G_\nu}$ does). The computations to prove this contraction property are similar to the ones we are about to develop in order to prove that $\pa{h_l}_{l\in\N}$ is a Cauchy sequence and we therefore only write down the latter.

\bigskip
Considering the difference $h_{l+1}-h_l$ we write, since $Q$ is a symmetric bilinear operator,
\begin{equation}\label{differenceiteration}
\begin{split}
h_{l+1}(t,x,v)-h_l(t,x,v) =& \int_0^{t} S_{G_\nu}(t-s)\cro{B^{(\delta)}_2 (h_{l+1}(s)-h_l(s))}ds 
\\&+ \int_0^{t} S_{G_\nu}(t-s)\cro{Q(h_{l}-h_{l-1},h_{l}+h_{l-1}+g)(s,x,v)}ds
\end{split}
\end{equation}
As now usual, we write $S_{G_\nu}(t-s)$ under its implicit form after one rebound  against the boundary (see $\eqref{fncharacteristics}$ or Step 1 of the proof of Proposition \ref{prop:triplenorm}). It reads
\begin{equation*}
\begin{split}
&\int_0^{t} S_{G_\nu}(t-s)\cro{B^{(\delta)}_2 (h_{l+1}(s)-h_l(s))}(x,v)\:ds
\\& = \int_{\max\br{0,t-t_1}}^t e^{-\nu(v)(t-s)}B_2^{(\delta)}\pa{h_{l+1}(s)-h_l(s)}(x-(t-s)v,v)\:ds
\\&+ (1-\alpha) e^{-\nu(v)t_1}\mathbf{1}_{\br{t> t_1}} \int_0^{t-t_1} S_{G_\nu}((t-t_1)-s)B_2^{(\delta)}\pa{h_{l+1}(s)-h_l(s)}(x_1,v_1)\:ds
\\&+ \alpha\mu e^{-\nu(v)t_1}\mathbf{1}_{\br{t> t_1}}  \int_0^{t-t_1} \int_{\mathcal{V}_1} \frac{1}{\mu(v_{1*})} S_{G_\nu}(t-t_1-s)B_2^{(\delta)}\pa{h_{l+1}-h_l}(x_1,v_{1*})d\sigma_{x_1}ds,
\end{split}
\end{equation*}
where $t_1 = t_{min}(x,v)$, $x_1 = x -t_1v$ and $v_1 = \mathcal{R}_{x_1}(v)$. Using the decomposition $S_{G_\nu}(f) = I_p(f) + R_p(S_{G_\nu}(f) )$ given by Lemma \ref{lem:IpRp}, we obtain for all $p\geq 1$:
\begin{equation}\label{differenceiterationineq}
m(v)\abs{h_{l+1}-h_l}(t,x,v) = J_B + J_{t_1} + J_{IB} + J_{RB} + J_Q
\end{equation}
with the following definitions
\begin{equation}\label{JB}
J_B = \int_{\max\br{0,t-t_1}}^t m(v)e^{-\nu(v)(t-s)}\abs{B_2^{(\delta)}\pa{h_{l+1}(s)-h_l(s)}}(x-(t-s)v,v)\:ds, 
\end{equation}
\begin{equation}\label{Jf0}
J_{t_1} = (1-\alpha) e^{-\nu(v)t_1}\mathbf{1}_{\br{t> t_1}}m(v)\abs{h_{l+1}-h_l}(t-t_1,x_1,v_1),
\end{equation}
\begin{equation}\label{JIB}
\begin{split}
J_{IB} =& \alpha\mu e^{-\nu(v)t_1}\mathbf{1}_{\br{t> t_1}}  \int_0^{t-t_1} \int_{\mathcal{V}_1} \frac{m(v)}{\mu(v_{1*})}
\\&\quad\quad\quad\quad\quad\times \abs{I_p\pa{B_2^{(\delta)}\pa{h_{l+1}-h_l}}(t-t_1-s,x_1,v_{1*})}d\sigma_{x_1}(v_{1*})ds,
\end{split}
\end{equation}
\begin{equation}\label{JRB}
\begin{split}
J_{RB} =& \alpha\mu e^{-\nu(v)t_1}\mathbf{1}_{\br{t> t_1}}  \int_0^{t-t_1} \int_{\mathcal{V}_1} \frac{m(v)}{\mu(v_{1*})}
\\&\quad\quad\quad\quad\quad\times \abs{R_p\pa{S_{G_\nu}B_2^{(\delta)}\pa{h_{l+1}-h_l}}(t-t_1-s,x_1,v_{1*})}d\sigma_{x_1}(v_{1*})ds,
\end{split}
\end{equation}
\begin{equation}\label{JQ}
\begin{split}
J_Q =& \int_0^{t} m(v)\abs{S_{G_\nu}(t-s)\cro{Q(h_{l}-h_{l-1},h_{l}+h_{l-1}+g)(s,x,v)}}ds+\alpha\mathbf{1}_{\br{t> t_1}}
\\& \times\mu e^{-\nu(v)t_1} \int_0^{t-t_1} \abs{S_{G_\nu}(t-t_1-s)\cro{Q(h_{l}-h_{l-1},h_{l}+h_{l-1}+g)(s,x_1,v_1)}}ds.
\end{split}
\end{equation}
We now estimate each of these terms separately.

\bigskip
\textbf{Estimate for $\mathbf{J_B}$ and $\mathbf{J_{t_1}}$}
These two terms are connected \textit{via} $t_1$ and it is important to understand that the contributions of $J_B$ and $J_{t_1}$ are interchanging.
\par By crudely bounding the integrand of $\eqref{JB}$ by the $L^\infty_{x,v}$-norm and controlling $B_2^{(\delta)}$ by Lemma \ref{lem:controlB2},
\begin{eqnarray*}
m(v)\abs{B_2^{(\delta)}\pa{h_{l+1}-h_l}}(x-(t-s)v,v) &\leq& \nu(v)\norm{B_2^{(\delta)}\pa{h_{l+1}(s)-h_l(s)}}_{L^\infty_{x,v}(m\nu^{-1})}
\\&\leq& C_B(\delta)\nu(v)\norm{h_{l+1}(s)-h_l(s)}_{L^\infty_{x,v}(m)}.
\end{eqnarray*}
For $\eps$ in $(0,1)$ we have $e^{-\nu(v)(t-s)}\leq e^{-\eps\nu_0t}e^{-\nu(v)(1-\eps)(t-s)}e^{\eps\nu_0s}$ and thus
\begin{equation}\label{boundJB}
\begin{split}
J_B \leq& \frac{C_B(\delta)}{1-\eps}\pa{1-e^{-\nu(v)(1-\eps)\min\{t,t_1\}}}e^{-\eps\nu_0t}\sup\limits_{s\in[0,t]}\cro{e^{\eps\nu_0s}\norm{h_{l+1}(s)-h_l(s)}_{L^\infty_{x,v}(m)}}.
\end{split}
\end{equation}
\par For $J_{t_1}$ we notice in $\eqref{Jf0}$ that $\abs{v} = \abs{v_1}$ and therefore $m(v)=m(v_1)$. Also, for $t>t_1$ and all $\eps$ in $(0,1)$ we have $e^{-\nu(v)t_1} \leq e^{-(1-\eps)\nu(v)t_1}e^{-\eps\nu_0t}e^{\eps\nu_0(t-t_1)}$ and therefore
\begin{equation}\label{boundJf0}
J_{t_1} \leq (1-\alpha) e^{-(1-\eps)\nu(v)t_1} e^{-\eps\nu_0t}\mathbf{1}_{\br{t> t_1}}\cro{e^{\eps\nu_0(t-t_1)}\norm{(h_{l+1}-h_l)(t-t_1)}_{L^\infty_{x,v}(m)}}.
\end{equation}
\par From $\eqref{boundJB}$ and $\eqref{boundJf0}$ we deduce
\begin{equation}\label{boundJBJf0}
\forall \eps\in (0,1),\quad J_B+J_{t_1} \leq \min\br{(1-\alpha),\frac{C_B(\delta)}{1-\eps}}e^{-\eps\nu_0t}\sup\limits_{s\in[0,t]}\cro{e^{\eps\nu_0s}\norm{h_{l+1}-h_l}_{L^\infty_{x,v}(m)}}.
\end{equation}

\bigskip
\textbf{Estimate for $\mathbf{J_{IB}}$}
Using Lemma \ref{lem:IpRp} (replacing $\Delta_n$ by $\alpha$) to control $I_p$ in $\eqref{JIB}$ and bounding the exponential decay in $d\Sigma^k_l$ by $e^{-\nu_0(t-t_1-s-t_k^{(l)}}$ gives the following control
\begin{equation*}
\begin{split}
J_{IB} \leq \int_0^{t-t_1}e^{-\nu_0(t-s)}&\int_{\mathcal{V}_1}\sum\limits_{k=0}^p \sum\limits_{i=0}^k\sum\limits_{l \in \vartheta_k(i)}(1-\alpha)^i\alpha^{k-i}\int_{\prod_{j=1}^p\mathcal{V}_j}\frac{\mu(v)m(v)}{\mu(v_k^{(l)})}
\\&\times\abs{B_2^{(\delta)}\pa{h_{l+1}-h_l}(s,x_k^{(l)}-t_k^{(l)}v_k^{(l)},v_k^{(l)})}\:\prod_{0\leq j\leq k}d\sigma_{x_j^{(l)}}(v_{j*})ds,
\end{split}
\end{equation*}
where the sequence $(t_k^{(l)},x_k^{(l)},v_k^{(l)})$ is associated to the initial point $(t-t_1-s,x_1,v_{1*})$.
\par For a given $(k,i,l)$ there exists a unique $J$ in $\br{0,p}$ such that $v_k^{(l)}=V_1(\dots (V_1(x_j,v_{J*}))))$ $k-J$ iterations and thus $(t_k^{(l)},x_k^{(l)},v_k^{(l)})$ only depends on $(t-t_1-s,x,v,v_{1*},\dots,v_{J*})$ and we can integrate the remaining variables. We remind here that $d\sigma_{x_j}$ is a probability measure on $\mathcal{V}_j$ and also
$$d\sigma_{x_J}(v_{J*}) =c_\mu \mu(v_{J*}) v_{J*}\cdot n(x_J) dv_{J*}.$$
Since $\abs{v_k^{(l)}}=\abs{v_{J*}}$ and $\mu(v)m(v) \leq C$, we infer the following,
\begin{equation*}
\begin{split}
J_{IB} \leq &C\int_0^{t-t_1}e^{-\nu_0(t-s)}\int_{\mathcal{V}_1}\sum\limits_{k=0}^p \sum\limits_{i=0}^k\sum\limits_{l \in \vartheta_k(i)}(1-\alpha)^i\alpha^{k-i}\int_{\prod_{j=1}^{J-1}\mathcal{V}_j}\:ds\prod_{0\leq j\leq J-1}d\sigma_{x_j^{(l)}}(v_{j*})
\\&\times\pa{\int_{\R^3}\abs{B_2^{(\delta)}\pa{h_{l+1}-h_l}(s,x_k^{(l)}-t_k^{(l)}v_k^{(l)},v_k^{(l)})}\abs{v_{J*}}dv_{J*}}.
\end{split}
\end{equation*}
We now make the change of variable $v_k^{(l)} \mapsto v_{J*}$ which preserves the norm and then the integral in $v_{J*}$ can be bounded by the $L^1_vL^\infty_x(\langle v\rangle^2)$-norm of $B^{(\delta)}_2$. As in the proof of Lemma \ref{lem:controlIp} $\sum_{k,i,l}(1-\alpha)^i\alpha^{k-i} \leq p$ and this yields the following estimate
$$J_{IB} \leq p C \int_0^{t}e^{-\nu_0(t-s)}\norm{B_2^{(\delta)}\pa{h_{l+1}-h_l}(s)}_{L^1_vL^\infty_x\pa{\langle v\rangle^2}}\:ds.$$
\par To conclude the estimate on $J_{IB}$ we choose $p=p(T_0)$ defined in Lemma \ref{lem:controlRp} (which makes $p$ bounded by $C(1+T_0)$) and we control $B_2^{(\delta)}$ thanks to Lemma \ref{lem:mixingcontrolB2}. 
\begin{equation}\label{boundJIB}
J_{IB} \leq C(1+T_0)\tilde{C}_B(\delta)e^{-\eps\nu_0t}\sup\limits_{s\in[0,t]}\cro{e^{\eps\nu_0s}\norm{(h_{l+1}-h_l)(s)}_{L^\infty_{x,v}(m)}}
\end{equation}
for all $\eps$ in $(0,1)$, $T_0 \geq 0$ and $t$ in $[0,1]$.

\bigskip
\textbf{Estimate for $\mathbf{J_{RB}}$}
The term $\eqref{JRB}$ is dealt with by crudely bounding the integrand of $\eqref{JRB}$ by its $L^\infty_{x,v}$-norm. Using Lemma \ref{lem:controlRp} to estimate $R_p$ we get for all $t$ in $[0,T_0]$,
\begin{equation*}
\begin{split}
&\frac{\mu(v)}{\mu(v_{1*})}e^{-\nu(v)t_1} m\abs{R_p\pa{S_{G_\nu}B_2^{(\delta)}(h_{l+1}-h_l)}(t-t_1-s,x_1,v_{1*})}
\\&\:\leq C \frac{\mu(v)m(v)}{\mu(v_{1*})m(v_{1*})}\nu(v_{1*})e^{-\nu_0(t-s)}\pa{\frac{1}{2}}^{\cro{\bar{C}T_0}}
\\&\quad\quad\times\sup\limits_{s_*\in[0,t-t_1-s]}\cro{e^{\nu_0 s_*}\norm{S_{G_\nu}(s_*)\cro{B_2^{(\delta)}(h_{l+1}-h_l)}}_{L^\infty_{x,v}(m\nu^{-1})}}.
\end{split}
\end{equation*}
Then, we apply Theorem \ref{theo:semigroupGnu} about the exponential decay of $S_{G_\nu}(s_*)$ in $L^\infty_{x,v}(m\nu^{-1})$  with a rate $(1-\eps)\nu_0$. At last, we control $B_2^{(\delta)}$ thanks to Lemma \ref{lem:controlB2}. This yields,
\begin{equation}\label{boundJRB}
J_{RB} \leq C \pa{\frac{1}{2}}^{\cro{\bar{C}T_0}}C_B(\delta)e^{-\eps\nu_0t}\sup\limits_{s\in[0,t]}\cro{e^{\eps\nu_0s}\norm{(h_{l+1}-h_l)(s)}_{L^\infty_{x,v}(m)}}
\end{equation}
for all $\eps$ in $(0,1)$, $T_0 \geq 0$ and $t$ in $[0,1]$.

\bigskip
\textbf{Estimate for $\mathbf{J_Q}$}
The term $\eqref{JQ}$ is dealt with using the gain of weight of $S_{G_\nu}(t)$ upon integration in time: Corollary \ref{cor:gainweightGnu}. A direct application of this corollary yields for all $\eps$ in $(0,1)$
$$J_Q \leq \frac{C_0}{1-\eps}e^{-\eps\nu_0t}\sup\limits_{s\in[0,t]}\cro{e^{\eps\nu_0s}\norm{Q(h_{l}-h_{l-1},h_{l}+h_{l-1}+g)(s)}_{L^\infty_{x,v}(m\nu^{-1})}}.$$
We control $Q$ thanks to Lemma \ref{lem:controlQ} and we infer
\begin{equation}\label{boundJQ}
\begin{split}
\forall \eps\in (0,1), \quad J_Q \leq& \frac{C}{1-\eps}\pa{\norm{h_{l}}_{L^\infty_{[0,t]}L^\infty_{x,v}(m)} + \norm{h_{l-1}}_{L^\infty_{[0,t]}L^\infty_{x,v}(m)}+\norm{g}_{L^\infty_tL^\infty_{x,v}(m)}}
\\&\times e^{-\eps\nu_0t}\sup\limits_{s\in[0,t]}\cro{e^{\eps\nu_0s}\norm{(h_{l}-h_{l-1})(s)}_{L^\infty_{x,v}(m)}}.
\end{split}
\end{equation}

\bigskip
\textbf{Conclusion of the proof.} We gather $\eqref{boundJBJf0}$, $\eqref{boundJIB}$, $\eqref{boundJRB}$ and $\eqref{JQ}$ inside $\eqref{differenceiterationineq}$. This gives for all $0<\eps<1$, all $T_0>0$ and all $t$ in $[0,T_0]$,
\begin{equation*}
\begin{split}
&\sup\limits_{s\in[0,t]}\cro{e^{\eps\nu_0s}\norm{(h_{l+1}-h_l)(s)}_{L^\infty_{x,v}(m)}}
\\&\:\leq \pa{\min\br{(1-\alpha),\frac{C_B(\delta)}{1-\eps}}+C(1+T_0)\tilde{C}_B(\delta) + 2^{-\cro{\bar{C}T_0}}C}
\\&\quad\quad\quad\quad\quad\times\sup\limits_{s\in[0,t]}\cro{e^{\eps\nu_0s}\norm{h_{l+1}-h_l}_{L^\infty_{x,v}(m)}}
\\&\:\quad+ \frac{C}{1-\eps}\pa{\norm{h_{l}}_{L^\infty_{[0,t]}L^\infty_{x,v}(m)} + \norm{h_{l-1}}_{L^\infty_{[0,t]}L^\infty_{x,v}(m)}+\norm{g}_{L^\infty_tL^\infty_{x,v}(m)}}
\\&\:\quad\quad\quad\quad\quad\times \sup\limits_{s\in[0,t]}\cro{e^{\eps\nu_0s}\norm{h_{l}-h_{l-1}}_{L^\infty_{x,v}(m)}}
\end{split}
\end{equation*}
\par We choose our constants as follow.
\begin{itemize}
\item From Lemma \ref{lem:controlB2} we define $\delta_0>0$ such that for all $\delta<\delta_0$, $C_B(\delta) \leq C_B(\delta_0)<1$;
\item Since $C_B(\delta_0)<1$ we fix $\eps$ in $(0,1)$ such that $C_B(\delta_0)+\eps <1$;
\item We choose $T_0$ large enough such that 
$$2^{-\cro{\bar{C}T_0}}C \leq \frac{1}{4}\pa{1-\min\br{(1-\alpha),\frac{C_B(\delta_0)}{1-\eps}}};$$
\item At last, we take $\delta < \delta_0$ such that, from Lemma \ref{lem:mixingcontrolB2},
$$C(1+T_0)\tilde{C}_B(\delta) \leq \frac{1}{4}\pa{1-\min\br{(1-\alpha),\frac{C_B(\delta_0)}{1-\eps}}}.$$
\end{itemize}
Denoting 
$$C_0 = 2C\pa{1-\min\br{(1-\alpha),\frac{C_B(\delta_0)}{1-\eps}}}^{-1},$$
our choice of constants implies that for all $t$ in $[0,T_0]$,
\begin{equation}\label{fincauchy}
\begin{split}
&\sup\limits_{s\in[0,t]}\cro{e^{\eps\nu_0s}\norm{h_{l+1}-h_l}_{L^\infty_{x,v}(m)}}
\\&\quad\quad\quad\leq C_0\pa{\norm{h_{l}}_{L^\infty_{[0,t]}L^\infty_{x,v}(m)} + \norm{h_{l-1}}_{L^\infty_{[0,t]}L^\infty_{x,v}(m)}+\norm{g}_{L^\infty_tL^\infty_{x,v}(m)}}
\\&\quad\quad\quad\quad\quad\times\sup\limits_{s\in[0,t]}\cro{e^{\eps\nu_0s}\norm{h_{l}-h_{l-1}}_{L^\infty_{x,v}(m)}}.
\end{split}
\end{equation}
Of important note is the fact that $C_0$ and $\eps$ do not depend on $T_0$ and therefore we can iterate the process on $[T_0,2T_0]$ and so on. The inequality above thus holds for all $t \geq 0$.
\par To conclude, we first prove that $\norm{h_{l+1}}_{L^\infty_{[0,t]}L^\infty_{x,v}(m)}$ is uniformly bounded. We could do exactly the same computations but subtracting $S_{G_\nu}(t)f_0(x,v)$ instead of $h_l(t,x,v)$ to $h_l(t,x,v)$ in $\eqref{differenceiteration}$. Thus, $\eqref{fincauchy}$ would become for all $t\geq 0$,
\begin{equation*}
\begin{split}
\sup\limits_{s\in[0,t]}\cro{e^{\eps\nu_0s}\norm{(h_{l+1}-S_{G_\nu}f_0)(s)}_{L^\infty_{x,v}(m)}}\leq& C_0\pa{\norm{h_{l}}_{L^\infty_{[0,t]}L^\infty_{x,v}(m)}+\norm{g}_{L^\infty_tL^\infty_{x,v}(m)}}
\\&\times\sup\limits_{s\in[0,t]}\cro{e^{\eps\nu_0s}\norm{h_{l}}_{L^\infty_{x,v}(m)}}.
\end{split}
\end{equation*}
Using the exponential decay of $S_{G_\nu}(t)f_0$, which is faster than $\eps\nu_0$ (see Theorem \ref{theo:semigroupGnu}), and assuming that the norms of $g$ and $f_0$ are bounded by $\eta_1$ to be determined later,
\begin{equation*}
\begin{split}
&\sup\limits_{s\in[0,t]}\cro{e^{\eps\nu_0s}\norm{h_{l+1}(s)}_{L^\infty_{x,v}(m)}} 
\\&\quad\leq C_0^{(1)}\norm{f_0}_{L^\infty_{x,v}(m)} + C^{(2)}_0\pa{\norm{h_{l}}_{L^\infty_{[0,t]}L^\infty_{x,v}(m)} + \eta_1}\sup\limits_{s\in[0,t]}\cro{e^{\eps\nu_0s}\norm{h_{l}(s)}_{L^\infty_{x,v}(m)}},
\end{split}
\end{equation*}
where $C_0^{(1)}$ and $C_0^{(2)}$ are two positive constants independent of $h_{l+1}$ and $\eta_1$.
\par If we choose $\eta_1>0$ small enough such that
$$C_0^{(1)}\eta_1 + C_0^{(2)}\pa{2+C_0^{(1)}}\pa{1+C_0^{(1)}}\eta_1^2 \leq (1+C_0^{(1)}) \eta_1$$
then we obtain by induction that
\begin{equation}\label{f1expodecay}
\forall l \in\N,\:\forall t\geq 0,\quad\sup\limits_{s\in[0,t]}\cro{e^{\eps\nu_0s}\norm{h_{l}(s)}_{L^\infty_{x,v}(m)}} \leq \pa{1+C_0^{(1)}}\norm{f_0}_{L^\infty_{x,v}(m)}.
\end{equation}
\par We now plug $\eqref{f1expodecay}$ into $\eqref{fincauchy}$ and use the fact that $f_0$ and $g$ are bounded by $\eta_1$. This gives
$$\norm{h_{l+1}-h_l}_{L^\infty_{t,x,v}(m)} \leq 3C_0\pa{1+C_0^{(1)}}\eta_1\norm{h_{l}-h_{l-1}}_{L^\infty_{t,x,v}(m)}.$$
The latter implies that for $\eta_1$ small enough, $\pa{h_l}_{l\in\N}$ is a Cauchy sequence in $L^\infty_{t,x,v}(m)$ and therefore converges towards $f_1$ in $L^\infty_{t,x,v}(m)$. Since $\gamma < k$, we can take the limit inside the iterative scheme $\eqref{iterativeschemef1}$ and $f_1$ is a solution to $\eqref{f1prop}$. Moreover, by taking the limit inside $\eqref{f1expodecay}$, $f_1$ has the desired exponential decay. This concludes the proof of Proposition \ref{prop:f1}.
\end{proof}
\bigskip


\subsubsection{Study of equation $\eqref{f2}$ in $L^\infty_{x,v}(\mu^{-\zeta})$}\label{subsubsec:f2}

We turn to the differential equation $\eqref{f2}$ in $L^\infty_{x,v}(\mu^{-\zeta})$ with $\zeta$ in $(1/2,1)$ so that Theorem \ref{theo:semigroupLinfty} holds.

\bigskip
\begin{prop}\label{prop:f2}
Let $m=e^{\kappa_1\abs{v}^{\kappa_2}}$ with $\kappa_1 >0$ and $\kappa_2$ in $(0,2)$ or $m=\langle v \rangle^k$ with $k> k_\infty$. Let $g=g(t,x,v)$ be in $L^\infty_tL^\infty_{x,v}\pa{m}$. Then there exists a unique function $f_2$ in $L^\infty_tL^\infty_{x,v}(\mu^{-\zeta})$ such that
$$\partial_t f_2 = G\pa{f_2} + A^{(\delta)}\pa{g} \quad\mbox{and}\quad f_2(0,x,v)= 0.$$
Moreover, if $\:\Pi_G\pa{f_2+g}=0$ and if
$$\exists\: \lambda_g,\:\eta_g>0,\:\forall t\geq 0, \: \norm{g(t)}_{L^\infty_{x,v}\pa{m}}\leq \eta_ge^{-\lambda_g t},$$
then  for any $0<\lambda_2<\min\br{\lambda_g,\:\lambda_\infty}$, with $\lambda_\infty$ defined in Theorem \ref{theo:semigroupLinfty}, there exist $C_2>0$ such that
$$\forall t\geq 0, \quad \norm{f_2(t)}_{L^\infty_{x,v}(\mu^{-\zeta})} \leq C_2\eta_g e^{-\lambda_2 t}.$$
The constant $C_2$ only depends on $\lambda_2$.
\end{prop}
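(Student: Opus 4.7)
The plan is to construct $f_2$ directly via a Duhamel formula using the semigroup $S_G(t)$ from Theorem \ref{theo:semigroupLinfty}, and then to extract the exponential decay by decomposing $f_2$ along the splitting $\mbox{Id}=\Pi_G+\Pi_G^\bot$ induced by the conservation law.

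First, since $g$ belongs to $L^\infty_tL^\infty_{x,v}(m)$, Lemma \ref{lem:controlA} guarantees $\norm{A^{(\delta)}(g(s))}_{L^\infty_{x,v}(\mu^{-\zeta})}\leq C_A\norm{g(s)}_{L^\infty_{x,v}(m)}$ uniformly in $s$. Since Theorem \ref{theo:semigroupLinfty} asserts that $G$ with Maxwell boundary condition generates a $C^0$-semigroup $S_G(t)$ on $L^\infty_{x,v}(\mu^{-\zeta})$, I would define
$$f_2(t,x,v)=\int_0^t S_G(t-s)\cro{A^{(\delta)}(g(s))}(x,v)\:ds.$$
The integrand is bounded in $L^\infty_{x,v}(\mu^{-\zeta})$ uniformly in $s\in[0,t]$, so $f_2$ lies in $L^\infty_tL^\infty_{x,v}(\mu^{-\zeta})$, satisfies $\partial_tf_2=Gf_2+A^{(\delta)}(g)$ with $f_2(0)=0$, and inherits the Maxwell boundary condition from $S_G$. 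Uniqueness is standard: the difference of two solutions would solve $\partial_th=Gh$ with $h(0)=0$ in $L^\infty_tL^\infty_{x,v}(\mu^{-\zeta})$, hence $h=S_G(t)(0)=0$ by the semigroup property.

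For the exponential decay under the assumption $\Pi_G(f_2+g)=0$, I would split $f_2=\Pi_Gf_2+\Pi_G^\bot f_2$ and estimate the two pieces separately. The fluid part is explicit: $\Pi_Gf_2(t)=-\Pi_Gg(t)$, and since $\Pi_Gh=(\int_{\Omega\times\R^3}h\:dxdv)\mu(v)$, the integrability of $1/m$ in $v$ combined with the boundedness of $\mu^{1-\zeta}$ (as $\zeta<1$) yields
$$\norm{\Pi_Gf_2(t)}_{L^\infty_{x,v}(\mu^{-\zeta})}\leq C\norm{g(t)}_{L^\infty_{x,v}(m)}\leq C\eta_ge^{-\lambda_g t}.$$
The microscopic part $\Pi_G^\bot f_2$ solves the same Duhamel equation but with source $\Pi_G^\bot A^{(\delta)}(g)$, so using that $\Pi_G^\bot$ commutes with $S_G$ (because $\mbox{Ker}(G)$ and its orthogonal in $L^2_{x,v}(\mu^{-1/2})$ are both stable under the flow, as recorded in Subsection \ref{subsec:LL2}), Theorem \ref{theo:semigroupLinfty} gives
$$\norm{\Pi_G^\bot f_2(t)}_{L^\infty_{x,v}(\mu^{-\zeta})}\leq C_\infty C_A\eta_g\int_0^te^{-\lambda_\infty(t-s)}e^{-\lambda_g s}\:ds.$$

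The final step is the elementary evaluation of the convolution-type integral above, which I would split into the three cases $\lambda_\infty>\lambda_g$, $\lambda_\infty<\lambda_g$, and $\lambda_\infty=\lambda_g$ (the last producing a factor $te^{-\lambda_\infty t}$ that is absorbed by any $\lambda_2<\lambda_\infty$). In all cases, any rate strictly below $\min\{\lambda_\infty,\lambda_g\}$ is admissible at the cost of a constant $C_2=C_2(\lambda_2)$ blowing up as $\lambda_2\to\min\{\lambda_\infty,\lambda_g\}$. The only subtle point — and the step I expect to be the main obstacle — is verifying that $\Pi_G^\bot$ commutes with the semigroup $S_G(t)$ on $L^\infty_{x,v}(\mu^{-\zeta})$ rather than merely on $L^2_{x,v}(\mu^{-1/2})$; this follows from the embedding $L^\infty_{x,v}(\mu^{-\zeta})\hookrightarrow L^2_{x,v}(\mu^{-1/2})$ (valid for $\zeta>1/2$), the conservation of mass along the flow, and the explicit formula for $\Pi_G$.
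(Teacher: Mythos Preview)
Your proposal is correct and follows essentially the same approach as the paper: Duhamel representation via $S_G$, the splitting $f_2=\Pi_Gf_2+\Pi_G^\bot f_2$ with $\Pi_Gf_2=-\Pi_Gg$ handled directly, and the microscopic part controlled through the exponential decay of $S_G$ on $(\mbox{Ker}\,G)^\bot$ applied to the Duhamel integral. The paper bounds the convolution integral in one stroke by $te^{-\min\{\lambda_g,\lambda_\infty\}t}$ rather than splitting into cases, but this is cosmetic.
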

\bigskip

\begin{proof}[Proof of Proposition \ref{prop:f2}]
Thanks to the regularising property of $A$ (Lemma \ref{lem:controlA}) $A^{(\delta)}\pa{g}$ belongs to $L^\infty_tL^\infty_{x,v}(\mu^{-\zeta})$. Theorem \ref{theo:semigroupLinfty} implies that there is indeed a unique $f_2$ solution to the differential equation and it is given by
$$f_2 = \int_0^{t}S_G(t-s)\cro{A^{(\delta)}\pa{g}(s)}\:ds,$$
where $S_G(t)$ is the semigroup generated by $G=L-v\cdot\nabla_x$ in $L^\infty_{x,v}(\mu^{-\zeta})$.
\par Suppose now that $\Pi_G\pa{f_2+g}=0$ and that there exists $ \eta_2>0$ such that $\norm{g(t)}_{L^\infty_{x,v}\pa{m}}\leq \eta_g e^{-\lambda t}$. Using the definition of $\Pi_G$ $\eqref{PiG}$, the projection part of $f_2$ is straightforwardly bounded for all $t\geq 0$:
\begin{equation}\label{PiGf2}
\begin{split}
\norm{\Pi_G\pa{f_2}(t)}_{L^\infty_{x,v}(\mu^{-\zeta})} &= \norm{\Pi_G\pa{g}(t)}_{L^\infty_{x,v}(\mu^{-\zeta})}\leq C_{\Pi_G} \norm{g}_{L^\infty_{x,v}\pa{m}}
\\&\leq C_{\Pi_G}\eta_g\:e^{-\lambda_g t}.
\end{split}
\end{equation}

\par Applying $\Pi_G^\bot = \mbox{Id}-\Pi_G$ to the equation satisfied by $f_2$ we get, thanks to $\eqref{PiG}$,
$$\partial_t \cro{\Pi_G^\bot\pa{f_2}} = g\cro{\Pi_G^\bot\pa{f_2}} + \Pi_G^\bot\pa{A^{(\delta)}\pa{g}}.$$
This yields
$$\Pi_G^\bot\pa{f_2} = \int_0^{t}S_G(t-s)\cro{ \Pi_G^\bot\pa{A^{(\delta)}\pa{g}}(s)}\:ds.$$
We use the exponential decay of $S_{G}(t)$ on $\pa{\mbox{Ker}(g)}^\bot$ (see Theorem \ref{theo:semigroupLinfty}).
$$\norm{\Pi_G^\bot\pa{f_2}}_{L^\infty_{x,v}(\mu^{-\zeta})} \leq C_\infty \int_0^{t} e^{-\lambda_\infty (t-s)}\norm{A^{(\delta)}\pa{g}(s)}_{L^\infty_{x,v}(\mu^{-\zeta})}\:ds.$$
Using the definition of $\Pi_G$ $\eqref{PiG}$ and then the regularising property of $A$ Lemma \ref{lem:controlA} we further can further bound. Fix $\lambda_2 < \min\br{\lambda_\infty,\:\lambda_g}$,
\begin{eqnarray}
\norm{\Pi_G^\bot\pa{f_2}}_{L^\infty_{x,v}(\mu^{-\zeta})} &\leq& C_GC_\infty C_{\Pi_G}C_AC_g\eta_g \int_0^{t} e^{-\lambda_\infty(t-s)}e^{-\lambda_g s}\:ds \nonumber
\\&\leq& C_GC_\infty C_{\Pi_G}C_AC_g\eta_g \:  te^{-\min\br{\lambda_g,\lambda_\infty} t}\nonumber
\\&\leq& C_2(\lambda_2)\eta_g e^{-\lambda_2 t}.\label{PiGbotf2}
\end{eqnarray}
\par Gathering $\eqref{PiGf2}$ and $\eqref{PiGbotf2}$ yields the desired exponential decay.
\end{proof}
\bigskip


\subsubsection{Proof of Theorem \ref{theo:existenceexpodecay}}\label{subsubsec:existence}

Take $f_0$ in $L^\infty_{x,v}\pa{m}$ such that $\Pi_G(f_0)=0$.
\par The existence will be proved by an iterative scheme. We start with $f^{(0)}_1=f^{(0)}_2=0$ and we approximate the system of equations $\eqref{f1}-\eqref{f2}$ as follows.
\begin{eqnarray*}
\partial_t f^{(n+1)}_1&=& B^{(\delta)} \pa{f^{(n+1)}_1} + Q\pa{f^{(n+1)}_1+f^{(n)}_2}
\\\partial_t f^{(n+1)}_2 &=& G\pa{f^{(n+1)}_2} + A^{(\delta)}\pa{f^{(n+1)}_1},
\end{eqnarray*}
with the following initial data
$$f^{(n+1)}_1(0,x,v)=f_0(x,v) \quad\mbox{and}\quad f^{(n+1)}_2(0,x,v)=0.$$
\par Assume that $(1+C_2)\norm{f_0}\leq \eta_1$, where $C_2$ was defined in Proposition \ref{prop:f2} and $\eta_1$ was defined in Proposition \ref{prop:f1}. Thanks to Proposition \ref{prop:f1} and Proposition \ref{prop:f2}, an induction proves first that $\pa{f^{(n)}_1}_{n\in\N}$ and $\pa{f^{(n)}_2}_{n\in\N}$ are well-defined sequences and second that for all $n$ in $\N$ and all $t\geq 0$
\begin{eqnarray}
\norm{f^{(n)}_1(t)}_{L^\infty_{x,v}\pa{m}} &\leq& e^{-\lambda_1 t}\norm{f_0}_{L^\infty_{x,v}\pa{m}} \label{expodecayfn1}
\\\norm{f^{(n)}_2(t)}_{L^\infty_{x,v}(\mu^{-\zeta})} &\leq& C_2 e^{-\lambda_2 t}\norm{f_0}_{L^\infty_{x,v}\pa{m}},\label{expodecayfn2}
\end{eqnarray}
with $\lambda_2 <\min\br{\lambda_1,\lambda_\infty}$. Indeed, if we constructed $f^{(n)}_1$ and $f^{(n)}_2$ satisfying the exponential decay above then we can construct $f^{(n+1)}_1$, which has the required exponential decay $\eqref{expodecayfn1}$, and then construct $f^{(n+1)}_2$. Finally, we have the following equality
$$\partial_t\pa{f^{(n+1)}_1+f^{(n+1)}_2} = g\pa{f^{(n+1)}_1+f^{(n+1)}_2} + Q\pa{f^{(n+1)}_1+f^{(n)}_2}.$$
Thanks to orthogonality property of $Q$ in Lemma \ref{lem:controlQ} and the definition of $\Pi_G$ $\eqref{PiG}$ we obtain that the projection is constant with time and thus
$$\Pi_G\pa{f^{(n+1)}_1+f^{(n+1)}_2} = \Pi_G(f_0)=0.$$
Applying Proposition \ref{prop:f2} we obtain the exponential decay $\eqref{expodecayfn2}$ for $f^{(n+1)}_2$.

\bigskip
We recognize exactly the same iterative scheme for $f^{(n+1)}_1$ as in the proof of Proposition \ref{prop:f1} with $g$ replaced by $f_2^{(n)}$. Moreover, the uniform bound $\eqref{expodecayfn2}$ allows us to derive the same estimates as in the latter proof independently of $f^{(n)}_2$. As a conclusion, $\pa{f^{(n)}_1}_{n\in\N}$ is a Cauchy sequence in $L^\infty_tL^\infty_{x,v}\pa{m}$ and therefore converges strongly towards a function $f_1$.
\par By $\eqref{expodecayfn2}$, the sequence $\pa{f^{(n)}_2}_{n\in\N}$ is bounded in $L^\infty_tL^\infty_{x,v}(\mu^{-\zeta})$ and is therefore weakly-* compact and therefore converges, up to a subsequence, weakly-* towards $f_2$ in $L^\infty_tL^\infty_{x,v}(\mu^{-\zeta})$.
\par Since the kernel inside the collision operator behaves like $\abs{v-v_*}^\gamma$ and that our weight $m(v)$ is either exponetial or of degree $k>2>\gamma$, we can take the weak limit inside the iterative scheme. This implies that $(f_1,f_2)$ is solution to the system $\eqref{f1}-\eqref{f2}$ and thus $f = f_1+f_2$ is solution to the perturbed equation $\eqref{perturbedBEfull}$. Moreover, taking the limit inside the exponential decays $\eqref{expodecayfn1}$ and $\eqref{expodecayfn2}$ yields the expected exponential decay for $f$.
\bigskip


\subsection{Uniqueness in the perturbative framework}\label{subsec:uniqueness}

We conclude the proof of our main Theorem \ref{theo:perturbativeCauchy} stated in Section \ref{sec:mainresults} by proving the uniqueness of solutions in the perturbative regime.

\bigskip
\begin{theorem}\label{theo:uniqueness}
Let $m=e^{\kappa_1\abs{v}^{\kappa_2}}$ with $\kappa_1 >0$ and $\kappa_2$ in $(0,2)$ or $m=\langle v \rangle^k$ with $k> k_\infty$. There exits $\eta>0$ such that for any $f_0$ in $L^\infty_{x,v}(m)$ such that $\norm{f_0}_{L^\infty_{x,v}(m)}\leq \eta$ there exists at most one solution $f(t,x,v)$ in $L^\infty_tL^\infty_{x,v}(m)$ to the perturbed Boltzmann equation $\eqref{perturbedBEfull}$ with Maxwell boundary condition and with $f_0$ as initial datum.
\end{theorem}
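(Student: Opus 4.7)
Let $f$ and $\bar f$ be two solutions in $L^\infty_t L^\infty_{x,v}(m)$ sharing the same (small) initial datum $f_0$. Their difference $h := f-\bar f$ satisfies, with zero initial data and the Maxwell boundary condition,
\begin{equation*}
\partial_t h + v\cdot\nabla_x h = L(h) + Q(h,\, f+\bar f),
\end{equation*}
and preservation of mass (shared by the full Boltzmann flow under Maxwell boundary condition) yields $\Pi_G(h)(t)=0$ for all $t\geq 0$. The goal is to force $h\equiv 0$ by a Gronwall-type estimate in $L^\infty_t L^\infty_{x,v}(m)$.

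My plan is to reproduce, on this linear equation with the small bilinear source $Q(h,f+\bar f)$, the splitting used for existence in Subsection~\ref{subsec:existence}. Fixing $\delta>0$ small, decompose $h = h_1 + h_2$ by setting
\begin{equation*}
h_2(t) := \int_0^t S_G(t-s)\, A^{(\delta)}\!\big(h(s)\big)\, ds \;\in\; L^\infty_t L^\infty_{x,v}(\mu^{-\zeta}), \qquad h_1 := h - h_2,
\end{equation*}
where $S_G$ is the semigroup of Theorem~\ref{theo:semigroupLinfty} and the integral is meaningful by Lemma~\ref{lem:controlA}. Using $G = A^{(\delta)} + B^{(\delta)}$, a direct computation shows that $(h_1,h_2)$ solves, with zero initial data and Maxwell boundary conditions,
\begin{align*}
\partial_t h_1 + v\cdot\nabla_x h_1 &= B^{(\delta)} h_1 + Q(h_1+h_2,\, f+\bar f) - A^{(\delta)} h_2,\\
\partial_t h_2 + v\cdot\nabla_x h_2 &= L\, h_2 + A^{(\delta)}(h_1 + h_2).
\end{align*}

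On this linear coupled system I would then run the estimates of Propositions~\ref{prop:f1} and~\ref{prop:f2}. The bilinear source $Q(h_1+h_2, f+\bar f)$ contributes a factor $\norm{f+\bar f}_{L^\infty_{x,v}(m)}\leq 2\eta$ by Lemma~\ref{lem:controlQ}; the $B^{(\delta)}$-Duhamel machinery (characteristic decomposition along rebounds, smallness of $C_B(\delta)$, the $L^1_v L^\infty_x(\langle v\rangle^2)$ smallness of $B_2^{(\delta)}$ from Lemma~\ref{lem:mixingcontrolB2}, and the gain-of-weight Corollary~\ref{cor:gainweightGnu}) yields a contractive factor on $\norm{h_1}_{L^\infty_{x,v}(m)}$; and the coupling term $A^{(\delta)} h_2$ is converted into a $\norm{h_2}_{L^\infty_{x,v}(\mu^{-\zeta})}$-bound thanks to Lemma~\ref{lem:controlA}. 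Combined with the exponential decay of $S_G$ on $\Pi_G^\bot$ applied to the $h_2$-equation (legitimate since $\Pi_G(h)=0$), these produce two coupled inequalities that can be merged into a single Gronwall-type estimate whose leading coefficient is strictly less than $1$ for $\eta$ sufficiently small. Since both pieces start from zero, this forces $h_1\equiv h_2\equiv 0$, hence $h\equiv 0$.

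The main obstacle is closing this coupled loop: the source $A^{(\delta)} h_2$ appearing in the $h_1$-equation is not itself small in~$\eta$, while the $Q$-contribution only brings smallness into the $h_1$-side of the estimates. The key, as in Proposition~\ref{prop:f2}, is to exploit the exponential decay $e^{-\lambda_\infty t}$ of $S_G$ restricted to $\Pi_G^\bot$ in order to transfer smallness from $h_1$ back to $h_2$; this in turn requires some care with $\Pi_G(A^{(\delta)}(h))$, which does not vanish in general because the cutoff $\Theta_\delta$ breaks the exact collisional conservation laws, so that the $\mathrm{Ker}(G)$-component of $h_2$ has to be controlled separately via the identity $\Pi_G(h_1+h_2)=\Pi_G(h)=0$.
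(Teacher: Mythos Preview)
Your approach has a genuine gap that you yourself identify but do not resolve. The source term $-A^{(\delta)}h_2$ in the $h_1$-equation carries no smallness in~$\eta$, and your proposed fix --- the exponential decay of $S_G$ on $\Pi_G^\bot$ --- only yields \emph{boundedness} of the Duhamel integral $\int_0^t e^{-\lambda_\infty(t-s)}\norm{A^{(\delta)}h(s)}\,ds$, not smallness: after integration you get a constant $C_A/\lambda_\infty$ multiplying $\sup_{[0,t]}\norm{h}$, and there is no reason this constant is below the threshold needed to close the loop. The coupled inequalities you sketch become $\norm{h_1}\lesssim \norm{h_2}$ and $\norm{h_2}\lesssim \norm{h_1}+\norm{h_2}$, which is circular.

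The paper's proof is considerably more elementary and sidesteps this obstacle entirely. It does \emph{not} split $h$ into a system; instead it writes a single Duhamel formula using only $S_{G_\nu}$:
\[
f-\tilde f \;=\; \int_0^t S_{G_\nu}(t-s)\Big[B_2^{(\delta)}(f-\tilde f)+Q(f-\tilde f,f+\tilde f)\Big]\,ds \;+\; \int_0^t S_{G_\nu}(t-s)\,A^{(\delta)}(f-\tilde f)\,ds.
\]
The first integral is handled exactly as in Proposition~\ref{prop:f1}, giving a contractive factor $(1-C_1)+C_Q(\norm{f}+\norm{\tilde f})$. The second integral is bounded \emph{crudely} by $C_2\, t\,\norm{f-\tilde f}_{L^\infty_{[0,t]}L^\infty_{x,v}(m)}$ --- the missing smallness comes from the factor of $t$, not from any spectral property of $S_G$. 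Choosing $T_0$ small enough that $C_2T_0<C_1/4$, and $\eta$ small enough that $\norm{f},\norm{\tilde f}\leq C\eta$ on $[0,T_0]$ (this auxiliary bound is obtained by the same Duhamel estimate applied to $f-S_{G_\nu}(t)f_0$), one gets $f=\tilde f$ on $[0,T_0]$, then iterates. No projection $\Pi_G$, no system, no $S_G$ semigroup is needed.
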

\bigskip

\begin{proof}[Proof of Theorem \ref{theo:uniqueness}]
Let $f_0$ be in $L^\infty_{x,v}(m)$ such that $\norm{f_0}_{L^\infty_{x,v}(m)}\leq \eta$, $\eta$ to be chosen later. Suppose that there exist two solutions $f$ and $\tilde{f}$ in $L^\infty_tL^\infty_{x,v}(m)$ associated to the initial datum $f_0$.
\par Subtracting the equations satisfied by $f$ and $\tilde{f}$ we get
$$\partial_t \pa{f-\tilde{f}} = G\pa{f-\tilde{f}} + Q\pa{f-\tilde{f},f+\tilde{f}}$$
and following the decomposition of the previous subsection
$$\partial_t \pa{f-\tilde{f}} = \cro{G_\nu\pa{f-\tilde{f}} + B^{(\delta}_2\pa{f-\tilde{f}} + Q\pa{f-\tilde{f},f+\tilde{f}}} + A^{(\delta)}\pa{f-\tilde{f}}.$$
Since $G_\nu$ generates a semigroup in $L^\infty_{x,v}(m)$ we can write the equation above under its Duhamel form:
\begin{equation}\label{uniquenessDuhamel}
\begin{split}
f-\tilde{f} =& \int_0^{t} S_{G_\nu}(t-s)\cro{B^{(\delta}_2\pa{f-\tilde{f}} + Q\pa{f-\tilde{f},f+\tilde{f}}}ds 
\\&+ \int_0^{t} S_{G_\nu}(t-s)\cro{A^{(\delta)}\pa{f-\tilde{f}}}ds.
\end{split}
\end{equation}

\bigskip
The first term on the right-hand side can be treated the same way as in the proof of Proposition \ref{prop:f1} and therefore, for $\delta$ small enough, there exists $0<C_1<1$ such that
\begin{equation}\label{uniqueness1}
\begin{split}
&\abs{\int_0^{t} S_{G_\nu}(t-s)\cro{B^{(\delta}_2\pa{f-\tilde{f}} + Q\pa{f-\tilde{f},f+\tilde{f}}}ds} 
\\&\quad\leq \cro{(1-C_1) + C_Q\pa{\norm{f}_{L^\infty_{[0,t]}L^\infty_{x,v}(m)} + \norm{\tilde{f}}_{L^\infty_{[0,t]}L^\infty_{x,v}(m)}}}\norm{f-\tilde{f}}_{L^\infty_{[0,t]}L^\infty_{x,v}(m)}.
\end{split}
\end{equation}
Since $S_{G_\nu}(t)$ is bounded on $L^\infty_{t,x,v}(m)$ (see Theorem \ref{theo:semigroupGnu}), as well as $A^{(\delta)}$ is (see Lemma \ref{lem:controlA}, we can bound the second term on the right-hand side of $\eqref{uniquenessDuhamel}$ by
\begin{equation}\label{uniqueness2}
\abs{\int_0^{t} S_{G_\nu}(t-s)\cro{A^{(\delta)}\pa{f-\tilde{f}}}ds}\leq C_2 t\norm{f-\tilde{f}}_{L^\infty_{[0,t]}L^\infty_{x,v}(m)}.
\end{equation}
\par Plugging $\eqref{uniqueness1}$ and $\eqref{uniqueness2}$ into $\eqref{uniquenessDuhamel}$ we obtain
\begin{equation}\label{uniquenessdifference}
\begin{split}
\norm{f-\tilde{f}}_{L^\infty_{[0,t]}L^\infty_{x,v}(m)} \leq &\cro{(1-C_1) + C_Q\pa{\norm{f}_{L^\infty_{[0,t]}L^\infty_{x,v}(m)} + \norm{\tilde{f}}_{L^\infty_{[0,t]}L^\infty_{x,v}(m)}}+ C_2t}
\\&\times\norm{f-\tilde{f}}_{L^\infty_{[0,t]}L^\infty_{x,v}(m)}.
\end{split}
\end{equation}

\bigskip
We now need to prove that the $L^\infty_{x,v}(m)$-norm of $f$ and $\tilde{f}$ must be bounded by the one of $f_0$. But this follows from $\eqref{uniquenessdifference}$ when one subtracts $S_{G_\nu}(t)f_0$ instead of $\tilde{f}$ to $f$. This yields, after controlling $S_{G_\nu}(t)f_0$ by its $L^\infty_{x,v}(m)$-norm,
$$\norm{f}_{L^\infty_{[0,t]}L^\infty_{x,v}(m)} \leq C_0 \norm{f_0}_{L^\infty_{x,v}(m)}+ \cro{(1-C_1) + C_Q\norm{f}_{L^\infty_{[0,t]}L^\infty_{x,v}(m)}+ C_2t}\norm{f}_{L^\infty_{[0,t]}L^\infty_{x,v}(m)}.$$
Since $C_1<1$ we fix $T_0$ such that $C_2T_0 < C_1/4$. We deduce that for all $t$ in $[0,T_0]$
$$\forall t \in [0,T_0], \quad\frac{3C_1}{4}\norm{f}_{L^\infty_{[0,t]}L^\infty_{x,v}(m)} \leq C_0 \norm{f_0}_{L^\infty_{x,v}(m)}+ C_Q\norm{f}^2_{L^\infty_{[0,t]}L^\infty_{x,v}(m)}$$
and therefore,
 if $\norm{f_0}_{L^\infty_{x,v}(m)}\leq \eta$ with $\eta$ small enough such that
$$\frac{3C_1}{4}-2\frac{C_QC_0}{C_1}\eta>\frac{C_1}{2}$$
then
\begin{equation}\label{uniquenessbound}
\forall t \in [0,T_0], \quad\norm{f}_{L^\infty_{[0,t]}L^\infty_{x,v}(m)} \leq \frac{2C_0}{C_1} \norm{f_0}_{L^\infty_{x,v}(m)}.
\end{equation}

\bigskip
To conclude the proof of uniqueness we see that $\eqref{uniquenessbound}$ is also valid for $\tilde{f}$ and $\eqref{uniquenessdifference}$ thus becomes
$$\forall t\in[0,T_0],\quad \norm{f-\tilde{f}}_{L^\infty_{[0,t]}L^\infty_{x,v}(m)} \leq \cro{\pa{1-\frac{3C_1}{4}} + 4\frac{C_0C_Q}{C_1}\eta}\norm{f-\tilde{f}}_{L^\infty_{[0,t]}L^\infty_{x,v}(m)}.$$
We can choose $\eta$ even smaller such that the term on the right-hand side can be absorbed by the left-hand side. This implies that $f=\tilde{f}$ on $[0,T_0]$. Starting at $T_0$ we can iterate the process and finally getting that $f=\tilde{f}$ on $\R^+$; which concludes the proof of Theorem \ref{theo:uniqueness}.
\end{proof}
\bigskip

\section{Qualitative study of the perturbative solutions to the Boltzmann equation}\label{sec:qalitativestudy}

In this last section, we address the issue of positivity and continuity of the solutions to the Boltzmann equation
\begin{equation}\label{BEpositivity}
\partial_t F + v \cdot\nabla_xF = Q\pa{F,F}.
\end{equation}
\par Note that even if our arguments are constructive, we only prove qualitative behaviours and we do not tackle the issue of quantitative estimates. For instance, we prove the positivity of the solutions but do not give any explicit lower bound. Such explicit lower bounds have been recently obtained in the case of pure specular reflections \cite{Bri2} and in the case of pure Maxwellian diffusion \cite{Bri5}. We think that the proofs can be adapted to fit the case of Maxwell boundary condition as it is a convex combination of these boundary conditions. However, the techniques required to deal with this are very different from the one developed throughout this paper and we therefore did not looked into it much further.

\bigskip


\subsection{Positivity of solutions}\label{subsec:positivity}

This subsection is dedicated to proving the following positivity property.

\bigskip
\begin{prop}\label{prop:positivity}
Let $m=e^{\kappa_1\abs{v}^{\kappa_2}}$ with $\kappa_1 >0$ and $\kappa_2$ in $(0,2)$ or $m=\langle v \rangle^k$ with $k> k_\infty$. Let $f_0$ be in $L^\infty_{x,v}\pa{m}$ with $\Pi_G(f_0)=0$ and
$$\norm{f_0}_{L^\infty_{x,v}\pa{m}}\leq \eta,$$
where $\eta>0$ is chosen such that Theorems \ref{theo:existenceexpodecay} and \ref{theo:uniqueness} hold and denote $f$ the unique solution of the perturbed equation associated to $f_0$.
\\ Suppose that $F_0= \mu + f_0\geq 0$ then $F=\mu + f \geq 0$.
\end{prop}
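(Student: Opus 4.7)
The plan is to build $F$ as the pointwise limit of a non-negative iterative scheme and to identify this limit with the perturbative solution of Theorem~\ref{theo:existenceexpodecay} through the uniqueness of Theorem~\ref{theo:uniqueness}. Rewriting the Boltzmann equation $\eqref{BEpositivity}$ in gain--loss form
\[
\partial_t F + v\cdot\nabla_x F + \nu[F]\, F = Q^+(F,F), \qquad \nu[F](v) = \int_{\R^3\times\S^2} B\, F(v_*)\, d\sigma\, dv_*,
\]
I would set $F^{(0)} \equiv \mu$ and define $F^{(n+1)}$ inductively as the unique $L^\infty_{x,v}(m)$-solution to the linear problem
\[
\partial_t F^{(n+1)} + v\cdot\nabla_x F^{(n+1)} + \nu[F^{(n)}]\, F^{(n+1)} = Q^+(F^{(n)}, F^{(n)}),
\]
with initial datum $F_0$ and Maxwell boundary condition $\eqref{mixedBC}$. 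In the perturbative variables $f^{(n)} = F^{(n)} - \mu$ this is a Boltzmann-type perturbed equation whose well-posedness in $L^\infty_{x,v}(m)$ follows from the semigroup theory of Section~\ref{sec:frequencycollision} together with a fixed-point argument in the spirit of Proposition~\ref{prop:f1}, the coupling $\nu[f^{(n)}]\,f^{(n+1)}$ acting as a small zeroth-order perturbation when $\|f^{(n)}\|_{L^\infty_{x,v}(m)}$ is small.

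Granted that $F^{(n)} \geq 0$, both $\nu[F^{(n)}]$ and $Q^+(F^{(n)}, F^{(n)})$ are non-negative. Writing $F^{(n+1)}$ via Duhamel's formula along backward characteristics and iterating through rebounds exactly as in the representation $\eqref{representationfn}$, every contribution is a non-negative integrand weighted by the non-negative factor $e^{-\int \nu[F^{(n)}]}$, while the Maxwell boundary operator $(1-\alpha)\mathcal{R}_x + \alpha P_\Lambda$ is a convex combination of positivity-preserving operators. Therefore $F^{(n+1)} \geq 0$, and since $F^{(0)} = \mu > 0$ and $F_0 \geq 0$ by assumption, induction yields $F^{(n)} \geq 0$ for every $n$.

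The next step is to pass to the limit. The differences $f^{(n+1)} - f^{(n)}$ satisfy a linear equation whose right-hand side depends on $f^{(n)} - f^{(n-1)}$ multiplied by the uniformly small iterates $f^{(n)}, f^{(n-1)}$, plus a zeroth-order coupling $\nu[f^{(n)} - f^{(n-1)}]\,f^{(n+1)}$. Applying the splitting $G = A^{(\delta)} + B^{(\delta)}$ together with the estimates of Proposition~\ref{prop:f1} and Theorem~\ref{theo:uniqueness}, the smallness of $\eta$ yields a contraction and hence convergence of $(F^{(n)})$ to some $\tilde{F} = \mu + \tilde{f}$ in $L^\infty_t L^\infty_{x,v}(m)$. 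Taking the limit in the iteration shows that $\tilde{F}$ solves the full Boltzmann equation with Maxwell boundary condition and initial datum $F_0$, so Theorem~\ref{theo:uniqueness} forces $\tilde{F} = F$. Convergence in $L^\infty_{x,v}(m)$ passes to pointwise a.e.~convergence up to extraction, which preserves non-negativity, so $F \geq 0$.

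The hard part will be the contraction estimate of the final step: the positivity-preserving scheme introduces the new coupling $\nu[F^{(n)}]\,F^{(n+1)}$, absent from the iteration of Proposition~\ref{prop:f1}, and one must re-run the boundary-rebound decomposition of Section~\ref{sec:frequencycollision} uniformly in $n$ in its presence. Since $\nu[f^{(n)}]$ is of the same order as $\|f^{(n)}\|_{L^\infty_{x,v}(m)} \leq C\eta$, it should be absorbed for $\eta$ small enough, but the verification is technical and essentially amounts to retracing the arguments of Sections~\ref{sec:frequencycollision}--\ref{subsec:existence} with this additional term.
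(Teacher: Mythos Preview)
Your proposal is correct and follows essentially the same route as the paper: the gain--loss splitting $Q = Q^+ - q(F)F$, the iterative scheme $F^{(0)}=\mu$ with $[\partial_t + v\cdot\nabla_x + q(F^{(n)})]F^{(n+1)} = Q^+(F^{(n)},F^{(n)})$ subject to the Maxwell boundary condition on $F^{(n+1)}$, positivity by induction via the Duhamel representation along characteristics, and convergence plus identification through uniqueness. The only cosmetic difference is that the paper argues positivity by stopping at the \emph{first} rebound (formulae \eqref{defpositivity1}--\eqref{defpositivity2}): one first gets $F^{(n+1)}\geq 0$ for $t\leq t_{\min}(x,v)$ directly from \eqref{defpositivity1}, which in turn makes the boundary trace $F_\Lambda^{(n+1)}$ non-negative, and then \eqref{defpositivity2} propagates positivity past the first collision; this avoids invoking the full multi-rebound representation \eqref{representationfn} and sidesteps the apparent circularity in your sentence about the boundary operator preserving positivity of $F^{(n+1)}$.
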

\bigskip

\begin{proof}[Proof of Proposition \ref{prop:positivity}]
Since we are working with hard potential kernels we can decompose the nonlinear operator into
$$Q(F,F) = -Q^-(F,F) + Q^+(F,F)$$
where
\begin{eqnarray*}
Q^-(F,F)(v) &=& \pa{\int_{\R^3\times \mathbb{S}^{2}}B\left(|v - v_*|,\mbox{cos}\:\theta\right) F_*\:dv_*d\sigma}F(v)=q(F)(v) F(v),
\\Q^+(F,F)&=&\int_{\R^3\times \mathbb{S}^{2}}B\left(|v - v_*|,\mbox{cos}\:\theta\right)F'F_*'\:dv_*d\sigma.
\end{eqnarray*}

\par Following the idea of \cite{Bri2}\cite{Bri5}, we obtain an equivalent definition of being a solution to $\eqref{BEpositivity}$ by applying the Duhamel formula along backward characteristics that is stopped right after the first collision against the boundary. If $F$ is solution to the Boltzmann equation then
for almost all $(x,v)$ in $\Omega\times\R^3$,
\begin{equation}\label{defpositivity1}
\begin{split}
F(t,x,v) =& F_0(x-vt,v)\mbox{exp}\left(-\int_{0}^t q[F(s,x-(t-s)v,\cdot)](v)\:ds\right) 
\\ &+ \int_{0}^t \mbox{exp}\left(-\int_s^t q[F(s',x-(s-s')v,\cdot)](v)\:ds'\right)
\\&\quad\quad\times Q^+\cro{F(s,x-(t-s)v,\cdot), F(s,x-(t-s)v,\cdot)}(v)\: ds
\end{split}
\end{equation}
if $t \leq t_{min}(x,v):=t_0$ or else
\begin{equation}\label{defpositivity2}
\begin{split}
F(t,x,v) =& F_\Lambda(t_0,x-t_0v,v)\mbox{exp}\pa{-\int_{t_0}^t q[F(s,x-(t-s)v,\cdot)](v)\:ds}
\\ &+ \int_{t_0}^t \mbox{exp}\pa{-\int_s^t q[F(s',x-(s-s')v,\cdot)](v)\:ds'}
\\&\quad\quad \times Q^+\cro{F(s,x-(t-s)v,\cdot), F(s,x-(t-s)v,\cdot)}(v)\: ds.
\end{split}
\end{equation}
We denoted by $F_\Lambda$ the Maxwell boundary condition for $(t',x',v)$ in $\R^+\times\Lambda$
$$F_\Lambda(t',x',v) = (1-\alpha) F(t',x',\mathcal{R}_{x'}(v))+ \alpha P_\Lambda\pa{F(t',x',\cdot)}(v).$$

\bigskip
We construct an iterative scheme $(F^{(n)})_{n\in\N}$ with $F^{(0)}=\mu$ and $F^{(n+1)}(t,x,v)$ being defined by $\eqref{defpositivity1}$ and $\eqref{defpositivity2}$ with all the $F$ on the right-hand side being replaced by $F^{(n)}$ except in the definition of $F_\Lambda$ where we keep $F^{(n+1)}$ instead. In other terms, $F^{(n+1)}$ is solution to
$$\cro{\partial_t + v\cdot\nabla_x + q(F^{(n)})}F^{(n+1)} = Q(F^{(n)},F^{(n)})$$
with the Maxwell boundary condition; which is an approximative scheme to the Boltzmann equation $\eqref{BEpositivity}$.
\par Defining $f^{(n)}= F^{(n)} -\mu$ we have the following differential iterative scheme
$$\partial_t f^{(n+1)} + v\cdot\nabla_xf^{(n+1)} = -\nu(v)\pa{f^{(n+1)}} + K\pa{f^{(n)}} + Q^+\pa{f^{(n)}}-q\pa{f^{(n)}}f^{(n+1)}.$$
As before, we prove that $\pa{f^{(n)}}_{n\in\N}$ is well-defined and converges in $L^\infty_{t,x,v}(m)$ towards $f$, the unique solution of the perturbed Boltzmann equation. Therefore, the same holds for $\pa{F^{(n)}}_{n\in\N}$ converging towards $F$ the unique perturbed solution of the original Boltzmann equation $\eqref{BEpositivity}$.
\par From the positivity of $q$ and $Q^+$ and $F_0$, a straightforward induction from $\eqref{defpositivity1}$ shows that $F^{(n)}(t,x,v)\geq 0$ for all $n$ when $t\leq t_0$. This implies that for all $n$ and all $(x,v)$, $F_\Lambda^{(n+1)}(t_0,x-t_0v,v)\geq 0$ and therefore $\eqref{defpositivity2}$ gives $F^{(n+1)}(t,x,v)\geq 0$ for all $(t,x,v)$ and all $n$. The positivity of $F$ follows by taking the limit as $n$ tends to infinity.
\end{proof}
\bigskip


\subsection{Continuity of solutions}\label{subsec:continuity}

The last issue tackled in the present article is the continuity of the solutions described in Section \ref{sec:fullcauchy}. More precisely, we prove the following proposition. 

\bigskip
\begin{prop}\label{prop:continuityqualitative}
Let $m=e^{\kappa_1\abs{v}^{\kappa_2}}$ with $\kappa_1 >0$ and $\kappa_2$ in $(0,2)$ or $m=\langle v \rangle^k$ with $k> k_\infty$. Let $f_0$ be in $L^\infty_{x,v}\pa{m}$ with $\Pi_G(f_0)=0$ and
$$\norm{f_0}_{L^\infty_{x,v}\pa{m}}\leq \eta,$$
where $\eta>0$ is chosen such that Theorems \ref{theo:existenceexpodecay} and \ref{theo:uniqueness} hold and denote $f$ the unique solution of the perturbed equation associated to $f_0$.
\\ Suppose that $F_0= \mu + f_0$ is continuous on $\Omega\times\R^3\cup\br{\Lambda^+\cup \mathfrak{C}^-_\Lambda}$ and satisfies the Maxwell boundary condition $\eqref{mixedBC}$ then $F=\mu + f$ is continuous on the continuity set $\mathfrak{C}$.
\end{prop}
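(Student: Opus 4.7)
I work with the iterative scheme $(F^{(n)})_{n \in \N}$ from the proof of Proposition \ref{prop:positivity}, for which $F^{(n)} \to F$ uniformly in $L^\infty_{t,x,v}(m)$. I will prove by induction on $n$ that each $F^{(n)}$ is continuous on $\mathfrak{C}$; a uniform limit of functions continuous on $\mathfrak{C}$ is then continuous on $\mathfrak{C}$, giving the conclusion for $F$. The base case $F^{(0)} = \mu$ is trivial.

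\textbf{Step 1: continuity of the backward flow on $\mathfrak{C}$.} By Lemma \ref{lem:boundarycontinuityset}, the map $(x,v) \mapsto (t_{min}(x,v), X_1(x,v))$ is continuous at any $(x,v)$ such that $(X_1(x,v), v) \in \mathfrak{C}^-_\Lambda = \Lambda^- \cup \Lambda_0^{(I-)}$, and the reflected velocity $V_1 = \mathcal{R}_{X_1}(v)$ inherits continuity from the $C^1$ regularity of $\partial\Omega$. Iterating bounce by bounce, the condition $(X_{k+1}(x,v), V_k(x,v)) \in \mathfrak{C}^-_\Lambda$ hard-wired into the definition of $\mathfrak{C}$ propagates continuity to the entire finite sequence $(T_k, X_k, V_k)_{1 \leq k \leq N(t,x,v)}$. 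In particular $(t - T_1, X_1, V_1)$ itself lies in $\mathfrak{C}$ with one fewer remaining bounce, which is what allows a secondary induction to close the boundary couplings below.

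\textbf{Step 2: continuity of $F^{(n+1)}$.} Given $F^{(n)}$ continuous on $\mathfrak{C}$, I construct $F^{(n+1)}$ via an inner fixed-point scheme $G^{(0)} = 0$, $G^{(m+1)} = \mathcal{T}[G^{(m)}]$, where $\mathcal{T}$ is the Duhamel map of \eqref{defpositivity1}--\eqref{defpositivity2} using $F^{(n)}$ for the quadratic and frequency terms and $G^{(m)}$ inside $F_\Lambda$. The rebound analysis of Section \ref{sec:frequencycollision} with $\alpha > 0$ yields uniform convergence $G^{(m)} \to F^{(n+1)}$ in $L^\infty_{t,x,v}(m)$. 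I then verify by induction on $m$ that $G^{(m)}$ is continuous on $\mathfrak{C}$: for $t \leq t_{min}(x,v)$ the datum $F_0(x-tv, v)$ is continuous by hypothesis, and the exponential and $Q^+$ factors are continuous on $\mathfrak{C}$ by dominated convergence using the uniform $L^\infty(m)$ bounds on $F^{(n)}$; for $t > t_{min}(x,v)$ one adds continuity of $(1-\alpha) G^{(m)}(T_1, X_1, V_1)$, which follows from Step 1 and the induction hypothesis since $(T_1, X_1, V_1) \in \mathfrak{C}$, and continuity of the diffuse contribution $\alpha P_\Lambda(G^{(m)}(T_1, X_1, \cdot))(v)$, which is obtained by dominated convergence, the integrand being dominated by $C \mu(v)\abs{v_* \cdot n(X_1)}/m(v_*)$ (integrable in $v_*$). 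Boundary points $(t,x,v) \in (0, +\infty) \times \mathfrak{C}^-_\Lambda$ and initial-time points are handled identically, directly invoking the Maxwell boundary relation.

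\textbf{Main obstacle.} The chief difficulty is the diffuse boundary integral, which averages $G^{(m)}(T_1, X_1, v_*)$ against a positive measure on the outgoing half-space $\{v_* \cdot n(X_1) > 0\}$ whose normal moves continuously with $x$. Dominated convergence delivers continuity of this integral only if, for almost every such $v_*$, the point $(T_1, X_1, v_*)$ belongs to $\mathfrak{C}$ — equivalently, its backward trajectory has no problematic grazing. This is a slice-wise restatement of the measure-zero bad set from \cite[Proposition A.4]{Bri2}, coupled with the hereditary design of $\mathfrak{C}$ under backward flow that ensures each successive bounce again falls into $\mathfrak{C}^-_\Lambda$. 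This combination is precisely what closes the induction on $m$ (and ultimately on $n$) and transfers continuity to $F^{(n+1)}$, hence to $F$.
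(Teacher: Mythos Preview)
Your overall strategy---iterate, show each iterate is continuous on $\mathfrak{C}$, pass to the uniform limit---is the same as the paper's. The organizational difference is that the paper freezes the \emph{diffusive} part of the boundary at the previous level $n$ while keeping the \emph{specular} part at level $n+1$; this turns each step into a transport equation with mixed specular-plus-inflow boundary (the inflow being $g=\alpha P_\Lambda(F^{(n)})$), for which the paper proves a clean explicit finite-bounce formula (Lemma \ref{lem:continuityspecularinflow}) and then checks continuity term by term. Your inner iteration $G^{(m)}$ instead carries the full Maxwell boundary at once. Both routes are viable, but the paper's decoupling avoids precisely the obstacle you flag.

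The gap in your argument is the diffuse integral. You need that for $(t,x,v)\in\mathfrak{C}$ with $t>t_1$, the point $(t-t_1,X_1,v_*)$ lies in $\mathfrak{C}$ for almost every outgoing $v_*$. The definition of $\mathfrak{C}$ tracks only the \emph{specular} backward trajectory of the original $(x,v)$; it gives you nothing about trajectories launched from $X_1$ with a \emph{new} velocity $v_*$. Your appeal to \cite[Proposition A.4]{Bri2} does not suffice: that result says the bad set has Lebesgue measure zero in the full phase space $\bar\Omega\times\R^3$, which by Fubini does not imply that the $v_*$-slice at a \emph{fixed} boundary point $X_1$ has measure zero. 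The paper closes this by invoking \cite[Proof of 2 of Theorem 3, Step 1]{Kim1}, which establishes directly that $P_\Lambda(F^{(n)})$ is continuous on $[0,T]\times\mathfrak{C}^-_\Lambda$ as soon as $F^{(n)}$ is continuous away from the first-bounce grazing set. You should either cite that result or prove the slice-wise measure-zero claim explicitly; as written, ``slice-wise restatement'' is an assertion, not a proof.
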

\bigskip

We recall the definition of inward inflection grazing boundary
$$\Lambda_0^{(I-)} = \Lambda_0 \cap \br{t_{min}(x,v)=0,\: t_{min}(x,-v)\neq 0 \:\mbox{and}\: \exists \delta>0, \:\forall \tau \in [0,\delta],\: x-\tau v \in \bar{\Omega}^c}.$$
We also rewrite the boundary continuity set
$$\mathfrak{C}^-_\Lambda = \Lambda^- \cup \Lambda_0^{(I-)}$$
and the continuity set
\begin{equation*}
\begin{split}
\mathfrak{C} =& \Big\{\br{0}\times\br{\Omega\times\R^3\cup\pa{\Lambda^+\cup \mathfrak{C}^-_\Lambda}}\Big\} \cup \Big\{(0,+\infty)\times\mathfrak{C}^-_\Lambda \Big\}
\\&\cup \Big\{(t,x,v) \in (0,+\infty)\times\pa{\Omega\times\R^3\cup \Lambda^+}:
\\&\quad\quad \forall 1\leq k\leq N(t,x,v) \in\N, \pa{X_{k+1}(x,v),V_k(x,v)}\in \mathfrak{C}^-_\Lambda \Big\}.
\end{split}
\end{equation*}
The sequence $(T_k(x,v),X_k(x,v),V_k(x,v))_{k\in\N}$ is the sequence of footprints of the backward characteristic trajectory starting at $(x,v)$; $N(t,x,v)$ is almost always finite and such that $T_{N(t,x,v)} \leq t < T_{N(t,x,v)+1}(x,v)$. We refer to Subsection \ref{subsec:collisionfrequencycharacteristics} for more details.

\bigskip
As explained in Lemma \ref{lem:boundarycontinuityset}, the set $\mathfrak{C}^-_\lambda$ describes the boundary points in the phase space that lead to continuous specular reflections.
\par The proof of Proposition \ref{prop:continuityqualitative} relies on a continuity result for the non-homogeneous transport equation with a mixed specular and in-flow boundary conditions when $\Omega$ is not necessarily convex.
\par Similar results have been obtained in \cite[Lemma 12]{Kim1} or \cite[Lemma 13]{Gu6} (when $\Omega$ is convex) for purely in-flow boundary condition as well as for purely bounce-back reflections \cite[Lemma 15]{Kim1}. We recover their results when $\alpha=1$ or by replacing $(T_k,X_k,V_k)_k$ by the sequence associated to bounce-back characteristics. The continuity for pure specular reflections has been tackled in \cite[Lemma 21]{Gu6} but required strict convexity of $\Omega$.
\par The following lemma therefore improves and extends the existing results.

\bigskip
\begin{lemma}\label{lem:continuityspecularinflow}
Let $\Omega$ be a $C^1$ bounded domain of $\R^3$ and let $f_0(x,v)$ be continuous on $\Omega\times\R^3\cup\br{\Lambda^+\cup \mathfrak{C}^-_\Lambda}$ and $g(t,x,v)$ be a boundary datum continuous on $[0,T]\times \mathfrak{C}^-_\Lambda$. At last, let $q_1(t,x,v)$ and $q_2(t,x,v)$ be two continuous function in the interior of $[0,T]\times\Omega\times\R^3$ satisfying
$$\sup\limits_{t \in [0,T]}\norm{q_1(t,x,v)}_{L^\infty_{x,v}(m)}<\infty \quad\mbox{and}\quad \sup\limits_{t \in [0,T]}\norm{q_2(t,x,v)}_{L^\infty_{x,v}(m)}<\infty.$$
Assume $f_0$ satisfies the mixed specular and in-flow boundary condition:
$$\forall (x,v)\in \mathfrak{C}^-_\Lambda,\quad f_0(x,v)=(1-\alpha) f_0(x,\mathcal{R}_x(v))+ g(0,x,v)$$
and suppose $f(t,x,v)$ is the solution to
$$\left\{\begin{array}{l}\disp{\cro{\partial_t + v\cdot\nabla_x + q_1(t,x,v)}f(t,x,v) = q_2(t,x,v)} \vspace{2mm}\\ \vspace{2mm} \disp{\forall (t,x,v)\in [0,T]\times\Lambda^-,\: f(t,x,v) = (1-\alpha) f(t,x,\mathcal{R}_x(v)) +g(t,x,v)}\end{array}\right.$$
associated to the initial datum $f_0$. Then $f(t,x,v)$ is continuous on the continuity set $\mathfrak{C}$.
\end{lemma}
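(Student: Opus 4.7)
My plan is to represent $f(t,x,v)$ as an explicit finite sum along the backward specular characteristic sequence $(T_k,X_k,V_k)_{k\geq 0}$ obtained by iterating Duhamel's formula and the mixed boundary condition, and then to read off joint continuity at each $(t,x,v) \in \mathfrak{C}$ from Lemma \ref{lem:boundarycontinuityset}. Concretely, integrating the transport equation along the segment from $X_1$ to $x$ and inserting the boundary condition at $X_1$ yields
\begin{equation*}
f(t,x,v) = E_1(t,x,v)\bigl[(1-\alpha) f(t-T_1,X_1,V_1) + g(t-T_1,X_1,v)\bigr] + \int_{t-T_1}^{t} E_1(t,x,v;s)\,q_2(s,x-(t-s)v,v)\,ds,
\end{equation*}
where $E_1(\cdot)$ is the exponential damping $\exp(-\int q_1)$ along the straight segment. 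Iterating this $N=N(t,x,v)$ times unrolls into
\begin{equation*}
f(t,x,v) = (1-\alpha)^{N+1}\,\mathcal{E}_{N+1}\,f_0\bigl(X_{N+1} - (t-T_{N+1})V_{N+1},V_{N+1}\bigr) + \sum_{k=0}^{N} (1-\alpha)^k\,(\text{$g$-term})_k + (\text{$q_2$-integral along the broken trajectory}),
\end{equation*}
with $\mathcal{E}_{N+1}$ the product of the exponential factors along the broken trajectory. This formula is satisfied by the unique solution described in the statement.

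Next, I would fix $(t^\star,x^\star,v^\star)\in\mathfrak{C}$ with $N^\star=N(t^\star,x^\star,v^\star)$ rebounds. The defining property of $\mathfrak{C}$ gives $(X_{k+1}^\star,V_k^\star)\in\mathfrak{C}^-_\Lambda = \Lambda^-\cup\Lambda_0^{(I-)}$ for every $1\leq k \leq N^\star$, so Lemma \ref{lem:boundarycontinuityset}(2)--(3) applies inductively: $t_{\min}$ and the footprint $X_1$ are continuous at each $(X_k^\star,V_{k-1}^\star)$, specular reflection is itself continuous, and consequently the whole sequence $(T_k,X_k,V_k)$ depends continuously on the starting triple for $k\leq N^\star+1$ on some open neighbourhood of $(x^\star,v^\star)$. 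The strict inequality $T_{N^\star}(x^\star,v^\star)\leq t^\star < T_{N^\star+1}(x^\star,v^\star)$ then propagates by continuity to show that $N(t,x,v)=N^\star$ on a (perhaps smaller) neighbourhood of $(t^\star,x^\star,v^\star)$. Combined with the continuity of $f_0$ on $\Omega\times\R^3\cup(\Lambda^+\cup\mathfrak{C}^-_\Lambda)$, of $g$ on $[0,T]\times\mathfrak{C}^-_\Lambda$, and of $q_1,q_2$ on the interior (integrated along straight segments, which is a continuous operation in the endpoints), every summand in the explicit representation becomes continuous at $(t^\star,x^\star,v^\star)$.

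The main obstacle will be the borderline configurations within $\mathfrak{C}$, namely when $(x^\star,v^\star)$ itself lies in $\Lambda_0^{(I-)}$ (so $T_1(x^\star,v^\star)=0$), when $t^\star = 0$, or when $t^\star$ coincides with some $T_k(x^\star,v^\star)$. In these cases Lemma \ref{lem:boundarycontinuityset} only provides lower semi-continuity of $t_{\min}$ in general, so I need to show that a perturbed trajectory either keeps its $k$-th rebound at a nearby footprint of $X_k^\star$, or else loses/gains a rebound in a way that the corresponding $(1-\alpha)^k$ weight together with the continuity of $f_0, g$ on their extended domains of continuity makes the discrepancy vanish continuously. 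The critical input is that $\mathfrak{C}$ was engineered precisely to exclude the outward inflection grazing $\Lambda_0^{(I+)}$, which is the only genuine discontinuity point of $t_{\min}$ from Lemma \ref{lem:boundarycontinuityset} and the remark following it; so the argument should close cleanly by matching the two limiting representations coming from the ``reaches the initial plane'' branch and the ``still bouncing'' branch of the Duhamel iteration.
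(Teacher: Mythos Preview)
Your approach is essentially the paper's: derive the explicit broken-trajectory representation by iterating Duhamel and the boundary condition, use Lemma \ref{lem:boundarycontinuityset} inductively to get joint continuity of $(T_k,X_k,V_k)_{k\leq N}$ on $\mathfrak{C}$, and then treat the borderline case where the number of rebounds jumps separately. Two small corrections: your formula should terminate at index $N$ (with $f_0(X_N-(t-T_N)V_N,V_N)$) rather than $N{+}1$, since $t<T_{N+1}$; and in the borderline case $t^\star=T_{N^\star}$ the matching of the two limiting representations works precisely because of the assumed compatibility condition $f_0=(1-\alpha)f_0\circ\mathcal{R}_x+g(0,\cdot,\cdot)$ on $\mathfrak{C}^-_\Lambda$, not merely the continuity of $f_0$ and $g$---you should invoke it explicitly.
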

\bigskip

\begin{proof}[Proof of Lemma \ref{lem:continuityspecularinflow}]
As now standard, in the homogeneous case $q_2=0$, we can use a Duhamel formula along the backward characteristics because $q_1$ belongs to $L^\infty_{t,x,v}$. More precisely, as in Subsection \ref{subsubsec:implicit} with $q_1(t,x,v)$ replacing $\nu(v)$ we obtain that if $h(t,x,v)$ is solution to
$$\cro{\partial_t + v\cdot\nabla_x + q_1(t,x,v)}h(t,x,v) = 0$$
with the mixed specular and in-flow boundary conditions then $h$ takes the form
\begin{itemize}
\item if $t\leq t_{min}(x,v)=T_1$, 
$$h(t,x,v) = h_0(x-tv,v)e^{-\int_0^t q_1(s,x-(t-s)v,v)ds};$$
\item if $t> T_1$
$$h(t,x,v)= \cro{(1-\alpha) h(t-T_1,X_1,V_1)+g(t-T_1,X_1,V_1)}e^{-\int_{t-T_1}^{t} q_1(s,x-(t-s)v,v)ds}.$$
\end{itemize}
Unlike the case of Maxwell boundary condition, we see that in the case of mixed specular and in-flow boundary condition we can always reach the initial plane $\br{t=0}$. We obtain an explicit form for $h(t,x,v)$ for almost every $(t,x,v)$ by iterating the property above (see \cite[lemma 20]{Gu6} for more details or \cite[Lemma 15]{Kim1} replacing the bounce-back trajectories by the specular ones). It reads with $N=N(t,x,v))$ and the usual notation $t_k = t-T_k(x,v)$
\begin{equation}\label{homogeneouscontinuity}
\begin{split}
h(t,x,v)=& (1-\alpha)^{N}h_0(X_{N}-t_NV_{N},V_N)\:e^{-\sum\limits_{k=0}^{N}\int_{\min\br{0,t_{k+1}}}^{t_k} q_1(s,X_k-(t_k-s)V_k,V_k)\:ds}
\\&+\sum\limits_{k=0}^{N-1} g(t_{k+1},X_{k+1},V_{k+1})\:e^{-\int_{\min\br{0,t_{k+1}}}^{t_k} q_1(s,X_k-(t_k-s)V_k,V_k)\:ds}
\end{split}
\end{equation}
for almost every $(t,x,v)$. Note that this expression is indeed well-defined since $N(t,x,v)$ is finite almost everywhere and $q_1$ belongs to $L^\infty_{t,x,v}$. We also emphasize that $\min\br{0,t_{k+1}}$ only plays a role when $k=N(t,x,v)$; it encodes the fact that we integrate all the complete lines between $t_k$ and $t_{k+1}$ and only the remaining part $[t-T_N,t]$ of the last line.

\bigskip
Since the source term $q_2$ also belongs to $L^\infty_{t,x,v}$, we obtain an explicit formula for $f(t,x,v)$ from $\eqref{homogeneouscontinuity}$. It reads, for almost every $(t,x,v)$,
\begin{equation}\label{nonhomogeneouscontinuity}
\begin{split}
f(t,x,v)=& (1-\alpha)^{N}f_0(X_{N}-t_NV_{N},V_N)\:e^{-\sum\limits_{k=0}^{N}\int_{\min\br{0,t_{k+1}}}^{t_k} q_1(s,X_k-(t_k-s)V_k,V_k)\:ds}
\\&+\sum\limits_{k=0}^{N-1} g(t_{k+1},X_{k+1},V_{k+1})\:e^{-\int_{\min\br{0,t_{k+1}}}^{t_k} q_1(s,X_k-(t_k-s)V_k,V_k)\:ds}
\\&+ \sum\limits_{k=0}^{N-1}\int_{\min\br{0,t_{k+1}}}^{t_k} \mbox{exp}\cro{-\sum\limits_{l=0}^{k}\int_{\max\br{s,t_{l+1}}}^{t_l} q_1(s_1,X_l-(t_l-s_1)V_l,V_l)ds_1} 
\\&\quad\quad\quad\quad\quad \times q_2(s,X_k-(t_k-s)V_k,V_k)\:ds.
\end{split}
\end{equation}
Note that in the expression above we used the change of variable $s_1 \mapsto t-s_1$ to recover exactly the sequence $(t_l,X_l,V_l)$ associated to $(t,x,v)$ instead of $(\tilde{t}_l,\tilde{X}_l,\tilde{V}_l)$ associated with $(t-s,x,v)$.

\bigskip
By assumptions on $f_0$ and $g$, we deduce that $f$ is continuous on
$$\Big\{\br{0}\times\br{\Omega\times\R^3\cup\pa{\Lambda^+\cup \mathfrak{C}^-_\Lambda}}\Big\} \cup \Big\{(0,+\infty)\times\mathfrak{C}^-_\Lambda \Big\}.$$
Now if $(t,x,v)$ belongs to 
\begin{equation*}
\begin{split}
&\Big\{(t,x,v) \in (0,+\infty)\times\pa{\Omega\times\R^3\cup \Lambda^+}:
\\&\quad\forall 1\leq k\leq N(t,x,v) \in\N, \pa{X_{k+1}(x,v),V_k(x,v)}\in \mathfrak{C}^-_\Lambda \Big\}
\end{split}
\end{equation*}
we have by iterating Lemma \ref{lem:boundarycontinuityset} that the finite sequence $(T_k,X_k,V_k)_{0\leq k \leq N(t,x,v)}$ is continuous around $(x,v)$.
\par Let $(t',x',v')$ be in the same set as $(t,x,v)$. In the case $T_{N(t,x,v)}\leq t\leq t'<T_{N(t,x,v)+1}$ or $T_{N(t,x,v)}\leq t'\leq t<T_{N(t,x,v)+1}$, by continuity of the $t-T_k(x,v)$ we have that for $(t',x',v')$ sufficiently close to $t$, $N(t',x',v')=N(t,x,v)$ and the continuity of $(T_k,X_k,V_k)_{0\leq k \leq N(t,x,v)}$, $g$, $q_1$ and $q_2$ implies $f(t',x',v')\to f(t,x,v)$. It remains to deal with the case $t'\leq t = T_{N(t,x,v)}$ where $N(t',x,v) = N(t,x,v)-1$. Exactly as proved in \cite[Lemma 21]{Gu6}, in that case $t_{N(t,x,v)}=0$ and the integrals from $0$ to $t_{N}$ are null in formula $\eqref{nonhomogeneouscontinuity}$. Moreover, $(X_{N(t,x,v)-1}-(t'-T_{N(t,x,v)-1})V_{N(t,x,v)-1})$ converges to $X_{N(t,x,v)}$ as $t'$ tends to $t$. Finally, since $f_0$ satisfies the boundary condition, we obtain here again that $f(t',x',v')\to f(t,x,v)$. Which concludes the proof.
\end{proof}
\bigskip

We now prove the continuity of the solutions constructed in Section \ref{sec:fullcauchy}.

\bigskip
\begin{proof}[Proof of Proposition \ref{prop:continuityqualitative}]
We use a sequence to approximate the solution  of the full Boltzmann equation with initial datum $F_0 = \mu +f_0$. We start from $F^{(0)}=\mu$ and define by induction $F^{(n+1)}=\mu + f^{(n+1)}$ such that
$$\cro{\partial_t + v\cdot\nabla_x + q(F^{(n)})}F^{(n+1)} = Q(F^{(n)},F^{(n)})$$
with the mixed specular and diffusive boundary conditions
$$\forall (x,v)\in \Lambda^-,\quad F^{(n+1)}(x,v) = (1-\alpha) F^{(n+1)}(x,\mathcal{R}_x(v)) + \alpha P_\Lambda(F^{(n)}(x,\cdot))(v).$$

\bigskip
Since we impose a specular part in the boundary condition, similar computations as in Section \ref{sec:frequencycollision} show that $\pa{f^{(n)}}_{n\in\N}$ is well-defined in $L^\infty_{t,x,v}(m)$. Moreover, similar computations as Subsection \ref{subsubsec:f1} prove that $\pa{f^{(n)}}_{n\in\N}$ is a Cauchy sequence, at least on $[0,T]$ for $T$ sufficiently small, as well as $\pa{F^{(n)}}_{n\in\N}$. Therefore $\pa{f^{(n)}}_{n\in\N}$ converges towards $f$ the unique solution of the perturbed Boltzmann equation with initial datum $f_0$ and $\pa{F^{(n)}}_{n\in\N}$ converges to $F$ the unique solution of the full Boltzmann equation with initial datum $F_0=\mu+f_0$.
\par We apply Lemma \ref{lem:continuityspecularinflow} inductively on $\nu(v)^{-1}F^{(n+1)}$. Indeed, \cite[Theorem 4 and Corollary 5]{Kim1} showed that $q_1 = \nu(v)^{-1}q(F^{(n)})$ and $q_2 = \nu(v)^{-1}Q(F^{(n)},F^{(n)})$ are continuous in the interior of $[0,T]\times\bar{\Omega}\times\R^3$ if $F^{(n)}$ is continous on $\mathfrak{C}$ (see also Lemma \ref{lem:controlQ}). And \cite[Proof of 2 of Theorem 3, Step 1]{Kim1} proved that $P_\Lambda(F^{(n)})$ is continuous on $[0,T]\times\mathfrak{C}^-_\Lambda$ even if $F^{(n)}$ is only continuous on
$$[0,T]\times\bar{\Omega}\times\R^3 -\br{(x,v)\in \bar{\Omega}\times\R^3,\quad n(X_1(x,v))\cdot v =0}$$
which is included in $\mathfrak{C}$.
\par Hence, by induction $F^{(n)}$ is continuous on $\mathfrak{C}$ for all $n$ and is a Cauchy sequence. Therefore its limit $F$ is continuous as well. 
\end{proof}
\bigskip



%
\bibliographystyle{acm}
\bibliography{bibliography_MaxwellBC}


\bigskip
\signmb
\signyan

\end{document}